\newtheoremstyle{case}{}{}{}{}{}{:}{ }{}
\theoremstyle{case}
\newtheorem{case}{Case}
\newcommand*{\sheafhom}{\mathscr{H}\kern -.5pt om}
\newcommand*{\sheafext}{\mathscr{E}\kern -.5pt xt}
\newcommand*{\sheafend}{\mathscr{E}\kern -.5pt nd}
\newcommand{\Ext}{\mathrm{Ext}}
\newcommand{\Hom}{\mathrm{Hom}}
\newcommand{\rhom}{\mathrm{RHom}}
\newcommand{\Syz}{\mathrm{Syz}}
\newcommand{\Ker}{\mathrm{Ker}}
\DeclareMathOperator{\Fil}{Fil}
\DeclareMathOperator{\gr}{gr}
\theoremstyle{plain}
\newtheorem{thm}{Theorem}[section] 
\newtheorem{lemma}[thm]{Lemma} 
\newtheorem{Corollary}[thm]{Corollary} 
\newtheorem{proposition}[thm]{Proposition} 
\theoremstyle{definition}
\newtheorem{construction}[thm]{Construction} 
\newtheorem{definition}[thm]{Definition} 
\newtheorem{notation}[thm]{Notation}
\newtheorem{remark}[thm]{Remark} 
\newcommand{\blank}{\underline{\;\;}}
\lstdefinelanguage{Julia}%
{morekeywords={abstract,break,case,catch,const,continue,do,else,elseif,%
		end,export,false,for,function,immutable,import,importall,if,in,%
		macro,module,otherwise,quote,return,switch,true,try,type,typealias,%
		using,while},%
	sensitive=true,%
	morecomment=[l]\#,%
	morecomment=[n]{\#=}{=\#},%
	morestring=[s]{"}{"},%
	morestring=[m]{'}{'},%
}[keywords,comments,strings]%
\bfseries\color{blue},
\title{Full Exceptional Sequence for a Fine Quiver Moduli Space}
\author{Svetlana Makarova, Junyu Meng}
\begin{document}
	\maketitle
	\begin{abstract}
		We consider the fine quiver moduli space of representations of the 3-Kronecker quiver
		of dimension vector $(2,3)$, which is a blow down of the Hilbert scheme of 3 points on $\mathds{P}^2$.  A short description of its geometry and Chow ring is given. Then we exhibit an exceptional sequence for the derived category by understanding a $\mathds{P}^1$-bundle over it and using Teleman Quantization. The fullness of the exceptional sequence is proved by using a covering argument and computations of mutations.  
	\end{abstract}

	\tableofcontents

	\newpage

	\section{Introduction}
	Quiver moduli spaces were first constructed by King in \cite{king} to geometrically deal with the problem of classification of representations of finite dimensional algebras. Using the GIT technique introduced by Mumford, the fine moduli space of stable representations of a quiver  with a fixed dimension vector can be constructed. Moreover, a universal family of representations can be constructed over the fine moduli space, uniquely up to a twist.
	Under some mild assumptions, a lot of aspects of quiver moduli space have been understood in recent works, for example, the Picard group and more generally the Chow ring, desingularizations, the rationality problem, framing construction, the Brauer group, the Euler characteristic, the cohomology of endomorphism bundles of the universal bundles and so on. The interested readers are invited to check \cite{Fanoquivermoduli,HNquiver,belmans2023vector,belmans2023chow,Reincke} for more detailed information.
	In the paper \cite{Fanoquivermoduli}, a systematic way of constructing Fano varieties in the framework of quiver moduli spaces is proposed, and it gives many interesting examples. In low dimension, apart from the classical flag varieties, there exists also some interesting nontrivial examples which are Fano quiver moduli spaces.
	
	
	In this paper, we consider the moduli space of representations of the 3-Kronecker quiver $\begin{tikzcd}
		\bullet \arrow[r,bend left] \arrow[r,bend right] \arrow[r] & \bullet
	\end{tikzcd}$ of dimension vector $(2,3)$. This moduli space, which we denote by $Y$, is a smooth prime Fano 6-fold of index 3 and provides ``the smallest'' quiver moduli space that is not a Grassmannian. The variety $Y$ and its higher dimensional analogue have aroused interest before, e.g. in the works \cite{EPS, Franzen}. Furthermore, $Y$ has some other descriptions:
	
	$\bullet$ It is a blow down of the Hilbert scheme of 3 points on $\mathds{P}^2$ by \cite{IlievManivel}; 
	
	$\bullet$ It is isomorphic to the variety of trisecant planes of the Segre embedded $\mathds{P}(W)$ in the Segre embedding $\mathds{P}(W)\hookrightarrow\mathds{P}(S^2W)$ by \cite{IlievManivel};
	
	$\bullet$ It is isomorphic to the height-zero moduli space $M_{\mathds{P}^2}(4,-1,3)$ of stable sheaves on $\mathds{P}^2$ with $(r,c_1,c_2)=(4,-1,3)$ by \cite{drezet}.
	
	Apart from the interesting descriptions of the quiver moduli space $Y$ itself, we also have a concrete description of the universal quiver representation $\mathcal{U}_1\rightarrow\mathcal{U}_2\otimes W$ over $Y$:
	In fact, there exist embeddings of $Y$ into $\mathrm{Gr}(2,6)$, $\mathrm{Gr}(3,8)$ and the universal bundles $\mathcal{U}_1$, $\mathcal{U}_2$ are pullbacks of universal subbundles over the two Grassmannians respectively.

	
	This paper is mainly concerned with the derived category of $Y$.
	There is a resolution of diagonal for fine quiver moduli spaces (see \cref{resolution} and \cref{general dercat}) that uses Schur modules of (duals of) the universal bundles.
	Motivated by this, we aim to find a full exceptional sequence consisting of such kind of Schur modules.
	\begin{thm}[\cref{theorem: exceptional collection on Y}, \cref{proposition: mutating slU1}, \cref{fullnessthm}]
		\label{main theorem}
		Let $Y$ be as above, i.e. the moduli space of representations of the $3$-Kronecker quiver with dimension vector $(2,3)$.
		Let $\mathcal O_Y (1) := \det \mathcal U_1^\vee$. The following collection of objects in $D^b(Y)$ is a strong full Lefschetz collection:
		\begin{align*}
			\left\langle 
			\mathcal{O}_Y, \mathcal{U}_2^*, \mathcal{U}_1^*, \mathcal{U}_2(1), \mathfrak{sl}(\mathcal{U}_1)(1), \
			\mathcal{O}_Y(1), \mathcal{U}_2^*(1), \mathcal{U}_1^*(1), \mathcal{U}_2(2),  \
			\mathcal{O}_Y(2), \mathcal{U}_2^*(2), \mathcal{U}_1^*(2), \mathcal{U}_2(3)\right\rangle.
		\end{align*}
	\end{thm}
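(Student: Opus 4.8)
The plan is to split the proof into the three parts recorded in \cref{theorem: exceptional collection on Y}, \cref{proposition: mutating slU1} and \cref{fullnessthm}: (i) the $13$ objects form a strong exceptional sequence in the stated Lefschetz shape; (ii) the non-rectangular member $\mathfrak{sl}(\mathcal{U}_1)(1)$ can be produced by a mutation; (iii) the sequence is full. Writing $\mathcal{E}_0 = (\mathcal{O}_Y, \mathcal{U}_2^*, \mathcal{U}_1^*, \mathcal{U}_2(1), \mathfrak{sl}(\mathcal{U}_1)(1))$ and $\mathcal{E}_1 = \mathcal{E}_2 = (\mathcal{O}_Y, \mathcal{U}_2^*, \mathcal{U}_1^*, \mathcal{U}_2(1))$, so that the displayed collection is $(\mathcal{E}_0, \mathcal{E}_1(1), \mathcal{E}_2(2))$ — of length $3$, the Fano index of $Y$ — part (i) reduces to a finite list of vanishings $H^\bullet(Y, V) = 0$, resp.\ concentration of $H^\bullet(Y, V)$ in degree $0$, with $V$ ranging over tensor products of $\mathcal{U}_1$, $\mathcal{U}_2$, their duals, $\mathfrak{sl}(\mathcal{U}_1)$, and the line bundles $\mathcal{O}_Y(k) = (\det\mathcal{U}_1^*)^{\otimes k}$ with $|k| \le 3$.

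These cohomology groups I would compute by Teleman quantization. As $Y$ is a fine quiver moduli space, it is a smooth geometric GIT quotient of the space of $(2,3)$-dimensional representations of the $3$-Kronecker quiver, so $H^\bullet(Y, V)$ is the invariant part of an equivariant cohomology group on the semistable locus, and each vanishing follows once the $\beta$-weight of $V$ along each stratum of the Hesselink--Kirwan stratification of the unstable locus exceeds the weight bound attached to the conormal bundle of that stratum — a bounded check, since the unstable strata for the $(2,3)$-data form a short explicit list. For the bundles for which this estimate is least transparent directly on $Y$ — those involving $\mathfrak{sl}(\mathcal{U}_1)$ and $\mathrm{Sym}^2\mathcal{U}_1^*$ — I would instead work on the $\mathbb{P}^1$-bundle $\pi\colon \mathbb{P}_Y(\mathcal{U}_1) \to Y$, which is itself a fine quiver moduli space (for instance, the fine moduli space for the $3$-Kronecker quiver extended by a source vertex, dimension vector $(1,2,3)$); pushing the relative $\mathcal{O}(j)$ forward along $\pi$ realizes each such $V$, up to a line-bundle twist, as $R\pi_*$ of a line bundle on $\mathbb{P}_Y(\mathcal{U}_1)$, so that its cohomology equals that of a line bundle there — the cleanest possible Teleman input.

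For (ii) I would use the characteristic-zero splitting $\mathcal{U}_1^* \otimes \mathcal{U}_1^* \cong \mathrm{Sym}^2\mathcal{U}_1^* \oplus \det\mathcal{U}_1^* = \mathfrak{sl}(\mathcal{U}_1)(1) \oplus \mathcal{O}_Y(1)$, where $\mathrm{Sym}^2\mathcal{U}_1^* \cong \mathfrak{sl}(\mathcal{U}_1) \otimes \det\mathcal{U}_1^*$ because $\mathcal{U}_1$ has rank $2$. The construction of Step~1 naturally produces, in the slot where the final collection carries $\mathfrak{sl}(\mathcal{U}_1)(1)$, an object whose only non-trivial ``exceptional core'' is $\mathfrak{sl}(\mathcal{U}_1)(1)$ — concretely a bundle built from $\mathcal{U}_1^* \otimes \mathcal{U}_1^*$, differing from $\mathfrak{sl}(\mathcal{U}_1)(1)$ by the trace summand $\mathcal{O}_Y(1)$, which reappears at the head of the next block. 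The content of \cref{proposition: mutating slU1} is the computation identifying the mutation that fills the slot by $\mathfrak{sl}(\mathcal{U}_1)(1) = \mathrm{Sym}^2\mathcal{U}_1^*$ itself; verifying the relevant mutation triangle is one more $\Ext$-computation of the kind already in hand, made explicit by the relative Euler sequence on $\mathbb{P}_Y(\mathcal{U}_1)$.

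For (iii) I would invoke the resolution of the diagonal for fine quiver moduli spaces (\cref{resolution}, \cref{general dercat}), which writes $\mathcal{O}_\Delta$ on $Y \times Y$ as an iterated extension of external products of Schur modules of $\mathcal{U}_1$, $\mathcal{U}_2$ and their duals, so that $D^b(Y)$ is generated by those Schur modules. The remaining task is the ``covering'' argument from the abstract: show that every Schur module occurring in the resolution — together with the finitely many $\mathcal{O}_Y(k)$-twists it is forced to carry — lies in the triangulated subcategory generated by the $13$ listed objects and a controlled set of their $\mathcal{O}_Y(\pm 1)$-twists, by rewriting each one through repeated mutations built from the Euler and Koszul sequences among the symmetric and exterior powers of $\mathcal{U}_1$ and $\mathcal{U}_2$; then fold the auxiliary twists back in using $\omega_Y = \mathcal{O}_Y(-3)$, Serre duality and the periodicity of the helix attached to the length-$3$ Lefschetz structure. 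I expect this to be the main obstacle: with Step~1 supplying the cohomological dictionary, each single rewriting is routine, but organizing the mutations so that every Schur module of the diagonal resolution is provably reached without leaving the subcategory — and certifying that no further twists intrude — is the delicate combinatorial core, and is the substance of \cref{fullnessthm}.
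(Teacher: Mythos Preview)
Your plan for part (i) is in the right spirit but misses the paper's main computational engine. You propose to run Teleman quantization again on $X = \mathds{P}_Y(\mathcal{U}_1)$ viewed as a framed quiver moduli space, but the paper instead identifies $X$ with the blowup of $\mathds{P}^7 = \mathds{P}(\mathfrak{sl}(W))$ along the Segre-embedded $\mathds{P}(W) \times \mathds{P}(W^*)$. This concrete description supplies short exact sequences expressing $\pi^*\mathcal{U}_1$ and $\pi^*\mathcal{U}_2$ (after a relative twist) as extensions of explicit line bundles and torsion sheaves supported on the exceptional divisor, and the dozens of required vanishings are then read off from the cohomology of line bundles on $\mathds{P}^7$ and on $\mathds{P}^2 \times \mathds{P}^2$ --- not from another round of Teleman. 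Teleman is used only on $Y$ itself, and several computations needed later (e.g.\ $\mathrm{RHom}^{\bullet}(\mathcal{U}_1^*\otimes \mathcal{U}_2(2), \mathcal{U}_2(1))=\mathds{C}[-3]$) have nonvanishing higher cohomology and genuinely require the blowup geometry on $X$.

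Your part (iii) diverges most seriously. The ``covering argument'' in the paper is not a combinatorial coverage of Schur modules from the diagonal resolution --- that resolution is cited only as motivation --- but a \emph{geometric} covering of $Y$ by smooth subvarieties $J$, the zero loci of general sections of $\mathcal{U}_1^*$. Each such $J$ is a hyperplane section of the $\mathrm{G}_2$-Grassmannian $\mathrm{G}_2\mathrm{Gr}(2,7)$, whose derived category is known by Kuznetsov's homological projective duality, and these $J$ sweep out all of $Y$ including the closed $SL(W)$-orbits. This reduces fullness to showing that just two specific bundles, $\mathcal{U}_1^* \otimes \mathcal{U}_2(2)$ and $\mathcal{U}_1^* \otimes \mathcal{U}_2^*$, lie in the subcategory generated by the collection. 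That step is the paper's technical core: it is a chain of explicit mutations, and the hardest identification ($L_2 \cong L_3$) is established by restricting both sides to a carefully chosen surface $Bl(\mathds{P}^2) \subset Y$ on which all the relevant bundles split into line bundles, then lifting the isomorphism back to $Y$ by a rank-rigidity argument exploiting the $A_3$-symmetry and the fact that $\mathrm{Pic}(Y)=\mathds{Z}$. Your proposal to chase every Schur module of the Eagon--Northcott complex through Euler and Koszul mutations is a valid strategy in the abstract, but nothing in the paper suggests it is tractable here; the $\mathrm{G}_2$-covering and the $Bl(\mathds{P}^2)$ trick are precisely the ideas that make the argument finite.
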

	\paragraph{Outline of the Proof.}The vanishing of the involved cohomology comes from two important facts.
	First, using Teleman Quantization, which we explain in \cref{subsection: Teleman quantization}, one can prove vanishing of higher cohomology groups of many bundles; and using Hirzebruch-Riemann-Roch and the code in \cref{appendix: Sage code}, one can compute the Euler characteristics.
	Secondly, it is an important observation, first suggested by Alexander Kuznetsov, that the $\mathds{P}^1$-bundle $X=\mathds{P}_Y(\mathcal{U}_1)$ can be identified with the blow up of $\mathds{P}^7$ along $\mathds{P}^2\times \mathds{P}^2$.
	We explain this in \cref{subsection: interpretation of X as a blowup}, and point out that
	the $\mathds{P}^1$-bundle $X$ is another fine quiver moduli space, which comes from the framing construction.
	We then describe the universal bundles over $X$ by relating them with commonly seen sheaves over the blow up of $\mathds{P}^7$ along $\mathds{P}^2\times \mathds{P}^2$.
	Since the derived pullback along a $\mathds{P}^1$-bundle map is fully faithful, we can calculate the cohomology of bundles over $Y$ by calculating the cohomology of the pullback bundles over $X$.
	
	To streamline some calculations, we make use of the fact that $Y$ is a prehomogeneous variety under the action of $SL(W)$, where $W$ is a three-dimensional vector space freely generated by the three arrows in the 3-Kronecker quiver $\begin{tikzcd}
		\bullet \arrow[r,bend left] \arrow[r,bend right] \arrow[r] & \bullet
	\end{tikzcd}$.
	Then we can realize the spaces of morphisms between exceptional bundles as $SL(W)$-modules.
	
	To prove the fullness of the exceptional sequence above, we need to make use of another description of the variety $Y$, namely that it is the zero locus of a general global section of $Q^*(1)$ over $\mathrm{Gr}(2,\mathds{C}^8)$, where $Q$ is the universal quotient bundle. For a general subspace $\mathds{C}^7\subset\mathds{C}^8$, we consider the intersection of $Y$ with $\mathrm{Gr}(2,\mathds{C}^7)$ and denote this intersection by $J$.
	In fact, $J$ is a hyperplane section of the $\mathrm{G}_2$-Grassmannian $\mathrm{G}_2\mathrm{Gr}(2,7)$,
	and its derived category has been described in \cite{hyperplaneDer}.
	When the subspace $\mathds{C}^7$ varies, the set of such smooth $J$ covers $Y$. The standard covering argument shows that we only need to prove two extra bundles belong to the triangulated subcategory generated by the above exceptional sequence.
	
	We prove that the two extra bundles really belong to the triangulated subcategory generated by our exceptional sequence by showing that these two bundles can be obtained by mutating some exceptional bundles across several other exceptional bundles in the exceptional collection. This is done by looking at the mutation process step by step, calculating the cohomology using Teleman Quantization and the interpretation of
	the $\mathds{P}^1$-bundle $X$ and using a small trick by first passing to a smaller subvariety as follows.

	
	There is a subvariety of $Y$ which is isomorphic to the blow up of $\mathds{P}^2$ at three points, which we denote by $Bl(\mathds{P}^2)$. Over $Bl(\mathds{P}^2)$, the restrictions of the universal representation morphism and some other interesting bundles split into direct sums and are somewhat `simple' in the sense that any endomorphism of a restriction bundle is of constant rank.
	Moreover, a subgroup of $SL(W)$ which is isomorphic to the alternating group $A_3$ acts on $Bl(\mathds{P}^2)$ and endows $Bl(\mathds{P}^2)$ with some basic symmetry.
	When considering the $SL(W)$-action on $Y$, one finds that the subvariety $Bl(\mathds{P}^2)$ contains some points in the biggest open orbit and some points in the minimal closed orbits.
	
	The step-by-step inspection of the mutation process is finally done by understanding the restriction of the mutation process to $Bl(\mathds{P}^2)$ and then recovering the original mutation process over $Y$ using the good properties of $Bl(\mathds{P}^2)$ mentioned above.  
	
	
	\paragraph{Structure of the article.}
	In \cref{section: quiver moduli spaces}, we recall some basic facts about fine quiver moduli spaces, fix some notations and introduce Teleman Quantization, which will enable us to show some vanishing of higher cohomology on our GIT quotient space $Y$.
	In \cref{section: describing Y}, we recall that $Y$ is a smooth prime Fano 6-fold of index 3, collect many of its previously known descriptions and analyze the $SL(W)$-action orbits.
	An explicit description of the Chow ring is given.
	At the end of the section, we explore another quiver moduli space $X=\mathds{P}_Y(\mathcal{U}_1)$ closely related to $Y$.
	After all these preparations, we exhibit the exceptional collection \eqref{1 exc} in \cref{section: derived category} and prove that this exceptional sequence is full.
	We start the section with a sequence of calculations that show exceptionality, and in \cref{subsection: mutations} together with \cref{subsection: Completion of mutations}, we conduct a sequence of mutations that yield more exceptional collections.
	In \cref{subsection: proving fullness}, we use the covering argument to prove fullness of the exceptional collection in \cref{main theorem}.
	Finally, in \cref{appendix: Sage code}, we present Julia code that the reader is advised to use to calculate certain Euler characteristics.

	\paragraph{Related works.}
	The geometry of the moduli space $Y$ has been studied before in \cite{drezet, EPS, IlievManivel, Franzen, belmans2023chow}.
	
	In an upcoming paper \cite{Quantum} by the second author, the famous Dubrovin's Conjecture is verified for $Y$, which says that the derived category of a smooth Fano variety admits a full exceptional collection if and only if the big quantum cohomology ring is generically semisimple. Moreover, by doing essentially one more mutation, an $\mathrm{Aut}(Y)=(PGL(W)\rtimes\mathds{Z}_2)$-invariant full Lefschetz collection is derived, which verifies the refined Dubrovin's Conjecture proposed by Kuznetsov and Smirnov in \cite[Conjecture 1.3]{Kuznetsov_Smirnov_2021}.

		\paragraph{Acknowledgements.}
		The authors want to thank Alexander Kuznetsov and Pieter Belmans for helpful discussions.
		S.M. would like to express gratitude to Alexander Efimov for supervising the work on proving the exceptionality of the sequence as an undergraduate and then masters thesis in 2014--2017;
		S.M. owes a great deal of mathematical knowledge and early research experience to Alexander's supervision.
		S.M. would also like to thank Davesh Maulik for useful discussions and interest in the project.
		J.M. wants to thank his advisors Thomas Dedieu and Laurent Manivel for leading him to this interesting topic and for their useful suggestions and revisions of the article. J.M. also want to thak Gianni Petrella for his help with the software QuiverTools.

	\section{Quiver Moduli Spaces}
	\label{section: quiver moduli spaces}

	\subsection{Classical Construction}
	\label{subsection: classical quiver moduli}
	
	In this subsection, we recall several basic facts about quiver moduli space, including the construction and universal representations; we refer to \cite{Reincke} for further reading.
	
	We assume that $Q=(Q_0,Q_1)$ is an acyclic quiver (i.e. without oriented cycles) throughout this article. Usually we use $i \in Q_0$ to denote a vertex in the quiver and $[a:i\rightarrow j]\in Q_1$ to denote an arrow in the quiver.  When considering the moduli problem of representations of $Q$, we fix a dimension vector $\underline{d}\in\mathds{N}^{Q_0}$. The component of the dimension vector $\underline{d}\in\mathds{N}^{Q_0}$ corresponding to $i\in Q_0$ is denoted by $d_i$.
	We let $R_{\underline{d}}$ denote $\prod_{a:i\rightarrow j}\Hom(\mathds{C}^{d_i},\mathds{C}^{d_j})$ and let $GL(\underline{d})$ denote $\prod_{i\in Q_0}GL(\mathds{C}^{d_i})$. The algebraic group $GL(\underline{d})$ acts on $R_{\underline{d}}$ by conjugation.
	The subtorus $((\alpha.\mathrm{Id}_{d_i})_{i\in Q_0})_{\alpha\in \mathds{C}^*}\subset GL(\underline{d})$ acts trivially on $R_{\underline{d}}$ and we have an induced action of $G(\underline{d})=GL(\underline{d})/((\alpha.\mathrm{Id}_{d_i})_{i\in Q_0})_{\alpha\in \mathds{C}^*}$ on $R_{\underline{d}}$.
	
	To construct the moduli space by the GIT machine, we fix a stability parameter $\theta$, i.e. an element in $\mathds{Z}^{Q_0}$ such that $\sum_{i\in Q_0} d_i.\theta_i=0$. We associate to $\theta$ a character $\chi_{\theta}$ of the algebraic group $GL(\underline{d})$ 
	by putting $\chi_{\theta}((g_i)_{i\in Q_0})=\prod_{i\in Q_0}\det(g_i)^{-\theta_i}$,
	and note that it factors through $G(\underline{d})$.
	Naturally, $\chi_{\theta}$ can be viewed as a $G(\underline{d})$-linearization of the trivial line bundle over $R_{\underline{d}}$, and one can construct the GIT quotient using this ample linearization.
	
	The GIT (semi)stability with respect to the above linearization is proved in \cite{king} to be equivalent to the following (semi)stability:	
	\begin{definition}
		A representation $V$ of the quiver $Q$ of dimension vector $\underline{d}$ is called ($\theta$-)semistable if for any subrepresentation $W\subset V$, we have  $\sum_{i\in Q_0}\dim(W_i) \cdot \theta_i\leq0$, and is ($\theta$-)stable if the equality is achieved for exactly two subrepresentations, namely $0$ and $V$.
	\end{definition}
	
	We assume that $\underline{d}$ is coprime to $\theta$, i.e. for any subdimension vector $0<\underline{e}< \underline{d}$,
	we have $\sum_{i\in Q_0}e_i.\theta_i\neq 0$. This implies that $\underline{d}$ is an indivisible vector and that stability with respect to the above linearization coincides with semistability.
	
	Under this assumption, the GIT quotient space $M$ is 
	a smooth projective variety, and the quotient map $R_{\underline{d}}^{\theta-stable}\rightarrow M$ is a principal $G(\underline{d})$-bundle map, where $R_{\underline{d}}^{\theta-stable}\subset R_{\underline{d}}$ is the open subscheme of (semi)stable representations. 
	
	We can construct the universal quiver representation over $M$ so that we can view $M$ as a fine moduli space of 
	stable representations of $Q$: We pick $\underline{a}\in\mathds{Z}^{Q_0}$ such that $\sum_{i\in Q_0}a_i.d_i=1$ and let $L(\underline{a})$ denote the trivial line bundle equipped with the linearization induced by $\chi_{\underline{a}}$. For each vertex $i\in Q_0$ we consider the trivial vector bundle $\mathds{C}^{d_i}$ together with $GL(\underline{d})=\prod_{i\in Q_0}GL(\mathds{C}^{d_i})$ acting on it via the matrix component in $ GL(\mathds{C}^{d_i})$. If we tensor this $GL(\underline{d})$-linearized vector bundle $\mathds{C}^{d_i}$ with $L(\underline{a})$, then the subtorus $((\alpha.\mathrm{Id}_{d_i})_{i\in Q_0})_{\alpha\in \mathds{C}^*}\subset GL(\underline{d})$ acts trivially on the resulting vector bundle and we have a $G(\underline{d})$-linearized vector bundle which can be descended to a vector bundle $\mathcal{U}_i$ over $M$. 
	
	An important observation is that the above construction depends on the
	choice of 
	$\underline{a}$, and and $\mathcal{U}_i$ differs by a twist for different choices of $\underline{a}$. On the other hand, one can easily see that there are some tautological $GL(\underline{d})$-equivariant morphisms between trivial bundles $\mathds{C}^{d_i},\mathds{C}^{d_j}$ corresponding to arrows $a:i\rightarrow j$ over the space $R_{\underline{d}}$ and these morphisms descend to $M$, giving rise to the tautological morphisms between universal bundles $\mathcal{U}_i$. These $\mathcal{U}_i$ together with the morphisms between them satisfy the universal property up to a twist.
	
	Another observation is that, the whole $GL(\underline{d})$ acts trivially on the line bundle $\bigotimes_{i\in Q_0}(\det(\mathds{C}^{d_i}\otimes L(\underline{a})))^{\otimes a_i}$ and thus by descent theory we know $\bigotimes_{i\in Q_0}(\det(\mathcal{U}_i))^{\otimes a_i}\cong \mathcal{O}_M$, i.e. $\sum_{i\in Q_0}a_i.\mathrm{c}_1(\mathcal{U}_i)=0$ in the Chow ring of $M$.

	\subsection{Harder-Narasimhan Stratification}

	The definition of (semi)stability given in the last subsection can be formulated as slope stability. We fix the stability parameter $\theta$ such that $\sum_{i\in Q_0}d_i.\theta_i=0$ as before. For any nonzero dimension vector $\underline{e}\in\mathds{N}^{Q_0}$, we define the slope to be $$\mu(\underline{e})=\frac{\sum_{i\in Q_0}e_i.\theta_i}{\sum_{i\in Q_0}e_i}.$$
	If $W$ is a quiver representation of dimension vector $\underline{e}$, we will sometimes denote $\mu(W)$ by $\mu(\underline{e})$. By assumption, we have $\mu(\underline{d})=0$ and the original (semi)stability can be reformulated as:
	\begin{definition}
		A representation $V$ of the quiver $Q$ of dimension vector $\underline{d}$ is called ($\theta$-)semistable iff for any subrepresentation $W\subset V$ whose dimension vector is denoted by $\underline{e}$, we have  $\mu(\underline{e})\leq\mu(\underline{d})$ and is ($\theta$-)stable if the above inequality is always strict for any nontrivial subrepresentation.
	\end{definition} 
	
	Just as in the discussion of slope (semi)stability for vector bundles over a fixed curve, we also have the Harder-Narasimhan filtration for representations of quivers.
	\begin{definition}
		A filtration $0=W^0\subset  W^1\subset\cdots\subset W^{l-1}\subset W^l=V$ of a quiver representation $V$ is called a Harder-Narasimhan (HN) filtration if the subquotient representations $W^s/W^{s-1}$ are semistable for $s=1,2,\cdots,l$ and $\mu(W^1/W^0)>\mu(W^2/W^1)>\cdots>\mu(W^l/W^{l-1})$.
	\end{definition}
	\begin{lemma}[\cite{Reincke}]
		Every quiver representation $V$ has a unique Harder-Narasimhan filtration.
	\end{lemma}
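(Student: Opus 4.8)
The plan is to reproduce, essentially verbatim, the classical argument for existence and uniqueness of the Harder--Narasimhan filtration for coherent sheaves on a curve. The only structural inputs are that the category $\mathrm{Rep}(Q)$ of finite-dimensional representations is abelian with every object of finite length, and that the pair of additive functions $n(W)=\sum_{i\in Q_0}\dim W_i$ and $t(W)=\sum_{i\in Q_0}\dim(W_i)\,\theta_i$ is additive on short exact sequences, so that $\mu(W)=t(W)/n(W)$ satisfies the see-saw property: for $0\to A\to B\to C\to 0$ with $A,C\neq 0$, the slope $\mu(B)$ is the mediant of $\mu(A)$ and $\mu(C)$ with positive weights $n(A),n(C)$, hence lies weakly between them and one has $\mu(A)<\mu(B)\Leftrightarrow\mu(B)<\mu(C)$, and similarly with $<$ replaced by $=$ or $>$. (Acyclicity of $Q$ is not used; only finite length is.) Both parts go by induction on $n(V)$.

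\emph{Existence.} If $V$ is semistable we are done. Otherwise, since the numerators $t(W)$ and denominators $n(W)$ of slopes of subrepresentations of $V$ are bounded, only finitely many slopes occur among nonzero $W\subseteq V$; let $\mu_{\mathrm{max}}(V)$ denote the largest, and pick $V_1\subseteq V$ with $\mu(V_1)=\mu_{\mathrm{max}}(V)$ of maximal possible $n(V_1)$. Then $V_1$ is semistable (any $0\neq W\subseteq V_1$ has $\mu(W)\le\mu_{\mathrm{max}}(V)=\mu(V_1)$ by definition of $\mu_{\mathrm{max}}$), and $V_1$ contains every $W\subseteq V$ with $\mu(W)=\mu_{\mathrm{max}}(V)$: applying additivity of $n,t$ to $0\to V_1\cap W\to V_1\oplus W\to V_1+W\to 0$ gives $t(V_1+W)+t(V_1\cap W)=\mu_{\mathrm{max}}(V)\,(n(V_1+W)+n(V_1\cap W))$; as $V_1+W$ and $V_1\cap W$ (if nonzero) are subrepresentations of $V$, both have slope $\le\mu_{\mathrm{max}}(V)$, so both inequalities are equalities, whence $\mu(V_1+W)=\mu_{\mathrm{max}}(V)$ and by maximality $n(V_1+W)=n(V_1)$, i.e. $W\subseteq V_1$ (the case $V_1\cap W=0$ with $W\ne 0$ is impossible, as then $V_1\oplus W$ would have slope $\mu_{\mathrm{max}}(V)$ and strictly larger $n$). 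In particular $V_1$ is uniquely determined. Since $V$ is not semistable, $V_1\subsetneq V$ and $n(V/V_1)<n(V)$, so by induction $V/V_1$ has an HN filtration; prepending $V_1$ to its preimage yields an HN filtration of $V$ once we know $\mu(V_1)>\mu_{\mathrm{max}}(V/V_1)$. But if $0\ne\overline U\subseteq V/V_1$ had $\mu(\overline U)\ge\mu(V_1)=\mu_{\mathrm{max}}(V)$, its preimage $U\supseteq V_1$ would satisfy $\mu(U)\ge\mu(V_1)$ (see-saw) with $n(U)>n(V_1)$, contradicting the maximality of $V_1$ (or the definition of $\mu_{\mathrm{max}}(V)$ if the inequality on $\mu(U)$ is strict).

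\emph{Uniqueness.} The key fact is: if $A,B$ are semistable with $\mu(A)>\mu(B)$ then $\Hom(A,B)=0$, for the image $I$ of any $\phi\colon A\to B$ is a quotient of $A$, so $\mu(I)\ge\mu(A)$ by see-saw and semistability of $A$, and a subobject of $B$, so $\mu(I)\le\mu(B)$, forcing $I=0$. Next, for any HN filtration $0=W^0\subset\cdots\subset W^l=V$ the first step realizes the maximal slope among nonzero subobjects: for $0\ne U\subseteq V$, the subquotients of the induced filtration $U\cap W^\bullet$ embed into the semistable $W^s/W^{s-1}$, each of slope $\le\mu(W^1/W^0)=\mu(W^1)$, so $t(U)\le\mu(W^1)\,n(U)$, i.e. $\mu(U)\le\mu(W^1)$. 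Hence a second HN filtration $0=\widetilde W^0\subset\cdots\subset\widetilde W^m=V$ has $\mu(\widetilde W^1)=\mu(W^1)=\mu_{\mathrm{max}}(V)$. Since $V/\widetilde W^1$ is filtered by semistable pieces of slope $<\mu_{\mathrm{max}}(V)$, a dévissage using the Hom-vanishing and left-exactness of $\Hom(W^1,-)$ gives $\Hom(W^1,V/\widetilde W^1)=0$, so $W^1\hookrightarrow V$ factors through $\widetilde W^1$, i.e. $W^1\subseteq\widetilde W^1$; by symmetry $W^1=\widetilde W^1$. Passing to $V/W^1$ and invoking the inductive hypothesis identifies the two filtrations termwise.

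\emph{Main point of the argument.} There is no real obstacle; everything is forced by the additivity of $(n,t)$ on short exact sequences. The step that needs genuine care is the construction of the maximal destabilizing subrepresentation $V_1$ — combining finiteness of the set of slopes (to attain $\mu_{\mathrm{max}}$) with the $V_1\cap W$/$V_1+W$ computation (to get its uniqueness and the fact that it absorbs all maximal-slope subobjects). Alternatively, one may simply invoke the general theory of (slope) stability functions on finite-length abelian categories à la Rudakov, of which the present situation is the instance attached to the linear functional $\theta$; this is the viewpoint implicit in \cite{Reincke}.
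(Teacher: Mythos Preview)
Your argument is correct and is the standard Harder--Narasimhan proof adapted to $\mathrm{Rep}(Q)$; each step (construction of the maximal destabilizing subrepresentation, the see-saw inequalities, the $\Hom$-vanishing, and the induction on $n(V)$) is sound. The paper itself gives no proof of this lemma: it is stated with a citation to \cite{Reincke} and no argument, so there is nothing to compare beyond noting that your write-up supplies exactly the classical proof that the cited reference records.
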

	As a result, we can associate the dimension vectors of the subquotient representations appearing in the unique Harder-Narasimhan filtration to the quiver representation $V$.
	Letting $\underline{d}^s$ be the dimension vector of $W^s/W^{s-1}$, we obtain a tuple of dimension vectors $\tau=(\underline{d}^1,\underline{d}^2,\cdots,\underline{d}^l)$, which is a Harder-Narasimhan type in the sense of the following definition. 
	\begin{definition}
		A tuple of dimension vectors $\tau=(\underline{d}^1,\underline{d}^2,\cdots,\underline{d}^l)$ is called a Harder-Narasimhan type if $\sum_{s=1,2,\cdots,l}\underline{d}^s=\underline{d}$, $\mu(\underline{d}^1)>\mu(\underline{d}^2)>\cdots>\mu(\underline{d}^l)$ and for each dimension vector $\underline{d}^s$, there exists at least one semistable representation of $Q$ with dimension vector $\underline{d}^s$.
	\end{definition}
	
	It will be helpful to construct a stratification of the space of representations $R_{\underline{d}}$ based on the Harder-Narasimhan types associated to quiver representations. For a Harder-Narasimhan type $\tau$, we consider 
	$$S_{\tau}=\{\text{representation } V\in R_{\underline{d}} \mid \text{the associated Harder-Narasimhan type of }V\text{ is equal to }\tau\}.$$
	For the Harder-Narasimhan type $\tau=(\underline{d})$, it is obvious that $S_{\tau}$ is exactly the semistable locus in $R_{\underline{d}}$.
	
	Recall that an element in $R_{\underline{d}}$ is viewed as a quiver representation with $V_i$ being $\mathds{C}^{d_i}$ and the morphisms between $V_i$ and $V_j$ are given by the corresponding matrix components of $R_{\underline{d}}=\prod_{a:i\rightarrow j}\Hom(\mathds{C}^{d_i},\mathds{C}^{d_j})$. For the Harder-Narasimhan type $\tau$, we fix a filtration $0=V_i^0\subset V_i^1\subset\cdots\subset V_i^l=V_i=\mathds{C}^{d_i}$of the underlying vector space $\mathds{C}^{d_i}$, such that $\dim(V_i^s/V_i^{s-1})=d_i^s$. We consider 
	$$\Sigma_{\tau} = 
	\left\{
	V\in R_{\underline{d}}
	\,\middle|\,
	\begin{array}{l}
		\forall s: \text{the underlying tuple of vector spaces of}     \\
		\text{the $s$-th step in the HN filtration is }
		(V^s_i)_{i\in Q_0}
	\end{array}
	\right\}.$$
	
	Obviously, $S_{\tau}=GL(\underline{d}).\Sigma_{\tau}$. It is proven in \cite{HNquiver} that $S_{\tau}$ is a locally closed subvariety, and we obtain a stratification of $R_{\underline{d}}$ indexed by the set of Harder-Narasimhan types.
	
	There are more delicate structures hidden in this stratification which we refrain from discussing here. We introduce another important subvariety of $R_{\underline{d}}$ to end this subsection:
	$$Z_{\tau}=\{\text{representation } V\in R_{\underline{d}} \mid V\in \Sigma_{\tau},\text{ $V$ splits as the direct sum of subquotients in the HN filtration}\}.$$

	\subsection{Teleman Quantization}
	\label{subsection: Teleman quantization}

	It will be helpful in the sequel that we can show the vanishing of the higher cohomology of some vector bundles constructed by descending $G(\underline{d})$-linearized vector bundles over $R_{\underline{d}}$. The powerful machine that we have in hand is Teleman Quantization.
	
	In a slightly more general setting, we consider a reductive group $G$ (for example, $GL(\underline{d})$) acting linearly on a vector space $R$ (for example, $R_{\underline{d}}$) together with a $G$-linearized trivial line bundle which allows us to do the GIT quotient construction $R^{\text{ semistable}}\rightarrow R//G$.
	
	The famous Hesselink stratification is a stratification of the space $R$ such that the only open stratum is the open subset of semistable points.
	The strata contained in the unstable (i.e. ``not semistable'') locus are given by an instability analysis as follows.
	We first fix a norm $\|*\|$ on the set of 1-parameter subgroup (1-ps) of $G$ by picking a maximal subtorus $T$ of $G$ and taking a Weyl group invariant inner product over $\mathrm{Y}(T)\otimes_{\mathds{Z}}\mathds{R}$, where $\mathrm{Y}(T)$ is the lattice of 1-ps of $T$,
	such that the inner product always takes integral values on $\mathrm{Y}(T)$. Then for any 1-ps $\lambda$ of $G$, some conjugate copy $\lambda'$ of it actually lies in $T$, and we define the norm of $\lambda$ to be the norm of $\lambda'$.
	
	We divide the usual Hilbert-Mumford weight $\mu(x,\lambda)$ by the norm of the 1-ps to get the so-called normalized Hilbert-Mumford weight $\bar{\mu}(x,\lambda)=\frac{\mu(x,\lambda)}{\|\lambda\|}$.
	
	The unstable locus in $R$ will be divided into several $G$-invariant strata by associating to each unstable point in $R$ a set of one parameter subgroups of $G$ which are the most responsible for the instability of the point.
	\begin{proposition}
		For each unstable point $x\in R$, the set of primitive one parameter subgroups $\lambda$ such that $\lim_{t\rightarrow0}\lambda(t).x$ exists, for which  $\bar{\mu}(x,\lambda)$ reaches the minimum among all the one parameter subgroups $\eta$ of $G$ such that $\lim_{t\rightarrow0}\eta(t).x$ exists is nonempty. All the 1-ps in this set form an orbit under the conjugate action of a parabolic subgroup $P(x)\subset G$.
	\end{proposition}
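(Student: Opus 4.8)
The plan is to reduce everything to the convex geometry of a single maximal torus, where the statement is soft, and then globalize; the resulting theorem is Kempf's on optimal destabilizing one-parameter subgroups (see also work of Rousseau, Hesselink, Ness and Kirwan), and what follows is only a sketch. Fix a maximal torus $T\subset G$ and decompose $R=\bigoplus_{\chi\in \mathrm{X}^\ast(T)}R_\chi$ into $T$-weight spaces, so that $x=\sum_\chi x_\chi$ has finite support $\Xi(x)=\{\chi\mid x_\chi\neq 0\}$. For a one-parameter subgroup of $T$, viewed as a point $\lambda\in \mathrm{Y}(T)$, the limit $\lim_{t\to 0}\lambda(t).x$ exists precisely when $\langle\lambda,\chi\rangle\geq 0$ for every $\chi\in\Xi(x)$, and the real solutions of these inequalities form a rational polyhedral cone $C(x)\subset \mathrm{Y}(T)\otimes_{\mathds Z}\mathds R$. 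On $C(x)$ the Hilbert--Mumford weight $\lambda\mapsto\mu(x,\lambda)$ is the restriction of a positively homogeneous, piecewise $\mathds Q$-linear and \emph{convex} function — a finite maximum of linear functionals read off from $\Xi(x)$ and the linearizing character, and in fact a single linear functional in the present vector-space situation — while $\lambda\mapsto\|\lambda\|$ is the restriction of a norm coming from a positive-definite rational quadratic form. Since $\bar\mu(x,-)=\mu(x,-)/\|-\|$ is invariant under positive rescaling, minimizing it over $C(x)\setminus\{0\}$ is the same as minimizing the continuous function $\mu(x,-)$ over the compact slice $\{v\in C(x)\mid\|v\|=1\}$, so by compactness a minimum $c_T(x)$ is attained.

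The crucial point is a \emph{uniqueness} statement that holds whenever the optimal value is negative. Suppose $v_1\neq v_2$ both lie in $C(x)$, have norm $1$, and satisfy $\mu(x,v_i)=c_T(x)<0$. Then $v:=\tfrac12(v_1+v_2)$ still lies in the convex cone $C(x)$, satisfies $\mu(x,v)\leq c_T(x)$ by convexity of $\mu(x,-)$, and has $\|v\|<1$ by strict convexity of the norm; hence $\bar\mu(x,v)=\mu(x,v)/\|v\|<c_T(x)$, a contradiction. So in this case the optimal direction is a single ray; on the face of $C(x)$ in whose relative interior it lies, a fixed subset of the defining inequalities is active and $\mu(x,-)$ restricts to a fixed rational linear functional there, so the active constraints together with the stationarity conditions on the rational unit sphere form a rational linear system, and the optimal direction spans a rational ray — its primitive generator is then an honest primitive one-parameter subgroup realizing $c_T(x)$.

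To globalize, note that every one-parameter subgroup of $G$ is conjugate into $T$ and that the two ingredients transform compatibly: $\|g\lambda g^{-1}\|=\|\lambda\|$ by the construction of the norm, and $\mu(x,g\lambda g^{-1})=\mu(g^{-1}.x,\lambda)$ by equivariance of the linearization, so the infimum of $\bar\mu(x,-)$ over all one-parameter subgroups of $G$ equals $\inf_{g\in G}c_T(g^{-1}.x)$, which is negative for unstable $x$. That this infimum is attained, and that its minimizers form a single parabolic orbit, is best packaged through the (vectorial) Tits building of $G$: its points are one-parameter subgroups up to positive scaling, each apartment is a copy of some $\mathrm{Y}(T')\otimes\mathds R$ carrying the piecewise-linear-over-norm function above, $\bar\mu(x,-)$ is a continuous $G$-invariant function on it, and the per-apartment uniqueness just established, together with the incidence geometry of parabolic subgroups, forces the minimum locus $\Lambda(x)$ to be nonempty with a well-defined center. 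For any optimal $\lambda_x$ (conjugate into whatever maximal torus one prefers, where uniqueness applies) the associated parabolic $P(\lambda_x)=\{g\in G\mid\lim_{t\to 0}\lambda_x(t)\,g\,\lambda_x(t)^{-1}\text{ exists}\}$ is forced by uniqueness to be independent of the choice — this is the group $P(x)$ — and, writing $P(x)$ as its Levi times its unipotent radical, the Levi carries one optimal cocharacter to any other through the same central point while the unipotent radical produces the rest without changing $\bar\mu$, so $\Lambda(x)$ is a single $P(x)$-conjugacy orbit. I expect this last step — attaining the global optimum over $G$ and identifying the parabolic-orbit structure — to be the real obstacle: the torus case is elementary convex geometry, whereas the passage to $G$ is the genuine content of Kempf's theorem, for which I would cite Kempf (and Hesselink, Ness, Kirwan) rather than reproduce the full argument.
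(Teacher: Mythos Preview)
The paper does not actually prove this proposition: it is stated without proof as a known foundational result (essentially Kempf's theorem on optimal destabilizing one-parameter subgroups), and the reader is referred to \cite{Hoskins} and \cite{Kirwan} for details. Your sketch is a faithful outline of the standard proof of Kempf's theorem --- reduce to a single maximal torus where the problem becomes convex optimization on a rational polyhedral cone, use strict convexity of the norm to get uniqueness of the optimal ray when the minimum is negative, then globalize via conjugacy and the parabolic $P(\lambda)$ --- and you are right to flag that the last step is the substantive one and is typically cited rather than reproduced. There is nothing to compare against in the paper itself; your proposal goes well beyond what the paper provides.
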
 
	\begin{remark}
		In the above Proposition, it is enough to consider primitive one parameter subgroups, since $\bar{\mu}(x,\lambda)=\bar{\mu}(x,n\lambda), \forall n\in \mathds{N}^*$.
	\end{remark}
	
	We call the set mentioned in the above Proposition to be the \emph{set of optimal 1-ps}
	for the point $x\in R$. As stated in the Proposition, optimal 1-ps are conjugate to each other under the action of a parabolic subgroup $P(x)$, and if we want to construct $G$-invariant strata inside the unstable locus, we naturally consider $G$-conjugate classes of primitive 1-ps.
	\begin{definition}
		Let $[\lambda]$ be a conjugate class of primitive one parameter subgroups under the action of $G$. We define 
		$$S_{[\lambda]} = 
		\left\{ x \mid
		\text{the set of optimal primitive 1-ps for $x$ is contained in the conjugate class }[\lambda] \right\} . $$
	\end{definition}
	
	\begin{definition}
		\label{definition: Z_lambda}
		Let $\lambda$ be a primitive one parameter subgroup of $G$.
		We define $\Sigma_{\lambda}=\{x \mid \lambda \text{ is optimal for }x \}$ and $Z_{\lambda}=\{x \mid x \in \Sigma_{\lambda} \text{ and } x \text{ is fixed by }\lambda\}$.
	\end{definition}
	
	It is obvious that $S_{[\lambda]}=G.\Sigma_{\lambda}$. One can prove that only finitely many $G$-conjugate classes $[\lambda]$ of primitive 1-ps contain the optimal set of primitive 1-ps for some unstable $x\in R$ and for each such $[\lambda]$, $S_{[\lambda]}$ is a locally closed smooth subvariety of $R$. Moreover, these $S_{[\lambda]}$ together with the semistable locus give a stratification for the space $R$, which is called the Hesselink stratification. For more details of the above description and the deeper structure of the stratification, the reader is invited to consult \cite{Hoskins} and \cite{Kirwan}.
	
	The following version of Teleman Quantization Theorem is taken from \cite{HalpernL}, where a stronger statement is given:
	\begin{thm}[Teleman]
		Let G be a reductive group acting linearly on an affine space $R$ and assume that it acts freely on the semistable locus (with respect to a chosen linearization).  Assume that all the $Z_{\lambda}$ defined above are connected. For each primitive one parameter subgroup $\lambda$ which appears in the stratification, we define $\eta_{\lambda}$ to be the weight of action of $\lambda$ on the determinant of the conormal bundle restricted to $Z_{\lambda}$, i.e.
		$$\eta_{\lambda}=\mathrm{weight}_{\lambda} \left(
		(\det N^*_{S_{[\lambda]}|R})|_{Z_{\lambda}} \right) .$$
		Assume $F$ is a $G$-linearized vector bundle on $R$. Assume also that $F$ can descend to the GIT quotient space $R//G$ and let $F$ also denote the descended vector bundle. If for all $\lambda$ as above, the weights of the action of $\lambda$ on $F|_{Z_{\lambda}}$ are strictly less than $\eta_{\lambda}$, then we have the isomorphism $\mathrm{H}^k(R,F)^G\cong \mathrm{H}^k(R//G,F), \forall k\in \mathds{N}$.
		In particular, all the higher cohomology of the descended bundle vanish because $R$ is an affine space that ensures the vanishing of the left-hand side whenever $k>0$.\label{teleman}
	\end{thm}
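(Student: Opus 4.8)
The plan is to follow Teleman's argument (as in \cite{HalpernL}), peeling off the Hesselink strata of \cref{subsection: Teleman quantization} one at a time. Order the finitely many unstable conjugacy classes $[\lambda_1],\dots,[\lambda_N]$ so that $\overline{S_{[\lambda_i]}}\subseteq S_{[\lambda_i]}\cup\bigcup_{j>i}S_{[\lambda_j]}$, and set $U_0=R^{\mathrm{ss}}$, $U_i=U_{i-1}\cup S_{[\lambda_i]}$, so that each $U_i$ is $G$-invariant and open in $R$, $U_N=R$, and $S_{[\lambda_i]}$ is closed in $U_i$. Since $G$ is reductive over $\mathds{C}$, the functor of $G$-invariants on quasi-coherent sheaves is exact, so writing $R\Gamma^{G}:=R\Gamma\circ(-)^{G}$ and applying it to the local-cohomology triangle along $S_{[\lambda_i]}$ inside $U_i$ produces, for each $i$, a distinguished triangle
\[
R\Gamma^{G}_{S_{[\lambda_i]}}(U_i,F)\ \longrightarrow\ R\Gamma^{G}(U_i,F)\ \longrightarrow\ R\Gamma^{G}(U_{i-1},F)\ \xrightarrow{\ +1\ }.
\]
If each term $R\Gamma^{G}_{S_{[\lambda_i]}}(U_i,F)$ vanishes, then downward induction on $i$ yields $R\Gamma(R,F)^{G}\cong R\Gamma(R^{\mathrm{ss}},F)^{G}$. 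For the base case, freeness of the $G$-action on $R^{\mathrm{ss}}$ makes $\pi\colon R^{\mathrm{ss}}\to R//G=:M$ a principal $G$-bundle, in particular affine; $F$ descends to a bundle $\bar F$ on $M$ with $\pi^{*}\bar F\cong F|_{R^{\mathrm{ss}}}$ and $(\pi_{*}\mathcal{O}_{R^{\mathrm{ss}}})^{G}=\mathcal{O}_{M}$, so $R\Gamma(R^{\mathrm{ss}},F)^{G}=R\Gamma\bigl(M,(\pi_{*}\pi^{*}\bar F)^{G}\bigr)=R\Gamma(M,\bar F)$; and the final ``in particular'' follows because $R$ affine forces $H^{>0}(R,F)=0$.

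Everything therefore reduces to the per-stratum vanishing, which is the substantial point. Fix $\lambda=\lambda_i$, let $P=P(\lambda)$ be the associated parabolic, $L$ its Levi, $U_P$ its unipotent radical, and $\mathfrak g=\mathfrak g_{<0}\oplus\mathfrak g_0\oplus\mathfrak g_{>0}$ the weight decomposition under $\mathrm{ad}\,\lambda$, so $\mathfrak u_P=\mathfrak g_{>0}$. Because $S_{[\lambda]}=G\cdot\Sigma_\lambda=G\times_P\Sigma_\lambda$ with $\Sigma_\lambda$ the $P$-stable attracting set $\{x:\lim_{t\to0}\lambda(t)x\in Z_\lambda\}$, a $G$-equivariant sheaf on $S_{[\lambda]}$ is the same as a $P$-equivariant sheaf on $\Sigma_\lambda$ and $R\Gamma(S_{[\lambda]},-)^{G}=R\Gamma^{P}(\Sigma_\lambda,-)$; together with the fact that local cohomology only sees a neighbourhood of its support, this turns the vanishing into $R\Gamma^{P}_{\Sigma_\lambda}(F)=0$. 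Now $S_{[\lambda]}$ is smooth of codimension $c$ in $R$, so the local-cohomology-along-a-smooth-centre formula identifies the local cohomology sheaf with $F|_{S_{[\lambda]}}\otimes\det N_{S_{[\lambda]}/R}\otimes\mathrm{Sym}^{\bullet}N_{S_{[\lambda]}/R}$ placed in degree $c$ and supported on $S_{[\lambda]}$. Restricting this to $\Sigma_\lambda$, pushing forward along the retraction $r\colon\Sigma_\lambda\to Z_\lambda$ (an affine bundle, equivariant for $P\twoheadrightarrow L$, whose fibre directions are the positive-weight normal directions of $Z_\lambda$ in $\Sigma_\lambda$, so $r_{*}\mathcal{O}$ has associated graded $\mathrm{Sym}^{\bullet}(T^{>0})^{*}$ for that positive part $T^{>0}$), and then computing derived $P=U_P\rtimes L$ invariants as derived $U_P$-invariants (the Chevalley--Eilenberg complex, i.e.\ tensoring with $\Lambda^{\bullet}\mathfrak u_P^{*}$) followed by exact $L$-invariants, one finds that $R\Gamma^{P}_{\Sigma_\lambda}(F)$ carries a filtration with associated graded $R\Gamma\bigl(Z_\lambda,\mathcal C\bigr)^{L}[-c]$, where
\[
\mathcal C=F|_{Z_\lambda}\otimes\det\bigl(N_{S_{[\lambda]}/R}|_{Z_\lambda}\bigr)\otimes\mathrm{Sym}^{\bullet}\bigl(N_{S_{[\lambda]}/R}|_{Z_\lambda}\bigr)\otimes\mathrm{Sym}^{\bullet}\bigl((T^{>0})^{*}\bigr)\otimes\Lambda^{\bullet}\mathfrak u_P^{*}.
\]

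It remains to bound the $\lambda$-weights of $\mathcal C$. Since $\lambda$ acts trivially on $Z_\lambda$, it acts on $R\Gamma(Z_\lambda,\mathcal C)$ only through $\mathcal C$; and because $N_{S_{[\lambda]}/R}|_{Z_\lambda}$, $T^{>0}$ and $\mathfrak u_P$ have, respectively, negative, positive and positive $\lambda$-weights, each of the three factors $\mathrm{Sym}^{\bullet}(N_{S_{[\lambda]}/R}|_{Z_\lambda})$, $\mathrm{Sym}^{\bullet}((T^{>0})^{*})$, $\Lambda^{\bullet}\mathfrak u_P^{*}$ has all $\lambda$-weights $\le 0$. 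Hence every $\lambda$-weight occurring in $\mathcal C$ is at most $\mathrm{wt}_\lambda(F|_{Z_\lambda})+\mathrm{wt}_\lambda\det(N_{S_{[\lambda]}/R}|_{Z_\lambda})$, and since the amplitude hypothesis is exactly $\mathrm{wt}_\lambda(F|_{Z_\lambda})<\eta_\lambda=\mathrm{wt}_\lambda\det(N^{*}_{S_{[\lambda]}/R}|_{Z_\lambda})=-\mathrm{wt}_\lambda\det(N_{S_{[\lambda]}/R}|_{Z_\lambda})$, this upper bound is $<0$. So $R\Gamma(Z_\lambda,\mathcal C)$ is concentrated in strictly negative $\lambda$-weight, whence its $L$-invariants (which retain only $\lambda$-weight $0$) vanish; a convergence check for the $\mathrm{Sym}$- and Chevalley--Eilenberg-filtrations then promotes this to $R\Gamma^{P}_{\Sigma_\lambda}(F)=0$, and the induction closes, giving $R\Gamma(R,F)^{G}\cong R\Gamma(M,\bar F)$ as wanted.

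The step I expect to be the real obstacle is making the per-stratum argument watertight, since everything else is formal. Concretely, one must (i) develop the structure theory of the Hesselink stratum precisely enough — smoothness and codimension of $S_{[\lambda]}$, the $P\twoheadrightarrow L$-equivariant affine-bundle retraction $\Sigma_\lambda\to Z_\lambda$, and the (possibly stacky) normal bundle that enters the local-cohomology computation together with its identification along $Z_\lambda$ with the conormal of $S_{[\lambda]}$ used to define $\eta_\lambda$ — and justify the reduction $R\Gamma^{G}_{S_{[\lambda]}}\simeq R\Gamma^{P}_{\Sigma_\lambda}$ (this is where the connectedness hypothesis on $Z_\lambda$, and in the more general statement an inner induction over a semistable part of $Z_\lambda$, are used); (ii) deal with the non-reductivity of $P$, so that derived $P$-invariants and the Chevalley--Eilenberg complex are genuinely needed, and then check convergence of the resulting filtration over the possibly non-proper $Z_\lambda$; and (iii) track the $\lambda$-weight contributions so as to verify that Teleman's threshold $\eta_\lambda$ is precisely what forces $\mathcal C$ into strictly negative weight. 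None of this is conceptually deep, but it is technical, which is why in the body we simply cite \cite{HalpernL} for the statement.
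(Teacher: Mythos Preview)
The paper does not prove this theorem at all: it is stated as a black box, explicitly attributed to \cite{HalpernL} (``The following version of Teleman Quantization Theorem is taken from \cite{HalpernL}, where a stronger statement is given''), and then simply applied. So there is no proof in the paper to compare your proposal against.

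Your outline is a reasonable sketch of the Teleman/Halpern-Leistner argument --- stratification-by-stratification local cohomology, reduction from $G$ to $P$ along $S_{[\lambda]}=G\times_P\Sigma_\lambda$, retraction to $Z_\lambda$, Chevalley--Eilenberg for $U_P$-invariants, and the weight estimate --- and you correctly identify the technical pressure points (smoothness and normal-bundle identifications for the strata, handling derived $P$-invariants for non-reductive $P$, convergence of the filtrations). You even note in your final paragraph that the body of the paper simply cites \cite{HalpernL}; that is exactly what happens. For the purposes of this paper, your proposal overshoots: the theorem is used as a tool, not proved, so the appropriate ``proof'' here is the citation.
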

	
	We will apply this Theorem to the setting of quiver moduli space with $G=GL(\underline{d})$ and $R=R_{\underline{d}}$.
	In the above discussion, we only care about semistability, so the 1-dimensional subtorus of $GL(\underline{d})$ which acts trivially on $R_{\underline{d}}$ does not matter.
	For the norm on the set of 1-ps of $GL(\underline{d})$, we just pick the standard maximal torus of $GL(\underline{d})$ and choose the standard Weyl group invariant inner product on the lattice of 1-ps of the maximal torus. 
	
	It was proven in \cite{Hoskins} that in this setting, the Hesselink stratification coincides with the Harder-Narasimhan stratification and the identification is given as follows: Denoting $GL(\underline{d})=\prod_{i\in Q_0}GL(\mathds{C}^{d_i})$, for a Harder-Narasimhan type $\tau=(\underline{d}^1,\underline{d}^2,\dots,\underline{d}^l)$, we consider the following (virtual) one parameter subgroup $(\lambda_i)_{i\in Q_0}\subset GL(\underline{d})$:
	$$\lambda_i(z)=\mathrm{diag}(z^{\mu_1},\dots,z^{\mu_1},z^{\mu_2},\dots,z^{\mu_2},\dots,z^{\mu_l},\dots,z^{\mu_l}), \ \forall z\in \mathds{C}^*,$$
	where $\mu_s=\mu(\underline{d}^s)$, for $s=1,2,\dots,l$, and the term $z^{\mu_s}$ appears in the above expression $d_i^s$ times.
	Notice that the above expression is only a virtual one parameter subgroup because the exponents $\mu_i$ can be rational numbers, but one can easily normalize it to be a primitive 1-ps by taking a positive multiple of this virtual 1-ps.
	
	Under the identification $\tau\mapsto \lambda\mapsto [\lambda]$ of the index set of the Harder-Narasimhan stratification with the index set of the Hesselink stratification given above, we have that $Z_{\tau}=Z_{\lambda}$ are connected, that $\Sigma_{\tau}=\Sigma_{\lambda}$, and thus $S_{\tau}=GL(\underline{d}).\Sigma_{\tau}=GL(\underline{d}).\Sigma_{\lambda}=S_{[\lambda]}$. This allows us to calculate $\eta_{\lambda}$ and check the assumptions in Teleman Quantization in the example that we are interested in.

	\section{The moduli space \texorpdfstring{$Y$}{Y}}
	\label{section: describing Y}

	\subsection{Geometric Constructions}
	\label{subsection: geometric constructions}
	
	Let $W$ be a 3-dimensional vector space. We view an element $r$
	in $\mathds{C}^2\otimes\mathds{C}^3\otimes W$ as a $2\times 3$ matrix with entries in $W$. We let $Y$ be the GIT quotient of $\mathds{P}(\mathds{C}^2\otimes\mathds{C}^3\otimes W)$ by $GL(2) \times GL(3) / \mathds G_m$, where the action of $(A,B)\in GL(2) \times GL(3) / \mathds G_m$ sends $r\in \mathds{C}^2\otimes\mathds{C}^3\otimes W$ to $ArB^{-1}$ and here $\mathds{G}_m$ appears as the subgroup of simultaneous scalar multiplications on $\mathds{C}^2$ and $\mathds{C}^3$.
	Equivalently, $Y$ is the moduli space of stable representations of the 3-Kronecker quiver $\begin{tikzcd}
		\bullet \arrow[r,bend left] \arrow[r,bend right] \arrow[r] & \bullet
	\end{tikzcd}$ of dimension vector $(2,3)$ and with stability parameter $(3,-2)$.
	
	\begin{remark}
		\label{remark: stability condition explained}
		We can observe that a representation $\rho : W^* \otimes \mathds C^2 \to \mathds C^3$ is stable if and only if $\rho$ is surjective, and for every $v \in \mathds C^2 \setminus 0$, the restriction $\rho_{W^* \otimes v}$ has rank at least $2$.
	\end{remark}
	
	Following the methods of \cite{EPS}, one can show that $Y$ is a smooth 6-dimensional Fano variety of Picard number 1 and index 3, and
	the authors of \cite{Fanoquivermoduli} show
	that it is a blow down of the Hilbert scheme of three points on $\mathds{P}^2$.
	Furthermore, $Y$ is embedded into $\operatorname{Gr}(3,S^2W)$ as a closed subscheme by the construction in \cite[\S2]{EPS}, which we now recall.
	
	\begin{construction}
		\label{construction: Y into Gr(3 S^2 W)}
		An element $[r]\in Y$ can be viewed as an orbit of $2\times 3 $ matrices with entries in $W$.
		By \cite[Lem. 1]{EPS}, linear independence of the three maximal minors of such a matrix is equivalent to the GIT-stability, and the vector subspace $H_r\subset S^2W$ generated by the three minors is independent of the choice of bases.
		The map from $Y$ to $\operatorname{Gr}(3,S^2W)$ sends $[r]\in Y$ to $H_r\subset S^2W$.
	\end{construction}
	
	We fix a basis of $W$ to be $W=\langle x,y,z\rangle$ from now on. We can analyze the orbits of the action of $GL(W)$ on $Y$. There are five orbits, and the Hasse diagram is as follows, where we mark orbits by $H_r$ instead of $r$:
	
	\begin{tikzpicture}
		\node (max) at (0,14) {$\langle xy,xz,yz\rangle$};
		\node (a) at (0,12) {$\langle x^2,xy,yz\rangle$};
		\node (b) at (0,10) {$\langle x^2,xy,y^2-xz\rangle$};
		\node (c) at (-2,8) {$\langle x^2,xy,xz\rangle$};
		\node (d) at (2,8) {$\langle x^2,xy,y^2\rangle$};
		\draw (max) -- (a)--(b)--(c);
		\draw (b)--(d);
		\node (m) at (8,14) {$\begin{pmatrix}
				x & y& 0\\
				0&y &z
			\end{pmatrix}$};
		\node (a') at (8,12) {$\begin{pmatrix}
				x & z& 0\\
				0&x &y
			\end{pmatrix}$};
		\node (b') at (8,10) {$\begin{pmatrix}
				x & y& z\\
				0&x &y
			\end{pmatrix}$};+
		\node (c') at (6,8) {$\begin{pmatrix}
				x & 0& z\\
				0&x &y
			\end{pmatrix}$};
		\node (d') at (10,8) {$\begin{pmatrix}
				x & y& 0\\
				0&x &y
			\end{pmatrix}$};
		\draw (m) -- (a')--(b')--(c');
		\draw (b')--(d');
	\end{tikzpicture}
	
	Their dimensions are equal to $6,5,4,2,2$, respectively from top to bottom.
	
	The blow up of $Y$ along the orbit of elements of the form $\langle x^2,xy,xz\rangle$ is exactly the Hilbert scheme $\mathrm{Hilb}^3(\mathds{P}^2)$ of three points on $\mathds{P}^2$ and the exceptional divisor in $\mathrm{Hilb}^3(\mathds{P}^2)$ is the locus where the 3 points on $\mathds{P}^2$ are collinear. The Hilbert scheme description is connected with the embedding $Y\subset \mathrm{Gr}(3,S^2W)$ as follows: except the case where $[r]\in Y$ lies in the orbit of elements of the form $\langle x^2,xy,xz\rangle$, $H_r$ always defines a length 3 subscheme of $\mathds{P}(W^*)$ when we view $H_r\subset S^2W$ as a subspace of $\mathrm{H}^0(\mathds{P}(W^*),\mathcal{O}_{\mathds{P}(W^*)}(2))$. For example, when $H_r=\langle xy,yz,xz\rangle$ is viewed as a subspace of quadratic equations, the subscheme it defines on $\mathds{P}(W^*)$ is the union of three reduced points $(x=y=0),(y=z=0),(x=z=0)$. 
	
	We see that $Y$ is the closure of elements in $\mathds{P}(\wedge^3(S^2W))$ of the form $ xy\wedge xz\wedge yz$. Actually $Y$ is isomorphic to the variety of trisecant planes of the Segre embedded $\mathds{P}(W)$ in the Segre embedding $\mathds{P}(W)\hookrightarrow\mathds{P}(S^2W)$ by \cite{IlievManivel}: The variety of trisecant planes of the Segre embedded $\mathds{P}(W)$ is the closure of elements in $\mathrm{Gr}(3,S^2W)\subset \mathds{P}(\wedge^3(S^2W))$ of the form $\langle x^2,y^2,z^2\rangle$ and what we claimed just follows from the fact that there is an automorphism of $\wedge^3(S^2W)$ sending elements of the form $e^2\wedge f^2\wedge g^2$ to $ef\wedge fg\wedge eg$. 
	
	\begin{construction}
		\label{construction: Y into Gr(2 S^21 W)}
		There is a natural map from $Y$ to $\mathrm{Gr}(2,S^{2,1}W)$. Recall that $S^{2,1}W$ can be viewed as the kernel of the map $W\otimes S^2W\rightarrow S^3W$ sending $A \otimes BC$ to $ABC$.
		We define $\Syz(H_r)$ to be the kernel of $H_r\otimes W\rightarrow S^3W$.
		One can show that $\Syz(H_r)$ is two-dimensional except if $[r]$ is in the $SL(W)$-orbit of elements of the form $\langle x^2,xy,xz\rangle$.
		There is a canonical two dimensional subspace of $\Syz(H_r)$: if $r=\begin{pmatrix}
			A & B &C\\
			D & E& F
		\end{pmatrix}$,  where $A,B,C,D,E,F\in W$, then we have two linearly independent tensors $$A\otimes (BF-CE)-B\otimes(AF-CD)+C\otimes(AE-BD),$$
		$$D\otimes (BF-CE)-E\otimes(AF-CD)+F\otimes (AE-BD),$$
		both of which lie in the kernel of $W\otimes S^2W\rightarrow S^3W$, and hence can be viewed as elements of $S^{2,1}W$.
		The two-dimensional subspace $\Syz_r \subset \Syz(H_r)$ generated by these two tensors does not depend on the choice of $r$ in the orbit corresponding to $[r]$. We can thus define a morphism $Y\rightarrow \mathrm{Gr}(2,S^{2,1}W)$ by sending $[r]$ to $\Syz_r$.
	\end{construction}
	
	\begin{notation}
		\label{notation: Y and universal bundles}
		By the stability analysis conducted in \cite[Lem. 1, Lem. 2]{EPS}, $Y$ can be viewed as the moduli space of stable representations of the 3-Kronecker quiver
		$\begin{tikzcd}
			\bullet \arrow[r,bend left] \arrow[r,bend right] \arrow[r] & \bullet
		\end{tikzcd}$ of dimension vector $\underline{d}=(2,3)$, with the stability parameter $\theta=(3,-2)$.
		We fix the choice of a universal representation by letting $\underline{a}=(1,-1)$:
		Let $\tilde{\mathcal U}_1$, resp. $\tilde{\mathcal U}_2$, be the $2$-, resp. $3$-dimensional universal vector bundles on the representation space $R_{(2,3)}$ with the natural action of $GL(2)\times GL(3)$.
		Let $L(\underline{a})$ be the trivial line bundle on $R_{(2,3)}$ on which $GL(2)\times GL(3)$ acts with character $\chi_{\underline{a}}$.
		Then the 1-ps of scalars inside $GL(2)\times GL(3)$ acts trivially on $\tilde{\mathcal U}_1 \otimes L(\underline{a})$, resp. $\tilde{\mathcal U}_2 \otimes L(\underline{a})$,
		and therefore these vector bundles descend to vector bundles on $Y$, which we denote by $\mathcal U_1$, resp. $\mathcal U_2$.
		The structure of a quiver bundle $\tilde{\mathcal U}_1 \to \tilde{\mathcal U}_2 \otimes W$, with $W = \mathds C^3$, also descends to give a quiver bundle structure ${\mathcal U}_1 \to {\mathcal U}_2 \otimes W$.
		Moreover, based on the choice of $\underline{a}$, we have $c_1(\mathcal{U}_1)=c_1(\mathcal{U}_2)$, i.e. $\det(\mathcal{U}_1)\cong \det(\mathcal{U}_2)$, and this line bundle is a generator of the Picard group.
		We denote it by $\mathcal O_Y(-1)$.
	\end{notation}
	
	It turns out that the universal bundles $\mathcal U_1$ and $\mathcal U_2$ can be obtained as pullbacks of universal subbundles along embeddings into grassmannians.
	
	\begin{proposition}
		\label{proposition: universal representation as pullbacks from grassmannians}
		In the setting of \cref{notation: Y and universal bundles}, \cref{construction: Y into Gr(3 S^2 W)} and \cref{construction: Y into Gr(2 S^21 W)}, we have that $\mathcal U_2$ is isomorphic to the pullback of the universal subbundle along
		$Y \to \operatorname{Gr}(3, S^2 W)$, and $\mathcal U_1$ is isomorphic to the pullback of the universal subbundle along
		$Y \to \operatorname{Gr}(2, S^{2,1} W)$. The universal representation map $\mathcal{U}_1\rightarrow\mathcal{U}_2\otimes W$ is induced by the fiberwise inclusion $S^{2,1}W\hookrightarrow S^2W\otimes W$.
	\end{proposition}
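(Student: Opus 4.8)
\emph{Proof plan.}
The statement is one of multilinear algebra together with $GL(2)\times GL(3)$-equivariant descent, so the plan is to work upstairs on the representation space $R_{(2,3)}$, produce two canonical subbundle inclusions $\mathcal U_2\hookrightarrow S^2W\otimes\mathcal O_Y$ and $\mathcal U_1\hookrightarrow S^{2,1}W\otimes\mathcal O_Y$ whose fibres over $[r]$ are $H_r$ and $\Syz_r$, check that the universal representation map intertwines them with the inclusion $S^{2,1}W\hookrightarrow S^2W\otimes W$, and then appeal to the universal property of the Grassmannian to conclude the claimed identifications with the pullbacks of the universal subbundles along \cref{construction: Y into Gr(3 S^2 W)} and \cref{construction: Y into Gr(2 S^21 W)}. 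Notably, no input from Teleman quantization is needed for this.

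\emph{The bundle $\mathcal U_2$.}
Write $\tilde\rho\colon\tilde{\mathcal U}_1\to\tilde{\mathcal U}_2\otimes W$ for the tautological quiver bundle structure on $R_{(2,3)}$. First I would form $\wedge^2\tilde\rho$ and compose with the projection $\wedge^2(\tilde{\mathcal U}_2\otimes W)\twoheadrightarrow\wedge^2\tilde{\mathcal U}_2\otimes S^2W$; rewriting $\wedge^2\tilde{\mathcal U}_2\cong\tilde{\mathcal U}_2^\vee\otimes\det\tilde{\mathcal U}_2$ and using the normalisation of the universal bundle (the choice of $\underline{a}$, which is exactly what yields $\det\mathcal U_1\cong\det\mathcal U_2$), this becomes an equivariant morphism $m\colon\tilde{\mathcal U}_2\to S^2W\otimes\mathcal O$ that descends to $Y$. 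In coordinates $m$ sends the $j$-th standard basis vector of $\mathds C^3$ to a sign times the $2\times 2$ minor of $r$ on the columns different from $j$, so its image over $[r]$ is the span of the three minors, that is $H_r$. By \cite[Lem.~1]{EPS} linear independence of these minors is equivalent to stability, hence $m$ is fibrewise injective over the locus of stable representations in $R_{(2,3)}$; as that locus is smooth, $m$ is a subbundle inclusion, and the descended inclusion $\mathcal U_2\hookrightarrow S^2W\otimes\mathcal O_Y$ has fibre $H_r$ over $[r]$, i.e.\ it is exactly the pullback of the universal subbundle along \cref{construction: Y into Gr(3 S^2 W)}.

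\emph{The bundle $\mathcal U_1$ and compatibility.}
Next I would examine the composite $s=(m\otimes\mathrm{id}_W)\circ\tilde\rho\colon\tilde{\mathcal U}_1\to\tilde{\mathcal U}_2\otimes W\to S^2W\otimes W\otimes\mathcal O$. The crucial point is that $s$ factors through $S^{2,1}W=\ker(S^2W\otimes W\to S^3W)$: fibrewise, $s$ sends the $i$-th standard basis vector of $\mathds C^2$ to $\sum_j m(f_j)\otimes r_{ij}$, and composing with multiplication $S^2W\otimes W\to S^3W$ yields the cofactor expansion along the top row of the $3\times3$ matrix obtained by stacking the $i$-th row of $r$ above the two rows of $r$, a determinant with a repeated row and hence zero. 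Up to swapping the two tensor factors, $s$ reproduces exactly the two syzygy tensors of \cref{construction: Y into Gr(2 S^21 W)}, so its image over $[r]$ is $\Syz_r$. Fibrewise injectivity of $s$ over the stable locus is again a consequence of stability: a nontrivial vanishing combination of the two row-syzygies would, since the minors $m(f_j)$ are linearly independent, force the corresponding combination of the rows of $r$ to vanish, contradicting the rank condition in \cref{remark: stability condition explained} (this is also what shows $\dim\Syz_r=2$ at \emph{every} point of $Y$, not merely generically). Hence $s$ descends to a subbundle inclusion $\mathcal U_1\hookrightarrow S^{2,1}W\otimes\mathcal O_Y$ with fibre $\Syz_r$, identifying $\mathcal U_1$ with the pullback of the universal subbundle along \cref{construction: Y into Gr(2 S^21 W)}. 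Finally, since $s=(m\otimes\mathrm{id}_W)\circ\tilde\rho$ by construction, the square whose top edge is $\tilde\rho\colon\mathcal U_1\to\mathcal U_2\otimes W$, whose bottom edge is $S^{2,1}W\hookrightarrow S^2W\otimes W$, and whose vertical edges are the two embeddings above, commutes tautologically; this is precisely the assertion that the universal representation map is induced by $S^{2,1}W\hookrightarrow S^2W\otimes W$.

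\emph{Expected difficulty.}
The only non-formal ingredients are the two stability criteria (\cite[Lem.~1]{EPS} and \cref{remark: stability condition explained}), which supply the fibrewise injectivity statements and with them the pointwise constancy of $\dim\Syz_r$ over all of $Y$. Everything else is routine but slightly delicate bookkeeping: tracking the $GL(2)\times GL(3)$-weights so that the twist $L(\underline{a})$ makes $m$ and $s$ descend, and pinning down signs so that $m$ literally returns the minors of \cref{construction: Y into Gr(3 S^2 W)} and $s$ the syzygy tensors of \cref{construction: Y into Gr(2 S^21 W)}.
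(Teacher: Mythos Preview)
The paper states this proposition without proof, treating it as a direct consequence of Constructions~\ref{construction: Y into Gr(3 S^2 W)} and~\ref{construction: Y into Gr(2 S^21 W)} together with \cref{notation: Y and universal bundles}. Your proposal supplies the argument the paper omits, and it is correct: the wedge-square construction of $m$ indeed produces the minor map $f_j\mapsto(-1)^j(\text{minor on columns}\neq j)$, the composite $s=(m\otimes\mathrm{id}_W)\circ\rho$ lands in $\ker(S^2W\otimes W\to S^3W)=S^{2,1}W$ by the repeated-row determinant identity, and the fibrewise injectivity of both maps follows from \cite[Lem.~1]{EPS} and \cref{remark: stability condition explained} exactly as you say.

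A small point worth making explicit when you write it up: your morphism $m$ as constructed is a priori a map $\tilde{\mathcal U}_2\otimes\det\tilde{\mathcal U}_1\otimes(\det\tilde{\mathcal U}_2)^{-1}\to S^2W\otimes\mathcal O$, and it is only after twisting by $L(\underline a)$ and using $\det\mathcal U_1\cong\det\mathcal U_2$ that it becomes literally $\mathcal U_2\hookrightarrow S^2W\otimes\mathcal O_Y$; you allude to this but the reader will appreciate seeing the determinant cancellation spelled out. Otherwise your bookkeeping of signs and tensor-factor order matches the explicit syzygy tensors written in \cref{construction: Y into Gr(2 S^21 W)}, so the compatibility square commutes on the nose.
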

	
	Moreover, we recall from \cite{belmans2023chow} another description of $Y$: $Y$ is the zero locus of a general section of $Q^*(1)$ on $\mathrm{Gr}(2,S^{2,1}W)$, where $Q$ is the universal quotient bundle on $\mathrm{Gr}(2,S^{2,1}W)$. 
	
	\begin{remark}
		\label{remark: S21 as slW}
		We view $S^{2,1}W$ as an irreducible $SL(W)$ representation and it is actually isomorphic to $\mathfrak{sl}(W)$ with $SL(W)$ conjugation action. We let $E_{ij}\in M_3(\mathds{C})$ denote the matrix whose $(i,j)$-entry is 1 with all the other entries being zero. The explicit isomorphism $S^{2,1}W\cong \mathfrak{sl}_3$ goes as follows:
		\begin{alignat*}{3}
			E_{13}\leftrightarrow x^2\otimes y-xy\otimes x \quad\quad&& E_{12}\leftrightarrow -x^2\otimes z+xz\otimes x\quad\quad&& E_{23}\leftrightarrow -y^2\otimes x+xy\otimes y\\[1em]
			E_{31}\leftrightarrow -z^2\otimes y+zy\otimes z\quad\quad&&E_{21}\leftrightarrow y^2\otimes z-zy\otimes y\quad\quad&& E_{32}\leftrightarrow z^2\otimes x-xz\otimes z
		\end{alignat*}	
		$$E_{22}-E_{11}\leftrightarrow yz\otimes x+xz\otimes y-2xy\otimes z\quad\quad E_{33}-E_{22}\leftrightarrow xz\otimes y+xy\otimes z-2yz\otimes x$$
		From now on, we will identify $S^{2,1}W$ with $\mathfrak{sl}(W)$.
	\end{remark}
	
	From the tautological exact sequence $0\rightarrow Q^*\rightarrow S^{2,1}W^*\rightarrow S^*\rightarrow 0$, we have a filtration for $\wedge^3(S^{2,1}W^*)$: $0\subset F_1=\wedge^3Q^*\subset F_2\subset F_3=\wedge^3(S^{2,1}W^*)$ such that $F_2/F_1\cong \wedge^2 Q^*\otimes S^*$ and $\wedge^3(S^{2,1}W^*)/F_2\cong Q^*\otimes\wedge^2S^*\cong Q^*(1)$.
	By the Borel-Bott-Weil Theorem, we know that $H^0(\mathrm{Gr}(2,S^{2,1}W), Q^*(1))\cong \wedge^3(S^{2,1}W^*)$.
	Let us consider the following alternating 3-form on $\mathfrak{sl}_3$: $(A,B,C)\in \mathfrak{sl}_3^{\oplus 3}\rightarrow \mathrm{Tr}(A[B,C])$.
	The stabilizer group in $GL(S^{2,1}W)$ of this 3-form is 8-dimensional, so the orbit of this 3-form is the open orbit in $\wedge^3(\mathfrak{sl}_3)^*$. Then the image of this 3-form under the vector bundle map $\wedge^3(\mathfrak{sl}_3)^*\rightarrow Q^*(1)=\wedge^3(\mathfrak{sl}_3)^*/F_2$ is a general section of $Q^*(1)$ and it vanishes at a point $\langle A,B\rangle\in \mathrm{Gr}(2, \mathfrak{sl}_3)$ iff $\mathrm{Tr}(C[A,B])=0$ for any $C\in \mathfrak{sl}_3$, that is iff $AB=BA$. In view of this, the zero locus of this global section can be viewed as the variety of abelian planes contained in $\mathfrak{sl}_3$. 
	
	One can show that this zero locus is indeed $Y$ embedded in $\mathrm{Gr}(2, S^{2,1}W)$ in the way mentioned above: We consider the representative $\langle xy,xz,yz\rangle\in Y\subset \mathrm{Gr}(3,S^2W)$ of the open orbit. It is mapped to $\langle E_{22}-E_{11},E_{33}-E_{22}\rangle\in \mathrm{Gr}(2,\mathfrak{sl}(W))$ via the embedding $Y\hookrightarrow \mathrm{Gr}(2,\mathfrak{sl}(W))$, which is an abelian plane contained in $\mathfrak{sl}_3$ and we can thus conclude that Y is contained in the zero locus of the global section. Since the zero locus of such a general global section has the expected dimension 6, some calculations of the classical invariants as in \cite{belmans2023chow} will show that $Y$ is the only irreducible component of the zero locus.
	
	Moreover, one can show that after a suitable $GL(W)$-equivariant identification of $\mathds{P}(\wedge^3(S^2W))$ with the corresponding $GL(W)$-invariant linear subspace of $\mathds{P}(\wedge^2(S^{2,1}W))$, the variety $Y$ can be exhibited as the scheme-theoretical intersection of $\mathrm{Gr}(3,S^2W)$ with $\mathrm{Gr}(2,S^{2,1}W)$ in the ambient projective space $\mathds{P}(\wedge^2(S^{2,1}(W)))$.
	
	Finally, we mention that $Y$ is isomorphic to the height-zero moduli space $M_{\mathds{P}^2}(4,-1,3)$ of stable sheaves on $\mathds{P}^2$ with $(r,c_1,c_2)=(4,-1,3)$ by \cite{drezet}.
	

	\subsection{Setup of Teleman Quantization for \texorpdfstring{$Y$}{Y}}

	Using \cref{proposition: universal representation as pullbacks from grassmannians}, we identify the universal bundles $\mathcal U_2$ and $\mathcal U_1$ with pullbacks of the universal subbundles on grassmannians.
	In order to calculate the cohomology of vector bundles over $Y$, one of the powerful tools is Teleman Quantization which we recalled in \cref{teleman}.
	Here are some basic information to check the assumptions in Teleman Quantization and one can check \cite{belmans2023rigidity} for reference.
	Recall that $\tilde{\mathcal U}_1$ and $\tilde{\mathcal U}_2$ denote the universal equivariant bundles on the representation space $R_{(2,3)}$.
	\cref{table: weights of universal representations} summarizes the $\lambda$-weights of the universal bundles when restricted to the strata $Z_\lambda$, where $\lambda$ is a 1-ps and $Z_\lambda$ is as in \cref{definition: Z_lambda}.
	
	\begin{table}[ht]
		\begin{center}
			\begin{tabular}{|c|c|c|c|c|c|}
				\hline
				\raisebox{-1ex}{\rule{0pt}{4ex}}
				$(\underline{d_1},\underline{d_2},\dots,\underline{d_l})$&1-ps $\lambda$ & \ Weights of $\tilde{\mathcal{U}}_1$ \ & \ Weights of $\tilde{\mathcal{U}}_2$ \ & \ Weight of $\det \tilde{\mathcal U}_1$ \ &$\eta_{\lambda}$ \\
				\hline
				(1,1),(1,2) &
				\scalebox{.7}{\raisebox{-4.5ex}{\rule{0pt}{10.5ex}}
					$\begin{pmatrix}
						t^3 & 0\\
						0& t^{-2}
					\end{pmatrix},\begin{pmatrix}
						t^3 & 0&0\\
						0& t^{-2}&0\\
						0&0&t^{-2}
					\end{pmatrix}$}
				&(5,0)&(5,0,0)&5&15 \\ 
				\hline (2,2),(0,1) &
				\scalebox{.7}{\raisebox{-4.5ex}{\rule{0pt}{10.5ex}}$\begin{pmatrix}
						t & 0\\
						0& t
					\end{pmatrix},\begin{pmatrix}
						t & 0&0\\
						0& t&0\\
						0&0&t^{-4}
					\end{pmatrix}$}
				&(5,5)&(5,5,0)&10&20 \\ 
				\hline
				(2,1),(0,2) &
				\scalebox{.7}{\raisebox{-4.5ex}{\rule{0pt}{10.5ex}}$\begin{pmatrix}
						t^4 & 0\\
						0& t^4
					\end{pmatrix},\begin{pmatrix}
						t^4 & 0&0\\
						0& t^{-6}&0\\
						0&0&t^{-6}
					\end{pmatrix}$}
				&(20,20)&(20,10,10)&40&100 \\ 
				\hline
				(1,0),(1,3) &
				\scalebox{.7}{\raisebox{-4.5ex}{\rule{0pt}{10.5ex}}$\begin{pmatrix}
						t^{12} & 0\\
						0& t^{-3}
					\end{pmatrix},\begin{pmatrix}
						t^{-3} & 0&0\\
						0& t^{-3}&0\\
						0&0&t^{-3}
					\end{pmatrix}$}
				&(30,15)&(15,15,15)&45& \ 120 \ \\
				\hline
				(1,0),(1,2),(0,1) \ &
				\scalebox{.7}{\raisebox{-4.5ex}{\rule{0pt}{10.5ex}}$\begin{pmatrix}
						t^9 & 0\\
						0& t^{-1}
					\end{pmatrix},\begin{pmatrix}
						t^{-1} & 0&0\\
						0& t^{-1}&0\\
						0&0&t^{-6}
					\end{pmatrix}$}
				&(25,15)&(15,15,10)&40&100 \\ 
				\hline
				(1,0),(1,1),(0,2) &
				\scalebox{.7}{\raisebox{-4.5ex}{\rule{0pt}{10.5ex}}$\begin{pmatrix}
						t^6 & 0\\
						0& t
					\end{pmatrix},\begin{pmatrix}
						t & 0&0\\
						0& t^{-4}&0\\
						0&0&t^{-4}
					\end{pmatrix}$}
				&(20,15)&(15,10,10)&35&90 \\ 
				\hline
				(2,0),(0,3) &
				\scalebox{.7}{\raisebox{-4.5ex}{\rule{0pt}{10.5ex}}$\begin{pmatrix}
						t^3 & 0\\
						0& t^3
					\end{pmatrix},\begin{pmatrix}
						t^{-2} & 0&0\\
						0& t^{-2}&0\\
						0&0&t^{-2}
					\end{pmatrix}$}
				&(15,15)&(10,10,10)&30& 90 \\ \hline
			\end{tabular}
			\caption{Weights of universal representations on strata}
			\label{table: weights of universal representations}
		\end{center}
	\end{table}
	
	
	\begin{remark}
		\label{remark: higher cohg vanish}
		As an example of the application of Teleman Quantization, we notice that all the weights of $\mathcal{U}_1$, $\mathcal{U}_2$, $\mathcal{U}_1\otimes \mathcal{U}_2$, $\mathcal{O}_Y(-1)$, $\mathfrak{sl}(\mathcal{U}_1)$, $\mathcal{U}_1^*\otimes \mathcal{U}_1^*$ under the action of 1-ps $\lambda$ are strictly less than $\eta_{\lambda}$ for all $\lambda$ corresponding to the possible Harder-Narasimhan types. As a result of \cref{teleman}, we know that all the higher cohomology of the mentioned bundles over $Y$ vanish.
	\end{remark}

	\subsection{Chow Ring of the Quiver Moduli Space \texorpdfstring{$Y$}{Y}}

	In this section, we study the Chow ring of $Y$
	using the result about the tautological presentation of Chow rings of fine quiver moduli spaces.
	We use conventions in \cref{notation: Y and universal bundles} and recall \cref{proposition: universal representation as pullbacks from grassmannians}.
	With these identifications, we can see that the map $\mathcal{U}_1\rightarrow \mathcal{U}_2\otimes W$ over $[r]\in Y$ is the inclusion $\Syz_r\subset S^{2,1}W\rightarrow S^2W\otimes W$.
	
	
	By the result of \cite{Kingwalter}, we know that the Chow ring $A^*(Y)$ is generated by the Chern classes of
	$\mathcal{U}_2,\mathcal{U}_1$ as an $\mathds{Z}$-algebra, and is a free abelian group of finite rank.
	By \cite{Franzen}, if we work over $\mathds{Q}$, all the relations between these generators are just $c_1(\mathcal{U}_1)=c_1(\mathcal{U}_2)$ together with the relations coming from the so-called $\theta$-forbidden dimension vectors.
	We denote $c_i(\mathcal{U}_2^*)$ by $c_i$, and we denote $c_i(\mathcal{U}_1^*)$ by $d_i$.
	By \cite[Example 4.7]{Franzen}, we get a clear description of the Chow ring of $Y$, summarized in \cref{table: Chow ring of Y}.
	
	\begin{table}[ht]
		\begin{center}
			\begin{tabular}{|c|c|l|}
				\hline
				\ \ \raisebox{-1.1ex}{\rule{0pt}{4ex}}Group \ \    & \ \ $\mathbb Z$-basis \ \                  & \multicolumn{1}{>{\centering\arraybackslash}m{9cm}|}{Relations} \\ \hline
				\raisebox{-1.1ex}{\rule{0pt}{4ex}}$A^1(Y)$ & $c_1$                    & $\begin{array}{l}c_1 = d_1\end{array}$ \\ \hline
				\raisebox{-1.1ex}{\rule{0pt}{4ex}}$A^2(Y)$ & $c_1^2,\ c_2,\ d_2$      & \\ \hline
				\raisebox{-1.1ex}{\rule{0pt}{4ex}}$A^3(Y)$ & \ $c_1c_2,\ c_1d_2,\ c_3$ \ & $\begin{array}{l}c_1^3=4c_1d_2-3c_3\end{array}$ \\ \hline
				$A^4(Y)$ & $c_2^2,\ c_2d_2,\ d_2^2$ &\raisebox{-2.8ex}{\rule{0pt}{7ex}}$\begin{array}{ll}
					c_1^4=-3c_2^2+9c_2d_2+3d_2^2, & c_1^2c_2=3d_2^2+c_2d_2, \\
					c_1^2d_2=3d_2^2, & c_1c_3=c_2^2-3c_2d_2+3d_2^2\end{array}$ \\ \hline
				$A^5(Y)$ & $\frac{c_2c_3}{3}$       &\raisebox{-4.3ex}{\rule{0pt}{10ex}}$\begin{array}{lll}
					3c_3d_2-c_1^2c_3=\frac{c_2c_3}{3}, & c_3d_2=\frac{2}{3}c_2c_3, & c_1^2c_3=\frac{5}{3}c_2c_3, \\
					c_1^5=19c_2c_3, & c_1^3c_2=9c_2c_3, & c_1^3d_2=6c_2c_3, \\
					c_1c_2^2=\frac{14}{3}c_2c_3, & c_1d_2^2=2c_2c_3, & c_1c_2d_2=3c_2c_3 \end{array}$                       \\ \hline
				$A^6(Y)$ & $c_3^2$                  &\raisebox{-5.8ex}{\rule{0pt}{13ex}}$\begin{array}{llll}
					c_1^6=57c_3^2,& c_1^4c_2=27c_3^2,  &
					c_1^4d_2=18c_3^2,& c_1^3c_3=5c_3^2, \\
					c_1^2c_2^2=14c_3^2,& c_1^2d_2^2=6c_3^2, &
					c_1^2c_2d_2=9c_3^2,& c_1c_3d_2=2c_3^2, \\
					c_1c_2c_3=3c_3^2,& c_2^3=9c_3^2, &
					c_2^2d_2=5c_3^2,& c_2d_2^2=3c_3^2, \\
					d_2^3=2c_3^2
				\end{array}$\\ \hline
			\end{tabular}
			\caption{Chow ring of $Y$, where $c_i = c_i(\mathcal{U}_2^*)$ and $d_i = c_i(\mathcal{U}_1^*)$}
			\label{table: Chow ring of Y}
		\end{center}
	\end{table}
	
	Using the formula given in the article \cite{belmans2023chow}, we know that the point class is equal to $c_3^2$, and combined with \cref{table: Chow ring of Y}, this gives us the following result.
	\begin{proposition}
		\label{proposition: top intersections}
		We have the following list of top intersection numbers:
		$c_1^6=57, \ c_1^4c_2=27, c_1^4d_2=18, \ c_1^3c_3=5, \ c_1^2c_2^2=14, \ c_1^2d_2^2=6, c_1^2c_2d_2=9, \ c_1c_3d_2=2, \ c_1c_2c_3=3, \ c_2^3=9, \ c_2^2d_2=5, \ c_2d_2^2=3, \ c_3^2 = 1, \ d_2^3=2$.\label{intersection}
	\end{proposition}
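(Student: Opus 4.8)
The plan is to derive every listed number directly from the presentation of $A^*(Y)$ recorded in \cref{table: Chow ring of Y}, once we pin down which class is the point class. First I would recall that by \cite{Franzen} the group $A^6(Y)$ is free abelian of rank one, generated by $c_3^2$; concretely, this means that every degree-six monomial in $c_1,c_2,d_2,c_3$ is an integer multiple of $c_3^2$, and the thirteen relations in the bottom row of \cref{table: Chow ring of Y} record exactly those multiples. Consequently the degree homomorphism $A^6(Y)\to\mathds{Z}$ is completely determined by the single integer it assigns to $c_3^2$, and computing all top intersection numbers reduces to computing that one value.

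Second, I would invoke the point-class formula for fine quiver moduli spaces from \cite{belmans2023chow}: evaluated for $Y$ it yields that the class of a point equals $c_3^2$, so $c_3^2$ has degree $1$. (As a cross-check one can compare with the Hirzebruch--Riemann--Roch computation implemented in the code of \cref{appendix: Sage code}, or with the description of $Y$ as a blow-down of $\mathrm{Hilb}^3(\mathds{P}^2)$.) Combined with the first step, the degree homomorphism sends $c_3^2\mapsto 1$; hence the degree of any degree-six monomial equals the integer coefficient standing in front of $c_3^2$ in the corresponding relation of \cref{table: Chow ring of Y}.

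Finally it only remains to transcribe the relevant row of \cref{table: Chow ring of Y}, together with $c_3^2=1$, to obtain $c_1^6=57$, $c_1^4c_2=27$, $c_1^4d_2=18$, $c_1^3c_3=5$, $c_1^2c_2^2=14$, $c_1^2d_2^2=6$, $c_1^2c_2d_2=9$, $c_1c_3d_2=2$, $c_1c_2c_3=3$, $c_2^3=9$, $c_2^2d_2=5$, $c_2d_2^2=3$, $d_2^3=2$, which are exactly the asserted intersection numbers. The only genuine mathematical input — as opposed to bookkeeping against the table — is establishing that $c_3^2$ is the point class and not some other multiple of the generator of $A^6(Y)$; this is the step where the formula of \cite{belmans2023chow} does the real work, while everything else is formal substitution. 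I therefore expect no serious obstacle, only the need to keep the relations of \cref{table: Chow ring of Y} straight and, if desired, to sanity-check internal consistency (for instance that $c_1^6=57$ is compatible with $c_1^4c_2=27$ via the degree-four relations in the same table).
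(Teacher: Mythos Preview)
Your proposal is correct and follows essentially the same approach as the paper: identify the point class as $c_3^2$ via the formula in \cite{belmans2023chow}, then read off all remaining intersection numbers from the $A^6(Y)$ relations in \cref{table: Chow ring of Y}. The paper's own justification is just a one-line reference to these two ingredients, so your write-up is simply a more detailed version of the same argument.
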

	
	Now we determine the fundamental classes of the two smallest orbits.
	
	The degenerate locus of the morphism of vector bundles $\mathcal{E}\otimes \mathcal{F}\rightarrow (\mathcal{F}\otimes W)\otimes \mathcal{F}\rightarrow S^2\mathcal{F}\otimes W$ is set-theoretically the orbit of elements of the form $\langle x^2,xy,xz\rangle$. Since the degenerate locus has the correct codimension, we know the class of the degenerate locus is $-3c_2d_2+6d_2^2$. One can calculate that $(c_1^2. -3c_2d_2+6d_2^2)=9$. On the other hand, a direct calculation shows that the degree of the orbit of elements of the form $\langle x^2,xy,xz\rangle$ is also equal to 9. We conclude that this orbit is the scheme-theoretical degenerate locus of $\mathcal{E}\otimes \mathcal{F}\rightarrow (\mathcal{F}\otimes W)\otimes \mathcal{F}\rightarrow S^2\mathcal{F}\otimes W$ and its fundamental class is $-3c_2d_2+6d_2^2$. 
	
	Using the same argument, we know that the orbit of elements of the form $\langle x^2,xy,y^2\rangle$ is the scheme-theoretic degenerate locus of $\mathcal{E}\otimes \mathcal{F}^*\rightarrow W$ and its fundamental class is $3c_2d_2-3d_2^2$.
	
	Moreover, we also have some information about the Chern characters of some vector bundles and the Chern class of the tangent bundle of $Y$ in order to apply the Hirzebruch-Riemann-Roch Theorem.
	These results are summarized in \cref{table: Chern characters of bundles} and \cref{chern ch}.
	
	\begin{table}[ht]
		\tabcolsep 5pt
		\renewcommand{\arraystretch}{1.4}
		\arrayrulewidth 1pt
		\begin{tabular}{|c|c|c|c|c|c|c|c|c|c|c|c|c|c|}
			\hline \diagbox{Sheaf}{Ch char}& $[Y]$ &$c_1$&$c_1^2$&$c_2$&$d_2$&$c_1c_2$&$c_1d_2$&$c_3$&$c_2^2$&$c_2d_2$&$d_2^2$&$c_2c_3$&$c_3^2$\\
			\hline $\mathcal{U}_2$&3&$-1$&$\frac{1}{2}$&$-1$&0&$\frac{1}{2}$&-$\frac{2}{3}$&0&$\frac{1}{8}$&$-\frac{7}{24}$&$\frac{1}{8}$&$-\frac{1}{180}$&0\\
			\hline $\mathfrak{sl}(\mathcal{U}_1^*)$&3&0&1&0&$-4$&$0$&0&0&$-\frac{1}{4}$&$\frac{3}{4}$&$-\frac{5}{12}$&0&$\frac{1}{360}$\\
			\hline$\mathcal{O}$&1&0&0&0&0&0&0&0&0&0&0&0&0\\
			\hline $\mathcal{U}_2^*$&3&1&$\frac{1}{2}$&$-1$&0&$-\frac{1}{2}$&$\frac{2}{3}$&0&$\frac{1}{8}$&$-\frac{7}{24}$&$\frac{1}{8}$&$\frac{1}{180}$&0\\
			\hline$\mathcal{U}_1^*$&2&1&$\frac{1}{2}$&0&$-1$&$0$&$\frac{1}{6}$&$-\frac{1}{2}$&$-\frac{1}{8}$&$\frac{3}{8}$&$-\frac{7}{24}$&$-\frac{1}{120}$&$-\frac{1}{720}$\\
			\hline$\mathcal{U}_2(1)$&3&2&$1$&$-1$&0&$-\frac{1}{2}$&$\frac{4}{3}$&$-\frac{3}{2}$&$-\frac{1}{2}$&$\frac{19}{12}$&$-\frac{5}{4}$&$-\frac{11}{360}$&$-\frac{1}{240}$\\
			\hline$\mathcal{O}(1)$&1&1&$\frac{1}{2}$&0&0&$0$&$\frac{2}{3}$&$-\frac{1}{2}$&$-\frac{1}{8}$&$\frac{3}{8}$&$\frac{1}{8}$&$\frac{19}{120}$&$\frac{19}{240}$\\
			\hline 
			$\mathcal{U}_1^*\otimes\mathcal{U}_2(1)$&6&7&$\frac{11}{2}$&$-2$&$-3$&$-2$&$\frac{55}{6}$&$-\frac{21}{2}$&$-\frac{43}{8}$&$\frac{391}{24}$&$-\frac{83}{8}$&$\frac{89}{360}$&$\frac{53}{240}$ \\
			\hline
		\end{tabular}
		\caption{Chern characters of certain bundles on $Y$}
		\label{table: Chern characters of bundles}
	\end{table}
	
	\begin{proposition}
		\label{chern ch}
		We have the following results for fundamental classes of bundles on $Y$:
		\begin{align*}
			c(\mathrm{T}_Y) &= 1+3c_1+(3c_1^2+5d_2)-(9c_3-16c_1d_2)+(4d_2^2+27c_2d_2-9c_2^2)+17c_2c_3+13c_3^2, \\
			\mathrm{Todd}(Y) &= 1+\frac{3}{2}c_1+
			\left(c_1^2 + \frac{5}{12}d_2\right)+ 
			\left(\frac{3}{8}c_1^3 + \frac{5}{8}c_1d_2\right)+
			\left(- \frac{1}{4}c_2^2 + \frac{3}{4}c_2d_2 + \frac{553}{360}d_2^2\right)+
			\frac{77}{60}c_2c_3+c_3^2, \\
			\mathrm{ch}(\mathcal{U}_1^*\otimes \mathcal{U}_2(1)) &= -\mathrm{ch}( \mathcal{U}_2)+3\mathrm{ch}(\mathfrak{sl}(\mathcal{U}_1^*))+6\mathrm{ch}(\mathcal{O})-6\mathrm{ch}(\mathcal{U}_2^*)-9\mathrm{ch}(\mathcal{U}_1^*)+9\mathrm{ch}(\mathcal{U}_2(1))+3\mathrm{ch}( \mathcal{O}(1)).
		\end{align*}
	\end{proposition}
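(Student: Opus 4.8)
The plan is to derive all three formulas from the complete description of the rational Chow ring $A^*(Y)$ recorded in \cref{table: Chow ring of Y}: once a class is expressed as a symmetric polynomial in the Chern roots of $\mathcal U_2^*$ and $\mathcal U_1^*$, it is rewritten in the generators $c_i=c_i(\mathcal U_2^*)$, $d_i=c_i(\mathcal U_1^*)$ and reduced modulo the listed relations (using in particular $c_1=d_1$). The one conceptual input is the class of the tangent bundle. Since $Y$ is a fine moduli space of stable representations of the hereditary $3$-Kronecker quiver with $\underline d=(2,3)$ and arrows $1\to 2$, deformation theory identifies $T_{[V]}Y=\mathrm{Ext}^1(V,V)$, with $\mathrm{Hom}(V,V)=\mathds C$ and $\mathrm{Ext}^{\ge 2}(V,V)=0$; globally the two-term complex $\mathcal End(\mathcal U_1)\oplus\mathcal End(\mathcal U_2)\to\mathcal Hom(\mathcal U_1,\mathcal U_2)\otimes W$ has $H^0=\mathcal O_Y$ and $H^1=T_Y$, hence in $K(Y)$
\[
[T_Y]=3\,[\mathcal U_1^{\vee}\otimes\mathcal U_2]-[\mathcal End(\mathcal U_1)]-[\mathcal End(\mathcal U_2)]+[\mathcal O_Y],
\]
a class of rank $18-4-9+1=6$ with $c_1(T_Y)=3c_1$, consistent with $Y$ being Fano of index $3$.

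Next I apply the splitting principle. Writing $c_i=e_i(a_1,a_2,a_3)$ and $d_i=e_i(b_1,b_2)$, the bundle $\mathcal U_1^{\vee}\otimes\mathcal U_2$ has Chern roots $b_i-a_j$, while $\mathcal End(\mathcal U_1)$ has roots $0,0,\pm(b_1-b_2)$ and $\mathcal End(\mathcal U_2)$ has roots $0,0,0$ together with $a_i-a_j$ for $i\neq j$. Hence
\[
c(T_Y)=\frac{\prod_{i=1}^{2}\prod_{j=1}^{3}(1+b_i-a_j)^{3}}{\bigl(1-(b_1-b_2)^2\bigr)\,\prod_{i\neq j}(1+a_i-a_j)},
\]
which I expand as a power series truncated in degree $\le 6$, symmetrize into the $c_i,d_i$, and reduce modulo \cref{table: Chow ring of Y}; this gives the stated $c(T_Y)$. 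For the Todd class I repeat the computation with the generating function $x/(1-e^{-x})$ in place of $1+x$, using that $\mathrm{Todd}$ is multiplicative over direct sums and inverts on $K$-theory, i.e. $\mathrm{Todd}(Y)=\mathrm{Todd}(\mathcal U_1^{\vee}\otimes\mathcal U_2\otimes W)\cdot\mathrm{Todd}(\mathcal End\,\mathcal U_1)^{-1}\cdot\mathrm{Todd}(\mathcal End\,\mathcal U_2)^{-1}$; a useful cross-check is that the degree-$6$ part integrates to $\chi(\mathcal O_Y)=1$ against $c_3^2=[\mathrm{pt}]$ from \cref{intersection}.

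For the remaining identity I first compute the Chern characters feeding \cref{table: Chern characters of bundles} directly from the roots: $\mathrm{ch}(\mathcal U_2)=\sum_j e^{-a_j}$, $\mathrm{ch}(\mathcal U_2^*)=\sum_j e^{a_j}$, $\mathrm{ch}(\mathcal U_1^*)=\sum_i e^{b_i}$, and $\mathrm{ch}(\mathfrak{sl}(\mathcal U_1^*))=\bigl(\sum_i e^{b_i}\bigr)\bigl(\sum_i e^{-b_i}\bigr)-1$ via $\mathcal End(\mathcal U_1)=\mathcal O_Y\oplus\mathfrak{sl}(\mathcal U_1)$; twisting by $\mathcal O_Y(1)$ multiplies by $e^{c_1}$ because $c_1(\mathcal O_Y(1))=c_1(\det\mathcal U_1^{\vee})=d_1=c_1$. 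Each expression is truncated in degree $\le 6$ and reduced modulo \cref{table: Chow ring of Y}. Given the table, the last displayed equation is then a purely linear verification: both sides are written in the $\mathds Z$-basis of $A^*(Y)_{\mathds Q}$ from \cref{table: Chow ring of Y} and compared coefficient by coefficient (one already sees the ranks $6=-3+9+6-18-18+27+3$ and the $c_1$-coefficients $7=1-6-9+18+3$ match). Equivalently, since $\mathrm{ch}\colon K(Y)_{\mathds Q}\xrightarrow{\sim}A^*(Y)_{\mathds Q}$ is a ring isomorphism, it suffices to verify the corresponding identity of $K$-classes, which is again visible on Chern roots.

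The only genuine difficulty is the bookkeeping: the power-series expansions and, above all, the reductions modulo the degree $4$, $5$ and $6$ relations in \cref{table: Chow ring of Y} are long and error-prone by hand. This is exactly what the code in \cref{appendix: Sage code} automates; beyond the $K$-theoretic formula for $[T_Y]$ and the observation that the third identity is formal once the Chern characters are known, there is no further conceptual content.
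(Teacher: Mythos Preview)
Your proposal is correct and in fact supplies more detail than the paper itself, which states \cref{chern ch} without proof as a computational summary derived from the Chow ring presentation in \cref{table: Chow ring of Y} and the code in \cref{appendix: Sage code}. Your key conceptual input---the $K$-theory identity $[T_Y]=3[\mathcal U_1^\vee\otimes\mathcal U_2]-[\mathcal End(\mathcal U_1)]-[\mathcal End(\mathcal U_2)]+[\mathcal O_Y]$ from the two-term deformation complex---is exactly what the QuiverTools package (cited in \cref{appendix: Sage code} and based on \cite{belmans2023chow}) uses internally, so your approach and the paper's implicit one coincide.
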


	\subsection{Interpretation of \texorpdfstring{$X := \mathds{P}_Y(\mathcal{U}_1)$}{X}}
	\label{subsection: interpretation of X as a blowup}

	In this subsection, we will present a description of $\mathds{P}_Y(\mathcal{U}_1)$ as a blow up of a projective space, where the pullbacks of the universal bundles admit a more explicit interpretation.
	
	\begin{notation}
		We use the following convention for projective bundles: given a vector bundle $E$, we write $\mathds{P}(E) := \mathrm{Proj}(\mathrm{Sym}^.(E^*))$.
	\end{notation}
	
	In the framing construction given in \cite{belmans2023vector}, we know that $\mathds{P}_Y(\mathcal{U}_1)$ is also a quiver moduli space for the quiver
	$\begin{tikzcd}
		0\arrow[r] & 1 \arrow[r,bend left] \arrow[r,bend right] \arrow[r] & 2
	\end{tikzcd}$
	of dimension vector $(1,2,3)$  corresponding to the vertices $0,1,2$ marked in the graph with respect to a suitable stability parameter. Obviously, this new quiver contains the 3-Kronecker quiver as a subquiver.
	A natural universal representation for this quiver moduli space is $\mathcal{O}_{\mathds{P}_Y(\mathcal{U}_1)}(-1)\rightarrow \pi^*\mathcal{U}_1\rightarrow \pi^*\mathcal{U}_2\otimes W$ (see the next paragraph), where $\pi: \mathds{P}_Y(\mathcal{U}_1)\rightarrow Y$ is the standard projection. However, in order to make the identifications in the sequel more explicit, we will work with the following universal representation over $\mathds{P}_Y(\mathcal{U}_1)$ by tensoring the above universal representation with $\mathcal{O}_{\mathds{P}_Y(\mathcal{U}_1)}(1)$:  $\mathcal{O}_{\mathds{P}_Y(\mathcal{U}_1)}\rightarrow \mathcal{E}_1 \rightarrow \mathcal{E}_2\otimes W$, where $\mathcal{E}_1=\pi^*\mathcal{U}_1\otimes \mathcal{O}_{\mathds{P}_Y(\mathcal{U}_1)}(1)$ and $\mathcal{E}_2=\pi^*\mathcal{U}_2\otimes \mathcal{O}_{\mathds{P}_Y(\mathcal{U}_1)}(1)$.
	
	In the framing construction, a representation $\mathds{C}\rightarrow \mathds{C}^{\oplus 2}\rightarrow \mathds{C}^{\oplus 3}\otimes W$ for the quiver $\begin{tikzcd}
		0\arrow[r] & 1 \arrow[r,bend left] \arrow[r,bend right] \arrow[r] & 2
	\end{tikzcd}$ is a stable representation if and only if $\mathds{C}\rightarrow \mathds{C}^{\oplus 2}$ is injective and $\mathds{C}^{\oplus 2}\rightarrow \mathds{C}^{\oplus 3}\otimes W$ is a stable representation for the 3-Kronecker quiver.
	This explains why $\mathcal{O}_{\mathds{P}_Y(\mathcal{U}_1)}$ is actually a subbundle of $\mathcal{E}_1$. Moreover, $\mathds{C}^{\oplus 2}\rightarrow \mathds{C}^{\oplus 3}\otimes W$ is a stable representation for the 3-Kronecker quiver iff the adjunction map $\mathds{C}^2\otimes W^*\rightarrow \mathds{C}^3$ is surjective and for any nonzero $v\in \mathds{C}^2$, the image of $\mathds{C}.v\otimes W^*$ via the adjunction map is at least two-dimensional.
	
	In many situations, we have an explicit description of a general point of a quiver moduli space. For the quiver moduli space $\mathds{P}_Y(\mathcal{U}_1)$, we can identify its general points with the general points of $\mathds{P}(W\otimes W^*/\mathds{C}.\mathrm{Id}_{W})$, and actually identify the whole $\mathds{P}_Y(\mathcal{U}_1)$ with the blow up of $\mathds{P}(W\otimes W^*/\mathds{C}.\mathrm{Id}_{W})$ along the image of the composition map $\mathds{P}(W)\times \mathds{P}(W^*)\rightarrow \mathds{P}(W\otimes W^*)\dashrightarrow \mathds{P}(W\otimes W^*/\mathds{C}.\mathrm{Id}_{W})$.
	
	\begin{proposition}
		$\mathds{P}_Y(\mathcal{U}_1)$ is isomorphic to the blow up of $\mathds{P}(W\otimes W^*/\mathds{C}.\mathrm{Id}_{W}) $ along $\mathds{P}(W)\times \mathds{P}(W^*)$.
	\end{proposition}
	
	\begin{proof}
		The idea of the identification is just as finding the Jordan normal form for representations of a vertex equipped with one arrow pointing to itself, we can also find a normal form for most representations of $\begin{tikzcd}
			0\arrow[r] & 1 \arrow[r,bend left] \arrow[r,bend right] \arrow[r] & 2
		\end{tikzcd}$ by choosing good basis elements for $\mathds{C},\mathds{C}^2,\mathds{C}^3$. The choice of a basis element of $\mathds{C}$ is almost trivial (only up to scalar) and we have assumed as above  that we can choose such a basis element globally over the moduli space of representations by tensoring the original universal representation $\mathcal{O}_{\mathds{P}_Y(\mathcal{U}_1)}(-1)\rightarrow \pi^*\mathcal{U}_1\rightarrow \pi^*\mathcal{U}_2\otimes W$ with $\mathcal{O}_{\mathds{P}_Y(\mathcal{U}_1)}(1)$ to get $\mathcal{O}_{\mathds{P}_Y(\mathcal{U}_1)}\rightarrow \mathcal{E}_1 \rightarrow \mathcal{E}_2\otimes W$, which justified our choice above. Consequently, we also have a basis element for $\mathds{C}^2$ via the injective map $\mathds{C}\hookrightarrow\mathds{C}^2$.
		\begin{case}
			For a general point of $\mathds{P}_Y(\mathcal{U}_1)$, the composition of the adjunction morphism $\mathcal{O}_{\mathds{P}_Y(\mathcal{U}_1)}\otimes W^*\hookrightarrow \mathcal{E}_1 \otimes W^*\rightarrow \mathcal{E}_2$ is of full rank in the fiber. For a general representation $\mathds{C}\otimes W^*\hookrightarrow \mathds{C}^{\oplus 2}\otimes W^*\rightarrow \mathds{C}^{\oplus 3}$ (in the adjunction form) corresponding to a general point of $\mathds{P}_Y(\mathcal{U}_1)$, we can identify $\mathds{C}^{\oplus 3}$ with $W^*=\mathds{C}\otimes W^*$, thus actually fixing a basis of $\mathds{C}^3$ since $W^*$ has three canonical basis elements which are the three arrows in the 3-Kronecker quiver. 
			
			Now we temporarily fix a noncanonical basis element $\xi$ in $\mathds{C}^2$ complementary to the inclusion $\mathds{C}\hookrightarrow\mathds{C}^2$. The only remaining data to determine is the composition map $\mathds{C}\cong\mathds{C}.\xi\subset\mathds{C}^2\rightarrow \mathds{C}^{\oplus 3}\otimes W\cong W^*\otimes W$. This last data is only determined up to scalar multiplications and additions of the composition map $\mathds{C}\hookrightarrow\mathds{C}^2\rightarrow W^*\otimes W$ sending $z\in \mathds{C}$ to $z.\mathrm{Id}_W$, since the choice of $\xi$ is noncanonical. Thus the actual parametrizing data corresponds to a general point in $\mathds{P}(W\otimes W^*/\mathds{C}.\mathrm{Id}_{W})$. Conversely, a morphism $g\in\Hom(\mathds{C},W\otimes W^*)$ gives a stable representation in the obvious reverse way iff it remains of rank at least two after the additions of arbitrary copies of the diagonal matrix, which comes from the stability requirement for the 3-Kronecker quiver representations. As a result, we can identify an open subset of $\mathds{P}_Y(\mathcal{U}_1)$ with $\mathds{P}(W\otimes W^*/\mathds{C}.\mathrm{Id}_{W}) \backslash (\mathds{P}(W)\times \mathds{P}(W^*))$ (Our notation is justified by the fact that the composition $\mathds{P}(W)\times \mathds{P}(W^*)\rightarrow \mathds{P}(W\otimes W^*)\dashrightarrow \mathds{P}(W\otimes W^*/\mathds{C}.\mathrm{Id}_{W})$ is an embedding).
		\end{case}
		
		\begin{case}
			When the composition of the maps $\mathds{C}\otimes W^*\hookrightarrow \mathds{C}^{\oplus 2}\otimes W^*\rightarrow\mathds{C}^{\oplus 3}$ is not surjective for a stable representation, we know the rank will be 2 because of the stability condition. Let $K\subset W^*$ be the one-dimensional kernel of the above map $W^*\cong \mathds{C}\otimes W^*\rightarrow \mathds{C}^{\oplus 3}$, and we can identify the image of $\mathds{C}\otimes W^*\rightarrow \mathds{C}^{\oplus 3}$ with $\mathrm{Im}\cong W^*/K$. The remaining data is determined via temporarily choosing a basis element $a$ of $\mathds{C}^{\oplus 2}$ complementary to $\mathds{C}\hookrightarrow\mathds{C}^{\oplus 2}$, and choosing a basis element $b$ of $\mathds{C}^3$ complementary to $\mathrm{Im}\cong W^*/K\subset \mathds{C}^{3}$ and choosing the morphism $f=f_1\oplus f_2:(\mathds{C}.a)\otimes W^*\rightarrow W^*/K\oplus \mathds{C}.b$, where $f_2$ is the component map $f_2:(\mathds{C}.a)\otimes W^*\rightarrow \mathds{C}.b$. Below is a diagram which indicates how the adjunction map $\mathds{C}\otimes W^*\hookrightarrow \mathds{C}^{2}\otimes W^*\rightarrow \mathds{C}^{3}$ looks like in terms of the basis elements we have chosen. 
			$$
			\xymatrix{
				\mathds{C}\otimes W^*  \ar@{^{(}->}[r]\ar@{=}[d] &\mathds{C}^2\otimes W^*\ar@{=}[d]\ar[r]&\mathds{C}^3\ar@{=}[d]\\
				\mathds{C}\otimes W^* \ar@{^{(}->}[r] &\mathds{C} \otimes W^* \ar@{}[d] |{\bigoplus}\ar[r]&  W^*/K\ar@{}[d] |{\bigoplus}\\
				&\mathds{C}.a\otimes W^*\ar[r]^{f_2}\ar[ur]^{f_1}&\mathds{C}.b
			}
			$$
			Based on the stability condition, $f_2$ is surjective. We consider the 2-dimensional subspace $L=\mathrm{Ker}(f_2)\subset W^*$. By definition of $L$, the image of $(\mathds{C}.a)\otimes L$ by $f=f_1\oplus f_2$ is contained in $W^*/K$. Thus the image of $\mathds{C}^2\otimes L$ is contained in $W^*/K\subset\mathds{C}^3$ via the adjunction representation map
			and consequently $L$ does not depend on the choice of $a$. 
			
			It is obvious that when $L$ is known, the choice of the basis element $b$ and the component map $f_2$ doesn't matter at all because we only care about quiver representations up to isomorphisms. Now we only need to concentrate on $f_1: \mathds{C}.a\otimes W^*\rightarrow W^*/K$. Since the adjunction map $\mathds{C}^2\otimes W^*\rightarrow \mathds{C}^3$ is surjective, there exists $x\in W^*$, such that $f(a\otimes x)=b$ and thus $f_1(a\otimes x)=0$. Of course we know $x\notin L$ and $ \mathds{C}.a\otimes W^*=(\mathds{C}.a\otimes \mathds{C}.x)\bigoplus (\mathds{C}.a\otimes L)$. Consequently we only need to concentrate on $f_1|_{\mathds{C}.a\otimes L}=f|_{\mathds{C}.a\otimes L}$. 
			
			Now the reader should be able to convince himself that the only information that need to be encoded are $K\subset W^*$, $L\subset W^*$ and the map $f_1|_{\mathds{C}.a\otimes L}: \ L\cong \mathds{C}.a\otimes L\rightarrow W^*/K$ (A small reminder is that the choice of $b$ and thus $x$ don't matter at all. Another reminder is that the decomposition $f=f_1\oplus f_2$ and thus $f_1$ depend on the choice of $b$. When you replace $b$ by some $b'$, $f_1$ changes but $f_1|_{\mathds{C}.a\otimes L}=f|_{\mathds{C}.a\otimes L}$ doesn't change). Since $a\in \mathds{C}^2$ isn't chosen canonically, we know the latter map $f_1|_{\mathds{C}.a\otimes L}$ is chosen up to additions of the canonical map $L\subset W^*\rightarrow W^*/K$. When we view $K$ as an element in $\mathds{P}(W^*)$ and view $L$ as an element in $\mathds{P}(W)$, all the above together shows that $\mathds{P}_Y(\mathcal{U}_1)$ is isomorphic to the blow up of $\mathds{P}(W\otimes W^*/\mathds{C}.\mathrm{Id}_{W}) $ at $\mathds{P}(W)\times \mathds{P}(W^*)$. \qedhere
		\end{case} 
	\end{proof}
	
	\begin{notation}Over the blow up of $\mathds{P}(W\otimes W^*/\mathds{C}.\mathrm{Id}_{W})=\mathds{P}^7 $ along $\mathds{P}(W)\times \mathds{P}(W^*)=\mathds{P}^2\times \mathds{P}^2$, we let $\mathcal{O}(\tau)$ denote the pullback of $\mathcal{O}_{\mathds{P}^7}(1)$ from the blow up map. We let $Z$ denote the image of $\mathds{P}(W)\times \mathds{P}(W^*)$ in $\mathds{P}(W\otimes W^*/\mathds{C}.\mathrm{Id}_{W}) $ and let $E$ be the exceptional divisor. We know that $E$ is the projectivized normal bundle over $Z$, and we let $\mathcal{O}_{E/Z}(1)$ denote the tautological line bundle of this projective bundle. For convenience, we denote $\mathds{P}_Y(\mathcal{U}_1)$ by $X$.
	\end{notation}
	
	We let $p$ be the bundle projection $E\rightarrow Z$ and let $j$ be the inclusion $E\hookrightarrow X$.
	
	\[\begin{tikzcd}
		E \arrow{d}{p} \arrow{r}{j}&X=\mathds{P}_Y(\mathcal{U}_1)\arrow{r}{\pi}\arrow{d}&Y\\
		Z=\mathds{P}(W)\times \mathds{P}(W^*)\arrow{r}&\mathds{P}(W\otimes W^*/\mathds{C}.\mathrm{Id}_{W})
	\end{tikzcd}
	\]
	
	\begin{proposition}
		\label{E_1 extension}
		Over $\mathds{P}_Y(\mathcal{U}_1)$, we have the following exact sequence:
		$$0\rightarrow \mathcal{O}\rightarrow \mathcal{E}_1\rightarrow \mathcal{O}(-\tau+E)\rightarrow 0.$$
	\end{proposition}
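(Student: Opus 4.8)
The plan is to turn the statement into an identity in $\Pic(X)$ and then determine that identity by playing the $\mathds{P}^1$-bundle structure $\pi\colon X\to Y$ off against the blow-up structure $\sigma\colon X\to\mathds{P}^7$. First I would invoke the fact, recalled just before the proposition, that the framing section $\mathcal{O}_X\to\mathcal{E}_1$ is a sub-bundle; hence there is a short exact sequence $0\to\mathcal{O}_X\to\mathcal{E}_1\to\mathcal{L}\to 0$ with $\mathcal{L}$ a line bundle, and taking determinants gives $\mathcal{L}\cong\det\mathcal{E}_1$. Since $\mathcal{E}_1=\pi^*\mathcal{U}_1\otimes\mathcal{O}_X(1)$ has rank $2$ and $\det\mathcal{U}_1=\mathcal{O}_Y(-1)$, this yields $\mathcal{L}\cong\pi^*\mathcal{O}_Y(-1)\otimes\mathcal{O}_X(2)$, so it suffices to prove $\pi^*\mathcal{O}_Y(-1)\otimes\mathcal{O}_X(2)\cong\mathcal{O}(-\tau+E)$ in $\Pic(X)$. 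Here $\Pic(X)$ is free of rank $2$: from the projective bundle it has basis $\{\pi^*\mathcal{O}_Y(1),\mathcal{O}_X(1)\}$ (using $\Pic(Y)=\mathbb{Z}\cdot\mathcal{O}_Y(1)$), and from the blow-up it has basis $\{\mathcal{O}(\tau),\mathcal{O}(E)\}$; everything reduces to pinning down the change of basis.

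For that I would establish two relations. The first compares canonical bundles: the blow-up formula (the centre $\mathds{P}(W)\times\mathds{P}(W^*)$ has codimension $3$ in $\mathds{P}^7$) gives $\omega_X=\sigma^*\omega_{\mathds{P}^7}\otimes\mathcal{O}(2E)=\mathcal{O}(-8\tau+2E)$, while the relative Euler sequence of $\pi$ gives $\omega_{X/Y}=\mathcal{O}_X(-2)\otimes\pi^*\det\mathcal{U}_1^{-1}=\mathcal{O}_X(-2)\otimes\pi^*\mathcal{O}_Y(1)$, and $\omega_Y=\mathcal{O}_Y(-3)$ because $Y$ is Fano of index $3$; combining $\omega_X=\omega_{X/Y}\otimes\pi^*\omega_Y$ yields
\[
\mathcal{O}_X(1)\otimes\pi^*\mathcal{O}_Y(1)\;\cong\;\mathcal{O}(4\tau-E).
\]
The second relation exhibits $E$ as a degeneracy locus: adjoining the quiver map $\mathcal{E}_1\to\mathcal{E}_2\otimes W$ to a map $\mathcal{E}_1\otimes W^*\to\mathcal{E}_2$ and precomposing with $\mathcal{O}_X\otimes W^*\hookrightarrow\mathcal{E}_1\otimes W^*$ produces $\psi\colon W^*\otimes\mathcal{O}_X\to\mathcal{E}_2$, a morphism of rank-$3$ bundles. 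By Case 1 and Case 2 in the proof of the previous proposition, $\psi$ is an isomorphism exactly over $X\setminus E$ and drops to rank $2$ along $E$; hence $\det\psi$ cuts out $E$ (with reduced structure, since the generic corank along $E$ is $1$), giving $\mathcal{O}_X(E)\cong\det\mathcal{E}_2\cong\pi^*\det\mathcal{U}_2\otimes\mathcal{O}_X(3)=\pi^*\mathcal{O}_Y(-1)\otimes\mathcal{O}_X(3)$, using $\det\mathcal{U}_2=\det\mathcal{U}_1$.

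Finally I would solve: the second relation reads $\pi^*\mathcal{O}_Y(1)\cong\mathcal{O}_X(3)\otimes\mathcal{O}(-E)$, and substituting into the first gives $\mathcal{O}_X(4)\cong\mathcal{O}(4\tau)$, hence $\mathcal{O}_X(1)\cong\mathcal{O}(\tau)$ since $\Pic(X)$ is torsion-free, and then $\pi^*\mathcal{O}_Y(1)\cong\mathcal{O}(3\tau-E)$. Plugging these back into $\mathcal{L}\cong\pi^*\mathcal{O}_Y(-1)\otimes\mathcal{O}_X(2)$ yields $\mathcal{L}\cong\mathcal{O}(-\tau+E)$, as claimed. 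I expect the only delicate point to be the second relation — precisely, checking that the rank-drop locus of $\psi$ is the reduced divisor $E$ and not a multiple of it; this follows from the local corank-$1$ statement already extracted in the previous proof, and alternatively is forced afterwards by the requirement that $\{\pi^*\mathcal{O}_Y(1),\mathcal{O}_X(1)\}$ remain a $\mathbb{Z}$-basis. Routine care is also needed with the two canonical-bundle sign conventions and with the identification of the paper's $\mathcal{O}_X(1)$ with the Grothendieck $\mathcal{O}(1)$ of $\mathrm{Proj}\,\mathrm{Sym}(\mathcal{U}_1^*)$, which is exactly the bundle realizing $\mathcal{O}_X(-1)\hookrightarrow\pi^*\mathcal{U}_1$.
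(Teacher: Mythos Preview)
Your proof is correct but takes a genuinely different route from the paper. The paper identifies the quotient line bundle $\mathcal{E}_1/\mathcal{O}$ by restriction: on $X\setminus E$ the Case~1 analysis of the preceding proposition gives $(\mathcal{E}_1/\mathcal{O})|_{X\setminus E}\cong\mathcal{O}(-\tau)$ directly, and on each fibre of $E\to Z$ the Case~2 analysis gives the restriction $\mathcal{O}_{E/Z}(-1)=\mathcal{O}(E)|_{\text{fibre}}$; since $\Pic(X)=\mathds{Z}\tau\oplus\mathds{Z}E$, these two restrictions pin down the class. You instead compute the change of basis in $\Pic(X)$ between $\{\tau,E\}$ and $\{\mathcal{O}_X(1),\pi^*\mathcal{O}_Y(1)\}$ via two global identities, the canonical-bundle comparison and $\det\mathcal{E}_2=\mathcal{O}(E)$. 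This is more circuitous for the proposition itself, but it has the side benefit of simultaneously establishing $\mathcal{O}_X(1)\cong\mathcal{O}(\tau)$ and $\det\mathcal{E}_2\cong\mathcal{O}(E)$, which the paper only records afterwards as a corollary of this proposition together with the next one. Your caution about the multiplicity of $E$ in the degeneracy divisor is warranted: generic corank $1$ along $E$ does not by itself force the divisor of $\det\psi$ to be reduced. However, your fallback argument via the $\mathds{Z}$-basis constraint is sound --- writing $\det\mathcal{E}_2=\mathcal{O}(mE)$ for some $m\geq 1$ and combining with the canonical-bundle relation, the change-of-basis matrix has determinant $\pm m$, forcing $m=1$.
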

	
	\begin{proof}
		We have seen that $\mathcal{O}$ is a subbundle of $\mathcal{E}_1$. Since $\mathrm{Pic}(\mathds{P}_Y(\mathcal{U}_1))=\mathds{Z}.\mathcal{O}(\tau)+\mathds{Z}.\mathcal{O}(E)$. We only need to check what $\mathcal{E}_1/\mathcal{O}$ is after restricting it to $X-E$ and to fibers of the projective bundle $E\rightarrow Z$ separately. Based on the identifications given above, we easily see that the restriction of $\mathcal{E}_2$  to $X-E$ is the trivial vector bundle and $(\mathcal{E}_1/\mathcal{O})|_{X-E}=\mathcal{O}_{X-E}(-\tau)$. Similarly, we consider the fiber $E_{(L,K)}$ of projective bundle $E\rightarrow Z$ over the point $(L,K)\in Z$. We know that the restriction of $\mathcal{E}_2$ to $E_{(L,K)}$ has a rank two trivial subbundle $(W^*/K)\otimes \mathcal{O}_{E_{(L,K)}}$, and the restriction of $\mathcal{E}_1/\mathcal{O}$ to $E_{(L,K)}$ is equal to $\mathcal{O}_{E_{(L,K)}}(-1)=\mathcal{O}(E)|_{E_{(L,K)}}$.
	\end{proof}
	
	We know $Z\cong \mathds{P}(W)\times \mathds{P}(W^*)$ and will let $\mathcal{O}_Z(n,m)$ denote $\mathcal{O}_{\mathds{P}(W)}(n)\boxtimes\mathcal{O}_{\mathds{P}(W^*)}(m)$ and sometimes use the same notation for the pullback of $\mathcal{O}_{\mathds{P}(W)}(n)\boxtimes\mathcal{O}_{\mathds{P}(W^*)}(m)$ to the projective bundle $E$ over $Z$. In this notation, we know that $(\mathcal{E}_1/\mathcal{O})|_{E}$ is equal to $\mathcal{O}_Z(-1,-1)\otimes \mathcal{O}_{E/Z}(-1)$.
	
	\begin{proposition}
		We have the following exact sequence over $X$.
		$$0\rightarrow W^*\otimes \mathcal{O}\rightarrow \mathcal{E}_2\rightarrow \mathcal{O}_E(L)\rightarrow 0$$
		where $\mathcal{O}_E(L)=(j_*\mathcal{O}_Z(1,0))\otimes \mathcal{O}(-\tau+E)$. \label{E2}
	\end{proposition}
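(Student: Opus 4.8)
The plan is to realize the asserted sequence as the two–term complex $W^{*}\otimes\mathcal{O}_{X}\xrightarrow{\ \phi\ }\mathcal{E}_{2}$ obtained by adjunction from the universal representation, together with its kernel and cokernel. Concretely, the composite $\mathcal{O}_{X}\to\mathcal{E}_{1}\to\mathcal{E}_{2}\otimes W$ of the universal representation is, by the stability condition (\cref{remark: stability condition explained}) together with the fact that $\mathcal{O}_{X}$ is a subbundle of $\mathcal{E}_{1}$, a subbundle inclusion $\mathcal{O}_{X}\hookrightarrow\mathcal{E}_{2}\otimes W=\sheafhom(W^{*}\otimes\mathcal{O}_{X},\mathcal{E}_{2})$; equivalently it is a morphism $\phi\colon W^{*}\otimes\mathcal{O}_{X}\to\mathcal{E}_{2}$, and I claim $\ker\phi=0$ and $\operatorname{coker}\phi\cong\mathcal{O}_{E}(L)$.

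First I would study $\phi$ on $X\smallsetminus E$ and along $E$, using the two cases in the proof of \cref{E_1 extension}. Over $X\smallsetminus E$ (Case 1) the bundle $\mathcal{E}_{2}$ is trivialized as $W^{*}\otimes\mathcal{O}$ in such a way that $\phi$ becomes the identity; hence $\phi$ is generically an isomorphism, in particular injective (its kernel is a torsion subsheaf of the locally free $\mathcal{E}_{2}$, so it vanishes), and $C:=\operatorname{coker}\phi$ is supported on $E$. Over $E$ (Case 2) the restriction of $\phi$ to each fibre $E_{(L,K)}$ of $p\colon E\to Z$ has image the rank-$2$ trivial subbundle $(W^{*}/K)\otimes\mathcal{O}$ and kernel the line $K$; so $\phi|_{E}$ has constant rank $2$, with $\ker(\phi|_{E})=p^{*}\mathcal{O}_{Z}(0,-1)$ (the pullback of the tautological subbundle of the $\mathds{P}(W^{*})$-factor) and $\operatorname{im}(\phi|_{E})=(W^{*}\otimes\mathcal{O}_{E})/p^{*}\mathcal{O}_{Z}(0,-1)$, a subbundle of $\mathcal{E}_{2}|_{E}$.

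Next I would show $C=j_{*}\mathcal{L}$ for a line bundle $\mathcal{L}$ on $E$. The key input is $\det\mathcal{E}_{2}\cong\mathcal{O}_{X}(E)$: since $\det\mathcal{U}_{1}\cong\det\mathcal{U}_{2}$ one has $\det\mathcal{E}_{2}\cong\det\mathcal{E}_{1}\otimes\mathcal{O}_{\mathds{P}_{Y}(\mathcal{U}_{1})}(1)$, and $\det\mathcal{E}_{1}\cong\mathcal{O}(-\tau+E)$ by \cref{E_1 extension}, so it suffices to identify $\mathcal{O}_{\mathds{P}_{Y}(\mathcal{U}_{1})}(1)$ with $\mathcal{O}(\tau)$, which one reads off from the construction of the contraction $X\to\mathds{P}^{7}$. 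Granting this, $\det\phi\in\mathrm{H}^{0}(X,\det\mathcal{E}_{2}\otimes\det(W^{*}\otimes\mathcal{O}_{X})^{\vee})=\mathrm{H}^{0}(X,\mathcal{O}_{X}(E))$ is an effective divisor linearly equivalent to $E$ whose support is $E$; as $[E]$ is a nontorsion class this forces $V(\det\phi)=E$ scheme-theoretically, so $\det\phi$ vanishes to order exactly $1$ along $E$. The adjugate identity $\phi\circ\mathrm{adj}(\phi)=(\det\phi)\cdot\mathrm{Id}_{\mathcal{E}_{2}}$ then gives that the ideal of $E$ annihilates $C$, so $C$ is an $\mathcal{O}_{E}$-module and $C=C\otimes_{\mathcal{O}_{X}}\mathcal{O}_{E}=\operatorname{coker}(\phi|_{E})=:\mathcal{L}$, which is a line bundle on $E$ because $\phi|_{E}$ has constant rank $2$.

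Finally I would identify $\mathcal{L}$ and match it with $\mathcal{O}_{E}(L)$. From the exact sequence $0\to\operatorname{im}(\phi|_{E})\to\mathcal{E}_{2}|_{E}\to\mathcal{L}\to 0$ together with $c_{1}(\operatorname{im}(\phi|_{E}))=p^{*}c_{1}(\mathcal{O}_{Z}(0,1))$ and $c_{1}(\mathcal{E}_{2}|_{E})=c_{1}(\mathcal{O}_{X}(E)|_{E})=c_{1}(\mathcal{O}_{E/Z}(-1))$ (using $\det\mathcal{E}_{2}\cong\mathcal{O}_{X}(E)$ and the standard $\mathcal{O}_{X}(E)|_{E}\cong\mathcal{O}_{E/Z}(-1)$), one obtains $c_{1}(\mathcal{L})=c_{1}\bigl(\mathcal{O}_{Z}(0,-1)\otimes\mathcal{O}_{E/Z}(-1)\bigr)$; since $\Pic(E)=p^{*}\Pic(Z)\oplus\mathds{Z}\,\mathcal{O}_{E/Z}(1)$ is torsion-free this yields $\mathcal{L}\cong\mathcal{O}_{Z}(0,-1)\otimes\mathcal{O}_{E/Z}(-1)$. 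On the other hand, using $\mathcal{O}(-\tau+E)|_{E}\cong\mathcal{O}_{Z}(-1,-1)\otimes\mathcal{O}_{E/Z}(-1)$ (as recalled before the statement) and the projection formula, $\mathcal{O}_{E}(L)=j_{*}\bigl(p^{*}\mathcal{O}_{Z}(1,0)\otimes\mathcal{O}(-\tau+E)|_{E}\bigr)=j_{*}\bigl(\mathcal{O}_{Z}(0,-1)\otimes\mathcal{O}_{E/Z}(-1)\bigr)=j_{*}\mathcal{L}=C$, which completes the proof. The hard part will be the single identification $\mathcal{O}_{\mathds{P}_{Y}(\mathcal{U}_{1})}(1)\cong\mathcal{O}(\tau)$ (equivalently $\det\mathcal{E}_{2}\cong\mathcal{O}_{X}(E)$) on which the line-bundle bookkeeping hinges; absent a clean conceptual argument it can be pinned down from the intersection numbers on $X$ (for instance $\tau^{6}\cdot E=0$ and $\tau^{4}\cdot E^{3}=6$), the remainder following the template of \cref{E_1 extension}.
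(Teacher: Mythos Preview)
Your argument is correct and follows the same template as the paper's: build $\phi\colon W^{*}\otimes\mathcal{O}\to\mathcal{E}_{2}$ from the universal representation, note it is a generic isomorphism hence injective, show the cokernel is a line bundle on $E$, and pin that line bundle down by a first Chern class computation. (One harmless slip: the kernel of $\phi$ is a subsheaf of $W^{*}\otimes\mathcal{O}_{X}$, not of $\mathcal{E}_{2}$.) The one substantive difference is how the scheme-theoretic support on $E$ is justified. You use $\det\mathcal{E}_{2}\cong\mathcal{O}_{X}(E)$ together with the adjugate identity, whereas the paper reads the cokernel off directly from the explicit Case~2 description of the representation near $E$: there the quotient is the ``$\mathds{C}.b$''-line, globally the cokernel of $Q_{1}^{*}\otimes(\mathcal{E}_{1}/\mathcal{O})|_{E}\hookrightarrow W^{*}\otimes(\mathcal{E}_{1}/\mathcal{O})|_{E}$, and the Chern class is then computed from that presentation. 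Your determinant route is tidy, but it needs $\mathcal{O}_{\mathds{P}_{Y}(\mathcal{U}_{1})}(1)\cong\mathcal{O}(\tau)$ (equivalently $\det\mathcal{E}_{2}\cong\mathcal{O}(E)$) as \emph{input}; in the paper that identity is recorded only afterward, in \cref{corollary: determinants of E_1 and E_2 and also tau=rel.O(1)}, as a \emph{consequence} of \cref{E_1 extension} together with the present proposition. This is not a gap, only a reordering: you can establish $\mathcal{O}_{\mathds{P}_{Y}(\mathcal{U}_{1})}(1)\cong\mathcal{O}(\tau)$ independently by combining $\det\mathcal{E}_{1}=\mathcal{O}(-\tau+E)$ from \cref{E_1 extension} with the two expressions $\omega_{X}=\mathcal{O}(-8\tau+2E)$ (from the blowup) and $\omega_{X}=\pi^{*}\mathcal{O}_{Y}(-2)\otimes\mathcal{O}_{\mathds{P}_{Y}(\mathcal{U}_{1})}(-2)$ (from the $\mathds{P}^{1}$-bundle), and solving in $\Pic(X)=\mathds{Z}\tau\oplus\mathds{Z}E$. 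So your ``hard part'' is in fact a two-line linear system, and no intersection-number computations are needed.
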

	
	\begin{proof}
		Obviously we have the generically injective, thus injective, map between two locally free sheaves $W^*\otimes \mathcal{O}$ and $\mathcal{E}_2$, and the quotient $Q$ is scheme-theoretically supported on $E$. In the identification given above, we know that the restriction of $Q$ to $E$ is actually a line bundle corresponding to $\mathds{C}.b$ and is equal to the cokernel of the map $Q_1^*\otimes(\mathcal{E}_1/\mathcal{O})|_{E}\hookrightarrow W^*\otimes(\mathcal{E}_1/\mathcal{O})|_{E}$ corresponding to the fiberwise map $L\otimes \mathds{C}.a\hookrightarrow W^*\otimes\mathds{C}.a$, where $Q_1^*$ is the dual of the pullback of the tautological quotient bundle over $\mathds{P}(W)$. An easy first Chern class calculation shows that $Q|_E$ is equal to $(\mathcal{E}_1/\mathcal{O})|_{E}\otimes \mathcal{O}_Z(1,0)=\mathcal{O}_Z(0,-1)\otimes \mathcal{O}_{E/Z}(-1)$.   
	\end{proof}
	
	\begin{Corollary}
		\label{2dual}
		The following sequence is exact:
		$$0\rightarrow \mathcal{E}_2^*\rightarrow W\otimes\mathcal{O}\rightarrow j_*\mathcal{O}_Z(0,1)\rightarrow 0.$$
	\end{Corollary}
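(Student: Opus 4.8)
The plan is to obtain the stated sequence simply by dualizing the short exact sequence of \cref{E2}, i.e. by applying $\sheafhom(-,\mathcal O)$ to
$$0\rightarrow W^*\otimes \mathcal{O}\rightarrow \mathcal{E}_2\rightarrow \mathcal{O}_E(L)\rightarrow 0.$$
Since $\mathcal E_2$ is locally free, $\sheafext^1(\mathcal E_2,\mathcal O)=0$; and since $\mathcal O_E(L)$ is scheme-theoretically supported on the Cartier divisor $E$, it is a torsion $\mathcal O$-module, so $\sheafhom(\mathcal O_E(L),\mathcal O)=0$ as $X$ is integral. Hence the $\sheafext$ long exact sequence collapses to
$$0\rightarrow \mathcal E_2^*\rightarrow W\otimes\mathcal O\rightarrow \sheafext^1(\mathcal O_E(L),\mathcal O)\rightarrow 0,$$
using $(W^*\otimes\mathcal O)^\vee\cong W\otimes\mathcal O$. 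So everything reduces to identifying $\sheafext^1(\mathcal O_E(L),\mathcal O)$ with $j_*\mathcal O_Z(0,1)$.

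For this I would invoke the standard computation of the $\sheafext$-sheaves of a line bundle carried by a Cartier divisor. Writing $\mathcal O_E(L)=j_*\mathcal L$ with $\mathcal L := \mathcal O_Z(1,0)\otimes j^*\mathcal O(-\tau+E)$ (by the projection formula applied to the shape of $\mathcal O_E(L)$ given in \cref{E2}), the Koszul resolution of $j_*\mathcal O_E$ along $E$ yields $\sheafext^q(j_*\mathcal L,\mathcal O)=0$ for $q\neq 1$ and
$$\sheafext^1(j_*\mathcal L,\mathcal O)\;\cong\;j_*\bigl(\mathcal L^{-1}\otimes\mathcal O_E(E)\bigr),$$
where $\mathcal O_E(E):=j^*\mathcal O(E)=N_{E/X}$. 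Substituting $\mathcal L^{-1}=\mathcal O_Z(-1,0)\otimes j^*\mathcal O(\tau-E)$ and cancelling the $+E$ against the $-E$, this becomes $j_*\bigl(\mathcal O_Z(-1,0)\otimes j^*\mathcal O(\tau)\bigr)$.

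It then remains only to check the one genuinely geometric point, namely that $j^*\mathcal O(\tau)\cong\mathcal O_Z(1,1)$ (pulled back along $p\colon E\to Z$, in the usual abuse). Indeed $\mathcal O(\tau)$ is pulled back from $\mathcal O_{\mathds P^7}(1)$, the blow-up morphism restricts on $E$ to $p$ followed by the inclusion $Z=\mathds P(W)\times\mathds P(W^*)\hookrightarrow\mathds P^7$, and that inclusion is the Segre embedding composed with the linear projection away from $[\mathrm{Id}_W]$ — a morphism defined on all of $Z$, since $\mathrm{Id}_W$ has rank three while the points of $Z$ are rank-one tensors. Hence $\mathcal O_{\mathds P^7}(1)$ pulls back to $\mathcal O_{\mathds P(W)}(1)\boxtimes\mathcal O_{\mathds P(W^*)}(1)=\mathcal O_Z(1,1)$, and therefore
$$\mathcal L^{-1}\otimes\mathcal O_E(E)\cong\mathcal O_Z(-1,0)\otimes\mathcal O_Z(1,1)=\mathcal O_Z(0,1),$$
so $\sheafext^1(\mathcal O_E(L),\mathcal O)\cong j_*\mathcal O_Z(0,1)$ and the displayed three-term sequence is exactly the one in the statement. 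The homological steps are routine; the only thing requiring care is the bookkeeping of line bundles on $E$ — in particular the cancellation of $\mathcal O_E(E)$ against the divisor $E$ built into $\mathcal O_E(L)$, together with the identification $j^*\mathcal O(\tau)\cong\mathcal O_Z(1,1)$ — and I do not anticipate a substantive obstacle.
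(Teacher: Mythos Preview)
Your proposal is correct and follows essentially the same route as the paper: both dualize the sequence of \cref{E2} and identify $\mathrm{R}\sheafhom(j_*\mathcal L,\mathcal O)$ via the local/Grothendieck--Verdier duality for the Cartier divisor $E$, the paper packaging this as $j^!\mathcal O=j^*\mathcal O\otimes\omega_j[-1]$ while you spell out the equivalent $\sheafext^1(j_*\mathcal L,\mathcal O)\cong j_*(\mathcal L^{-1}\otimes\mathcal O_E(E))$. Your explicit justification of $j^*\mathcal O(\tau)\cong\mathcal O_Z(1,1)$ is exactly the ingredient the paper uses silently in its first displayed equality.
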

	
	\begin{proof}
		We just need to dualize the last short exact sequence using Grothendieck-Verdier Duality:
		\begin{align*}
			\mathrm{R}\mathcal{H}om^.(j_*\mathcal{O}_Z(1,0)\otimes \mathcal{O}(-\tau+E),\mathcal{O})
			&=\mathrm{R}\mathcal{H}om^.(j_*\mathcal{O}_Z(0,-1),\mathcal{O}(-E))
			\\&=j_*(\mathrm{R}\mathcal{H}om^.(\mathcal{O}_Z(0,-1),j^*\mathcal{O}(-E)\otimes \omega_j[-1])=j_*\mathcal{O}_Z(0,1)[-1].
			\qedhere
		\end{align*}
	\end{proof}
	
	\begin{Corollary}
		\label{corollary: determinants of E_1 and E_2 and also tau=rel.O(1)}
		$\mathrm{det}(\mathcal{E}_1)=\mathcal{O}(-\tau+E)$, $\mathrm{det}(\mathcal{E}_2)=\mathcal{O}(E)$, $\mathcal{O}_X(\tau)=\mathcal{O}_{\mathds{P}_Y(\mathcal{U}_1)}(1)$;
		\label{dual of E1}
		In particular, $\mathcal E_1^* \cong \mathcal E_1(\tau - E)$.
	\end{Corollary}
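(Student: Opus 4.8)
\textbf{Proof strategy for \cref{corollary: determinants of E_1 and E_2 and also tau=rel.O(1)}.}
The plan is to read off all four statements directly from the short exact sequences established in \cref{E_1 extension} and \cref{E2} (and its dual \cref{2dual}), together with basic facts about blow-ups. First I would compute $\det \mathcal E_1$: taking determinants in the sequence $0\to\mathcal O\to\mathcal E_1\to\mathcal O(-\tau+E)\to 0$ of \cref{E_1 extension} immediately gives $\det\mathcal E_1 = \mathcal O\otimes\mathcal O(-\tau+E) = \mathcal O(-\tau+E)$. Similarly, from the sequence $0\to W^*\otimes\mathcal O\to\mathcal E_2\to\mathcal O_E(L)\to 0$ of \cref{E2}, I would use that $\det(W^*\otimes\mathcal O)=\mathcal O$ (since $W^*$ is $3$-dimensional and $\det\mathcal O_X = \mathcal O_X$) and that the determinant of a line bundle supported on the divisor $E$, namely $\mathcal O_E(L) = (j_*\mathcal O_Z(1,0))\otimes\mathcal O(-\tau+E)$, contributes $\mathcal O(E)$ — here I would invoke the standard fact that for a torsion sheaf $\mathcal F$ scheme-theoretically supported on an effective Cartier divisor $D$ with a locally free rank-one structure along $D$, one has $\det\mathcal F = \mathcal O(D)$. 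This yields $\det\mathcal E_2 = \mathcal O(E)$.

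For the third claim, $\mathcal O_X(\tau)=\mathcal O_{\mathds P_Y(\mathcal U_1)}(1)$, I would recall the definitions: $\mathcal O(\tau)$ was defined as the pullback of $\mathcal O_{\mathds P^7}(1)$ along the blow-up, while $\mathcal E_1$ was defined as $\pi^*\mathcal U_1\otimes\mathcal O_{\mathds P_Y(\mathcal U_1)}(1)$, so that $\det\mathcal E_1 = \pi^*\det\mathcal U_1\otimes\mathcal O_{\mathds P_Y(\mathcal U_1)}(2)$. Using $\det\mathcal U_1 = \mathcal O_Y(-1) = \pi^*$ of the ample generator (per \cref{notation: Y and universal bundles}), together with $\det\mathcal E_1 = \mathcal O(-\tau+E)$ from the first step, I would solve for $\mathcal O_{\mathds P_Y(\mathcal U_1)}(1)$ in the Picard group $\mathrm{Pic}(X)=\mathds Z\mathcal O(\tau)\oplus\mathds Z\mathcal O(E)$. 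To pin it down I would test against two classes of curves: a fiber of $\pi:X\to Y$ (on which $\mathcal O(\tau)$, $\pi^*\mathcal O_Y(1)$, and $\mathcal O(E)$ restrict to known degrees, and $\mathcal O_{\mathds P_Y(\mathcal U_1)}(1)$ has degree $1$) and a line in a fiber of the exceptional divisor $E\to Z$, which fixes the $E$-coefficient. Alternatively — and perhaps more cleanly — one can argue that $\mathcal O_{\mathds P_Y(\mathcal U_1)}(1)$ restricted to $X\setminus E$ must agree with $\mathcal O(\tau)$ there (by the Case~1 identification in the proof of the preceding proposition, where $\mathcal E_1/\mathcal O = \mathcal O_{X-E}(-\tau)$ and simultaneously $=\mathcal O_{\mathds P_Y(\mathcal U_1)}(-1)$ since $\mathcal O$ splits off), so the two differ only by a multiple of $E$, and then the fiber-of-$E\to Z$ computation finishes it.

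The last assertion $\mathcal E_1^*\cong\mathcal E_1(\tau-E)$ is then formal: dualizing the rank-two sequence of \cref{E_1 extension} gives $0\to\mathcal O(\tau-E)\to\mathcal E_1^*\to\mathcal O\to 0$, and comparing with the original sequence twisted by $\mathcal O(\tau-E)$, namely $0\to\mathcal O(\tau-E)\to\mathcal E_1(\tau-E)\to\mathcal O\to 0$ (using $(-\tau+E)+(\tau-E)=0$), one sees both $\mathcal E_1^*$ and $\mathcal E_1(\tau-E)$ are extensions of $\mathcal O$ by $\mathcal O(\tau-E)$; since a rank-two bundle with trivial determinant satisfying $\det=\mathcal O$ and containing a sub-line-bundle is self-dual up to that twist, $\mathcal E_1^*\cong\mathcal E_1\otimes\det\mathcal E_1^{-1} = \mathcal E_1(\tau-E)$ — the general identity $V^*\cong V\otimes(\det V)^{-1}$ for rank-two $V$ makes this immediate once $\det\mathcal E_1 = \mathcal O(-\tau+E)$ is known. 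I expect no real obstacle here; the only point requiring mild care is the determinant-of-a-divisorial-sheaf computation in \cref{E2} and the bookkeeping in $\mathrm{Pic}(X)$ to nail down the $E$-coefficient of $\mathcal O_{\mathds P_Y(\mathcal U_1)}(1)$, which is why I would lean on the restriction-to-$X\setminus E$ argument to sidestep intersection-number computations.
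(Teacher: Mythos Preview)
Your computations of $\det\mathcal E_1$ and $\det\mathcal E_2$ from the short exact sequences, and the final deduction $\mathcal E_1^*\cong\mathcal E_1\otimes(\det\mathcal E_1)^{-1}$ for a rank-two bundle, are correct and match the paper.

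For the third statement, however, your ``perhaps more cleanly'' alternative contains an error: $\mathcal E_1/\mathcal O$ is \emph{not} isomorphic to $\mathcal O_{\mathds P_Y(\mathcal U_1)}(-1)$. From the tautological sequence $0\to\mathcal O_{\mathds P_Y(\mathcal U_1)}(-1)\to\pi^*\mathcal U_1\to Q\to 0$ tensored with $\mathcal O_{\mathds P_Y(\mathcal U_1)}(1)$ one gets $\mathcal E_1/\mathcal O\cong Q\otimes\mathcal O_{\mathds P_Y(\mathcal U_1)}(1)\cong\pi^*\mathcal O_Y(-1)\otimes\mathcal O_{\mathds P_Y(\mathcal U_1)}(2)$, not $\mathcal O_{\mathds P_Y(\mathcal U_1)}(-1)$; and there is no reason for the sequence to split on $X\setminus E$. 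Your curve-testing route would work in principle, but you would still need to identify how the $\pi$-fibers sit relative to the blow-up picture to compute $\mathcal O(\tau)$ and $\mathcal O(E)$ on them.

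The paper sidesteps all of this with a one-line observation you overlooked: since $\det\mathcal U_1=\det\mathcal U_2$ (from \cref{notation: Y and universal bundles}) and $\mathcal E_i=\pi^*\mathcal U_i\otimes\mathcal O_{\mathds P_Y(\mathcal U_1)}(1)$, taking determinants gives $\det\mathcal E_2\otimes(\det\mathcal E_1)^{-1}=\mathcal O_{\mathds P_Y(\mathcal U_1)}(3-2)=\mathcal O_{\mathds P_Y(\mathcal U_1)}(1)$. With $\det\mathcal E_1=\mathcal O(-\tau+E)$ and $\det\mathcal E_2=\mathcal O(E)$ already in hand, this immediately yields $\mathcal O_{\mathds P_Y(\mathcal U_1)}(1)=\mathcal O(\tau)$, with no need to resolve $\pi^*\mathcal O_Y(-1)$ separately or to intersect with curves.
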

	\begin{proof}
		For the third statement, recall that $\mathcal{O}_X(\tau)$ is the pullback of $\mathcal{O}_{\mathds{P}^7}(1)$ from the blow up map, $\mathcal{O}_{\mathds{P}_Y(\mathcal{U}_1)}(1)$ is the tautological line bundle of the projective bundle $\mathds{P}_Y(\mathcal{U}_1)$ over $Y$. The third statement comes from the fact that $\mathcal{E}_1=\pi^*\mathcal{U}_1\otimes \mathcal{O}_{\mathds{P}_Y(\mathcal{U}_1)}(1)$ and $\mathcal{E}_2=\pi^*\mathcal{U}_2\otimes \mathcal{O}_{\mathds{P}_Y(\mathcal{U}_1)}(1)$ and $\mathrm{det}(\mathcal{U}_1)=\mathrm{det}(\mathcal{U}_2)$.
	\end{proof}
	\begin{lemma}
		\label{-3} $\mathrm{R}p_*(\mathcal{O}_{E/Z}(-3)) \cong \mathcal{O}_Z(5,5)[-2]$.
	\end{lemma}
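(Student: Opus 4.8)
The plan is to compute $\mathrm{R}p_*$ fiberwise, observe it is a line bundle placed in cohomological degree $2$, and then pin that line bundle down via relative duality together with the blow‑up description of $X$ from \cref{subsection: interpretation of X as a blowup}.

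\textbf{Step 1 (reduction to a line bundle in degree $2$).} The map $p\colon E\to Z$ is a $\mathds{P}^2$-bundle — $E$ is the exceptional divisor of the blow up of $\mathds{P}^7$ along the codimension‑$3$ subvariety $Z=\mathds{P}(W)\times\mathds{P}(W^*)$ — and $\mathcal{O}_{E/Z}(1)$ restricts to $\mathcal{O}_{\mathds{P}^2}(1)$ on each fiber (this is compatible with \cref{E_1 extension}, where $\mathcal{O}(E)|_{E_{(L,K)}}=\mathcal{O}_{E_{(L,K)}}(-1)$). Hence $\mathcal{O}_{E/Z}(-3)$ restricts to $\mathcal{O}_{\mathds{P}^2}(-3)$, whose cohomology is one-dimensional in degree $2$ and zero otherwise. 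By cohomology and base change, $R^0p_*=R^1p_*=0$ and $\mathcal{L}:=R^2p_*\mathcal{O}_{E/Z}(-3)$ is a line bundle on $Z$, so $\mathrm{R}p_*\mathcal{O}_{E/Z}(-3)\cong\mathcal{L}[-2]$; it remains to identify $\mathcal{L}$.

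\textbf{Step 2 (relative duality).} By relative Serre duality for the smooth proper morphism $p$ of relative dimension $2$, $\mathcal{L}\cong\big(p_*(\mathcal{O}_{E/Z}(3)\otimes\omega_{E/Z})\big)^{\vee}$. The relative Euler sequence for $E=\mathds{P}(N_{Z/\mathds{P}^7})$ gives $\omega_{E/Z}\cong\mathcal{O}_{E/Z}(-3)\otimes p^*\det(N_{Z/\mathds{P}^7})^{\vee}$ (one can cross-check this using $\omega_E\cong\omega_X(E)|_E$, $\omega_X\cong g^*\omega_{\mathds{P}^7}(2E)$, $\mathcal{O}_E(E)|_E\cong\mathcal{O}_{E/Z}(-1)$ and $\omega_E\cong p^*\omega_Z\otimes\omega_{E/Z}$). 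Therefore $\mathcal{O}_{E/Z}(3)\otimes\omega_{E/Z}\cong p^*\det(N_{Z/\mathds{P}^7})^{\vee}$, and the projection formula together with $p_*\mathcal{O}_E\cong\mathcal{O}_Z$ yields $\mathcal{L}\cong\det(N_{Z/\mathds{P}^7})$. So the whole problem reduces to computing $\det N_{Z/\mathds{P}^7}$.

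\textbf{Step 3 (the normal bundle of $Z$).} Recall $Z\hookrightarrow\mathds{P}^7=\mathds{P}(W\otimes W^*/\mathds{C}\cdot\mathrm{Id}_W)$ is the Segre embedding $Z\hookrightarrow\mathds{P}(W\otimes W^*)=\mathds{P}^8$ followed by linear projection away from $q=[\mathrm{Id}_W]$; since $\mathrm{Id}_W$ has rank $3$ it is not proportional to any rank‑$1$ tensor, so $q\notin Z$, and since the projection restricts to a closed embedding of $Z$ no line through $q$ is tangent to $Z$. For the Segre we have $\det N_{Z/\mathds{P}^8}\cong\omega_Z\otimes\mathcal{O}_{\mathds{P}^8}(9)|_Z\cong\mathcal{O}_Z(-3,-3)\otimes\mathcal{O}_Z(9,9)\cong\mathcal{O}_Z(6,6)$. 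Resolving the projection by $g\colon\mathrm{Bl}_q\mathds{P}^8\to\mathds{P}^7$, which is a $\mathds{P}^1$-bundle, $Z$ lifts isomorphically (it avoids the exceptional divisor) with unchanged normal bundle, and the sequence $0\to T_{g}\to T_{\mathrm{Bl}_q\mathds{P}^8}\to g^*T_{\mathds{P}^7}\to 0$ restricted along $Z$ (using that $g|_Z$ is an immersion, so $T_Z\cap T_g|_Z=0$) produces $0\to T_{g}|_Z\to N_{Z/\mathds{P}^8}\to N_{Z/\mathds{P}^7}\to 0$. Here $T_g|_Z$ is the tangent to the lines through $q$, i.e. at $z$ it is $\mathrm{Hom}\big(\langle z\rangle,(\langle q\rangle+\langle z\rangle)/\langle z\rangle\big)$, and because $\mathrm{Id}_W$ spans a fixed line transverse to every fiber of $\mathcal{O}_Z(-1,-1)\hookrightarrow W\otimes W^*$ this is globally $\mathcal{H}om\big(\mathcal{O}_Z(-1,-1),\mathcal{O}_Z\big)\cong\mathcal{O}_Z(1,1)$. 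Taking determinants, $\det N_{Z/\mathds{P}^7}\cong\mathcal{O}_Z(6,6)\otimes\mathcal{O}_Z(1,1)^{\vee}\cong\mathcal{O}_Z(5,5)$.

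Combining the three steps gives $\mathrm{R}p_*\mathcal{O}_{E/Z}(-3)\cong\mathcal{O}_Z(5,5)[-2]$. The main obstacle is Step 3: one must keep the $\mathds{P}(\cdot)$-convention straight so that $\mathcal{O}_{E/Z}(1)$, $\mathcal{O}_E(E)|_E$ and $\omega_{E/Z}$ all carry the correct signs, and one must justify the short exact sequence relating $N_{Z/\mathds{P}^8}$ and $N_{Z/\mathds{P}^7}$, which hinges on $q\notin Z$ and on the projection being an immersion along $Z$. As an independent sanity check, the equality $\det N_{Z/\mathds{P}^7}\cong\mathcal{O}_Z(5,5)$ also follows purely from the blow‑up adjunction relations in Step 2 together with $\mathcal{O}_{\mathds{P}^7}(1)|_Z\cong\mathcal{O}_Z(1,1)$, avoiding the secant-line analysis entirely.
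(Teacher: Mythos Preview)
Your proof is correct and follows essentially the same route as the paper: both reduce to computing $\det N_{Z/\mathds{P}^7}$ via the short exact sequence $0\to\mathcal{O}_Z(1,1)\to N_{Z/\mathds{P}^8}\to N_{Z/\mathds{P}^7}\to 0$ coming from the linear projection $\mathds{P}^8\dashrightarrow\mathds{P}^7$. The only substantive difference is that the paper identifies $N_{Z/\mathds{P}^8}$ explicitly as $T_{\mathds{P}(W)}\boxtimes T_{\mathds{P}(W^*)}$ (a standard fact about the Segre embedding) and reads off the determinant, whereas you bypass this by computing $\det N_{Z/\mathds{P}^8}$ directly from adjunction; your Steps~1--2 also spell out the relative-duality justification for $\mathrm{R}p_*\mathcal{O}_{E/Z}(-3)\cong\det N_{Z/\mathds{P}^7}[-2]$, which the paper states in one line.
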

	\begin{proof}
		$E$ is the projectivization of the normal bundle $N_{Z|\mathds{P}^7}$. As a result, $\mathrm{R}p_*(\mathcal{O}_{E/Z}(-3))=\mathrm{det}(N_{Z|\mathds{P}^7})[-2]$.
		On the other hand, we have the exact sequence $0\rightarrow \mathcal{O}_{\mathds{P}^7}(1)|_Z=\mathcal{O}_Z(1,1)\rightarrow N_{Z|\mathds{P}^8}\rightarrow N_{Z|\mathds{P}^7}\rightarrow 0$.
		To calculate $N_{Z|\mathds{P}^8}$, observe that we have the following exact sequence: $0\rightarrow T_{\mathds{P}(W)}\oplus T_{\mathds{P}(W^*)} \rightarrow T_{\mathds{P}(W\otimes W^*)}\rightarrow T_{\mathds{P}(W)}\boxtimes T_{\mathds{P}(W^*)}\rightarrow 0$, so that $N_{Z|\mathds{P}^8}\cong T_{\mathds{P}(W)}\boxtimes T_{\mathds{P}(W^*)}$.
	\end{proof}

	\section{Derived Category}
	\label{section: derived category}

	In this section we will study the derived category of the 6-dimensional quiver moduli space $Y$, and give a full exceptional sequence.
	
	\begin{definition}
		Let $\mathcal{T}$ be a triangulated category.
		An object $E\in \mathcal{T}$ is called \emph{exceptional} if $\rhom^\bullet(E,E)$ $=\mathds{C}[0]$.
		A collection of objects $\langle E_1,E_2,\dots,E_r\rangle$ is called \emph{exceptional} if each object $E_i$ is exceptional and $\mathrm{RHom}^{\bullet}(E_i,E_j)=0$ for $i>j$.
		An exceptional collection of objects $\langle E_1,E_2,\dots,E_r\rangle$ is \emph{full} if the smallest triangulated subcategory containing $\langle E_1,E_2,\dots,E_r\rangle$ is $\mathcal{T}$ itself.
	\end{definition}
	
	\begin{definition}
		Let $i_*:\mathcal{A}\hookrightarrow \mathcal{T}$ be an admissible subcategory, let $i^*$ and $i^{!}$ denote the left and right adjoint functors of $i_*$, respectively.
		The \emph{left mutation} of an element $\mathcal{G}\in \mathcal{T}$ across $\mathcal A$ is defined to be the cone of the canonical map $i_*i^!\mathcal{G}\rightarrow\mathcal{G} $, and the \emph{right mutation} across $\mathcal A$ is defined to be the cone of the map $\mathcal{G}[-1]\rightarrow i_*i^*\mathcal{G}[-1]$.
	\end{definition}
	
	When $\mathcal A = \langle E \rangle$ for an exceptional object $E$, we have 
	$i_*i^! (\mathcal G) = \rhom^\bullet ( E , \mathcal G ) \otimes E$ and 
	$i_*i^* (\mathcal G) = \rhom^\bullet ( \mathcal G , E )^* \otimes E$.
	
	\begin{lemma}
		Assume $\langle E_1, E_2\rangle$ is an exceptional collection in a triangulated category $\mathcal{C}$. For any object $\mathcal{G}\in \mathcal{C}$, we have $\mathrm{RHom}^{\bullet}(E_2,\mathcal{G})=\mathrm{RHom}^{\bullet}(E_2,\mathrm{R}_{E_1}\mathcal{G})$, where $\mathrm{R}_{E_1}\mathcal{G}$ is the right mutation of $\mathcal{G}$ across $\langle E_1 \rangle$, and similarly $\mathrm{RHom}^{\bullet}(\mathcal{G},E_1)=\mathrm{RHom}^{\bullet}(\mathrm{L}_{E_2}\mathcal{G}, E_1)$.\label{mutation and Rhom}
	\end{lemma}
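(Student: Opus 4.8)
The plan is to prove the statement about mutations and $\mathrm{RHom}$ directly from the definition of a right (resp. left) mutation across an exceptional object, using the distinguished triangle it sits in and the vanishing that comes from exceptionality of the pair $\langle E_1, E_2 \rangle$.

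First I would recall that since $E_1$ is exceptional, $\langle E_1 \rangle$ is an admissible subcategory, and the right mutation $\mathrm{R}_{E_1}\mathcal{G}$ fits into a distinguished triangle
\begin{equation*}
\mathcal{G} \longrightarrow \rhom^\bullet(\mathcal{G}, E_1)^* \otimes E_1 \longrightarrow \mathrm{R}_{E_1}\mathcal{G} \longrightarrow \mathcal{G}[1],
\end{equation*}
obtained by rotating the defining triangle $\mathcal{G}[-1] \to i_*i^*\mathcal{G}[-1] \to \mathrm{R}_{E_1}\mathcal{G}$ and using $i_*i^*\mathcal{G} = \rhom^\bullet(\mathcal{G},E_1)^* \otimes E_1$. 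Applying the cohomological functor $\rhom^\bullet(E_2, -)$ to this triangle yields a long exact sequence relating $\rhom^\bullet(E_2,\mathcal{G})$, $\rhom^\bullet(E_2, \rhom^\bullet(\mathcal{G},E_1)^*\otimes E_1)$, and $\rhom^\bullet(E_2, \mathrm{R}_{E_1}\mathcal{G})$. The middle term is $\rhom^\bullet(\mathcal{G},E_1)^* \otimes \rhom^\bullet(E_2, E_1)$, and since $\langle E_1, E_2\rangle$ is exceptional with $2 > 1$, we have $\rhom^\bullet(E_2, E_1) = 0$. Hence the middle term vanishes and the connecting maps in the triangle induce the isomorphism $\rhom^\bullet(E_2,\mathcal{G}) \cong \rhom^\bullet(E_2, \mathrm{R}_{E_1}\mathcal{G})$.

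The second statement is entirely dual: the left mutation $\mathrm{L}_{E_2}\mathcal{G}$ sits in the distinguished triangle
\begin{equation*}
\mathrm{L}_{E_2}\mathcal{G}[-1] \longrightarrow \rhom^\bullet(E_2,\mathcal{G}) \otimes E_2 \longrightarrow \mathcal{G} \longrightarrow \mathrm{L}_{E_2}\mathcal{G},
\end{equation*}
coming from the cone of $i_*i^!\mathcal{G} = \rhom^\bullet(E_2,\mathcal{G}) \otimes E_2 \to \mathcal{G}$. Applying $\rhom^\bullet(-, E_1)$ gives a long exact sequence in which the term $\rhom^\bullet(\rhom^\bullet(E_2,\mathcal{G})\otimes E_2, E_1) = \rhom^\bullet(E_2,\mathcal{G})^* \otimes \rhom^\bullet(E_2, E_1)$ again vanishes by exceptionality of the pair, yielding $\rhom^\bullet(\mathcal{G}, E_1) \cong \rhom^\bullet(\mathrm{L}_{E_2}\mathcal{G}, E_1)$.

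This argument is essentially formal, so I do not anticipate a genuine obstacle; the only point requiring minor care is bookkeeping the direction of the triangles and the placement of the shift $[-1]$ so that the correct $\rhom$ group lands in the vanishing middle term. One should also be slightly careful that the tensor factors $\rhom^\bullet(\mathcal{G},E_1)^*$ and $\rhom^\bullet(E_2,\mathcal{G})$ are perfect complexes of vector spaces, so that $- \otimes E_i$ commutes with $\rhom^\bullet$ and the identification of the middle term is valid; this holds in the situations of interest since all objects in play are perfect.
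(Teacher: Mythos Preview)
Your proof is correct and follows essentially the same approach as the paper: write down the defining triangle for the mutation, apply $\mathrm{RHom}^{\bullet}(E_2,-)$ (resp.\ $\mathrm{RHom}^{\bullet}(-,E_1)$), and use $\mathrm{RHom}^{\bullet}(E_2,E_1)=0$ to kill the middle term. The paper's proof is slightly terser, treating only the right-mutation case explicitly and leaving the dual statement implicit.
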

	\begin{proof}
		The right mutation is defined via the exact triangle:
		$$\mathrm{R}_{E_1}\mathcal{G}\rightarrow \mathcal{G}\rightarrow (\mathrm{RHom}^{\bullet}(\mathcal{G},E_1))^*\otimes E_1\rightarrow \mathrm{R}_{E_1}\mathcal{G}[1].$$
		We only need to apply the functor $\mathrm{RHom}^{\bullet}(E_2,-)$ and notice that $\mathrm{RHom}^{\bullet}(E_2,E_1)=0$ to conclude. \label{Der lemma}
	\end{proof}
	\begin{lemma}\label{res G}
		Assume $Y$ is a smooth projective variety, and $\mathcal{G}^.\in \mathrm{D}^b(Y)$. If the derived pullback $i_y^*(\mathcal{G}^.)=0$ in $\mathrm{D}^b(k(y))$ for every closed point $y$, where $i_y: \mathrm{Spec}(k(y))\hookrightarrow Y$ is the closed embedding, then $\mathcal{G}^.=0$.
	\end{lemma}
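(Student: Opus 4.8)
The plan is to argue by contradiction, reducing the vanishing of $\mathcal G^.$ to the vanishing of its top nonzero cohomology sheaf and then invoking Nakayama's lemma. Suppose $\mathcal G^. \neq 0$. Since $\mathcal G^. \in \mathrm D^b(Y)$, there is a largest integer $j$ with $\mathcal H^j(\mathcal G^.) \neq 0$; set $\mathcal F := \mathcal H^j(\mathcal G^.)$, a nonzero coherent sheaf on $Y$. The goal becomes to show $\mathcal F \otimes_{\mathcal O_Y} k(y) = 0$ for every closed point $y$, and then to deduce $\mathcal F = 0$.

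Next I would apply the derived pullback $i_y^*$ to the truncation triangle
\[
\tau_{\le j-1}\mathcal G^. \longrightarrow \mathcal G^. \longrightarrow \mathcal F[-j] \longrightarrow \tau_{\le j-1}\mathcal G^.[1].
\]
The key elementary observation is that derived pullback does not raise the top cohomological degree: computing $i_y^*$ with a free resolution of $k(y)$ concentrated in nonpositive degrees, a complex with cohomology in degrees $\le j-1$ is carried to one with cohomology in degrees $\le j-1$. Hence, in the long exact cohomology sequence of $k(y)$-vector spaces obtained by applying $i_y^*$ to the triangle, the terms $\mathcal H^j(i_y^* \tau_{\le j-1}\mathcal G^.)$ and $\mathcal H^{j+1}(i_y^* \tau_{\le j-1}\mathcal G^.)$ both vanish, and one reads off a natural isomorphism
\[
\mathcal H^j\bigl(i_y^* \mathcal G^.\bigr) \;\cong\; \mathcal H^0\bigl(i_y^*(\mathcal F[-j])\bigr) \;=\; \mathcal F \otimes_{\mathcal O_Y} k(y).
\]
(Equivalently, one may feed in the hyper-Tor spectral sequence $E_2^{p,q} = \mathrm{Tor}^{\mathcal O_{Y,y}}_{-p}\!\bigl(\mathcal H^q(\mathcal G^.)_y, k(y)\bigr) \Rightarrow \mathcal H^{p+q}(i_y^*\mathcal G^.)$ and note that the corner $E_2^{0,j}$ survives to $E_\infty$ and accounts for all of $\mathcal H^j(i_y^*\mathcal G^.)$, since $\mathcal H^{q}(\mathcal G^.) = 0$ for $q > j$.)

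By hypothesis $i_y^* \mathcal G^. = 0$ for every closed point $y$, so the displayed isomorphism gives $\mathcal F \otimes_{\mathcal O_Y} k(y) = 0$ for all such $y$. Since $\mathcal F$ is coherent, $\mathcal F_y$ is a finitely generated $\mathcal O_{Y,y}$-module with $\mathcal F_y \otimes_{\mathcal O_{Y,y}} k(y) = 0$, so Nakayama's lemma forces $\mathcal F_y = 0$. Thus $\mathrm{Supp}(\mathcal F)$ is a closed subset of the finite-type $k$-scheme $Y$ containing no closed point, hence empty, so $\mathcal F = 0$ — contradicting the choice of $j$. Therefore $\mathcal G^. = 0$. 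The only point requiring a little care is the degree bookkeeping that produces the isomorphism $\mathcal H^j(i_y^*\mathcal G^.) \cong \mathcal F \otimes k(y)$; everything after that is formal (Nakayama, together with the fact that a nonempty closed subscheme of a Jacobson scheme contains a closed point), and in particular smoothness of $Y$ is not really used.
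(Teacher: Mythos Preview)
Your proof is correct and follows essentially the same approach as the paper: both argue by contradiction, pick the largest $j$ with $\mathcal H^j(\mathcal G^.)\neq 0$, and show that $\mathcal H^j(i_y^*\mathcal G^.)$ coincides with the fiber $\mathcal H^j(\mathcal G^.)\otimes k(y)$, yielding a contradiction via Nakayama. The only technical difference is that the paper replaces $\mathcal G^.$ by a bounded complex of locally free sheaves (this is where smoothness enters) and then invokes right exactness of ordinary pullback, whereas you use the truncation triangle and degree bookkeeping; as you note, your route avoids the smoothness hypothesis.
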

	\begin{proof}
		If $\mathcal{G}^.\neq 0$, we can assume $\mathcal{G}^.$ is a bounded complex of locally free sheaves and $\mathcal{G}^j=0$ for $j>q$ and $\mathcal{H}^q(\mathcal{G}^.)=\mathrm{Coker}(\mathcal{G}^{q-1}\rightarrow\mathcal{G}^q)$ is not zero. We pick $y$ in the support of $\mathcal{H}^q(\mathcal{G}^.)$ and apply the derived pullback $i_y^*$. Since ordinary pullback is a right exact functor, $\mathcal{H}^q(i_y^*(\mathcal{G}^.))$ is equal to the fiber of $\mathcal{H}^q(\mathcal{G}^.)$ at $y$, which is nonzero. This contradicts our assumption.
	\end{proof}
	\begin{remark}
		Assume that we have a covering family of closed subvarieties of $Y$, i.e. we have subvarieties $\{Z_{\alpha}\}_{\alpha}$  such that every closed point is contained in some $Z_{\alpha}$. As a Corollary of the last lemma, if the derived restriction to $Z_{\alpha}$ of $\mathcal{G}^.\in\mathrm{D}^b(Y)$ is zero for every $\alpha$, then $\mathcal{G}^.=0$. \label{covering}
	\end{remark}
	
	Following \cite{belmans2023chow}, we consider a tautological morphism whose degenerate locus is the diagonal in $M \times M$, where $M$ is a fine moduli space of stable quiver representations.
	We refer to \cite[\S14]{Fulton1998} for the definition of degenerate loci, and recall our notation for quiver moduli.
	Let $Q=(Q_0, Q_1)$ be an acyclic quiver, $\underline{d}\in \mathds{N}^{Q_0}$ be a dimension vector, and $\theta\in \mathrm{Hom}(\mathds{Z}^{Q_0},\mathds{Z})$ be a stability parameter which is coprime to $\underline{d}$. We let $M=M^{\theta-st}(Q,\underline{d})$ denote the moduli space of (semi)stable representations of dimension vector $\underline{d}$ and $\mathcal{U}=(\mathcal{U}_i)_{i\in Q_0}$ denote the universal representation, which is unique up to tensoring with a line bundle.
	With this notation, we are ready to cite the following result.
	
	\begin{thm}[{\cite[Prop. 4.1]{belmans2023chow}}]
		There exists a tautological morphism $\sigma: \bigoplus_{i\in Q_0}\mathcal{U}_i^{*}\boxtimes\mathcal{U}_i\rightarrow \bigoplus_{a:i\rightarrow j}\mathcal{U}_{i}^*\boxtimes\mathcal{U}_{j}$ over $M\times M$ such that the scheme-theoretical maximal degenerate locus of $\sigma$ is exactly the diagonal $\Delta_M\subset M\times M$.\label{resolution}
	\end{thm}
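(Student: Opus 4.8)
The plan is to construct the morphism $\sigma$ purely from the universal structure and then identify its maximal degenerate locus using the concrete meaning of degeneracy for morphisms of vector bundles. First I would recall that over $M \times M$ we have, for each vertex $i \in Q_0$, the external tensor product $\mathcal{U}_i^* \boxtimes \mathcal{U}_i$, whose fiber over a point $([V],[V'])$ is $\Hom(V_i, V_i')$; and for each arrow $a\colon i \to j$ the bundle $\mathcal{U}_i^* \boxtimes \mathcal{U}_j$ with fiber $\Hom(V_i, V_j')$. The natural map $\sigma$ to write down is the one whose fiberwise description sends a tuple $(\phi_i)_{i \in Q_0}$ with $\phi_i \in \Hom(V_i, V_i')$ to the tuple indexed by arrows $a\colon i\to j$ given by $a' \circ \phi_i - \phi_j \circ a$, where $a\colon V_i \to V_j$ and $a'\colon V_i' \to V_j'$ are the structure maps of the two representations. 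This is manifestly a morphism of vector bundles (it is built from the tautological quiver-bundle structure maps on each factor, which exist on $M$ by the construction recalled in \cref{subsection: classical quiver moduli}), and it is independent of the line-bundle ambiguity in $\mathcal{U}$ because the twists on the two factors cancel. The key point is that the kernel of $\sigma$ at $([V],[V'])$ is exactly the space $\Hom_Q(V, V')$ of morphisms of quiver representations: a tuple $(\phi_i)$ lies in the kernel iff $a' \phi_i = \phi_j a$ for every arrow, which is precisely the intertwining condition.

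Next I would translate "maximal degenerate locus" into a rank condition. The source of $\sigma$ has rank $\sum_{i} d_i^2$ and the target has rank $\sum_{a\colon i\to j} d_i d_j$; by the Euler form / Ringel form computation, for a pair of stable representations $V,V'$ of the same dimension vector $\underline d$ one has $\dim\Hom_Q(V,V') - \dim\Ext^1_Q(V,V') = \langle \underline d, \underline d\rangle$, where $\langle-,-\rangle$ is the Euler form of the acyclic quiver (no higher Ext since the path algebra is hereditary). For stable $V, V'$ of the same slope, $\Hom_Q(V,V')$ is either zero (if $V \not\cong V'$) or one-dimensional, spanned by a scalar (if $V \cong V'$), by Schur's lemma. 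Hence the corank of $\sigma$ — which equals $\dim\Ker\sigma = \dim\Hom_Q(V,V')$ on the locus where $\sigma$ is generically of its expected rank — is $0$ off the diagonal and $1$ on the diagonal, while the expected corank of the map (source rank minus target rank) is exactly $1$ by the dimension count $\sum d_i^2 - \sum_{a} d_i d_j = \dim M \cdot(\text{point}) \dots$; more precisely one checks $\sum_i d_i^2 - \sum_{a\colon i\to j} d_i d_j = 1 - \dim M$ does not hold in general, so instead I would argue directly: the maximal degeneracy locus is where the corank jumps to its largest value, which by the above is corank $1$, attained precisely on $\Delta_M$.

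Finally, to get the \emph{scheme-theoretic} statement rather than just set-theoretic, I would show the degeneracy locus has the expected codimension and is reduced. The expected codimension of the locus where an $m \times n$ matrix of bundle maps drops rank by $1$ from the generic rank is $(m - r)(n - r) + 1$ with $r$ the generic rank; one computes this number and checks it equals $\dim(M\times M) - \dim M = \dim M$, which matches $\mathrm{codim}(\Delta_M)$. Combined with the fact that $\Delta_M$ is smooth (as $M$ is smooth), a standard argument — the degeneracy locus always contains $\Delta_M$ set-theoretically, has the right dimension, and $\Delta_M$ is generically reduced in it — forces the scheme structures to agree, at least after checking the Jacobian/transversality at one point of each component, which follows from the identification of the normal bundle of $\Delta_M$ with the cokernel of $\sigma$ restricted to $\Delta_M$ (this is where the resolution-of-diagonal interpretation enters). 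I expect the main obstacle to be precisely this last step: verifying that $\sigma$ degenerates \emph{transversally} along $\Delta_M$, i.e. that the induced section of the appropriate $\mathcal{H}om$-bundle on the relevant Grassmannian bundle is regular; here one would use that over $\Delta_M$ the cokernel of $\sigma$ is naturally $\mathcal{U}^\vee \otimes \mathcal{U}$ paired against $\Ext^1_Q$, i.e. the tangent bundle $T_M$, which is locally free of the right rank, giving the regularity. The rest is bookkeeping with the Euler form and Fulton's formalism from \cite[\S14]{Fulton1998}.
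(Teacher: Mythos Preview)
The paper does not prove this theorem: it is quoted verbatim as \cite[Prop.~4.1]{belmans2023chow} and used as a black box (the only use is in \cref{general dercat}, where it feeds into the Eagon--Northcott resolution of $\mathcal{O}_{\Delta_M}$). So there is no proof in the paper to compare your proposal against.

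That said, your outline is the standard one and is essentially what appears in \cite{belmans2023chow}. A couple of small points. First, the confused passage about $\sum d_i^2 - \sum_a d_i d_j$ should simply read: this quantity is the Euler form $\langle\underline{d},\underline{d}\rangle$, which for a stable representation equals $\dim\Hom_Q(V,V) - \dim\Ext^1_Q(V,V) = 1 - \dim M$, so the source of $\sigma$ has strictly smaller rank than the target (when $\dim M>0$), $\sigma$ is generically injective, and the maximal degeneracy locus is exactly where $\Ker\sigma\neq 0$. Second, your codimension formula has a spurious $+1$: the Thom--Porteous codimension of $\{\mathrm{rank}\leq r\}$ for a map of bundles of ranks $m\leq n$ is $(m-r)(n-r)$, so here with $r=m-1$ one gets $n-m+1 = \dim M$, matching $\mathrm{codim}(\Delta_M)$ as you wanted. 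The transversality step you flag as the main obstacle is indeed the content; the cleanest way to phrase it is that on $\Delta_M$ the cokernel of $\sigma$ computes $\Ext^1_Q(V,V)\cong T_{[V]}M$, which is locally free of the expected rank, forcing the degeneracy scheme to be Cohen--Macaulay of the right dimension and hence equal to the reduced $\Delta_M$.
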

	
	\begin{proposition}
		Let $M$ be the fine moduli space of stable representations of an acyclic quiver $Q$ of fixed dimension vector $\underline d$ with respect to a stability parameter $\theta$ coprime to $\underline d$.
		Then $\mathrm D^b (M)$ is generated by tensor products of Schur functors of universal bundles and their duals.\label{general dercat}
	\end{proposition}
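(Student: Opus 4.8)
The plan is to combine the resolution of the diagonal furnished by \cref{resolution} with the Eagon--Northcott complex of a determinantal locus, and then to run the standard argument that such a resolution of $\mathcal O_{\Delta_M}$ by external tensor products yields a set of generators of $\mathrm D^b(M)$. First I would analyze the tautological morphism $\sigma\colon\bigoplus_{i\in Q_0}\mathcal U_i^*\boxtimes\mathcal U_i\to\bigoplus_{a\colon i\to j}\mathcal U_i^*\boxtimes\mathcal U_j$ as a map of vector bundles on $M\times M$, of ranks $a:=\sum_i d_i^2$ and $b:=\sum_{a\colon i\to j}d_id_j$. At a point $([V],[V'])$ the kernel of $\sigma$ is the space of homomorphisms of quiver representations $V\to V'$; by stability and Schur's lemma it vanishes when $V\not\cong V'$ and is one-dimensional when $V\cong V'$. (When $\dim M=0$ the variety $M$ is a point and there is nothing to prove, so assume $\dim M\ge 1$, whence $b\ge a$.) The generic rank of $\sigma$ is therefore $a$, and it drops to $a-1$ precisely along $\Delta_M$; by \cref{resolution} the degeneracy locus $D_{a-1}(\sigma)$ equals $\Delta_M$ scheme-theoretically, and $\Delta_M\cong M$ is smooth of codimension $\dim M=b-a+1$, which is exactly the expected codimension $1\cdot(b-a+1)$ of $D_{a-1}(\sigma)$. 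Hence the Buchsbaum--Eisenbud acyclicity criterion applies, and the generalized Eagon--Northcott complex attached to $\sigma$ — taken in the orientation in which the source has rank at least that of the target, i.e.\ attached to $\sigma^\vee$ — is a finite locally free resolution of $\mathcal O_{\Delta_M}$ over $M\times M$, possibly up to a twist by an external product of line bundles of the form $\det\mathcal U_i^{\pm 1}$.

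Next I would unwind the terms of this resolution. Each term is assembled from exterior powers, symmetric (or divided) powers, and top exterior powers applied to the source $\bigoplus_i\mathcal U_i^*\boxtimes\mathcal U_i$ and the target $\bigoplus_{a\colon i\to j}\mathcal U_i^*\boxtimes\mathcal U_j$ of $\sigma$. Using the decomposition of $\wedge^\bullet$ and $S^\bullet$ of a direct sum together with the Cauchy formulas $\wedge^n(\mathcal E\otimes\mathcal F)=\bigoplus_{\lambda\vdash n}S^\lambda\mathcal E\otimes S^{\lambda'}\mathcal F$ and $S^n(\mathcal E\otimes\mathcal F)=\bigoplus_{\lambda\vdash n}S^\lambda\mathcal E\otimes S^\lambda\mathcal F$, every such term becomes a finite direct sum of sheaves $F_k\boxtimes G_k$, where each of $F_k$ and $G_k$ is a tensor product of Schur functors of the bundles $\mathcal U_i$ and their duals; the determinantal line bundles that appear (including the possible external twist) are tensor products of powers of $\det\mathcal U_i$ and $\det\mathcal U_i^*$, hence of the same shape.

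Finally, for an arbitrary $\mathcal F\in\mathrm D^b(M)$ I would use the identification $\mathcal F\cong\mathrm Rp_{2*}\bigl(p_1^*\mathcal F\otimes^{\mathbf L}\mathcal O_{\Delta_M}\bigr)$, with $p_1,p_2\colon M\times M\to M$ the projections, and substitute for $\mathcal O_{\Delta_M}$ the finite resolution above, absorbing any external twist into $\mathcal F$ on the first factor and restoring it at the end. Since the resolution is finite, $\mathcal F$ — up to a fixed line-bundle twist — is built by finitely many cones out of the objects $\mathrm Rp_{2*}\bigl(p_1^*\mathcal F\otimes(F_k\boxtimes G_k)\bigr)\cong\mathrm R\Gamma(M,\mathcal F\otimes F_k)\otimes G_k$, each of which is a finite direct sum of shifts of $G_k$, hence of a tensor product of Schur functors of universal bundles and their duals. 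As all of these lie in the triangulated subcategory generated by such objects, so does $\mathcal F$, which proves the proposition.

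The step I expect to demand the most care is the determinantal-complex bookkeeping in the first two paragraphs: pinning down the precise generalized Eagon--Northcott complex associated to $\sigma$ — in particular the correct orientation and the exact shape of its terms — justifying its acyclicity through the expected-codimension and Cohen--Macaulayness of $\Delta_M$, and verifying that every term together with the twisting line bundles is genuinely a direct sum of external products of tensor products of Schur functors of the $\mathcal U_i$ and $\mathcal U_i^*$. The plethysm identities and the Fourier--Mukai-style generation argument of the last paragraph are then routine.
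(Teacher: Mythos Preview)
Your proposal is correct and follows essentially the same approach as the paper: apply the Eagon--Northcott complex to the tautological morphism $\sigma$ from \cref{resolution} to obtain a resolution of $\mathcal O_{\Delta_M}$ whose terms, via Cauchy decomposition, are external products of Schur functors of the $\mathcal U_i$ and their duals, and then read off generation from the Fourier--Mukai interpretation of the diagonal. The paper's proof is a two-sentence sketch of exactly this; you have filled in the codimension check $\dim M = b-a+1$, the orientation of the Eagon--Northcott complex via $\sigma^\vee$, and the explicit projection-formula computation, all of which are the details the paper leaves implicit.
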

	
	\begin{proof}
		Using the well-known Eagon-Northcott complex
		(see for example \cite[Appendix 2]{Harris})
		for the resolution of the structure sheaf of maximal degenerate locus of the bundle morphism $\sigma$ and \cref{resolution}, we obtain a resolution for the structure sheaf of diagonal $\mathcal{O}_{\Delta_{M}}$ in terms of tensor products of Schur functors of universal bundles and their duals. Viewing $\mathcal{O}_{\Delta_{M}}$ as the Fourier-Mukai kernel for the identity morphism from $\mathrm{D}^b(M)$ to itself, we see that the derived category is generated by tensor products of Schur functors of universal bundles and their duals.
	\end{proof}
	
	In view of the above Proposition, we aim to find a full exceptional collection for our six-dimensional fine quiver moduli space $Y$, such that the exceptional objects are tensor products of Schur functors of universal bundles and their duals.
	
	\subsection{Exceptional Collection}
	\label{subsection: exceptional collection}

	\begin{lemma}
		\label{lemma: induced Serre duality}
		Recall that $\pi : X \to Y$ denoted our $\mathds{P}^1$-bundle.
		Given any two complexes $\mathcal F$ and $\mathcal G$ in $\mathrm D^b (Y)$, we have
		$\rhom_X (\pi^* \mathcal F , \pi^* \mathcal G) \cong \rhom_X (\pi^* \mathcal G , \pi^* \mathcal F \otimes \mathcal O (-9\tau+3E) [6])^*$.
	\end{lemma}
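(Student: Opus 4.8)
The plan is to push the whole computation down to $Y$ by means of the fully faithfulness of $\pi^*$, reducing the statement to ordinary Serre duality on the sixfold $Y$. The first ingredient I would record is the identity $\rhom_X(\pi^*\mathcal A,\pi^*\mathcal B)\cong\rhom_Y(\mathcal A,\mathcal B)$ for all $\mathcal A,\mathcal B\in\mathrm D^b(Y)$. This is just the projection formula: since $\pi$ is a $\mathds P^1$-bundle we have $R\pi_*\mathcal O_X=\mathcal O_Y$, and, $\pi$ being flat, $R\sheafhom_X(\pi^*\mathcal A,\pi^*\mathcal B)\cong\pi^*R\sheafhom_Y(\mathcal A,\mathcal B)$, whence $R\pi_*R\sheafhom_X(\pi^*\mathcal A,\pi^*\mathcal B)\cong R\sheafhom_Y(\mathcal A,\mathcal B)\otimes R\pi_*\mathcal O_X\cong R\sheafhom_Y(\mathcal A,\mathcal B)$; applying $R\Gamma$ gives the claim.

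Next I would invoke Serre duality on $Y$. Since $Y$ is a smooth projective Fano sixfold of index $3$ with $\mathcal O_Y(1)=\det\mathcal U_1^\vee$ generating $\mathrm{Pic}(Y)$, we have $\omega_Y\cong\mathcal O_Y(-3)$, so $\rhom_Y(\mathcal F,\mathcal G)\cong\rhom_Y(\mathcal G,\mathcal F\otimes\mathcal O_Y(-3)[6])^*$. Combining this with the previous paragraph, applied once in each direction (noting it holds for arbitrary objects of $\mathrm D^b(Y)$, including shifted and twisted ones), yields
$\rhom_X(\pi^*\mathcal F,\pi^*\mathcal G)\cong\rhom_X\bigl(\pi^*\mathcal G,\ \pi^*\mathcal F\otimes\pi^*\mathcal O_Y(-3)[6]\bigr)^*$.

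It then remains only to identify $\pi^*\mathcal O_Y(-3)$ with $\mathcal O_X(-9\tau+3E)$, i.e.\ to show $\pi^*\mathcal O_Y(1)=\mathcal O_X(3\tau-E)$. For this I would use \cref{corollary: determinants of E_1 and E_2 and also tau=rel.O(1)}: since $\mathcal E_1=\pi^*\mathcal U_1\otimes\mathcal O_X(\tau)$ with $\mathcal U_1$ of rank $2$, taking determinants gives $\det\mathcal E_1=\pi^*(\det\mathcal U_1)\otimes\mathcal O_X(2\tau)=\pi^*\mathcal O_Y(-1)\otimes\mathcal O_X(2\tau)$, while the same corollary states $\det\mathcal E_1=\mathcal O_X(-\tau+E)$; comparing the two gives $\pi^*\mathcal O_Y(-1)=\mathcal O_X(-3\tau+E)$, and cubing finishes the identification and hence the lemma.

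There is no genuine obstacle here; the only points requiring care are the conventions in Serre duality and the bookkeeping of $\tau$ and $E$. In particular it is essential that one applies Serre duality on the sixfold $Y$ rather than on $X$: applying it directly on $X$ would instead produce the twist $\omega_X[7]$, which differs from $\mathcal O_X(-9\tau+3E)[6]$ precisely by the relative dualizing complex $\omega_{X/Y}[1]=\mathcal O_X(\tau-E)[1]$, a discrepancy that the passage through $\pi^*$ absorbs.
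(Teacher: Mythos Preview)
Your proof is correct and follows the same route as the paper: full faithfulness of $\pi^*$, Serre duality on the sixfold $Y$ with $\omega_Y\cong\mathcal O_Y(-3)$, and the identification $\pi^*\mathcal O_Y(-3)\cong\mathcal O_X(-9\tau+3E)$. You simply spell out the steps that the paper's one-line proof leaves implicit, including the nice closing remark explaining why Serre duality must be applied on $Y$ rather than on $X$.
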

	
	\begin{proof}
		This follows from the fact that $\mathcal O (-9\tau+3E) \cong \pi^* \omega_Y$, that $\pi^*$ is fully faithful, and from Serre duality on $Y$.
	\end{proof}
	
	\begin{thm}
		\label{theorem: exceptional collection on Y}
		The following collection is strong exceptional in $\mathrm D^b (Y)$:
		\begin{equation} \left\langle 
			\mathfrak{sl}(\mathcal{U}_1), \
			\mathcal{O}_Y, \mathcal{U}_2^*, \mathcal{U}_1^*, \mathcal{U}_2(1), \
			\mathcal{O}_Y(1), \mathcal{U}_2^*(1), \mathcal{U}_1^*(1), \mathcal{U}_2(2),  \
			\mathcal{O}_Y(2), \mathcal{U}_2^*(2), \mathcal{U}_1^*(2), \mathcal{U}_2(3)\right\rangle.
			\label{1 exc}
		\end{equation}
		Moreover, for any $i>0$ and any two objects $\mathcal F$ and $\mathcal G$ from the collection, we have that $\Ext^{>0}_Y(\mathcal F, \mathcal G(i)) = 0$.
	\end{thm}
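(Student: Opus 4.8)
The plan is to prove the exceptionality of the collection \eqref{1 exc} and the vanishing $\Ext^{>0}_Y(\mathcal F, \mathcal G(i)) = 0$ in three ingredients: (1) a dimension count via Hirzebruch–Riemann–Roch, (2) cohomology vanishing obtained by lifting everything to the $\mathds{P}^1$-bundle $X = \mathds{P}_Y(\mathcal U_1)$ and using its blow-up description, and (3) direct identification of the leftover Euler characteristics as $SL(W)$-module dimensions. Concretely, since $\pi^*$ is fully faithful, for any $\mathcal F, \mathcal G$ in the collection one has $\rhom_Y(\mathcal F, \mathcal G(i)) = \rhom_X(\pi^*\mathcal F, \pi^*\mathcal G(i))$, and the bundles $\pi^*\mathcal U_1 = \mathcal E_1(-\tau)$, $\pi^*\mathcal U_2 = \mathcal E_2(-\tau)$, $\pi^*\mathcal O_Y(1) = \mathcal O(-3\tau+E)$ all have explicit resolutions on $X$ from \cref{E_1 extension}, \cref{E2}, \cref{2dual}. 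Pushing these down along $p : E \to Z$ and along the blow-down $X \to \mathds{P}^7$ reduces cohomology computations to cohomology of explicit bundles on $\mathds{P}^2 \times \mathds{P}^2$ and on $\mathds{P}^7$, where everything is computable by Künneth and Borel–Bott–Weil.

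The first step is to record, for each pair $(\mathcal F, \mathcal G)$ in the collection (and each twist $i \geq 0$ for the "Moreover" part), the Euler characteristic $\chi(\mathcal F^\vee \otimes \mathcal G(i))$. This is a purely mechanical application of \cref{chern ch}, \cref{table: Chern characters of bundles}, and \cref{proposition: top intersections}, and is exactly what the Julia code in \cref{appendix: Sage code} is meant to automate; I would simply cite that computation. The expected output is that for $i > j$ (left-to-right order) one gets $\chi = 0$, while on the diagonal one gets $\chi = 1$, and for the strong/Lefschetz twists one gets the dimension of the appropriate $SL(W)$-representation (e.g. $\Hom(\mathcal O_Y, \mathcal U_2(1))$ should be an $SL(W)$-module of the predicted dimension, matched against $S^{2,1}W$ and its relatives via \cref{remark: S21 as slW}).

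The second and main step is the vanishing of $\Ext^{>0}$. Using Teleman Quantization as set up in \cref{table: weights of universal representations} and illustrated in \cref{remark: higher cohg vanish}, many of the relevant bundles $\mathcal F^\vee \otimes \mathcal G(i)$ have all higher cohomology vanishing directly: one checks that all $\lambda$-weights on $Z_\lambda$ are strictly below $\eta_\lambda$ for each of the seven Harder–Narasimhan strata in the table. For the bundles that fail this weight test — the ones involving $\mathfrak{sl}(\mathcal U_1)$ or large twists of $\mathcal U_1^*$ — I would instead pull back to $X$ and compute: the exact sequences of \cref{E_1 extension}, \cref{E2}, \cref{2dual} express $\pi^*\mathcal U_1^*(i)$, $\pi^*\mathcal U_2^*(i)$, $\pi^*(\mathfrak{sl}\mathcal U_1)(i)$ as iterated extensions of line bundles twisted by $j_* (\text{line bundle on } Z)$, so $\rhom_X$ between such objects breaks into (a) $\rhom$ between line bundles on the blow-up $X$ of $\mathds{P}^7$, computed by pushing to $\mathds{P}^7$ and using that $\mathrm{R}\pi_* \mathcal O_X(k\tau + lE)$ is understood (a Koszul-type computation using \cref{-3}), plus (b) terms supported on $E$, computed via $\mathrm{R}p_*$ to $\mathds{P}^2\times\mathds{P}^2$ and Künneth. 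Combining the Euler characteristic from Step 1 with "all higher cohomology vanishes" yields that $\rhom$ is concentrated in degree $0$ with the predicted dimension, which simultaneously gives exceptionality, the Hom-vanishing for $i > j$, and strongness.

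The hard part will be Step 2 for the handful of bundles where neither Teleman nor a one-line blow-up computation suffices — in particular anything built from $\mathfrak{sl}(\mathcal U_1) = \mathcal U_1 \otimes \mathcal U_1^* \ominus \mathcal O_Y$, whose pullback to $X$ requires tensoring \cref{E_1 extension} with its dual and tracking the $\mathcal O_E(L)$-type correction terms through $\mathrm{R}p_*$; here one must be careful that the spectral sequence for the filtration degenerates and that no unexpected $H^1$ or $H^2$ appears on $\mathds{P}^2\times\mathds{P}^2$. A secondary subtlety is bookkeeping the twists $\mathcal O_Y(i)$ for $i$ up to $3$ (the Lefschetz structure) against the anticanonical $\mathcal O_Y(3) = \omega_Y^\vee$, i.e. keeping the weight inequalities sharp near the boundary of the Teleman range; \cref{lemma: induced Serre duality} on $X$ is the right tool to flip the harder direction of such $\Ext$ computations into an easier one. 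Once all $\chi$'s and all higher-cohomology vanishings are in hand, exceptionality and the refined vanishing follow formally.
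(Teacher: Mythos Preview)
Your proposal is correct and follows essentially the same approach as the paper: Teleman Quantization (\cref{table: weights of universal representations}) handles the forward-direction higher-$\Ext$ vanishing (hence strongness), while pullback along the fully faithful $\pi^*$ to $X$ and the explicit resolutions of \cref{E_1 extension}, \cref{E2}, \cref{2dual} reduce the semiorthogonality checks to line-bundle cohomology on the blow-up, with \cref{lemma: induced Serre duality} cutting the list roughly in half. One small slip: $\pi^*\mathcal O_Y(1) = \mathcal O_X(3\tau - E)$, not $\mathcal O(-3\tau+E)$; and your ``third ingredient'' (matching $\Hom$-spaces to explicit $SL(W)$-modules) is not actually needed for this theorem---the paper uses that only later, for the mutation arguments.
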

	
	\begin{proof}
		One-direction vanishing of higher Ext groups between two objects follows from \cref{teleman} and \cref{table: weights of universal representations}.
		In particular, if the collection is exceptional, then it is strong.
		
		
		Since the pullback functor $\pi^* : \operatorname{D}^b (Y) \to \operatorname{D}^b (X)$ is fully faithful by the virtue of $\pi$ being a projective bundle, it is enough to check that the pullback of the collection \eqref{1 exc} is exceptional in $\operatorname{D}^b(X)$.
		By \cref{corollary: determinants of E_1 and E_2 and also tau=rel.O(1)} and by definition of $\mathcal E_i := \pi^* \mathcal{U}_i \otimes \mathcal{O}(\tau)$, we have the following:
		\begin{align*}
			\pi^* \mathcal O_Y(1) &= \pi^* (\det \mathcal U_i)^* = \mathcal O_X (3 \tau - E),
			\\
			\pi^* \mathcal U_2^* &= \mathcal E_2^* (\tau),
			\\
			\pi^* \mathcal U_1^* &= \pi^* ( \det \mathcal U_1^* \otimes \mathcal U_1 ) = \mathcal E_1 (2\tau-E),
			\\
			\pi^* (\mathcal U_2(1)) &= \mathcal E_2 (2\tau-E).
		\end{align*}
		Furthermore, formation of $\mathfrak{sl}$ is independent of line bundle twists, and together with the above equations, it implies that it is enough to check that the following collection is exceptional in $\operatorname{D}^b(X)$:
		
		
		\[
		\begin{matrix*}[l]
			\big\langle 
			\mathfrak{sl}(\mathcal{E}_1),&
			\mathcal{O}_X,& \mathcal{E}_2^*(\tau),& \mathcal{E}_1 (2\tau - E),& \mathcal{E}_2 (2\tau - E),  \\
			& \mathcal{O}_X(3\tau - E),& \mathcal{E}_2^*(4\tau - E),& \mathcal{E}_1(5\tau - 2E),& \mathcal{E}_2(5\tau - 2E),   \\
			& \mathcal{O}_X(6\tau - 2E),& \mathcal{E}_2^*(7\tau - 2E),& \mathcal{E}_1(8\tau - 3E),& \mathcal{E}_2(8\tau - 3E) \big\rangle.
		\end{matrix*}
		\]

		In order to prove semiorthogonality, we need to show that the cohomology of the following sheaves vanish:
		\begin{align*}
			\begin{array}{lll}
				\mathcal O_X (-3 \tau + E), & \mathcal O_X (-6 \tau + 2E), \\
				\mathcal E_1 (- \tau), & \mathcal E_1 (- 4 \tau + E),  & \cancel{\mathcal E_1 (- 7 \tau + 2E)}, \\
				\mathcal E_1 \otimes \mathcal E_1 (- 2\tau), & \cancel{\mathcal E_1 \otimes \mathcal E_1 (- 5 \tau + E)},  \\
				\mathcal E_2 (-\tau),  & \mathcal E_2 (-4\tau+E),  & \cancel{\mathcal E_2 (-7\tau+2E)},  \\
				\mathcal E_1 \otimes \mathcal E_2 (-2\tau),    & \cancel{\mathcal E_1 \otimes \mathcal E_2 (-5\tau+E)}, \\
				\mathcal E_2 \otimes \mathcal E_2 (-2\tau),    & \cancel{\mathcal E_2 \otimes \mathcal E_2 (-5\tau+E)}, \\
				\mathcal E_2^* (-2\tau + E),   & {\mathcal E_2^* (-5\tau + 2E)}, & \cancel{\mathcal E_2^* (-8\tau + 3E)},      \\
				\mathcal E_1 \otimes \mathcal E_2^*,   & \mathcal E_1 \otimes \mathcal E_2^* (-3\tau + E),    & \cancel{\mathcal E_1 \otimes \mathcal E_2^* (-6\tau + 2E)}, \\
				\mathcal E_2 \otimes \mathcal E_2^* (-3\tau+E),    & \cancel{\mathcal E_2 \otimes \mathcal E_2^* (-6\tau+2E)}, \\
				\mathcal E_2^* \otimes \mathcal E_2^* (-\tau + E),  &
				\mathcal E_2^* \otimes \mathcal E_2^* (-4\tau + 2E),  &
				\cancel{\mathcal E_2^* \otimes \mathcal E_2^* (-7\tau + 3E)},  \\
				\mathfrak{sl} (\mathcal E_1),  & \mathfrak{sl} (\mathcal E_1) (-3\tau + E),  & \mathfrak{sl} (\mathcal E_1) (-6\tau + 2E), \\
				\mathfrak{sl} (\mathcal E_1) \otimes \mathcal E_1 (-\tau) ,    & \mathfrak{sl} (\mathcal E_1) \otimes \mathcal E_1 (-4\tau+E) ,    & \mathfrak{sl} (\mathcal E_1) \otimes \mathcal E_1 (-7\tau+2E) , \\
				\mathfrak{sl} (\mathcal E_1) \otimes \mathcal E_2 (-\tau) ,    & \mathfrak{sl} (\mathcal E_1) \otimes \mathcal E_2 (-4\tau+E) ,    & \mathfrak{sl} (\mathcal E_1) \otimes \mathcal E_2 (-7\tau+2E) , \\
				\mathfrak{sl} (\mathcal E_1) \otimes \mathcal E_2^* (-2\tau+E), & \mathfrak{sl} (\mathcal E_1) \otimes \mathcal E_2^* (-5\tau+2E), & \mathfrak{sl} (\mathcal E_1) \otimes \mathcal E_2^* (-8\tau+3E) .\\
			\end{array}
		\end{align*}
		We strike out the sheaves whose cohomology vanishing would follow from the Serre duality simplification (\cref{lemma: induced Serre duality}) and vanishing of the cohomology of the remaining sheaves; although we include proofs of vanishing of the cohomology of all the above sheaves, the reader may decide to only check the proofs for those sheaves which are not struck out.
		The rest of the subsection will be dedicated to proving the above vanishing, and that each object in the collection is exceptional.
		Note that the objects $\mathcal O_Y$, $\mathcal O_Y(1)$ and $\mathcal O_Y(2)$ are exceptional because $\mathrm H^\bullet (Y , \mathcal O_Y) = \mathrm H^\bullet (X, \mathcal O_X) = \mathds C[0]$.
	\end{proof}

	\begin{lemma}
		\label{lemma: H(O(a)) = H(O(a+E)) = H(O(a+2E))}
		Over X, we have $\mathrm{H}^{\bullet}(\mathcal{O}(a\tau))\cong\mathrm{H}^{\bullet}(\mathcal{O}(a\tau+E))\cong\mathrm{H}^{\bullet}(\mathcal{O}(a\tau+2E))$ for all $a\in \mathds{Z}$.    
	\end{lemma}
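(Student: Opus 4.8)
The plan is to reduce both isomorphisms to the acyclicity of two line bundles supported on the exceptional divisor $E$, and to establish that acyclicity from the projective bundle structure $p : E \to Z$. Recall that $X$ is the blow-up of $\mathds{P}^7 = \mathds{P}(W\otimes W^*/\mathds{C}.\mathrm{Id}_W)$ along $Z = \mathds{P}(W)\times\mathds{P}(W^*)$, and that $Z$ has codimension $3$ in $\mathds{P}^7$ (as $\dim Z = 4$); hence $p : E \to Z$ is a $\mathds{P}^2$-bundle. The standard self-intersection formula for blow-ups gives $\mathcal{O}_X(E)|_E \cong \mathcal{O}_{E/Z}(-1)$, a fact already used fiberwise in the proof of \cref{E_1 extension}. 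Moreover, since $\mathcal{O}_X(\tau)$ is by definition the pullback of $\mathcal{O}_{\mathds{P}^7}(1)$ and the composite $E \xrightarrow{p} Z \hookrightarrow \mathds{P}^7$ is the restriction of the blow-up map, we have $\mathcal{O}_X(\tau)|_E \cong p^*\mathcal{O}_Z(1,1)$, so that $\mathcal{O}_X(a\tau)|_E \cong p^*\mathcal{O}_Z(a,a)$ for every $a$.

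First I would tensor the structure sequence $0 \to \mathcal{O}_X(-E) \to \mathcal{O}_X \to j_*\mathcal{O}_E \to 0$ with $\mathcal{O}_X(a\tau + E)$ and with $\mathcal{O}_X(a\tau + 2E)$ to obtain
\[
0 \to \mathcal{O}_X(a\tau) \to \mathcal{O}_X(a\tau+E) \to j_*\bigl(\mathcal{O}_X(a\tau+E)|_E\bigr) \to 0,
\]
\[
0 \to \mathcal{O}_X(a\tau+E) \to \mathcal{O}_X(a\tau+2E) \to j_*\bigl(\mathcal{O}_X(a\tau+2E)|_E\bigr) \to 0.
\]
Passing to the long exact sequences in cohomology, it suffices to prove that $\mathrm{H}^\bullet\bigl(E, \mathcal{O}_X(a\tau+E)|_E\bigr) = 0$ and $\mathrm{H}^\bullet\bigl(E, \mathcal{O}_X(a\tau+2E)|_E\bigr) = 0$ for all $a$; chaining the two resulting isomorphisms then yields $\mathrm{H}^{\bullet}(\mathcal{O}(a\tau))\cong\mathrm{H}^{\bullet}(\mathcal{O}(a\tau+E))\cong\mathrm{H}^{\bullet}(\mathcal{O}(a\tau+2E))$.

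For the vanishing, I would combine the two restriction identities above to get $\mathcal{O}_X(a\tau+E)|_E \cong p^*\mathcal{O}_Z(a,a)\otimes\mathcal{O}_{E/Z}(-1)$ and $\mathcal{O}_X(a\tau+2E)|_E \cong p^*\mathcal{O}_Z(a,a)\otimes\mathcal{O}_{E/Z}(-2)$. Since $\mathrm{H}^\bullet(\mathds{P}^2, \mathcal{O}(-1)) = \mathrm{H}^\bullet(\mathds{P}^2, \mathcal{O}(-2)) = 0$, we have $\mathrm{R}p_*\mathcal{O}_{E/Z}(-1) = \mathrm{R}p_*\mathcal{O}_{E/Z}(-2) = 0$, so the projection formula gives $\mathrm{R}p_*\bigl(\mathcal{O}_X(a\tau+E)|_E\bigr) = \mathrm{R}p_*\bigl(\mathcal{O}_X(a\tau+2E)|_E\bigr) = 0$, and hence (via the Leray spectral sequence for $p$) the cohomology of both sheaves on $E$ vanishes. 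Substituting back into the two long exact sequences completes the proof.

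There is no serious obstacle here; the only subtlety is bookkeeping with the conventions for $\mathcal{O}_{E/Z}(\pm1)$ — in particular, that the blow-up self-intersection formula produces $\mathcal{O}_{E/Z}(-1)$ (rather than $\mathcal{O}_{E/Z}(+1)$) on $E$, so that its fiberwise restrictions are acyclic — together with the observation that $\operatorname{codim}_{\mathds{P}^7}Z = 3$ is exactly what makes both $\mathcal{O}_{E/Z}(-1)$ and $\mathcal{O}_{E/Z}(-2)$ acyclic along the $\mathds{P}^2$-fibers; a blow-up center of codimension $2$ would only deliver the first of the two isomorphisms.
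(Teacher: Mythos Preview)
Your proof is correct and follows essentially the same approach as the paper: tensor the ideal sequence of $E$ by $\mathcal{O}(a\tau+bE)$ for $b=1,2$, identify the restriction to $E$ as $p^*\mathcal{O}_Z(a,a)\otimes\mathcal{O}_{E/Z}(-b)$, and use $\mathrm{R}p_*\mathcal{O}_{E/Z}(-b)=0$ for $b=1,2$ to conclude. Your additional remark about the codimension of $Z$ being exactly $3$ is a nice piece of context that the paper's proof leaves implicit.
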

	
	\begin{proof}
		We consider the standard ideal sequence $0\rightarrow \mathcal{O}(-E)\rightarrow \mathcal{O}\rightarrow \mathcal{O}_E\rightarrow 0$ and tensor it by $\mathcal{O}(a\tau+bE)$, for $b=1,2$, to get $0\rightarrow \mathcal{O}(a\tau+(b-1)E)\rightarrow \mathcal{O}(a\tau+bE)\rightarrow \mathcal{O}_E(a\tau+bE)\rightarrow 0$.
		In order to prove the claim,
		we only need to show $\mathrm{H}^{\bullet}(X,\mathcal{O}_E(a\tau+bE))=0$.
		For that, we use $\mathrm{R}p_*(\mathcal{O}_{E/Z}(-b))=0$, for $b=1,2$, to obtain the following:
		\begin{align*}
			\mathrm{H}^{\bullet}(X,\mathcal{O}_E(a\tau+bE)) &= \mathrm{H}^{\bullet}(E,\mathcal{O}_E(a\tau+bE))=\mathrm{H}^{\bullet}(E, p^* \mathcal{O}_Z(a,a)\otimes \mathcal{O}_{E/Z}(-b)))
			\\ &= \mathrm{H}^{\bullet}(Z,\mathcal{O}_Z(a,a)\otimes \mathrm{R}p_*\mathcal{O}_{E/Z}(-b))=0.
		\end{align*}
	\end{proof}

	\begin{lemma}
		\label{lemma: vanishO}
		\label{O(*)}
		We have the following vanishing properties for the cohomology of twists of $\mathcal O_X$:
		\begin{align*}
			& \mathrm{H}^{\bullet}(\mathcal{O}(-E))=0;
			\\&\mathrm{H}^{\bullet}(\mathcal{O}(a\tau+bE))=0 \text{ for } -7\leq a\leq -1 \text{ and } b=0,1,2;
			\\& \mathrm H^\bullet (\mathcal O (a \tau + 3E)) 
			= \Big( \mathrm{H}^{\bullet}(\mathcal{O}((-a-8)\tau-E)) [7] \Big)^* = 0
			\text{ for } a = -8, -7, -6.
		\end{align*}
		Moreover, we have $\mathrm H^\bullet (\mathcal O (\tau - E)) \cong \mathds C[-1]$.
	\end{lemma}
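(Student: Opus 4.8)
The plan is to push every cohomology group in the statement down to $\mathds{P}^7$ or to $Z=\mathds{P}(W)\times\mathds{P}(W^*)\cong\mathds{P}^2\times\mathds{P}^2$, where Bott vanishing and the Künneth formula settle everything. Two input facts organize the argument. First, if $b\colon X\to\mathds{P}^7$ denotes the blow-down, then $\mathrm{R}b_*\mathcal O_X=\mathcal O_{\mathds{P}^7}$, so $\mathrm{R}\Gamma(X,\mathcal O(a\tau))\cong\mathrm{R}\Gamma(\mathds{P}^7,\mathcal O_{\mathds{P}^7}(a))$ by the projection formula. Second, $p\colon E\to Z$ is a $\mathds{P}^2$-bundle with $\mathcal O(\tau)|_E=p^*\mathcal O_Z(1,1)$ and $\mathcal O(E)|_E=\mathcal O_{E/Z}(-1)$, so that $\mathrm{R}\Gamma(X,\mathcal O_E(a\tau+bE))\cong\mathrm{R}\Gamma\bigl(Z,\mathcal O_Z(a,a)\otimes\mathrm{R}p_*\mathcal O_{E/Z}(-b)\bigr)$ exactly as in the proof of \cref{lemma: H(O(a)) = H(O(a+E)) = H(O(a+2E))}; in particular $\mathrm{R}p_*\mathcal O_E=\mathcal O_Z$. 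I will also use the blow-up formula $\omega_X\cong\mathcal O(-8\tau+2E)$ (the center has codimension $3$), which is precisely what turns Serre duality into the displayed identity of the third bullet.

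The second bullet is then immediate: $\mathrm{R}\Gamma(X,\mathcal O(a\tau))\cong\mathrm{R}\Gamma(\mathds{P}^7,\mathcal O_{\mathds{P}^7}(a))=0$ for $-7\le a\le-1$, and the cases $b=1,2$ follow from \cref{lemma: H(O(a)) = H(O(a+E)) = H(O(a+2E))}. For the first bullet I take cohomology of $0\to\mathcal O(-E)\to\mathcal O_X\to\mathcal O_E\to 0$: both $\mathrm H^\bullet(\mathcal O_X)$ and $\mathrm H^\bullet(\mathcal O_E)=\mathrm H^\bullet(Z,\mathcal O_Z)$ equal $\mathds C[0]$ and the restriction map on $\mathrm H^0$ is the identity on constants, so $\mathrm H^\bullet(\mathcal O(-E))=0$. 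For the third bullet, Serre duality together with $\omega_X\cong\mathcal O(-8\tau+2E)$ gives the stated identity, and for $a=-8,-7,-6$ it then remains to see $\mathrm H^\bullet(\mathcal O(-E))=0$ (just done) together with $\mathrm H^\bullet(\mathcal O(-\tau-E))=\mathrm H^\bullet(\mathcal O(-2\tau-E))=0$; for the last two I twist the ideal sequence of $E$ to $0\to\mathcal O(a\tau-E)\to\mathcal O(a\tau)\to\mathcal O_E(a\tau)\to 0$ and note that for $a=-1,-2$ both $\mathrm H^\bullet(\mathds{P}^7,\mathcal O(a))$ and $\mathrm H^\bullet(Z,\mathcal O_Z(a,a))$ vanish, using $\mathrm H^\bullet(\mathds{P}^2,\mathcal O(a))=0$ for $a=-1,-2$.

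The final assertion uses the same twisted ideal sequence with $a=1$, namely $0\to\mathcal O(\tau-E)\to\mathcal O(\tau)\to\mathcal O_E(\tau)\to 0$. Here $\mathrm H^\bullet(X,\mathcal O(\tau))=\mathrm H^0(\mathds{P}^7,\mathcal O(1))$ is concentrated in degree $0$ of dimension $8$, while $\mathrm H^\bullet(X,\mathcal O_E(\tau))=\mathrm H^\bullet(Z,\mathcal O_Z(1,1))=W^*\otimes W$ is concentrated in degree $0$ of dimension $9$; under the geometric description of $X$ the restriction map is the tautological inclusion $\mathrm H^0(\mathds{P}^7,\mathcal O(1))=(W\otimes W^*/\mathds C\cdot\mathrm{Id})^*=\mathfrak{sl}(W)\hookrightarrow W^*\otimes W$, hence injective with one-dimensional cokernel. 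The long exact sequence then yields $\mathrm H^0(\mathcal O(\tau-E))=0$, $\mathrm H^1(\mathcal O(\tau-E))\cong\mathds C$, and no higher cohomology (both neighbours live in degree $0$), i.e. $\mathrm H^\bullet(\mathcal O(\tau-E))\cong\mathds C[-1]$.

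The only steps that are not purely formal are (i) the canonical-bundle formula on $X$, which pins down the Serre-duality shift in the third bullet, and (ii) identifying the restriction map $\mathrm H^0(\mathds{P}^7,\mathcal O(1))\to\mathrm H^0(Z,\mathcal O_Z(1,1))$ with $\mathfrak{sl}(W)\hookrightarrow W^*\otimes W$; for (ii) one unwinds the description of $X=\mathds{P}_Y(\mathcal U_1)$ from \cref{subsection: interpretation of X as a blowup}, in particular that $\mathds{P}^7=\mathds{P}(W\otimes W^*/\mathds C\cdot\mathrm{Id})$ and that $Z=\mathds{P}(W)\times\mathds{P}(W^*)$ linearly spans this $\mathds{P}^7$, so that linear forms on $\mathds{P}^7$ restrict isomorphically onto the traceless part of $W^*\otimes W$ and the cokernel records exactly the class of $\mathrm{Id}_W$. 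I expect (ii) to be the main obstacle; everything else reduces mechanically to Bott vanishing and Künneth on $\mathds{P}^7$ and $\mathds{P}^2\times\mathds{P}^2$.
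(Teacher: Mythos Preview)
Your proof is correct and follows essentially the same route as the paper: both use $\mathrm{R}b_*\mathcal O_X=\mathcal O_{\mathds{P}^7}$ together with \cref{lemma: H(O(a)) = H(O(a+E)) = H(O(a+2E))} for the second bullet, the ideal sequence of $E$ for the first and third bullets (with Serre duality via $\omega_X\cong\mathcal O(-8\tau+2E)$ for the third), and identify the restriction map $\mathrm H^0(\mathcal O(\tau))\to\mathrm H^0(\mathcal O_E(\tau))$ with $\mathfrak{sl}(W)\hookrightarrow W^*\otimes W$ for the final claim. Your write-up is in fact slightly more explicit than the paper's in a couple of places (e.g.\ noting that the restriction on constants is the identity), but the argument is the same.
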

	
	\begin{proof}
		The first claim follows from the long exact sequence of cohomology associated to the ideal sequence of $E$.
		
		For the second statement, we only need to show $\mathrm{H}^{\bullet}(X,\mathcal{O}(a\tau))=0$ for $-7\leq a\leq -1$. This latter vanishing follows from the fact that $\mathrm{H}^{\bullet}(\mathds{P}^7,\mathcal{O}_{\mathds{P}^7}(a))=0$ for $-7\leq a\leq -1$ and the fact that the derived pushforward of $\mathcal{O}_X$ via the blow up map $X\mapsto \mathds{P}^7$ is equal to $\mathcal{O}_{\mathds{P}^7}$.
		
		
		For the third part, we first use Serre duality, noting that $\omega_X = \mathcal O (-8 \tau + 2E)$, and then consider the twist of the ideal sequence of $E$ by $\mathcal O (a' \tau)$, where $a' = -a-8 = 0, -1, -2$:
		\[
		0 \to \mathcal O_X (a'\tau - E) \to \mathcal O_X (a' \tau) \to \mathcal O_E (a' \tau) \to 0.
		\]
		If $a'=0$, then $\mathrm H^\bullet (X, \mathcal O_X) \cong \mathrm H^\bullet (X, \mathcal O_E) \cong \mathds C[0]$, and therefore $\mathrm H^\bullet (X, \mathcal O_X (- E)) = 0$.
		If $a' = -1, -2$,
		we have $\mathrm H^\bullet (X, \mathcal O_X (a' \tau) ) = 0$ by the previous part of this Lemma.
		We now calculate $\mathrm H^\bullet (X, \mathcal O_E (a'\tau)) =
		\mathrm H^\bullet (E, p^* \mathcal O_Z (a',a')) =
		\mathrm H^\bullet (Z, (p_* \mathcal O_E) \otimes \mathcal O_Z (a',a')) = 
		\mathrm H^\bullet (Z, \mathcal O_Z (a',a')) = 0$ for $a' = -1, -2$.
		This allows us to finish the proof.
		
		For the last part, we consider again the twisted ideal sequence $0 \to \mathcal O(\tau - E) \to \mathcal O(\tau) \to \mathcal O_E(\tau) \to 0$.
		We notice that $\mathcal O(\tau) \to \mathcal O_E(\tau)$ can be obtained as a pullback from $\mathds P^7$ of $\mathcal O_{\mathds{P}^7}(1) \to \mathcal O_Z(1,1)$,
		therefore the induced map on cohomology $\mathrm H^0(\mathcal \tau)=\mathrm H^0 (\mathds{P}^7,\mathcal O_{\mathds{P}^7}(1)) \cong \mathfrak {sl} (W) \to \mathrm H^0 (\mathcal O_E(\tau)) \cong \mathrm H^0 (\mathds P^7 , \mathcal O_Z(1,1)) \cong W^* \otimes W$ is an injection.
		We conclude that $\mathrm H^1 (\mathcal O (\tau - E)) \cong W^* \otimes W / \mathfrak {sl} (W) \cong \mathds C$, while all other cohomology groups vanish.
	\end{proof}
	
	\begin{lemma}
		\label{lemma: vanishE1}
		We have the following vanishing properties for the cohomology of twists of $\mathcal E_1$:
		\begin{align*}
			& \operatorname{H}^\bullet{(\mathcal E_1(a\tau - E))} = 0 
			\text { for } a = 0,1;
			\\& \operatorname{H}^\bullet{(\mathcal E_1(a \tau + b E))} = 0
			\text{ for }
			-6 \leq a \leq -1 \text{ and } b = 0, 1;	
			\\ & \operatorname{H}^\bullet{(\mathcal E_1(a \tau + 2 E))} = 0
			\text{ for } a = -7,-6,-5
			.
		\end{align*}
		Furthermore, $\mathrm H^\bullet (X, \mathcal E_1) \cong \mathds C[0]$.
	\end{lemma}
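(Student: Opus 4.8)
The plan is to exploit the extension $0 \to \mathcal O \to \mathcal E_1 \to \mathcal O(-\tau+E) \to 0$ from \cref{E_1 extension}. Tensoring it with $\mathcal O(a\tau+bE)$ gives a short exact sequence
\[
0 \to \mathcal O(a\tau+bE) \to \mathcal E_1(a\tau+bE) \to \mathcal O\bigl((a-1)\tau+(b+1)E\bigr) \to 0 ,
\]
and in each case I would read the cohomology of $\mathcal E_1(a\tau+bE)$ off the associated long exact sequence, feeding in the cohomology of the two outer line bundles from \cref{lemma: H(O(a)) = H(O(a+E)) = H(O(a+2E))} and \cref{lemma: vanishO}.

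For almost all of the claimed twists both outer terms are acyclic, so the vanishing is immediate. Concretely, for $\mathcal E_1(-E)$ the outer terms are $\mathcal O(-E)$ and $\mathcal O(-\tau)$, both acyclic by \cref{lemma: vanishO}; for $-6 \le a \le -1$ and $b \in \{0,1\}$ the outer terms $\mathcal O(a\tau+bE)$ and $\mathcal O((a-1)\tau+(b+1)E)$ have $\tau$-exponent in $\{-7,\dots,-1\}$ and $E$-exponent in $\{0,1,2\}$, hence are acyclic by the second bullet of \cref{lemma: vanishO}; and for $a \in \{-7,-6,-5\}$ the outer terms of $\mathcal E_1(a\tau+2E)$ are $\mathcal O(a\tau+2E)$, acyclic by the same bullet, and $\mathcal O((a-1)\tau+3E)$ with $a-1 \in \{-8,-7,-6\}$, acyclic by the third bullet. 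For the untwisted bundle the sequence reads $0 \to \mathcal O_X \to \mathcal E_1 \to \mathcal O(-\tau+E) \to 0$ with $\mathrm H^\bullet(\mathcal O_X) = \mathds C[0]$ and, by \cref{lemma: H(O(a)) = H(O(a+E)) = H(O(a+2E))} together with \cref{lemma: vanishO}, $\mathrm H^\bullet(\mathcal O(-\tau+E)) \cong \mathrm H^\bullet(\mathcal O(-\tau)) = 0$; this gives the asserted $\mathrm H^\bullet(X,\mathcal E_1) \cong \mathds C[0]$.

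The one genuinely delicate case is $\mathcal E_1(\tau-E)$, where the twisted sequence is $0 \to \mathcal O(\tau-E) \to \mathcal E_1(\tau-E) \to \mathcal O_X \to 0$ and, by \cref{lemma: vanishO}, both outer terms have nonzero cohomology, $\mathrm H^\bullet(\mathcal O(\tau-E)) \cong \mathds C[-1]$ and $\mathrm H^\bullet(\mathcal O_X) \cong \mathds C[0]$; a direct argument would require showing the connecting map $\mathrm H^0(\mathcal O_X) \to \mathrm H^1(\mathcal O(\tau-E))$ is an isomorphism. Instead I would use \cref{dual of E1}, which gives $\mathcal E_1(\tau-E) \cong \mathcal E_1^*$, together with $\mathcal E_1 = \pi^* \mathcal U_1 \otimes \mathcal O_X(\tau)$ and $\mathcal O_X(\tau) = \mathcal O_{\mathds P_Y(\mathcal U_1)}(1)$, so that $\mathcal E_1^* = \pi^* \mathcal U_1^* \otimes \mathcal O_{\mathds P_Y(\mathcal U_1)}(-1)$. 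The projection formula then yields $\mathrm R\pi_* \mathcal E_1^* = \mathcal U_1^* \otimes \mathrm R\pi_* \mathcal O_{\mathds P_Y(\mathcal U_1)}(-1) = 0$, since $\pi$ is a $\mathds P^1$-bundle and $\mathrm R\pi_* \mathcal O(-1) = 0$ for such; hence $\mathrm H^\bullet(X, \mathcal E_1(\tau-E)) = 0$. I expect this last case to be the only real obstacle --- everything else is a mechanical application of \cref{E_1 extension} and the line-bundle cohomology already recorded in \cref{lemma: H(O(a)) = H(O(a+E)) = H(O(a+2E))} and \cref{lemma: vanishO}.
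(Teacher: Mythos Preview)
Your proof is correct and, for all but the delicate case $\mathcal E_1(\tau-E)$, identical to the paper's: both twist the extension of \cref{E_1 extension} and feed in the line-bundle vanishings from \cref{lemma: vanishO}.

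The one genuine difference is how you handle $\mathcal E_1(\tau-E)$. The paper stays with the twisted sequence $0 \to \mathcal O(\tau-E) \to \mathcal E_1(\tau-E) \to \mathcal O \to 0$ and argues that the connecting map $\delta: \mathrm H^0(\mathcal O_X) \to \mathrm H^1(\mathcal O(\tau-E))$ is nonzero because it sends $1$ to the extension class, and the extension is non-split (they invoke $\mathrm{End}(\mathcal E_1) = \mathds C$ from \cite{belmans2023vector}). You instead use $\mathcal E_1(\tau-E) \cong \mathcal E_1^* = \pi^*\mathcal U_1^* \otimes \mathcal O_{\mathds P_Y(\mathcal U_1)}(-1)$ and the standard vanishing $\mathrm R\pi_* \mathcal O_{\mathds P^1\text{-bundle}}(-1) = 0$. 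Your route is more self-contained, since it avoids the external input that $\mathcal E_1$ is simple; the paper's route is slightly more uniform with the rest of the section. Both are short and valid.
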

	
	\begin{proof}
		Tensoring the short exact sequence in \cref{E_1 extension} by $\mathcal O(a\tau +bE)$, we get the following:
		$$	0 \to \mathcal O(a \tau +bE) \to \mathcal E_1(a\tau +bE) \to \mathcal O ((a-1)\tau +(b+1)E) \to 0
		\mbox{.} $$
		We now apply \cref{lemma: vanishO} to deduce that the cohomology of the right-hand and the left-hand terms vanish when $a = 0$ and $b=-1$, or when $-6 \leq a \leq -1$ and $b=0,1$, or when $a = -7,-6,-5$ and $b=2$. 
		
		When $a=1$ and $b=-1$, the short exact sequence above becomes $0 \to \mathcal O (\tau - E) \to \mathcal E_1 (\tau - E) \to \mathcal O \to 0$.
		By \cref{O(*)}, the associated long exact sequence of cohomology reads:
		\[
		0 \to \mathrm H^0 (\mathcal E_1 (\tau - E)) \to \mathds C \xrightarrow{\delta}
		\mathds C \to \mathrm H^1 (\mathcal E_1 (\tau - E)) \to 0.
		\]
		We now note that the extension in \cref{E_1 extension} is non-split (in fact we know $\mathrm{End}(\mathcal E_1)=\mathds{C}$ from
		\cite{belmans2023vector}), therefore the element representing it in $\mathrm{Ext}^1 (\mathcal O , \mathcal O (\tau - E)) \cong \mathrm H^1 (\mathcal O (\tau - E))$ is non-zero; but this element is precisely $\delta$.
		Now the long exact sequence of cohomology implies that $\mathrm H^\bullet ( \mathcal E_1 (\tau - E)) = 0$.
		
		In case $a=b=0$, the cohomology of the right-hand term vanish, so $\mathrm H^\bullet (X, \mathcal E_1) \cong \mathrm H^\bullet (X, \mathcal O) \cong \mathds C[0]$.
	\end{proof}
	
	\begin{lemma}
		\label{vanishE1E1}
		We have the following vanishing properties for the cohomology of twists of $\mathcal E_1 \otimes \mathcal E_1$:
		\begin{align*}
			&\operatorname {H^\bullet} ({\mathcal E_1 \otimes \mathcal E_1(-E)}) = 0;
			\\&\operatorname {H^\bullet} ({\mathcal E_1 \otimes \mathcal E_1(a \tau)}) = 0
			\mbox{ for }
			-5 \leq a \leq -1;
			\\ &\operatorname {H^\bullet} ({\mathcal E_1 \otimes \mathcal E_1(a \tau + E)}) = 0
			\mbox{ for }
			a = -6, -5, -4
			\mbox{.}
		\end{align*}
		Furthermore, $\mathrm H^\bullet (X, \mathcal E_1 \otimes \mathcal E_1) \cong \mathrm H^\bullet (X, \mathcal E_1 \otimes \mathcal E_1 (\tau - E)) \cong \mathds C[0]$, hence $\mathcal E_1 (a\tau + bE)$ is exceptional for any $a, b \in \mathds Z$.
	\end{lemma}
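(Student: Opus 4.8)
The plan is to reduce everything to \cref{lemma: vanishE1} by tensoring the extension of \cref{E_1 extension} with the locally free sheaf $\mathcal E_1$. Tensoring $0 \to \mathcal O \to \mathcal E_1 \to \mathcal O(-\tau + E) \to 0$ with $\mathcal E_1$ yields
\[
0 \to \mathcal E_1 \to \mathcal E_1 \otimes \mathcal E_1 \to \mathcal E_1(-\tau + E) \to 0,
\]
and twisting by $\mathcal O(a\tau + bE)$ gives
\[
0 \to \mathcal E_1(a\tau + bE) \to \mathcal E_1 \otimes \mathcal E_1(a\tau + bE) \to \mathcal E_1((a-1)\tau + (b+1)E) \to 0.
\]
For each twist in the vanishing statement I would pick the matching $(a,b)$: $(0,-1)$ for $\mathcal E_1 \otimes \mathcal E_1(-E)$; $(a,0)$ with $-5 \le a \le -1$ for the middle family; and $(a,1)$ with $a = -6,-5,-4$ for the last family. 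In every case the left-hand term $\mathcal E_1(a\tau+bE)$ and the right-hand term $\mathcal E_1((a-1)\tau+(b+1)E)$ fall into one of the index ranges already handled in \cref{lemma: vanishE1} (the ranges $a=0,1$ with $b=-1$; $-6\le a\le -1$ with $b=0,1$; and in particular $a-1=-7,-6,-5$ with $b+1=2$ for the last family). The long exact sequence of cohomology then forces $\mathrm H^\bullet$ of the middle term to vanish.

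For the two distinguished twists I would use the same sequence with $(a,b)=(0,0)$ and $(a,b)=(1,-1)$. With $(a,b)=(0,0)$ the sequence is $0 \to \mathcal E_1 \to \mathcal E_1 \otimes \mathcal E_1 \to \mathcal E_1(-\tau+E) \to 0$; since $\mathrm H^\bullet(X, \mathcal E_1) \cong \mathds C[0]$ and $\mathrm H^\bullet(\mathcal E_1(-\tau+E)) = 0$ by \cref{lemma: vanishE1}, we get $\mathrm H^\bullet(X, \mathcal E_1 \otimes \mathcal E_1) \cong \mathds C[0]$. With $(a,b)=(1,-1)$ the sequence is $0 \to \mathcal E_1(\tau - E) \to \mathcal E_1 \otimes \mathcal E_1(\tau - E) \to \mathcal E_1 \to 0$, and since $\mathrm H^\bullet(\mathcal E_1(\tau - E)) = 0$ and $\mathrm H^\bullet(X, \mathcal E_1) \cong \mathds C[0]$ (both from \cref{lemma: vanishE1}), we obtain $\mathrm H^\bullet(X, \mathcal E_1 \otimes \mathcal E_1(\tau - E)) \cong \mathds C[0]$.

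To conclude exceptionality of $\mathcal E_1(a\tau + bE)$, I would note that twisting both arguments by the same line bundle leaves $\rhom$ unchanged, so $\rhom^\bullet_X(\mathcal E_1(a\tau+bE), \mathcal E_1(a\tau+bE)) \cong \rhom^\bullet_X(\mathcal E_1, \mathcal E_1) \cong \mathrm H^\bullet(X, \mathcal E_1^* \otimes \mathcal E_1)$. By \cref{corollary: determinants of E_1 and E_2 and also tau=rel.O(1)} we have $\mathcal E_1^* \cong \mathcal E_1(\tau - E)$, hence this equals $\mathrm H^\bullet(X, \mathcal E_1 \otimes \mathcal E_1(\tau - E)) \cong \mathds C[0]$ by the previous paragraph. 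I do not expect a genuine obstacle: the argument is bookkeeping against the index ranges of \cref{lemma: vanishE1}, and the only point to flag is that exceptionality uses the twisted product $\mathcal E_1 \otimes \mathcal E_1(\tau - E)$ rather than $\mathcal E_1 \otimes \mathcal E_1$, which is precisely why that extra twist is isolated in the statement.
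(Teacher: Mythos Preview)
Your proposal is correct and follows essentially the same approach as the paper: tensor the extension of \cref{E_1 extension} by $\mathcal E_1(a\tau+bE)$, check the outer terms against the ranges in \cref{lemma: vanishE1}, and for exceptionality use $\mathcal E_1^* \cong \mathcal E_1(\tau-E)$ from \cref{dual of E1}. The paper's proof is slightly more terse but identical in content and in the specific index choices $(a,b)=(0,0)$ and $(1,-1)$ for the two distinguished twists.
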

	
	\begin{proof}
		Tensoring \cref{E_1 extension} by $\mathcal E_1(a \tau +b E)$, we obtain:
		$$	0 \to \mathcal E_1(a \tau +b E) \to \mathcal E_1\otimes\mathcal E_1(a \tau +b E) \to \mathcal E_1((a-1)\tau+(b+1)E) \to 0
		\mbox{.} $$
		From what has already been proved in \cref{lemma: vanishE1}, we see that the cohomology of the left-hand and the right-hand terms vanish within the desired bounds.

		When $a=b=0$ (resp. $a=1$ and $b=-1$), the cohomology of the rightmost (resp. leftmost) term vanish by \cref{lemma: vanishE1}, hence $\mathrm H^\bullet (X, \mathcal E_1 \otimes \mathcal E_1) \cong \mathrm H^\bullet (X, \mathcal E_1) \cong \mathds C[0]$ (resp. $\mathrm H^\bullet (X, \mathcal E_1 \otimes \mathcal E_1 (\tau - E)) \cong \mathrm H^\bullet (X, \mathcal E_1) \cong \mathds C[0]$).
		Lastly, we note that $\Ext^\bullet (\mathcal E_1, \mathcal E_1) \cong \mathrm H^\bullet (\mathcal E_1^* \otimes \mathcal E_1) \cong \mathrm H^\bullet (\mathcal E_1 \otimes \mathcal E_1 (\tau - E))$, by \cref{dual of E1}.
	\end{proof}
	
	\begin{lemma}
		\label{vanishE2}
		We have the following vanishing properties for the cohomology of twists of $\mathcal E_2$:
		\begin{align*}
			& \operatorname{H}^\bullet({\mathcal E_2(-E)}) = 0;
			\\& \operatorname{H}^\bullet({\mathcal E_2(a\tau + b E)}) = 0
			\text{ for }
			-7 \leq a \leq -1 \text{ and } b = 0,1 ;
			\\ & \operatorname{H}^\bullet({\mathcal E_2(a \tau + 2 E)})
			= 0
			\text{ for }
			a = -7, -6, -5.
		\end{align*}
	\end{lemma}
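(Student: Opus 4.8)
The plan is to mimic the proofs of \cref{lemma: vanishE1} and \cref{vanishE1E1}, but starting instead from the short exact sequence of \cref{E2}, namely $0\to W^*\otimes\mathcal O\to\mathcal E_2\to\mathcal O_E(L)\to 0$ with $\mathcal O_E(L)=(j_*\mathcal O_Z(1,0))\otimes\mathcal O(-\tau+E)$. Twisting by $\mathcal O(a\tau+bE)$ produces the short exact sequence
\[
0\to W^*\otimes\mathcal O(a\tau+bE)\to\mathcal E_2(a\tau+bE)\to (j_*\mathcal O_Z(1,0))\otimes\mathcal O((a-1)\tau+(b+1)E)\to 0,
\]
so it suffices to show that both outer terms have vanishing cohomology for the pairs $(a,b)$ appearing in the statement: $(0,-1)$; then $-7\le a\le-1$ with $b=0,1$; and finally $a=-7,-6,-5$ with $b=2$.

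First I would dispose of the left-hand term $W^*\otimes\mathcal O(a\tau+bE)$. Its cohomology vanishes in all the required ranges directly by \cref{lemma: vanishO}: the case $(a,b)=(0,-1)$ is covered by $\mathrm H^\bullet(\mathcal O(-E))=0$, while the remaining cases are covered by the vanishing of $\mathrm H^\bullet(\mathcal O(a\tau+bE))$ for $-7\le a\le-1$ and $b=0,1,2$.

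For the right-hand term I would use the projection formula together with the restriction formulas $j^*\mathcal O(\tau)\cong p^*\mathcal O_Z(1,1)$ (which holds because $\mathcal O(\tau)$ is pulled back from $\mathds P^7$ and $\mathcal O_{\mathds P^7}(1)|_Z\cong\mathcal O_Z(1,1)$) and $j^*\mathcal O(E)\cong\mathcal O_{E/Z}(-1)$ (the restriction to the exceptional divisor of the blow-up). After pushing forward along $p$, the right-hand term becomes $\mathcal O_Z(a,a-1)\otimes\mathrm Rp_*\mathcal O_{E/Z}(-(b+1))$. Since $Z$ has codimension $3$ in $\mathds P^7$, the map $p\colon E\to Z$ is a $\mathds P^2$-bundle, so $\mathrm Rp_*\mathcal O_{E/Z}(-1)=\mathrm Rp_*\mathcal O_{E/Z}(-2)=0$ and $p_*\mathcal O_E\cong\mathcal O_Z$; this disposes of the right-hand term for $b=0,1$, and also for $b=-1$, where there is no relative twist and one is left with $\mathrm H^\bullet(Z,\mathcal O_Z(0,-1))=0$. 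For $b=2$ I would instead invoke \cref{-3}, which gives $\mathrm Rp_*\mathcal O_{E/Z}(-3)\cong\mathcal O_Z(5,5)[-2]$, so the right-hand term becomes $\mathcal O_Z(a+5,a+4)[-2]$; this has vanishing cohomology for $a=-7,-6,-5$ since for each such $a$ one of the two line-bundle factors on $\mathds P^2$ is $\mathcal O(-1)$ or $\mathcal O(-2)$, and $\mathrm H^\bullet(\mathds P^2,\mathcal O(-1))=\mathrm H^\bullet(\mathds P^2,\mathcal O(-2))=0$. The long exact sequences of cohomology then yield all three displayed statements.

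The argument contains no genuine obstacle; essentially all the work has already been done in \cref{lemma: vanishO} and \cref{-3}. The only point that calls for a little care is the $b=2$ line of the statement, where the right-hand term of the twisted sequence is not acyclic as a sheaf but only has vanishing cohomology once the $\mathds P^2$-bundle pushforward is computed via \cref{-3}; one must also keep track of the $\mathcal O_Z(1,0)$-twist coming from $\mathcal O_E(L)$ as it passes through this pushforward.
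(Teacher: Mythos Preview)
Your proof is correct and follows essentially the same approach as the paper: twist the short exact sequence of \cref{E2}, handle the left term via \cref{lemma: vanishO}, and compute the cohomology of the right term by pushing forward along $p$ and invoking \cref{-3} for the $b=2$ case. The paper's write-up is nearly identical, including the computation $\mathrm H^\bullet(Z,\mathcal O_Z(a,a-1)\otimes\mathrm Rp_*\mathcal O_{E/Z}(-b-1))$ and the same case analysis.
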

	
	\begin{proof}
		The twist of the short exact sequence in \cref{E2} by the invertible sheaf $\mathcal O (a \tau + b E)$ gives:
		$$	0 \to W^* \otimes \mathcal O (a\tau +bE) \to \mathcal E_2(a\tau +bE) \to \mathcal O_E (L) \otimes \mathcal O (a\tau +bE) \to 0
		\mbox{.}$$
		Let us calculate the cohomology of the rightmost term:
		\begin{align*}
			\operatorname{H^\bullet} ({X, \mathcal O_E (L) \otimes \mathcal O (a\tau +b E)}) 						&\cong
			\operatorname{H^\bullet} ({E, L \otimes p^* \mathcal O _Z(a,a) \otimes \mathcal O _E(bE)}) 			\\
			&\cong \operatorname{H^\bullet} ({E, p^* \mathcal O _Z(a,a-1) \otimes \mathcal O _{E/Z}(-b-1)})		\\
			&\cong \operatorname{H^\bullet} ({Z, \mathcal O _Z(a,a-1) \otimes \mathrm Rp_* \mathcal O _{E/Z}(-b-1)})
			\mbox{.}
		\end{align*}
		If $b=0,1$, then $\mathrm Rp_* \mathcal O _{E/Z}(-b-1) = 0$, hence
		$\operatorname{H^\bullet} ({\mathcal O _E (L) \otimes \mathcal O (a \tau + b E)}) =0$ for such $b$.
		We also know, due to \cref{lemma: vanishO}, that for $-7 \leq a \leq -1$ and $0 \leq b \leq 2$,
		as well as for $a=0$ and $b=-1$, the cohomology of the left-hand term vanishes.
		We thus get that $\mathcal E_2(a \tau + b E)$ has zero cohomology for $-7 \leq a \leq -1$ and $b=0,1$.
		
		For the first claim, we substitute $b=-1$ and get:
		\begin{align*}
			\operatorname{H^\bullet} ({X, \mathcal O_E (L) \otimes \mathcal O (a\tau - E)}) 
			&\cong \operatorname{H^\bullet} ({Z, \mathcal O _Z(a,a-1)})
			\\ &= 0
			\text{ for } a = -2, -1, 0.
		\end{align*}
		
		For the last case, we substitute $b=2$ and get:
		\begin{align*}
			\operatorname{H^\bullet} ({X, \mathcal O_E (L) \otimes \mathcal O (a\tau + 2 E)}) 						&\cong \operatorname{H^\bullet} ({Z, \mathcal O _Z(a,a-1) \otimes \mathrm Rp_* \mathcal O _{E/Z}(-3)})
			\\ &\cong \mathrm{H^{\bullet-2}} (Z, \mathcal O_Z(a+5,a+4))^*
			\text{, by \cref{-3}}
			\\ &= 0
			\text{ for } a = -7,-6,-5.
			\qedhere
		\end{align*}
	\end{proof}
	
	\begin{lemma}
		\label{vanishE1E2}
		We have the following vanishing properties for the cohomology of twists of $\mathcal E_1 \otimes \mathcal E_2$:
		\begin{align*}
			&\operatorname{H^\bullet} ({\mathcal E_1 \otimes \mathcal E_2(-E)}) = 0;
			\\&\operatorname{H^\bullet} ({\mathcal E_1 \otimes \mathcal E_2(a \tau)}) = 0
			\mbox{ for }
			-6 \leq a \leq -1 ;
			\\ & \operatorname{H^\bullet} ({\mathcal E_1 \otimes \mathcal E_2(a \tau + E)}) = 0
			\mbox{ for }
			a = -6,-5,-4
			\mbox{.}
		\end{align*}
	\end{lemma}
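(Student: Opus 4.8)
The plan is to follow the same template as in \cref{vanishE2} and \cref{vanishE1E1}, namely to tensor the defining extension of $\mathcal{E}_1$ with a twist of $\mathcal{E}_2$ and chase the long exact sequence of cohomology. Concretely, tensoring the short exact sequence of \cref{E_1 extension} by $\mathcal{E}_2(a\tau + bE)$ yields
$$0 \to \mathcal{E}_2(a\tau + bE) \to \mathcal{E}_1 \otimes \mathcal{E}_2(a\tau + bE) \to \mathcal{E}_2((a-1)\tau + (b+1)E) \to 0,$$
so it suffices to check that for the relevant values of $(a,b)$ both the left-hand term $\mathcal{E}_2(a\tau + bE)$ and the right-hand term $\mathcal{E}_2((a-1)\tau + (b+1)E)$ have vanishing cohomology, which we will read off from \cref{vanishE2}.

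Three cases then have to be treated. For $\mathcal{E}_1 \otimes \mathcal{E}_2(-E)$ take $(a,b) = (0,-1)$: the left-hand term is $\mathcal{E}_2(-E)$ and the right-hand term is $\mathcal{E}_2(-\tau)$, both covered by \cref{vanishE2}. For $\mathcal{E}_1 \otimes \mathcal{E}_2(a\tau)$ with $-6 \le a \le -1$ take $b = 0$: the left-hand term $\mathcal{E}_2(a\tau)$ lies in the range $-7 \le a \le -1$, $b = 0$, and the right-hand term $\mathcal{E}_2((a-1)\tau + E)$ has twist index $a-1 \in [-7,-2]$ with $b = 1$, both again covered by \cref{vanishE2}. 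For $\mathcal{E}_1 \otimes \mathcal{E}_2(a\tau + E)$ with $a \in \{-6,-5,-4\}$ take $b = 1$: the left-hand term $\mathcal{E}_2(a\tau + E)$ is in the range $-7 \le a \le -1$, $b = 1$, and the right-hand term $\mathcal{E}_2((a-1)\tau + 2E)$ has $a-1 \in \{-7,-6,-5\}$, $b = 2$, which is exactly the last case of \cref{vanishE2}. In each case the long exact sequence forces $\mathrm{H}^\bullet$ of the middle term to vanish.

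I do not expect any real obstacle here: every input vanishing has already been established in \cref{vanishE2}, and the argument reduces to bookkeeping, namely verifying that after the shift $(a,b) \mapsto (a-1, b+1)$ produced by the extension, the resulting twists of $\mathcal{E}_2$ still fall within the ranges of \cref{vanishE2}. This is the same kind of check already carried out in \cref{lemma: vanishE1} and \cref{vanishE1E1}, so it should go through with no surprises.
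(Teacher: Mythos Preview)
Your proposal is correct and follows exactly the paper's approach: tensor the extension from \cref{E_1 extension} by $\mathcal{E}_2(a\tau+bE)$ and read off the vanishing of the outer terms from \cref{vanishE2}. Your case-by-case bookkeeping of the twist ranges is even more explicit than the paper's one-line reference.
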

	
	\begin{proof}
		Tensoring \cref{E_1 extension} by $\mathcal E_2(a \tau +b E)$, we obtain:
		$$	0 \to \mathcal E_2(a \tau +b E) \to \mathcal E_1 \otimes \mathcal E_2(a \tau +b E) \to \mathcal E_2((a-1) \tau + (b+1) E) \to 0
		\mbox{.}$$
		\cref{vanishE2} implies vanishing of cohomology of the left- and right-hand side terms within the desired bounds.
	\end{proof}
	
	\begin{lemma}
		\label{lemma: cohg zero of E2E2}
		We have the following vanishing properties for the cohomology of twists of $\mathcal E_2 \otimes \mathcal E_2$:
		\begin{align*}
			& \operatorname{H}^\bullet({\mathcal E_2 \otimes \mathcal E_2 (a\tau)}) = 0
			\text{ for }
			-7 \leq a \leq -1;
			\\ & \operatorname{H}^\bullet({\mathcal E_2 \otimes \mathcal E_2 (a \tau + E)})
			= 0
			\text{ for }
			-7 \leq a \leq -4.
		\end{align*}
	\end{lemma}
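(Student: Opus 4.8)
The plan is to follow the template already used for \cref{vanishE1E2}: tensor a known short exact sequence in which $\mathcal{E}_2$ appears by a twist of $\mathcal{E}_2$, and reduce the desired vanishing to cases already settled. Concretely, tensoring the sequence of \cref{E2} by the locally free sheaf $\mathcal{E}_2(a\tau+bE)$ yields
\[
0 \to W^* \otimes \mathcal{E}_2(a\tau+bE) \to \mathcal{E}_2 \otimes \mathcal{E}_2(a\tau+bE) \to \mathcal{O}_E(L) \otimes \mathcal{E}_2(a\tau+bE) \to 0 .
\]
By \cref{vanishE2} the cohomology of the left-hand term vanishes for $-7 \le a \le -1$ and $b=0,1$; this is the only place the bound $-7 \le a \le -1$ (and, for $b=1$, the start of the range $-7 \le a \le -4$) comes from. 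So I only need to prove that the right-hand term, which is scheme-theoretically supported on $E$, has vanishing cohomology.

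The one new ingredient is the restriction $j^*\mathcal{E}_2$ as a sheaf on $E$. I would obtain it by restricting the sequence of \cref{E2} to $E$: the only nonzero $\mathcal{T}or$ contribution is $\mathcal{T}or_1^{\mathcal{O}_X}(\mathcal{O}_E(L),\mathcal{O}_E) \cong L \otimes \mathcal{O}_{E/Z}(1)$ (computed from the resolution $0 \to \mathcal{O}_X(-E) \to \mathcal{O}_X \to \mathcal{O}_E \to 0$), and using $\mathcal{O}_E(L) = j_*\mathcal{O}_Z(1,0) \otimes \mathcal{O}(-\tau+E)$ together with $j^*\mathcal{O}(\tau) = p^*\mathcal{O}_Z(1,1)$ and $j^*\mathcal{O}(E) = \mathcal{O}_{E/Z}(-1)$ one finds $L = p^*\mathcal{O}_Z(0,-1) \otimes \mathcal{O}_{E/Z}(-1)$, hence $L \otimes \mathcal{O}_{E/Z}(1) = p^*\mathcal{O}_Z(0,-1)$. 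This gives on $E$ a short exact sequence
\[
0 \to p^*\mathcal{Q} \to j^*\mathcal{E}_2 \to L \to 0 ,
\]
where $\mathcal{Q} = (W^* \otimes \mathcal{O}_Z)/\mathcal{O}_Z(0,-1)$ is the rank-two bundle on $Z$ obtained by pulling back the tautological quotient bundle under $Z \to \mathds{P}(W^*)$ — precisely the rank-two subbundle of $\mathcal{E}_2|_E$ identified in the proof of \cref{E2}.

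With this in hand the computation reduces to derived pushforward along the $\mathds{P}^2$-bundle $p : E = \mathds{P}(N_{Z|\mathds{P}^7}) \to Z$. By the projection formula $\mathcal{O}_E(L) \otimes \mathcal{E}_2(a\tau+bE) = j_*\big( j^*\mathcal{E}_2 \otimes p^*\mathcal{O}_Z(a,a-1) \otimes \mathcal{O}_{E/Z}(-b-1) \big)$, so tensoring the sequence for $j^*\mathcal{E}_2$ by $p^*\mathcal{O}_Z(a,a-1)\otimes\mathcal{O}_{E/Z}(-b-1)$ and applying $\mathrm{R}p_*$, the $p^*\mathcal{Q}$-summand contributes $\mathcal{Q}(a,a-1)\otimes \mathrm{R}p_*\mathcal{O}_{E/Z}(-b-1)$ and the $L$-summand contributes $\mathcal{O}_Z(a,a-2)\otimes \mathrm{R}p_*\mathcal{O}_{E/Z}(-b-2)$. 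For $b=0$ both $\mathrm{R}p_*\mathcal{O}_{E/Z}(-1)$ and $\mathrm{R}p_*\mathcal{O}_{E/Z}(-2)$ vanish, so the right-hand term has no cohomology for every $a$. For $b=1$ the term $\mathrm{R}p_*\mathcal{O}_{E/Z}(-2)$ still vanishes, while $\mathrm{R}p_*\mathcal{O}_{E/Z}(-3) \cong \mathcal{O}_Z(5,5)[-2]$ by \cref{-3}, leaving cohomology $\mathrm{H}^{\bullet-2}(Z, \mathcal{O}_Z(a+5,a+3))$; by the Künneth formula on $Z = \mathds{P}^2\times\mathds{P}^2$ this vanishes exactly when $a+5 \in \{-1,-2\}$ or $a+3 \in \{-1,-2\}$, i.e. for $-7 \le a \le -4$, matching the statement. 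Combining with the vanishing of the left-hand term finishes the proof.

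The only real obstacle I anticipate is pinning down $j^*\mathcal{E}_2$ — in particular, verifying that the kernel of $W^* \otimes \mathcal{O}_E \to j^*\mathcal{E}_2$ is the pullback of the tautological sub-line-bundle on $\mathds{P}(W^*)$ and checking the $\mathcal{T}or$ computation; once this sequence is available everything downstream is a routine application of the projection formula, \cref{-3}, and Künneth, and (unlike in \cref{theorem: exceptional collection on Y}) no Serre-duality reduction via \cref{lemma: induced Serre duality} is needed, since the ranges obtained are already tight.
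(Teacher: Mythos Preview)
Your proof is correct and follows essentially the same approach as the paper: tensor the sequence of \cref{E2} by $\mathcal{E}_2(a\tau+bE)$, use \cref{vanishE2} for the left term, compute $Lj^*\mathcal{E}_2$ via the $\mathcal{T}or$ resolution, and then push forward along $p$. The only cosmetic difference is that the paper explicitly identifies your quotient bundle $\mathcal{Q}$ with $\mathcal{O}_{\mathds{P}W}\boxtimes T_{\mathds{P}W^*}(-1)$ via the Euler sequence on $\mathds{P}(W^*)$ before carrying out the identical cohomology computation.
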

	
	\begin{proof}
		Tensoring the short exact sequence in \cref{E2} by $\mathcal E_2(a \tau +b E)$ we obtain:
		\[ 0\rightarrow W^*\otimes \mathcal E_2(a \tau +b E) \rightarrow \mathcal{E}_2 \otimes \mathcal E_2(a \tau +b E) \rightarrow \mathcal E_2(a \tau +b E) \otimes \mathcal{O}_E(L)\rightarrow 0.
		\]
		By \cref{vanishE2}, cohomology of the leftmost term vanish within the required bounds.
		Using \cref{E2} and projection formula, we rewrite the rightmost term as 
		\[ 
		\mathcal E_2(a \tau + b E) \otimes \mathcal{O}_E(L) \cong j_* \left( j^* \mathcal E_2 \otimes p^* \mathcal O_Z (a,a-1) \otimes \mathcal O_{E/Z} (-b-1) \right). 
		\]
		Applying $Lj^*$ to the sequence in \cref{E2} yields the following exact sequence on $E$:
		\[
		0 \to L \otimes \mathcal O_{E/Z} (1) \to W^* \otimes \mathcal O_E \to j^* \mathcal E_2 \to L \to 0.
		\]
		We notice that $L \otimes \mathcal O_{E/Z} (1) \cong p^* \mathcal O_Z (0,-1)$, hence the Euler exact sequence for $\mathds P W^*$ implies that the following sequence is exact on $Z$:
		\[
		0 \to p^* (\mathcal O_{\mathds PW} \boxtimes T_{\mathds PW^*}(-1) ) \to j^* \mathcal E_2 \to p^* \mathcal O_Z(0,-1) \otimes \mathcal O_{E/Z} (-1) \to 0.
		\]
		Twisting it by $p^* \mathcal O_Z (a,a-1) \otimes \mathcal O_{E/Z} (-b-1)$ yields the short exact sequence
		\begin{align*}
			0 
			& \to p^* (\mathcal O_{\mathds PW}(a) \boxtimes T_{\mathds PW^*}(a-2)) \otimes \mathcal O_{E/Z} (-b-1)
			\\& \to j^* \mathcal E_2 \otimes p^* \mathcal O_Z (a,a-1) \otimes \mathcal O_{E/Z} (-b-1)
			\\& \to p^* \mathcal O_Z(a,a-2) \otimes \mathcal O_{E/Z} (-b-2) \to 0.
		\end{align*}
		A direct calculation (using \cref{-3}) shows that the cohomology of the outer terms vanish when $b=0$, or when $b=1$ and $-7 \leq a \leq -4$.
		Tracing back the short exact sequences completes the proof.
	\end{proof}
	
	\begin{lemma}
		\label{lemma: vanishE2*}
		We have the following vanishing properties for the cohomology of twists of $\mathcal E_2^*$:
		\begin{align*}
			& \mathrm{H}^{\bullet}(\mathcal{E}_2^*(a\tau))=0
			\text{ for } -3\leq a\leq 0;
			\\ & \mathrm{H}^{\bullet}(\mathcal{E}_2^*(a\tau+bE))=0
			\text{ for } -7\leq a\leq -1 \text{ and } b = 1,2;
			\\ & \mathrm{H}^{\bullet}(\mathcal{E}_2^*(a\tau+3E))=0
			\text{ for } a = -8, -7, -6.
		\end{align*}
	\end{lemma}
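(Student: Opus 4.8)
The plan is to run everything off the short exact sequence of \cref{2dual}. Twisting it by $\mathcal O_X(a\tau+bE)$ and using the projection formula together with the standard identities $j^*\mathcal O(\tau)=p^*\mathcal O_Z(1,1)$ and $j^*\mathcal O(E)=\mathcal O_{E/Z}(-1)$, one obtains
\[
0\to \mathcal E_2^*(a\tau+bE)\to W\otimes\mathcal O_X(a\tau+bE)\to j_*\bigl(p^*\mathcal O_Z(a,a+1)\otimes\mathcal O_{E/Z}(-b)\bigr)\to 0.
\]
It then suffices to show that the middle and right-hand terms are acyclic for every $(a,b)$ occurring in the statement and to read the vanishing off the long exact sequence of cohomology.

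For the middle term, the acyclicity of $W\otimes\mathcal O_X(a\tau+bE)$ is exactly what \cref{lemma: vanishO} provides: for $-7\le a\le -1$ with $0\le b\le 2$, and for $a\in\{-8,-7,-6\}$ with $b=3$. The one case of the present Lemma not covered this way is $\mathcal E_2^*$ itself ($a=b=0$); here I bypass the middle term by observing that $\mathcal E_2^*=\pi^*\mathcal U_2^*\otimes\mathcal O_X(-\tau)$ with $\mathcal O_X(-\tau)=\mathcal O_{\mathds P_Y(\mathcal U_1)}(-1)$ by \cref{corollary: determinants of E_1 and E_2 and also tau=rel.O(1)}, whence $\mathrm R\pi_*\mathcal E_2^*=0$ for the $\mathds P^1$-bundle $\pi$ and so $\mathrm H^\bullet(X,\mathcal E_2^*)=0$.

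For the right-hand term I push forward along $p\colon E\to Z$, which is a $\mathds P^2$-bundle since $Z=\mathds P(W)\times\mathds P(W^*)$ has codimension $3$ in $\mathds P^7$; its cohomology is $\mathrm H^\bullet\bigl(Z,\ \mathcal O_Z(a,a+1)\otimes\mathrm Rp_*\mathcal O_{E/Z}(-b)\bigr)$. For $b=1,2$ this is zero outright since $\mathrm Rp_*\mathcal O_{E/Z}(-b)=0$, which—combined with the middle-term vanishing—settles the entire second bullet. For $b=0$ it equals $\mathrm H^\bullet(Z,\mathcal O_Z(a,a+1))$, and for $b=3$ it equals $\mathrm H^{\bullet-2}(Z,\mathcal O_Z(a+5,a+6))$ by \cref{-3}. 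In both remaining cases I apply Künneth over $\mathds P(W)\times\mathds P(W^*)$: the $\mathds P(W)$-factor is $\mathcal O_{\mathds P(W)}(a)$, resp.\ $\mathcal O_{\mathds P(W)}(a+5)$, which is acyclic for all relevant values except when it is $\mathcal O_{\mathds P(W)}(-3)$ (i.e.\ $a=-3$, resp.\ $a=-8$), and precisely then the $\mathds P(W^*)$-factor is the acyclic sheaf $\mathcal O_{\mathds P(W^*)}(-2)$; so the product is acyclic in every case.

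I expect the only point needing care to be this last bookkeeping: the asymmetric twist $\mathcal O_Z(0,1)$ from \cref{2dual} breaks the symmetry between the two projective planes, and it is exactly what prevents the two ``boundary'' twists $\mathcal O_{\mathds P(W)}(-3)$ (occurring at $a=-3$ and $a=-8$) from contributing an $\mathrm H^2$; one has to keep track of which factor the negative twist lands on. Everything else is a direct assembly of the long exact sequences. As a cross-check, the struck-out case $\mathcal E_2^*(-8\tau+3E)$ of the third bullet also follows from Serre duality on $X$ (with $\omega_X=\mathcal O(-8\tau+2E)$) together with the vanishing of $\mathrm H^\bullet(\mathcal E_2(-E))$ established in \cref{vanishE2}.
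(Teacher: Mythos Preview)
Your argument is correct and follows the paper's approach: twist the sequence of \cref{2dual} by $\mathcal O(a\tau+bE)$, kill the middle term with \cref{lemma: vanishO}, and kill the right-hand term by pushing forward along $p$ and reading off $\mathrm H^\bullet(Z,\mathcal O_Z(a,a+1)\otimes\mathrm Rp_*\mathcal O_{E/Z}(-b))$. The one place you diverge is the case $a=b=0$: the paper stays with the same short exact sequence and argues that the induced map $W=\mathrm H^0(W\otimes\mathcal O_X)\to\mathrm H^0(j_*p^*\mathcal O_Z(0,1))\cong W$ is an isomorphism, whereas your observation that $\mathcal E_2^*=\pi^*\mathcal U_2^*\otimes\mathcal O_{\mathds P_Y(\mathcal U_1)}(-1)$ has $\mathrm R\pi_*=0$ on a $\mathds P^1$-bundle is a cleaner shortcut that sidesteps any analysis of that map.
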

	
	\begin{proof}
		We tensor the short exact sequence in \cref{2dual} by $\mathcal{O}(a\tau+bE)$ and obtain: $$0\rightarrow \mathcal{E}_2^*(a\tau+bE)\rightarrow W\otimes\mathcal{O}(a\tau+bE)\rightarrow j_*p^*\mathcal{O}_Z(0,1)\otimes\mathcal{O}(a\tau+bE)\rightarrow 0.$$
		We have seen in \cref{O(*)} that the cohomology of $\mathcal{O}(a\tau+bE)$ vanish within the desired bounds, except when $a=b=0$.
		We can show that all the cohomology of $j_*\mathcal{O}_Z(0,1)\otimes\mathcal{O}(a\tau+bE)$ are zero using the same argument as before; namely, we make use of the isomorphism
		$ \mathrm H^\bullet (X, j_*p^* \mathcal O_Z(0,1) \otimes \mathcal O_X(a\tau+bE))
		\cong \mathrm H^\bullet (Z, \mathcal O_Z(a,a+1) \otimes R p_* \mathcal O_{E/Z} (-b))$
		and note that the latter vanishes when $-3\leq a\leq -1$ and $b=0$, or $b=1,2$, or $a=-8, -7, -6$ and $b=3$.
		It remains to prove that $\mathrm{H}^{\bullet}(\mathcal{E}_2^*)=0$. For this, we take the long exact sequence associated to \cref{2dual}:
		$$0\rightarrow \mathrm{H}^0(X,\mathcal{E}_2^*)\rightarrow W \rightarrow W\cong \mathrm{H}^0(X,j_*\mathcal{O}_Z(0,1))\rightarrow \mathrm{H}^1(X,\mathcal{E}_2^*)\rightarrow0. $$
		One can easily see that the middle map $W\rightarrow W$ is an isomorphism. As a result, $\mathrm{H}^{\bullet}(\mathcal{E}_2^*)=0$.
	\end{proof}
	
	\begin{lemma}
		\label{lemma: vanishE1E2*}
		We have the following vanishing properties for the cohomology of twists of $\mathcal E_1 \otimes \mathcal E_2^*$:
		\begin{equation*}
			\begin{split}
				& \operatorname{H^\bullet} ({\mathcal E_1 \otimes \mathcal E_2^* (a \tau)}) = 0
				\text { for } -3 \leq a \leq 0;
				\\& \operatorname{H^\bullet} ({\mathcal E_1 \otimes \mathcal E_2^* (a \tau + E)}) = 0
				\text { for } -6 \leq a \leq -1;
				\\& \operatorname{H^\bullet} ({\mathcal E_1 \otimes \mathcal E_2^* (a \tau + 2E)}) = 0
				\text { for } a = -7, -6, -5.        
			\end{split}
		\end{equation*}
	\end{lemma}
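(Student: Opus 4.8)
The plan is to reduce this statement to the vanishing for twists of $\mathcal E_2^*$ already established in \cref{lemma: vanishE2*}, in exactly the same way that \cref{vanishE1E2} was deduced from \cref{vanishE2}. Concretely, I would tensor the short exact sequence of \cref{E_1 extension}, namely $0 \to \mathcal O \to \mathcal E_1 \to \mathcal O(-\tau+E) \to 0$, by the invertible sheaf $\mathcal E_2^*(a\tau+bE)$. This yields
\[
0 \to \mathcal E_2^*(a\tau+bE) \to \mathcal E_1 \otimes \mathcal E_2^*(a\tau+bE) \to \mathcal E_2^*\big((a-1)\tau+(b+1)E\big) \to 0,
\]
so that vanishing of the cohomology of the two outer terms forces, via the associated long exact sequence, the vanishing of the cohomology of the middle term.

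It then remains to check that, for each of the three index ranges in the statement, both outer twists fall within the ranges covered by \cref{lemma: vanishE2*}. For $b=0$ and $-3 \leq a \leq 0$: the left-hand term $\mathcal E_2^*(a\tau)$ is handled by the first bullet of \cref{lemma: vanishE2*}, while the right-hand term $\mathcal E_2^*((a-1)\tau+E)$ has $-4 \leq a-1 \leq -1$, which is covered by the second bullet (with $b=1$). For $b=1$ and $-6 \leq a \leq -1$: the left-hand term is covered by the second bullet of \cref{lemma: vanishE2*} (with $b=1$), and the right-hand term $\mathcal E_2^*((a-1)\tau+2E)$ has $-7 \leq a-1 \leq -2$, covered by the second bullet (with $b=2$). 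For $b=2$ and $a \in \{-7,-6,-5\}$: the left-hand term is covered by the second bullet (with $b=2$), and the right-hand term $\mathcal E_2^*((a-1)\tau+3E)$ has $a-1 \in \{-8,-7,-6\}$, covered by the third bullet.

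Since every input needed is already in place, I do not expect any genuine obstacle: the proof is a one-step application of \cref{E_1 extension} together with \cref{lemma: vanishE2*}. The only point requiring care is the bookkeeping of the index shift $(a,b)\mapsto(a-1,b+1)$, to make sure both endpoints of each interval still land inside the admissible ranges of \cref{lemma: vanishE2*}; the verification above confirms this, so the argument is purely formal.
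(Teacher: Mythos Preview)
Your proposal is correct and matches the paper's proof exactly: tensor the sequence of \cref{E_1 extension} by $\mathcal E_2^*(a\tau+bE)$ and invoke \cref{lemma: vanishE2*} for the outer terms. One trivial wording slip: $\mathcal E_2^*(a\tau+bE)$ is locally free of rank~$3$, not an invertible sheaf, but this does not affect the argument.
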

	
	\begin{proof}
		Tensoring the sequence in \cref{E_1 extension} with $\mathcal E_2^*(a \tau +b E)$, we obtain:
		$$	0 \to \mathcal E_2^*(a \tau +b E) \to \mathcal E_1 \otimes \mathcal E_2^* (a \tau +b E) \to \mathcal E_2^*((a-1)\tau + (b+1) E) \to 0
		\mbox{.} $$
		Cohomology of the outer terms vanish within the required bounds by \cref{lemma: vanishE2*}.
	\end{proof}

	\begin{lemma}
		We have the following vanishing properties for the cohomology of twists of $\mathcal E_2 \otimes \mathcal E_2^*$:
		\begin{align*}
			& \mathrm{H}^{\bullet}(\mathcal{E}_2 \otimes \mathcal{E}_2^* (a\tau))=0
			\text { for } a = -2,-1;
			\\& \mathrm{H}^{\bullet}(\mathcal{E}_2 \otimes \mathcal{E}_2^* (a\tau+E))=0
			\text { for } -7 \leq a \leq-1;
			\\& \mathrm{H}^{\bullet}(\mathcal{E}_2 \otimes \mathcal{E}_2^* (a\tau+2E))=0
			\text { for } a = -7,-6.
		\end{align*}
		Moreover, $\mathrm{H}^{\bullet}(\mathcal{E}_2 \otimes \mathcal{E}_2^* (-5\tau+2E))=\mathds{C}[-3]$ and $\mathrm{H}^{\bullet}(\mathcal{E}_2 \otimes \mathcal{E}_2^* )=\mathds{C}[0]$; In particular, objects $\mathcal E_2 (a \tau + b E)$ and $\mathcal E_2^* (a \tau + b E)$ are exceptional for all $a, b \in \mathds Z$.
		\label{E2E2^*}
	\end{lemma}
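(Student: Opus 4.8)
The plan is to run the same exact-sequence machine used in the proofs of \cref{vanishE2} and \cref{lemma: cohg zero of E2E2}, this time starting from the short exact sequence of \cref{2dual}, namely $0 \to \mathcal E_2^* \to W \otimes \mathcal O_X \to j_* \mathcal O_Z(0,1) \to 0$. Tensoring it with the locally free sheaf $\mathcal E_2(a\tau+bE)$ and using the projection formula for $j$ together with the identity $\mathcal O_X(a\tau+bE)|_E = p^* \mathcal O_Z(a,a) \otimes \mathcal O_{E/Z}(-b)$ yields
\[
0 \to \mathcal E_2 \otimes \mathcal E_2^*(a\tau+bE) \to W \otimes \mathcal E_2(a\tau+bE) \to j_*\big(j^*\mathcal E_2 \otimes p^*\mathcal O_Z(a,a+1) \otimes \mathcal O_{E/Z}(-b)\big) \to 0 .
\]
By \cref{vanishE2}, the cohomology of the middle term already vanishes in every range that occurs in the statement (including $a=-5$, $b=2$, which is needed for the ``moreover'' part), so the entire problem reduces to computing the cohomology of the $E$-supported term on the right.

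For that I would re-use the short exact sequence on $E$ that is extracted inside the proof of \cref{lemma: cohg zero of E2E2}, namely $0 \to p^*(\mathcal O_{\mathds{P} W} \boxtimes T_{\mathds{P} W^*}(-1)) \to j^*\mathcal E_2 \to p^*\mathcal O_Z(0,-1) \otimes \mathcal O_{E/Z}(-1) \to 0$. Twisting it by $p^*\mathcal O_Z(a,a+1) \otimes \mathcal O_{E/Z}(-b)$ produces a short exact sequence on $E$ whose outer terms are $p^*(\mathcal O_{\mathds{P} W}(a) \boxtimes T_{\mathds{P} W^*}(a)) \otimes \mathcal O_{E/Z}(-b)$ and $p^*\mathcal O_Z(a,a) \otimes \mathcal O_{E/Z}(-b-1)$. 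Pushing forward along the $\mathds{P}^2$-bundle $p : E \to Z$, using $\mathrm R p_* \mathcal O_{E/Z}(-1) = \mathrm R p_* \mathcal O_{E/Z}(-2) = 0$, $\mathrm R p_* \mathcal O_{E/Z} = \mathcal O_Z$, and \cref{-3}, the remaining computation becomes a short K\"unneth / Borel--Bott--Weil calculation on $\mathds{P}^2 \times \mathds{P}^2$. For $b=0$ the outcome is governed by the factor $\mathrm H^\bullet(\mathds{P} W, \mathcal O(a))$, which vanishes for $a=-1,-2$. For $b=1$ both $\mathcal O_{E/Z}(-1)$ and $\mathcal O_{E/Z}(-2)$ have vanishing $\mathrm R p_*$, so the $E$-term is acyclic for every $a$ and the constraint $-7\le a\le -1$ comes solely from \cref{vanishE2}. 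For $b=2$ the only surviving contribution is $\mathrm R p_*(\mathcal O_Z(a,a) \otimes \mathcal O_{E/Z}(-3)) \cong \mathcal O_Z(a+5,a+5)[-2]$ via \cref{-3}, whose cohomology vanishes precisely for $a\in\{-6,-7\}$. Tracing these back through the long exact sequences gives the three displayed vanishings.

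The ``moreover'' statements split into two. The equality $\mathrm H^\bullet(\mathcal E_2 \otimes \mathcal E_2^*(-5\tau+2E)) = \mathds C[-3]$ is the same exact sequence evaluated at $(a,b)=(-5,2)$: the middle term is acyclic by \cref{vanishE2}, the $E$-term pushes forward (again via \cref{-3}) to $\mathcal O_Z[-2]$ and hence contributes $\mathds C$ in cohomological degree $2$, and the long exact sequence of the main triangle shifts this once more, producing $\mathds C$ in degree $3$. The equality $\mathrm H^\bullet(\mathcal E_2 \otimes \mathcal E_2^*) = \mathds C[0]$ is not a vanishing, and I would handle it differently: the line-bundle twists cancel, so $\mathcal E_2 \otimes \mathcal E_2^* \cong \pi^*(\mathcal U_2 \otimes \mathcal U_2^*)$; since $\pi^*$ is fully faithful and $\mathrm R\pi_* \mathcal O_X = \mathcal O_Y$, this gives $\mathrm H^\bullet(X, \mathcal E_2 \otimes \mathcal E_2^*) \cong \mathrm H^\bullet(Y, \mathcal U_2^* \otimes \mathcal U_2)$, which equals $\mathds C[0]$ because $\mathcal U_2$ is simple (so $\mathrm H^0 = \mathds C$) and the higher cohomology vanishes by Teleman Quantization through the same kind of weight estimate as in \cref{remark: higher cohg vanish} (on each $Z_\lambda$ the $\lambda$-weights of $\tilde{\mathcal U}_2^* \otimes \tilde{\mathcal U}_2$ are bounded by the spread of the weights of $\tilde{\mathcal U}_2$ listed in \cref{table: weights of universal representations}, which in every row is strictly less than $\eta_\lambda$). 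Finally, for any $a,b$ one has $\rhom_X(\mathcal E_2(a\tau+bE), \mathcal E_2(a\tau+bE)) \cong \mathrm H^\bullet(X, \mathcal E_2^* \otimes \mathcal E_2) = \mathds C[0]$, and likewise for $\mathcal E_2^*$, so every object $\mathcal E_2(a\tau+bE)$ and $\mathcal E_2^*(a\tau+bE)$ is exceptional.

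I expect the main obstacle to be bookkeeping rather than ideas: one must simultaneously keep track of the $\mathcal O_{E/Z}$-twists, of the extra $\mathcal O_Z(0,1)$-twist introduced by \cref{2dual} (rather than the $\mathcal O_E(L)$ of \cref{E2} used in \cref{lemma: cohg zero of E2E2}), and of the homological shifts, so that the ``moreover'' lands in degree $3$ and not $2$. The one place where a genuinely new input is required is the non-vanishing $\mathrm H^\bullet(\mathcal E_2 \otimes \mathcal E_2^*) = \mathds C[0]$, where the exact-sequence reduction no longer applies and one instead descends to $Y$ and combines simplicity of $\mathcal U_2$ with a Teleman weight count.
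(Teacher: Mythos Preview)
Your argument is correct, and it runs parallel to the paper's but with the roles of the two defining sequences swapped. The paper tensors the sequence of \cref{E2} (which presents $\mathcal E_2$) with $\mathcal E_2^*(a\tau+bE)$, reduces via \cref{lemma: vanishE2*}, and then derives a new short exact sequence \eqref{E2^*|H} for $\mathcal E_2^*|_E$ in terms of $\Omega^1_{\mathds P(W^*)}$; you instead tensor the sequence of \cref{2dual} (which presents $\mathcal E_2^*$) with $\mathcal E_2(a\tau+bE)$, reduce via \cref{vanishE2}, and reuse the restriction sequence for $j^*\mathcal E_2$ already obtained in \cref{lemma: cohg zero of E2E2}. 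Since $T_{\mathds P(W^*)}(-1)$ and $\Omega^1_{\mathds P(W^*)}(1)$ are Serre-dual twists of each other, the two computations on $Z$ are mirror images and yield the same vanishing ranges and the same $\mathds C[-3]$ at $(a,b)=(-5,2)$. For the case $(a,b)=(0,0)$ the paper stays with the same exact sequences on $X$ and reads off $\mathds C[0]$ from $\mathrm H^\bullet(E,\mathcal O_E)$, whereas you descend along $\pi$ to $Y$ and invoke simplicity of $\mathcal U_2$ plus a Teleman weight bound; both are valid, and your route has the minor advantage of not needing a separate identification of the connecting map. The only thing your version gains by reusing the $j^*\mathcal E_2$ sequence is economy---you avoid rederiving an analogue of \eqref{E2^*|H}---while the paper's version keeps everything inside the same $\mathcal E_2^*$-based toolkit it builds for \cref{HE2^*E2^*}.
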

	
	\begin{proof}
		First we tensor the short exact sequence in \cref{E2} with $\mathcal{E}_2^*(a\tau+bH)$:
		$$0\rightarrow W^*\otimes \mathcal{E}_2^*(a\tau+bE)\rightarrow \mathcal{E}_2\otimes \mathcal{E}_2^*(a\tau+bE)\rightarrow \mathcal{O}_E(L)\otimes \mathcal{E}_2^*(a\tau+bE) \rightarrow 0.$$
		We have seen that all the cohomology of the first term vanishes within the desired bounds. To understand the last term, we first tensor the complex in \cref{2dual} with $\mathcal{O}_E$ 
		and obtain the exact triangle:
		$$ \mathcal{E}_2^*\otimes^{\mathds{L}}\mathcal{O}_E=\mathcal{E}_2^*|_E\rightarrow W\otimes\mathcal{O}\otimes^{\mathds{L}}\mathcal{O}_E=W\otimes \mathcal{O}_E\rightarrow j_*\mathcal{O}_Z(0,1)\otimes^{\mathds{L}}\mathcal{O}_E\rightarrow \mathcal{E}_2^*|_E[1],$$
		where $j_*\mathcal{O}_Z(0,1)\otimes^{\mathds{L}}\mathcal{O}_E=j_*\mathcal{O}_Z(0,1)\otimes[0\rightarrow \mathcal{O}(-E)\rightarrow \mathcal{O}\rightarrow 0]=[0\rightarrow j_*\mathcal{O}_Z(0,1)\otimes \mathcal{O}(-E)\xrightarrow{0} j_*\mathcal{O}_Z(0,1)\rightarrow 0]$ has 0-th cohomology $j_*\mathcal{O}_Z(0,1)$ and $(-1)$-th cohomology $j_*\mathcal{O}_Z(0,1)\otimes \mathcal{O}(-E)$. Now we take the long exact sequence associated to the exact triangle and consider it over $E$:
		$$0\rightarrow \mathcal{O}_Z(0,1)\otimes \mathcal{O}_{E/Z}(1)\rightarrow \mathcal{E}_2^*|_E\rightarrow W\otimes \mathcal{O}_E\rightarrow  \mathcal{O}_Z(0,1)\rightarrow 0.$$
		Using the pull back  via $p: E\rightarrow Z$ of the Euler sequence over $\mathds{P}(W^*)$, we can identify the kernel of the last surjection with $p^* \left( \mathcal O \boxtimes \Omega^1_{\mathds{P}(W^*)}(1) \right)$.
		Thus we have the short exact sequence:
		\begin{equation}
			0\rightarrow \mathcal{O}_Z(0,1)\otimes \mathcal{O}(-E)\rightarrow \mathcal{E}_2^*|_E\rightarrow p^* \left( \mathcal O \boxtimes \Omega^1_{\mathds{P}(W^*)}(1) \right) \rightarrow 0.
			\label{E2^*|H}
		\end{equation}
		We twist the above sequence by the line bundle $L\otimes j^*\mathcal{O}(a\tau+bE)=\mathcal{O}_Z(a,a-1)\otimes \mathcal{O}_{E/Z}(-b-1)$ and get:
		\begin{equation}
			\label{eq: E2*|E}
			0\rightarrow \mathcal{O}_Z(a,a)\otimes \mathcal{O}_{E/Z}(-b)\rightarrow \mathcal{O}_E(L)\otimes \mathcal{E}_2^*(a\tau+bE)
			\rightarrow p^* \left( \mathcal O(a) \boxtimes \Omega^1_{\mathds{P}(W^*)} (a) \right) \otimes \mathcal{O}_{E/Z}(-b-1)\rightarrow 0.
		\end{equation}
		Now a calculation of cohomology of the left- and right-most terms (using \cref{-3}) gives the desired vanishing of cohomology in the Lemma. 
		
		When $(a,b) = (-5,2)$ or $(0,0)$, the vanishing of the cohomology of $\mathcal E_2^* (a\tau+bE)$ still holds by \cref{E2E2^*}, hence
		\begin{align*}
			\mathrm{H}^{\bullet}(\mathcal{E}_2\otimes \mathcal{E}_2^*(a\tau+bE))
			& \cong \mathrm H^\bullet (X,\mathcal{O}_E(L)\otimes \mathcal{E}_2^*(a\tau+bE))
			.
		\end{align*}
		When $a = -5$ and $b=2$, we notice in addition that $\mathrm H^\bullet (E, p^* \mathcal{O}_Z(a,a)\otimes \mathcal{O}_{E/Z}(-b)) = 0$, therefore
		\begin{align*}
			\mathrm{H}^{\bullet}(\mathcal{E}_2 \otimes \mathcal{E}_2^* (-5\tau+2E))
			& \cong \mathrm{H}^{\bullet}(Z,(\mathcal{O}_{\mathds{P}(W)} (-5) \boxtimes\Omega^1_{\mathds{P}(W^*)}(-5)) \otimes \mathrm{R}p_*\mathcal{O}_{E/Z}(-3))
			\text{, from \eqref{eq: E2*|E}}
			\\& \cong \mathrm{H}^{\bullet}(\mathds{P}(W^*), \Omega^1_{\mathds{P}(W^*)}[-2])
			\text{, by \cref{-3} and K\"unneth}
			\\& \cong \mathds{C}[-3].
		\end{align*}
		When $a=b=0$, we have instead $\mathrm H^\bullet (E, p^* (\mathcal O \boxtimes \Omega) \otimes \mathcal O_{E/Z} (-1) = 0$, and with this, we compute
		\begin{align*}
			\mathrm{H}^{\bullet}(\mathcal{E}_2 \otimes \mathcal{E}_2^*)
			& \cong \mathrm{H}^{\bullet}(E, \mathcal O_E)
			\text{, from \eqref{eq: E2*|E}}
			\\& \cong \mathds{C}[0].
			\qedhere
		\end{align*}
	\end{proof}

	\begin{lemma}
		\label{HE2^*E2^*}
		We have the following vanishing properties for the cohomology of twists of $\mathcal E_2^* \otimes \mathcal E_2^*$:
		\begin{align*}
			& \mathrm{H}^{\bullet}(\mathcal{E}_2^* \otimes \mathcal{E}_2^* (a\tau + E))=0
			\text { for } -4 \leq a \leq -1;
			\\& \mathrm{H}^{\bullet}(\mathcal{E}_2^* \otimes \mathcal{E}_2^* (a\tau+2E))=0
			\text { for } -7 \leq a \leq -1;
			\\& \mathrm{H}^{\bullet}(\mathcal{E}_2^* \otimes \mathcal{E}_2^* (a\tau+3E))=0
			\text { for } a = -7,-6.
		\end{align*}
	\end{lemma}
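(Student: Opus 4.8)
The plan is to imitate the argument used to prove \cref{E2E2^*}, but starting from the exact sequence of \cref{2dual} instead of the one in \cref{E2}. First I would tensor
$$0\to\mathcal{E}_2^*\to W\otimes\mathcal{O}\to j_*\mathcal{O}_Z(0,1)\to 0$$
by the locally free sheaf $\mathcal{E}_2^*(a\tau+bE)$; the sequence stays exact, and its middle term $W\otimes\mathcal{E}_2^*(a\tau+bE)$ is acyclic in all the ranges we need by \cref{lemma: vanishE2*} — the three cases $(-4\le a\le -1,\,b=1)$, $(-7\le a\le -1,\,b=2)$ and $(a\in\{-7,-6\},\,b=3)$ all lie inside the windows $-7\le a\le -1$ (for $b=1,2$) and $a\in\{-8,-7,-6\}$ (for $b=3$) covered there. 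Hence the long exact sequence reduces the whole statement to the vanishing of $\mathrm{H}^\bullet(X,\,j_*\mathcal{O}_Z(0,1)\otimes\mathcal{E}_2^*(a\tau+bE))$ in those ranges.

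To handle this last term, I would use the projection formula together with $j^*\mathcal{O}_X(\tau)=p^*\mathcal{O}_Z(1,1)$ and $j^*\mathcal{O}_X(E)=\mathcal{O}_{E/Z}(-1)$ to rewrite it as $j_*\big(\mathcal{E}_2^*|_E\otimes p^*\mathcal{O}_Z(a,a+1)\otimes\mathcal{O}_{E/Z}(-b)\big)$, so that its cohomology equals $\mathrm{H}^\bullet\big(E,\ \mathcal{E}_2^*|_E\otimes p^*\mathcal{O}_Z(a,a+1)\otimes\mathcal{O}_{E/Z}(-b)\big)$. Then I would twist the structural sequence \eqref{E2^*|H} for $\mathcal{E}_2^*|_E$ by $p^*\mathcal{O}_Z(a,a+1)\otimes\mathcal{O}_{E/Z}(-b)$; its sub- and quotient terms become $p^*\mathcal{O}_Z(a,a+2)\otimes\mathcal{O}_{E/Z}(1-b)$ and $p^*\big(\mathcal{O}_{\mathds{P}(W)}(a)\boxtimes\Omega^1_{\mathds{P}(W^*)}(a+2)\big)\otimes\mathcal{O}_{E/Z}(-b)$. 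Pushing forward along $p$ and using $\mathrm{R}p_*\mathcal{O}_{E/Z}(-1)=\mathrm{R}p_*\mathcal{O}_{E/Z}(-2)=0$ together with \cref{-3}, the computation collapses case by case: for $b=2$ both outer pushforwards vanish, so the term vanishes for every $a$; for $b=1$ only $\mathrm{H}^\bullet(Z,\mathcal{O}_Z(a,a+2))$ survives; and for $b=3$ only $\mathrm{H}^{\bullet-2}\big(Z,\mathcal{O}_{\mathds{P}(W)}(a+5)\boxtimes\Omega^1_{\mathds{P}(W^*)}(a+7)\big)$ survives.

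Finally I would kill these survivors by Künneth on $\mathds{P}^2\times\mathds{P}^2$: for $-4\le a\le -1$ one of $\mathcal{O}_{\mathds{P}^2}(a)$, $\mathcal{O}_{\mathds{P}^2}(a+2)$ equals $\mathcal{O}_{\mathds{P}^2}(-1)$ or $\mathcal{O}_{\mathds{P}^2}(-2)$ and is therefore acyclic, so $\mathrm{H}^\bullet(Z,\mathcal{O}_Z(a,a+2))=0$; and for $a\in\{-7,-6\}$ the factor $\mathcal{O}_{\mathds{P}^2}(a+5)$ equals $\mathcal{O}_{\mathds{P}^2}(-2)$ or $\mathcal{O}_{\mathds{P}^2}(-1)$, hence is acyclic and the $\Omega^1$-factor never enters. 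The only delicate point is the numerology — writing the twisted form of \eqref{E2^*|H} correctly and checking that the windows handed down by \cref{lemma: vanishE2*} and \cref{-3} really cover the three stated ranges — rather than any new geometric input; nothing beyond \cref{2dual}, \eqref{E2^*|H}, \cref{-3} and Bott's formula on $\mathds{P}^2$ is required.
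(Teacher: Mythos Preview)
Your proposal is correct and follows essentially the same route as the paper: tensor the sequence of \cref{2dual} by $\mathcal{E}_2^*(a\tau+bE)$, kill the middle term via \cref{lemma: vanishE2*}, and handle the quotient $j_*\mathcal{O}_Z(0,1)\otimes\mathcal{E}_2^*(a\tau+bE)$ by twisting \eqref{E2^*|H} and pushing forward along $p$ using \cref{-3}. Your explicit K\"unneth check of the surviving terms on $Z$ is exactly the ``direct calculation'' the paper leaves to the reader.
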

	
	\begin{proof}
		We tensor the sequence in \cref{2dual} with $\mathcal{E}_2^*(a\tau+bE)$ and get:
		$$0\rightarrow \mathcal{E}_2^*\otimes \mathcal{E}_2^*(a\tau+bE)\rightarrow W\otimes\mathcal{E}_2^*(a\tau+bE)\rightarrow j_*\mathcal{O}_Z(0,1)\otimes \mathcal{E}_2^*(a\tau+bE)\rightarrow 0.$$
		By \cref{lemma: vanishE2*}, the cohomology of $\mathcal{E}_2^*(a\tau+bE)$ vanish under our assumption.
		To understand the last term in the above sequence, we tensor \eqref{E2^*|H} with $\mathcal{O}_Z(0,1)\otimes j^*\mathcal{O}(a\tau+bE)=\mathcal{O}_Z(a,a+1)\otimes \mathcal{O}_{E/Z}(-b)$:
		\begin{equation*}
			0\rightarrow \mathcal{O}_Z(a,a+2)\otimes \mathcal{O}_{E/Z}(-b+1)\rightarrow j_*\mathcal{O}_Z(0,1)\otimes \mathcal{E}_2^*(a\tau+bE)\rightarrow p^*(\mathcal O(a) \boxtimes \Omega^1_{\mathds{P}(W^*)}(a+2)) \otimes \mathcal{O}_{E/Z}(-b)\rightarrow 0.
			\label{E2E_2^*|H}
		\end{equation*}
		A direct calculation, using \cref{-3}, shows that the cohomology of $j_*\mathcal{O}_Z(0,1)\otimes \mathcal{E}_2^*(a\tau+bE)$ vanish under our assumption.
	\end{proof}
	
	\begin{lemma}
		\label{lemma: filtration on slE1}
		The sheaf $\mathfrak{sl} (\mathcal E_1)$ is a filtered object, with filtration $\mathfrak{sl} (\mathcal E_1) = \Fil_0 \supset \Fil_1 \supset \Fil_2 \supset 0$ such that $\Fil_1 \cong \mathcal E_1 (\tau-E)$.
		Its associated graded quotients are $\gr_0 \cong \mathcal O(-\tau+E)$, $\gr_1 \cong \mathcal O$, $\gr_2 \cong \mathcal O(\tau-E)$.
		In particular, we have a short exact sequence
		\begin{equation}
			\label{eq: ses for slE1}
			0 \to \mathcal E_1 (\tau-E) \to \mathfrak{sl} (\mathcal E_1) \to \mathcal O(-\tau+E) \to 0.
		\end{equation}
	\end{lemma}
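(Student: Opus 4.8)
The plan is to obtain the asserted filtration as the one that the sub-line-bundle $\mathcal O\subset\mathcal E_1$ of \cref{E_1 extension} induces on $\mathfrak{sl}(\mathcal E_1)$, and then to recognize its first step by a purely cohomological argument rather than a local cocycle computation. Write $0\to\mathcal O\to\mathcal E_1\to\mathcal M\to 0$ for the sequence of \cref{E_1 extension}, so $\mathcal M\cong\mathcal O(-\tau+E)$, and use the splitting $\sheafend(\mathcal E_1)=\mathcal O\cdot\mathrm{id}\oplus\mathfrak{sl}(\mathcal E_1)$, valid over $\mathds C$. I would then define $\Fil_1\subset\mathfrak{sl}(\mathcal E_1)$ to be the traceless local endomorphisms preserving $\mathcal O\subset\mathcal E_1$, and $\Fil_2\subset\Fil_1$ those which in addition kill $\mathcal O$ (equivalently, have image contained in $\mathcal O$), with $\Fil_0=\mathfrak{sl}(\mathcal E_1)$.

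Next I would record the graded pieces, which is the elementary linear algebra of a rank-two flag. The assignment $\phi\mapsto(\mathcal O\hookrightarrow\mathcal E_1\xrightarrow{\phi}\mathcal E_1\twoheadrightarrow\mathcal M)$ identifies $\gr_0=\Fil_0/\Fil_1$ with $\sheafhom(\mathcal O,\mathcal M)=\mathcal M\cong\mathcal O(-\tau+E)$; an element of $\Fil_2$ factors as $\mathcal E_1\twoheadrightarrow\mathcal M\to\mathcal O$, so $\gr_2=\Fil_2$ is identified with $\sheafhom(\mathcal M,\mathcal O)=\mathcal M^\vee\cong\mathcal O(\tau-E)$; and $\phi\mapsto\phi|_{\mathcal O}$ identifies $\gr_1=\Fil_1/\Fil_2$ with $\sheafhom(\mathcal O,\mathcal O)=\mathcal O$, because tracelessness forces the endomorphism induced on $\mathcal M$ to equal $-\phi|_{\mathcal O}$, and the kernel of $\phi\mapsto\phi|_{\mathcal O}$ is exactly $\Fil_2$. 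Local surjectivity of each map is immediate. In particular $0\to\Fil_1\to\mathfrak{sl}(\mathcal E_1)\to\mathcal O(-\tau+E)\to 0$ is exact, while $\Fil_1$ itself sits in an extension $0\to\mathcal O(\tau-E)\to\Fil_1\to\mathcal O\to 0$.

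It then remains to prove $\Fil_1\cong\mathcal E_1(\tau-E)$, which I expect to be the only delicate point. Twisting the sequence of \cref{E_1 extension} by $\mathcal M^\vee=\mathcal O(\tau-E)$ exhibits $\mathcal E_1(\tau-E)$ as an extension of $\mathcal O$ by $\mathcal O(\tau-E)$ of exactly the same shape as $\Fil_1$. Since $\Ext^1_X(\mathcal O,\mathcal O(\tau-E))=\mathrm H^1(X,\mathcal O(\tau-E))$ is one-dimensional by \cref{lemma: vanishO}, any two non-split such extensions have isomorphic total spaces, so it suffices to see that neither $\mathcal E_1(\tau-E)$ nor $\Fil_1$ is split. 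For $\mathcal E_1(\tau-E)$ this is the non-splitness of $\mathcal E_1$ noted in \cref{E_1 extension}. For $\Fil_1$, I would use that $\mathrm H^0(X,\sheafend(\mathcal E_1))\cong\mathds C$: indeed $\sheafend(\mathcal E_1)=\mathcal E_1\otimes\mathcal E_1^\vee\cong\mathcal E_1\otimes\mathcal E_1(\tau-E)$ by \cref{dual of E1}, whose cohomology is $\mathds C[0]$ by \cref{vanishE1E1}; as $\mathrm H^0(\mathcal O\cdot\mathrm{id})=\mathds C$ already exhausts this, we get $\mathrm H^0(X,\mathfrak{sl}(\mathcal E_1))=0$, hence $\mathrm H^0(X,\Fil_1)=0$. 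A split $\Fil_1\cong\mathcal O(\tau-E)\oplus\mathcal O$ would have a nonzero global section, a contradiction; so $\Fil_1$ is non-split and therefore isomorphic to $\mathcal E_1(\tau-E)$. The short exact sequence $0\to\Fil_1\to\Fil_0\to\gr_0\to 0$ is then exactly \eqref{eq: ses for slE1}.
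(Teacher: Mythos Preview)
Your proof is correct and follows essentially the same approach as the paper: both induce the filtration from the sub-line-bundle $\mathcal O\subset\mathcal E_1$, identify the graded pieces, and pin down $\Fil_1\cong\mathcal E_1(\tau-E)$ by combining $\dim\Ext^1(\mathcal O,\mathcal O(\tau-E))=1$ with $\mathrm H^0(\mathfrak{sl}(\mathcal E_1))=0$. The only difference is presentational: you define the filtration directly on $\mathfrak{sl}(\mathcal E_1)$ via traceless endomorphisms preserving $\mathcal O$, whereas the paper first builds a filtration on $\mathcal E_1^*\otimes\mathcal E_1$ by tensoring the extension with its dual and then takes the image under the trace-zero projection.
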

	
	\begin{proof}
		By tensoring the sequence from \cref{E_1 extension} with its dual, $0 \to \mathcal O (\tau - E) \to \mathcal E_1^* \to \mathcal O \to 0$, we obtain a 3-step filtration
		\[
		\mathcal E_1^* \otimes \mathcal E_1 \supset \Fil_1 (\mathcal E_1^* \otimes \mathcal E_1)
		\supset \mathcal E_1^* \supset \mathcal O(\tau - E) \supset 0
		\]
		with $\Fil_1 (\mathcal E_1^* \otimes \mathcal E_1) = \Ker \Big(
		\mathcal E_1^* \otimes \mathcal E_1 \to \mathcal O\otimes \mathcal O(-\tau + E)
		\Big)$, and the subquotients
		$(\mathcal E_1^* \otimes \mathcal E_1) / \Fil_1 (\mathcal E_1^* \otimes \mathcal E_1) \cong \mathcal O (-\tau + E)$ and $\Fil_1 (\mathcal E_1^* \otimes \mathcal E_1) / \mathcal E_1^* \cong \mathcal O$.
		We consider the image of this filtration under the quotient map $\mathcal E_1\otimes \mathcal E^*_1\rightarrow\mathfrak {sl} (\mathcal E_1)$. In local charts we easily see that the image of $\mathcal E^*_1$ and $\mathcal O(\tau-E)$ are the same, which is isomorphic to $\mathcal O(\tau-E)$. The image of $\Fil_1 (\mathcal E_1^* \otimes \mathcal E_1)$ is a rank 2 bundle, which we denote by $\Fil_1$. Obviously $\Fil_1$ fits into a short exact sequence $0 \to \mathcal O (\tau - E) \to \Fil_1 \to \mathcal O \to 0$, which is nonsplit since $\mathfrak{sl}(\mathcal E_1)$ and thus $\Fil_1$ have no global section. Because there can be only one such nontrivial extension of $\mathcal O(\tau-E)$ by $\mathcal O$ in view of \cref{lemma: vanishO}, we conclude that $\Fil_1 \cong \mathcal E_1 (\tau - E)$ by \cref{E_1 extension}. Remembering that $\det \mathfrak {sl} (\mathcal E_1) \cong \mathcal O$, we conclude that we get a 2-step filtration with associated graded factors as in the statement.
	\end{proof}
	
	\begin{lemma}
		\label{muorthogonal}
		We have the following vanishing properties for the cohomology of twists of $\mathfrak{sl} (\mathcal E_1)$:
		\begin{align*}
			& \mathrm{H}^{\bullet}(\mathfrak{sl} (\mathcal E_1) )=0;
			\\& \mathrm{H}^{\bullet}(\mathfrak{sl} (\mathcal E_1) (a\tau+E))=0
			\text { for } -6 \leq a \leq -2;
			\\& \mathrm{H}^{\bullet}(\mathfrak{sl} (\mathcal E_1) (a\tau+2E))=0
			\text { for } a = -7,-6,-5.
		\end{align*}
		In particular, $\mathfrak{sl} (\mathcal U_1)$ and $\mathcal O_Y$ are mutually orthogonal, i.e. $\rhom_Y^{\bullet}(\mathfrak{sl} (\mathcal U_1), \mathcal O_Y) = \rhom_Y^{\bullet} (\mathcal O_Y , \mathfrak{sl} (\mathcal U_1)) = 0$.
	\end{lemma}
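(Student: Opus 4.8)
The plan is to deduce every vanishing statement from the short exact sequence \eqref{eq: ses for slE1} of \cref{lemma: filtration on slE1}, namely $0 \to \mathcal E_1(\tau-E) \to \mathfrak{sl}(\mathcal E_1) \to \mathcal O(-\tau+E) \to 0$, together with the cohomology computations already carried out in \cref{lemma: vanishE1} and \cref{lemma: vanishO}. Twisting the sequence by $\mathcal O(a\tau+bE)$ yields
\begin{equation*}
0 \to \mathcal E_1((a+1)\tau+(b-1)E) \to \mathfrak{sl}(\mathcal E_1)(a\tau+bE) \to \mathcal O((a-1)\tau+(b+1)E) \to 0,
\end{equation*}
so it suffices to check in each case that the cohomology of both outer terms vanishes; the long exact sequence then forces the middle term to have vanishing cohomology.

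First I would treat $a=b=0$: the left term is $\mathcal E_1(\tau-E)$, whose cohomology vanishes by \cref{lemma: vanishE1}, and the right term is $\mathcal O(-\tau+E)$, whose cohomology vanishes by \cref{lemma: vanishO}; hence $\mathrm H^\bullet(\mathfrak{sl}(\mathcal E_1))=0$. For $b=1$ and $-6 \leq a \leq -2$, the left term is $\mathcal E_1((a+1)\tau)$ with $-5 \leq a+1 \leq -1$, covered by \cref{lemma: vanishE1}, and the right term is $\mathcal O((a-1)\tau+2E)$ with $-7 \leq a-1 \leq -3$, covered by \cref{lemma: vanishO}. For $b=2$ and $a\in\{-7,-6,-5\}$, the left term is $\mathcal E_1((a+1)\tau+E)$ with $a+1\in\{-6,-5,-4\}$, again covered by \cref{lemma: vanishE1}, and the right term is $\mathcal O((a-1)\tau+3E)$ with $a-1\in\{-8,-7,-6\}$, which is precisely the last case of \cref{lemma: vanishO}. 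This establishes all three displayed vanishings.

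For the last assertion, I would first observe that $\mathfrak{sl}$ of a rank-two bundle is self-dual, either via the trace form or from the identification $\mathfrak{sl}(E)\cong S^2E\otimes(\det E)^{-1}$, which is visibly isomorphic to its own dual. Hence $\rhom_Y^{\bullet}(\mathfrak{sl}(\mathcal U_1),\mathcal O_Y)\cong \mathrm H^\bullet(Y,\mathfrak{sl}(\mathcal U_1)^*)\cong \mathrm H^\bullet(Y,\mathfrak{sl}(\mathcal U_1))\cong \rhom_Y^{\bullet}(\mathcal O_Y,\mathfrak{sl}(\mathcal U_1))$. Since $\pi^*$ is fully faithful and the formation of $\mathfrak{sl}$ is insensitive to line-bundle twists (so $\pi^*\mathfrak{sl}(\mathcal U_1)\cong\mathfrak{sl}(\pi^*\mathcal U_1)\cong\mathfrak{sl}(\mathcal E_1)$), we get $\mathrm H^\bullet(Y,\mathfrak{sl}(\mathcal U_1))\cong\mathrm H^\bullet(X,\mathfrak{sl}(\mathcal E_1))$, which vanishes by the first part.

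There is essentially no real obstacle: every required vanishing is an immediate specialization of a previously proven statement, and the only non-formal points are the self-duality of $\mathfrak{sl}(\mathcal E_1)$ and its compatibility with $\pi^*$ up to the twist absorbed into the definition of $\mathcal E_1$. The main thing to be careful about is the bookkeeping of the shifted indices $a\mapsto a\pm 1$ and $b\mapsto b\pm 1$, so that they land inside the intervals established in \cref{lemma: vanishE1} and \cref{lemma: vanishO}.
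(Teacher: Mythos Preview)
Your proof is correct and follows essentially the same approach as the paper: twist the short exact sequence \eqref{eq: ses for slE1} by $\mathcal O(a\tau+bE)$ and check that the outer terms $\mathcal E_1((a+1)\tau+(b-1)E)$ and $\mathcal O((a-1)\tau+(b+1)E)$ have vanishing cohomology via \cref{lemma: vanishE1} and \cref{lemma: vanishO}. Your index bookkeeping is accurate, and your explicit justification of the ``In particular'' clause via self-duality of $\mathfrak{sl}$ of a rank-two bundle and full faithfulness of $\pi^*$ is a welcome elaboration that the paper leaves implicit.
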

	
	\begin{proof}
		The outside terms of the twist of \eqref{eq: ses for slE1} by $\mathcal O (a\tau + bE)$ are $\mathcal E_1 ((a+1)\tau + (b-1)E)$ and $\mathcal O ((a-1)\tau + (b+1)E)$.
		By \cref{O(*)} and \cref{lemma: vanishE1}, their cohomology vanish within the desired bounds.
	\end{proof}
	
	\begin{lemma}
		We have the following vanishing properties for the cohomology of twists of $\mathfrak{sl} (\mathcal E_1) \otimes \mathcal E_1$:
		\begin{align*}
			& \mathrm{H}^{\bullet}(\mathfrak{sl} (\mathcal E_1) \otimes \mathcal E_1 (- \tau) )=0;
			\\& \mathrm{H}^{\bullet}(\mathfrak{sl} (\mathcal E_1) \otimes \mathcal E_1 (a\tau+E))=0
			\text { for } a = -6,-5,-4;
			\\& \mathrm{H}^{\bullet}(\mathfrak{sl} (\mathcal E_1) \otimes \mathcal E_1 (-7\tau+2E))=0 .
		\end{align*}
	\end{lemma}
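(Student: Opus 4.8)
The plan is to tensor the short exact sequence \eqref{eq: ses for slE1} from \cref{lemma: filtration on slE1} by $\mathcal E_1(a\tau+bE)$, which yields
\[
0 \to \mathcal E_1 \otimes \mathcal E_1((a+1)\tau+(b-1)E) \to \mathfrak{sl}(\mathcal E_1) \otimes \mathcal E_1(a\tau+bE) \to \mathcal E_1((a-1)\tau+(b+1)E) \to 0 ,
\]
and to read off the three vanishing statements from \cref{lemma: vanishE1} and \cref{vanishE1E1}. For $(a,b)=(-1,0)$ the outer terms are $\mathcal E_1\otimes\mathcal E_1(-E)$ and $\mathcal E_1(-2\tau+E)$; for $(a,b)=(-6,1),(-5,1),(-4,1)$ they are $\mathcal E_1\otimes\mathcal E_1(a'\tau)$ with $a'=-5,-4,-3$ and $\mathcal E_1(a''\tau+2E)$ with $a''=-7,-6,-5$. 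All of these have vanishing cohomology by the two cited lemmas, so the first two lines, and two of the three terms in the third line, are immediate.

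The one term not directly covered is $\mathcal E_1(-8\tau+3E)$, appearing for $(a,b)=(-7,2)$ (the other term, $\mathcal E_1\otimes\mathcal E_1(-6\tau+E)$, vanishes by \cref{vanishE1E1}, so it suffices to treat this one). Rather than push it through the sequence, I would observe that $\mathfrak{sl}(\mathcal E_1)\otimes\mathcal E_1(-7\tau+2E)$ is itself a pullback from $Y$: formation of $\mathfrak{sl}$ is insensitive to line‑bundle twists, so $\mathfrak{sl}(\mathcal E_1)=\pi^*\mathfrak{sl}(\mathcal U_1)$; and since $\mathcal O_X(3\tau-E)=\pi^*\mathcal O_Y(1)$ by \cref{corollary: determinants of E_1 and E_2 and also tau=rel.O(1)}, one has $\mathcal E_1(-7\tau+2E)=\pi^*\mathcal U_1\otimes\mathcal O_X(-6\tau+2E)=\pi^*(\mathcal U_1(-2))$. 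Hence $\mathfrak{sl}(\mathcal E_1)\otimes\mathcal E_1(-7\tau+2E)=\pi^*(\mathfrak{sl}(\mathcal U_1)\otimes\mathcal U_1(-2))$, and since $\mathrm R\pi_*\mathcal O_X=\mathcal O_Y$ its cohomology equals $\mathrm H^\bullet(Y,\mathfrak{sl}(\mathcal U_1)\otimes\mathcal U_1(-2))$.

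It remains to show this group vanishes. By Serre duality on $Y$, using $\omega_Y=\mathcal O_Y(-3)$, the self‑duality $\mathfrak{sl}(\mathcal U_1)^\vee\cong\mathfrak{sl}(\mathcal U_1)$, and the rank‑two identity $\mathcal U_1^*(-1)\cong\mathcal U_1$, one gets $\mathrm H^\bullet(Y,\mathfrak{sl}(\mathcal U_1)\otimes\mathcal U_1(-2))\cong\mathrm H^{6-\bullet}(Y,\mathfrak{sl}(\mathcal U_1)\otimes\mathcal U_1)^*$, so it is enough to prove $\mathrm H^\bullet(Y,\mathfrak{sl}(\mathcal U_1)\otimes\mathcal U_1)=0$. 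For this I would apply Teleman Quantization (\cref{teleman}): writing $(w_1,w_2)$ for the weights of $\tilde{\mathcal U}_1$ recorded in \cref{table: weights of universal representations}, the relevant $\lambda$‑weights of the equivariant bundle underlying $\mathfrak{sl}(\mathcal U_1)\otimes\mathcal U_1$ are at most $2w_1-w_2$, which is strictly less than $\eta_\lambda$ for every Harder–Narasimhan type in the table; hence $\mathrm H^{>0}(Y,\mathfrak{sl}(\mathcal U_1)\otimes\mathcal U_1)=0$, and then $\dim\mathrm H^0=\chi(Y,\mathfrak{sl}(\mathcal U_1)\otimes\mathcal U_1)$, which one checks to be $0$ by Hirzebruch–Riemann–Roch using \cref{table: Chern characters of bundles}, \cref{chern ch}, and the code of \cref{appendix: Sage code}. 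The main obstacle is precisely this last case: one must track carefully the twists under $\pi$ and the two Serre dualities, and, crucially, the vanishing rests on the numerical fact $\chi(Y,\mathfrak{sl}(\mathcal U_1)\otimes\mathcal U_1)=0$ rather than on a structural argument (a Borel–Bott–Weil computation via the Koszul resolution of $\mathcal O_Y$ on $\operatorname{Gr}(2,S^{2,1}W)$ is possible, but the resulting hypercohomology spectral sequence does not degenerate, so it is not evidently shorter).
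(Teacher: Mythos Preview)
Your proof is correct and follows the paper's approach for the first two lines exactly. For the third line, both you and the paper use Serre duality, but the paper is more economical: using \cref{lemma: induced Serre duality} and \cref{dual of E1}, one gets directly $\mathrm{H}^{\bullet}(\mathfrak{sl}(\mathcal E_1)\otimes\mathcal E_1(-7\tau+2E))\cong\mathrm{H}^{6-\bullet}(\mathfrak{sl}(\mathcal E_1)\otimes\mathcal E_1(-\tau))^*$, which is precisely the first vanishing you already established. Your identity $\mathrm H^\bullet(Y,\mathfrak{sl}(\mathcal U_1)\otimes\mathcal U_1(-2))\cong\mathrm H^{6-\bullet}(Y,\mathfrak{sl}(\mathcal U_1)\otimes\mathcal U_1)^*$ says exactly the same thing (since $\mathfrak{sl}(\mathcal E_1)\otimes\mathcal E_1(-\tau)=\pi^*(\mathfrak{sl}(\mathcal U_1)\otimes\mathcal U_1)$ and $\pi^*$ is fully faithful), so your Teleman/HRR step is redundant: you are re-proving the first line of the lemma by a different method rather than invoking it.
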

	
	\begin{proof}
		The outside terms of the twist of \eqref{eq: ses for slE1} by $\mathcal E_1 (a\tau + bE)$ are $\mathcal E_1 \otimes \mathcal E_1 ((a+1)\tau + (b-1)E)$ and $\mathcal E_1 ((a-1)\tau + (b+1)E)$.
		By \cref{lemma: vanishE1} and \cref{vanishE1E1}, their cohomology vanish for 
		$b=0$ and $a=-1$, or
		$b=1$ and $a = -6,-5,-4$.
		
		For the last vanishing, we use \cref{lemma: induced Serre duality} and \cref{dual of E1} to get $\mathrm{H}^{\bullet}(\mathfrak{sl} (\mathcal E_1) \otimes \mathcal E_1 (-7\tau+2E)) \cong
		\mathrm{H}^{6-\bullet}(\mathfrak{sl} (\mathcal E_1) \otimes \mathcal E_1 (-\tau))^*$, which vanishes by what we have just proved.
	\end{proof}
	
	\begin{lemma}
		\label{lemma: vanishslE1E2}
		We have the following vanishing properties for the cohomology of twists of $\mathfrak{sl} (\mathcal E_1) \otimes \mathcal E_2$:
		\begin{align*}
			& \mathrm{H}^{\bullet}(\mathfrak{sl} (\mathcal E_1) \otimes \mathcal E_2 (- \tau) )=0;
			\\& \mathrm{H}^{\bullet}(\mathfrak{sl} (\mathcal E_1) \otimes \mathcal E_2 (a\tau+E))=0
			\text { for } a = -6,-5,-4;
			\\& \mathrm{H}^{\bullet}(\mathfrak{sl} (\mathcal E_1) \otimes \mathcal E_2 (-7\tau+2E))=0 .
		\end{align*}
	\end{lemma}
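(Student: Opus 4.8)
The plan is to follow the same template as the two preceding lemmas (for $\mathfrak{sl}(\mathcal E_1)$ and for $\mathfrak{sl}(\mathcal E_1)\otimes\mathcal E_1$). First I would tensor the short exact sequence \eqref{eq: ses for slE1}, namely $0\to\mathcal E_1(\tau-E)\to\mathfrak{sl}(\mathcal E_1)\to\mathcal O(-\tau+E)\to 0$, by the line bundle $\mathcal E_2(a\tau+bE)$; this produces a short exact sequence whose outer terms are $\mathcal E_1\otimes\mathcal E_2((a+1)\tau+(b-1)E)$ and $\mathcal E_2((a-1)\tau+(b+1)E)$. The first two assertions of the lemma correspond to the cases $(a,b)\in\{(-1,0),(-4,1),(-5,1),(-6,1)\}$, and for each of these the left-hand term has vanishing cohomology by \cref{vanishE1E2} while the right-hand term has vanishing cohomology by \cref{vanishE2}, so the middle term does as well.

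The remaining case $(a,b)=(-7,2)$, i.e. $\mathfrak{sl}(\mathcal E_1)\otimes\mathcal E_2(-7\tau+2E)$, is the one I expect to be the main obstacle: the left-hand term $\mathcal E_1\otimes\mathcal E_2(-6\tau+E)$ is still acyclic by \cref{vanishE1E2}, but the right-hand term $\mathcal E_2(-8\tau+3E)$ lies just outside the range covered by \cref{vanishE2}, so one must prove $\mathrm H^\bullet(\mathcal E_2(-8\tau+3E))=0$ separately. For that I would twist the short exact sequence of \cref{E2} by $\mathcal O(-8\tau+3E)$: the subsheaf $W^*\otimes\mathcal O(-8\tau+3E)$ has vanishing cohomology by the third part of \cref{lemma: vanishO}, and for the quotient, arguing exactly as in the proof of \cref{vanishE2}, one obtains $\mathrm H^\bullet(\mathcal O_E(L)\otimes\mathcal O(-8\tau+3E))\cong\mathrm H^\bullet(Z,\mathcal O_Z(-8,-9)\otimes\mathrm Rp_*\mathcal O_{E/Z}(-4))$. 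The genuinely new input is $\mathrm Rp_*\mathcal O_{E/Z}(-4)$, which by relative Serre duality for the $\mathds{P}^2$-bundle $E\to Z$ (the same computation underlying \cref{-3}) equals $N_{Z|\mathds{P}^7}(5,5)[-2]$; so the question reduces to showing $\mathrm H^\bullet(Z,N_{Z|\mathds{P}^7}(-3,-4))=0$. That in turn follows by twisting the conormal sequence $0\to\mathcal O_Z(1,1)\to T_{\mathds{P}(W)}\boxtimes T_{\mathds{P}(W^*)}\to N_{Z|\mathds{P}^7}\to 0$ from \cref{-3} by $\mathcal O_Z(-3,-4)$, applying the Künneth formula, and invoking Bott vanishing on $\mathds{P}^2$: the middle term is acyclic because $T_{\mathds{P}^2}(-4)\cong\Omega^1_{\mathds{P}^2}(-1)$ is acyclic, and $\mathcal O_Z(-2,-3)$ is acyclic as well.

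With $\mathrm H^\bullet(\mathcal E_2(-8\tau+3E))=0$ in hand, tracing back through the twisted copy of \eqref{eq: ses for slE1} settles the case $(a,b)=(-7,2)$ and completes the proof. As an alternative for this last case, one could instead observe that $\mathcal E_2(-7\tau+2E)\cong\pi^*(\mathcal U_2(-2))$, hence $\mathfrak{sl}(\mathcal E_1)\otimes\mathcal E_2(-7\tau+2E)\cong\pi^*(\mathfrak{sl}(\mathcal U_1)\otimes\mathcal U_2(-2))$, and use Serre duality on $Y$ to reduce the vanishing to that of $\mathrm H^\bullet(\mathfrak{sl}(\mathcal E_1)\otimes\mathcal E_2^*(-2\tau+E))$; but this trades the computation above for a forward reference to the analogous statement for $\mathfrak{sl}(\mathcal E_1)\otimes\mathcal E_2^*$, which is also the reason this sheaf does not appear among those we strike out via \cref{lemma: induced Serre duality}. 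I would therefore keep the direct argument.
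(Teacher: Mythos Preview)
Your argument is correct. The first two assertions are handled exactly as in the paper, via the twist of \eqref{eq: ses for slE1} and \cref{vanishE2}, \cref{vanishE1E2}.

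For the third assertion, you and the paper diverge. The paper takes precisely the alternative you mention and then set aside: it applies \cref{lemma: induced Serre duality} to obtain
\[
\mathrm H^\bullet(\mathfrak{sl}(\mathcal E_1)\otimes\mathcal E_2(-7\tau+2E))\cong\mathrm H^{6-\bullet}(\mathfrak{sl}(\mathcal E_1)\otimes\mathcal E_2^*(-2\tau+E))^*,
\]
and then invokes the first claim of \cref{lemma: vanishslE1E2*}, explicitly noting that this forward reference is non-circular (that claim is proved from \cref{lemma: vanishE2*} and \cref{lemma: vanishE1E2*} alone). Your direct route instead establishes $\mathrm H^\bullet(\mathcal E_2(-8\tau+3E))=0$ by extending the computation of \cref{vanishE2} one step further, which requires the new input $\mathrm Rp_*\mathcal O_{E/Z}(-4)\cong N_{Z|\mathds P^7}(5,5)[-2]$ and the acyclicity of $N_{Z|\mathds P^7}(-3,-4)$ via the conormal sequence. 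Your computation is sound: $T_{\mathds P^2}(-4)\cong\Omega^1_{\mathds P^2}(-1)$ is indeed acyclic, so the K\"unneth product vanishes, and $\mathcal O_Z(-2,-3)$ is acyclic for the same reason. The trade-off is that your argument is self-contained at the cost of a small extra computation beyond \cref{-3}, whereas the paper's argument is shorter but relies on the carefully arranged mutual independence of \cref{lemma: vanishslE1E2} and \cref{lemma: vanishslE1E2*}.
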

	
	\begin{proof}
		The outside terms of the twist of \eqref{eq: ses for slE1} by $\mathcal E_2 (a\tau + bE)$ are $\mathcal E_1 \otimes \mathcal E_2 ((a+1)\tau + (b-1)E)$ and $\mathcal E_2 ((a-1)\tau + (b+1)E)$.
		By \cref{vanishE2} and \cref{vanishE1E2}, their cohomology vanish for 
		$b=0$ and $a=-1$, or
		$b=1$ and $a = -6,-5,-4$.
		
		For the last vanishing, we use \cref{lemma: induced Serre duality} to get $\mathrm{H}^{\bullet}(\mathfrak{sl} (\mathcal E_1) \otimes \mathcal E_2 (-7\tau+2E)) \cong
		\mathrm{H}^{6-\bullet}(\mathfrak{sl} (\mathcal E_1) \otimes \mathcal E_2^* (-2\tau+E))^*$, which vanishes by the first claim of \cref{lemma: vanishslE1E2*}, which is independent of this Lemma.
	\end{proof}
	
	\begin{lemma}
		\label{lemma: vanishslE1E2*}
		We have the following vanishing properties for the cohomology of twists of $\mathfrak{sl} (\mathcal E_1) \otimes \mathcal E_2^*$:
		\begin{align*}
			& \mathrm{H}^{\bullet}(\mathfrak{sl} (\mathcal E_1) \otimes \mathcal E_2^* (a\tau+E) )=0
			\text{ for } -4 \leq a \leq -1;
			\\& \mathrm{H}^{\bullet}(\mathfrak{sl} (\mathcal E_1) \otimes \mathcal E_2^* (a\tau+2E))=0
			\text { for } a = -7,-6,-5;
			\\& \mathrm{H}^{\bullet}(\mathfrak{sl} (\mathcal E_1) \otimes \mathcal E_2^* (-8\tau+3E))=0 .
		\end{align*}
	\end{lemma}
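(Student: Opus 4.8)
The plan is to follow the same pattern as in the preceding cohomology-vanishing lemmas. Tensoring the short exact sequence \eqref{eq: ses for slE1} from \cref{lemma: filtration on slE1} by the invertible sheaf $\mathcal E_2^*(a\tau+bE)$ yields
\[
0 \to \mathcal E_1 \otimes \mathcal E_2^*\big((a+1)\tau+(b-1)E\big) \to \mathfrak{sl}(\mathcal E_1) \otimes \mathcal E_2^*(a\tau+bE) \to \mathcal E_2^*\big((a-1)\tau+(b+1)E\big) \to 0 ,
\]
so it suffices to check, for each relevant pair $(a,b)$, that the cohomology of the left-hand term vanishes (this will come from \cref{lemma: vanishE1E2*}) and that the cohomology of the right-hand term vanishes (this will come from \cref{lemma: vanishE2*}).

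For the first assertion I would take $b=1$ and $-4\le a\le -1$: then the left-hand term is $\mathcal E_1\otimes\mathcal E_2^*\big((a+1)\tau\big)$ with $-3\le a+1\le 0$, whose cohomology vanishes by \cref{lemma: vanishE1E2*}, and the right-hand term is $\mathcal E_2^*\big((a-1)\tau+2E\big)$ with $-5\le a-1\le -2$, whose cohomology vanishes by \cref{lemma: vanishE2*}. For the second assertion I would take $b=2$ and $a=-7,-6,-5$: the left-hand term $\mathcal E_1\otimes\mathcal E_2^*\big((a+1)\tau+E\big)$ has $a+1=-6,-5,-4$ and is again covered by \cref{lemma: vanishE1E2*}, while the right-hand term $\mathcal E_2^*\big((a-1)\tau+3E\big)$ has $a-1=-8,-7,-6$ and is covered by \cref{lemma: vanishE2*}.

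The third assertion, the case $(a,b)=(-8,3)$, does not fall out of the displayed sequence directly, since its right-hand term $\mathcal E_2^*(-9\tau+4E)$ lies outside the range of \cref{lemma: vanishE2*} (and also outside the range reachable by Serre duality from \cref{vanishE2}). Here I would instead argue via \cref{lemma: induced Serre duality}. Since $\mathfrak{sl}(\mathcal E_1)=\pi^*\mathfrak{sl}(\mathcal U_1)$ is self-dual, $\mathcal E_2^*\cong\pi^*\mathcal U_2^*(-\tau)$, and $\mathcal O(-9\tau+3E)\cong\pi^*\omega_Y$, the sheaf $\mathfrak{sl}(\mathcal E_1)\otimes\mathcal E_2^*(-8\tau+3E)$ is the pullback of $\mathfrak{sl}(\mathcal U_1)\otimes\mathcal U_2^*\otimes\omega_Y$; applying \cref{lemma: induced Serre duality} together with $\pi^*\mathcal U_2\cong\mathcal E_2(-\tau)$ then identifies its cohomology with
\[
\mathrm H^\bullet\big(\mathfrak{sl}(\mathcal E_1)\otimes\mathcal E_2^*(-8\tau+3E)\big)\cong \mathrm H^{6-\bullet}\big(\mathfrak{sl}(\mathcal E_1)\otimes\mathcal E_2(-\tau)\big)^{*},
\]
which vanishes by the first claim of \cref{lemma: vanishslE1E2}. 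I do not expect any real obstacle in carrying this out; the only point requiring care is the order of dependencies, namely that the first two assertions of the present Lemma use only \cref{lemma: vanishE1E2*} and \cref{lemma: vanishE2*}, and that the first claim of \cref{lemma: vanishslE1E2} uses only \cref{vanishE2} and \cref{vanishE1E2}, so invoking it for the $(-8,3)$ case introduces no circularity.
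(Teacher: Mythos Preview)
Your proposal is correct and follows essentially the same approach as the paper: tensor the short exact sequence \eqref{eq: ses for slE1} by $\mathcal E_2^*(a\tau+bE)$ and invoke \cref{lemma: vanishE1E2*} and \cref{lemma: vanishE2*} for the first two ranges, then use \cref{lemma: induced Serre duality} to reduce the $(-8,3)$ case to the first claim of \cref{lemma: vanishslE1E2}. Your remark about non-circularity matches the paper's own observation that the first claim of \cref{lemma: vanishslE1E2} is independent of this Lemma.
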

	
	\begin{proof}
		The outside terms of the twist of \eqref{eq: ses for slE1} by $\mathcal E_2^* (a\tau + bE)$ are $\mathcal E_1 \otimes \mathcal E_2^* ((a+1)\tau + (b-1)E)$ and $\mathcal E_2^* ((a-1)\tau + (b+1)E)$.
		By \cref{lemma: vanishE2*} and \cref{lemma: vanishE1E2*}, their cohomology vanish for 
		$b=1$ and $-4 \leq a \leq -1$, or
		$b=2$ and $a = -7,-6,-5$.
		
		For the last vanishing, we use \cref{lemma: induced Serre duality} to get $\mathrm{H}^{\bullet}(\mathfrak{sl} (\mathcal E_1) \otimes \mathcal E_2^* (-8\tau+3E)) \cong
		\mathrm{H}^{6-\bullet}(\mathfrak{sl} (\mathcal E_1) \otimes \mathcal E_2 (-\tau))^*$, which vanishes by the first claim of \cref{lemma: vanishslE1E2}, which is independent of this Lemma.
	\end{proof}
	
	\begin{lemma}
		The object $\mathfrak{sl}(\mathcal E_1)$ is exceptional.
	\end{lemma}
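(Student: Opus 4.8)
The goal is to show $\rhom_X^\bullet(\mathfrak{sl}(\mathcal E_1), \mathfrak{sl}(\mathcal E_1)) = \mathds C[0]$; as $\mathfrak{sl}(\mathcal E_1)$ is a vector bundle this equals $\mathrm H^\bullet(X, \mathfrak{sl}(\mathcal E_1)^\vee \otimes \mathfrak{sl}(\mathcal E_1))$, so I would first decompose this bundle. Since $\mathcal E_1$ has rank $2$, we have $\mathfrak{sl}(\mathcal E_1) \cong \mathrm{Sym}^2 \mathcal E_1 \otimes (\det \mathcal E_1)^{-1}$, which is self-dual via the trace form and has $\det \mathfrak{sl}(\mathcal E_1) \cong \mathcal O_X$ (cf.\ \cref{lemma: filtration on slE1}). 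Hence, in characteristic zero,
\[
\mathfrak{sl}(\mathcal E_1)^\vee \otimes \mathfrak{sl}(\mathcal E_1) \;\cong\; \mathfrak{sl}(\mathcal E_1)^{\otimes 2} \;\cong\; \mathrm{Sym}^2\mathfrak{sl}(\mathcal E_1) \;\oplus\; \wedge^2 \mathfrak{sl}(\mathcal E_1),
\]
where $\wedge^2\mathfrak{sl}(\mathcal E_1) \cong \mathfrak{sl}(\mathcal E_1)^\vee \otimes \det\mathfrak{sl}(\mathcal E_1) \cong \mathfrak{sl}(\mathcal E_1)$, and the plethysm $\mathrm{Sym}^2(\mathrm{Sym}^2\mathcal E) \cong \mathrm{Sym}^4\mathcal E \oplus (\det\mathcal E)^{\otimes 2}$ for a rank-$2$ bundle $\mathcal E$ gives $\mathrm{Sym}^2\mathfrak{sl}(\mathcal E_1) \cong \mathcal O_X \oplus \big( \mathrm{Sym}^4\mathcal E_1 \otimes (\det\mathcal E_1)^{-2}\big)$. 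By \cref{dual of E1}, $(\det\mathcal E_1)^{-2} \cong \mathcal O(2\tau - 2E)$.

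Since $\mathrm H^\bullet(X, \mathcal O_X) = \mathds C[0]$ and $\mathrm H^\bullet(\mathfrak{sl}(\mathcal E_1)) = 0$ by \cref{muorthogonal}, it remains to prove $\mathrm H^\bullet(X, \mathrm{Sym}^4\mathcal E_1(2\tau-2E)) = 0$. Using $\mathcal E_1 = \pi^*\mathcal U_1 \otimes \mathcal O(\tau)$ and $\pi^*\mathcal O_Y(1) = \mathcal O(3\tau - E)$, this bundle equals $\pi^*\big(\mathrm{Sym}^4\mathcal U_1 \otimes \mathcal O_Y(2)\big)$, so by the projection formula ($\mathrm R\pi_*\mathcal O_X = \mathcal O_Y$) its cohomology agrees with $\mathrm H^\bullet(Y, \mathrm{Sym}^4\mathcal U_1(2))$.

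To compute the latter I would invoke Teleman Quantization. The bundle $\mathrm{Sym}^4\mathcal U_1 \otimes \mathcal O_Y(2) = \mathrm{Sym}^4\mathcal U_1 \otimes (\det\mathcal U_1)^{-2}$ descends from the $GL(2)\times GL(3)$-equivariant bundle $\mathrm{Sym}^4\tilde{\mathcal U}_1 \otimes (\det\tilde{\mathcal U}_1)^{-2}$ on $R_{(2,3)}$ (the $L(\underline a)$-twists cancel). If $\tilde{\mathcal U}_1$ has $\lambda$-weights $(w_1,w_2)$ on $Z_\lambda$, then this bundle has top $\lambda$-weight $2|w_1 - w_2|$; reading $(w_1,w_2)$ and $\eta_\lambda$ off \cref{table: weights of universal representations}, one checks $2|w_1-w_2| < \eta_\lambda$ in each of the seven strata. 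By \cref{teleman} this yields $\mathrm H^{>0}(Y, \mathrm{Sym}^4\mathcal U_1(2)) = 0$ and $\mathrm H^0(Y, \mathrm{Sym}^4\mathcal U_1(2)) \cong \big( (\mathrm{Sym}^4 \mathds C^2 \otimes (\wedge^2\mathds C^2)^{-2}) \otimes \mathcal O(R_{(2,3)}) \big)^{GL(2)\times GL(3)}$. Here $GL(3)$ acts trivially on the fiber, and by classical invariant theory $\mathcal O(R_{(2,3)})^{GL(3)} = \mathds C$ (the $3\times 3$ determinants are $SL(3)$- but not $GL(3)$-invariant), so this invariant subspace equals $(\mathrm{Sym}^4\mathds C^2 \otimes (\wedge^2\mathds C^2)^{-2})^{GL(2)} = 0$, the latter being a non-trivial irreducible $GL(2)$-representation. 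Hence $\mathrm H^\bullet(Y, \mathrm{Sym}^4\mathcal U_1(2)) = 0$ and the exceptionality follows.

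I expect the main difficulty to be the vanishing of $\mathrm H^0$: Teleman's theorem only controls higher cohomology, and one has to identify $\mathrm H^0$ with an explicit invariant subspace and invoke classical invariant theory for the $GL(3)$-action to conclude it vanishes, rather than appealing to a vanishing theorem. The remaining ingredients --- the plethystic splitting of $\mathfrak{sl}(\mathcal E_1)^{\otimes 2}$, the bookkeeping identifying $\mathrm{Sym}^4\mathcal E_1 \otimes (\det\mathcal E_1)^{-2}$ as a pullback from $Y$, and the weight comparison against the $\eta_\lambda$ --- are routine but need to be carried out carefully.
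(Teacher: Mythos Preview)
Your argument is correct and takes a genuinely different route from the paper. The paper simply observes that $\mathfrak{sl}(\mathcal E_1)=\pi^*\mathfrak{sl}(\mathcal U_1)$, invokes Teleman quantization (via \cref{remark: higher cohg vanish}, applied to $\mathfrak{sl}(\mathcal U_1)\otimes\mathfrak{sl}(\mathcal U_1)$) to kill the higher cohomology, and then computes $\chi(\mathfrak{sl}(\mathcal U_1)^{\otimes 2})=1$ by Hirzebruch--Riemann--Roch using the Chern data in \cref{table: Chern characters of bundles} and \cref{chern ch} (or the code in \cref{appendix: Sage code}). Your proof replaces the HRR step by the plethystic splitting $\mathfrak{sl}(\mathcal E_1)^{\otimes 2}\cong\mathcal O_X\oplus\mathfrak{sl}(\mathcal E_1)\oplus\pi^*(\mathrm{Sym}^4\mathcal U_1(2))$, uses \cref{muorthogonal} for the middle summand, and handles the last summand by Teleman together with the elementary invariant-theory observation that $\mathcal O(R_{(2,3)})^{GL(3)}=\mathds C$ (the center of $GL(3)$ scales all coordinates). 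This is more self-contained---it avoids the numerical intersection theory on $Y$ and explains transparently that the surviving $\mathds C$ comes from the trace summand $\mathcal O_X$---whereas the paper's approach is faster given that the HRR machinery and Chern tables are already in place and used repeatedly elsewhere. Both arguments ultimately rely on the same Teleman weight check $2|w_1-w_2|<\eta_\lambda$, which is immediate from \cref{table: weights of universal representations}.
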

	
	\begin{proof}
		We need to prove that $\mathrm H^\bullet (X, \mathfrak{sl}(\mathcal E_1) \otimes \mathfrak{sl}(\mathcal E_1) ) \cong \mathds C [0]$.
		By \cref{remark: higher cohg vanish}, it is enough to check that the Euler characteristic $\chi(\mathfrak{sl}(\mathcal E_1) \otimes \mathfrak{sl}(\mathcal E_1)) = 1$, which follows from the Hirzebruch-Riemann-Roch Theorem using the Todd class and chern characters provided before.
	\end{proof}
	
	\begin{remark}
		A large part of exceptionality follows simply from Teleman Quantization and Hirzebruch-Riemann-Roch type calculations. However, in \cref{symmetry} and \cref{1 unique morphism}, we need to calculate for example $\mathrm{H}^{\bullet}(X,\mathcal{E}_2^*\otimes \mathcal{E}_2(-5\tau+2E))=\mathds{C}[-3] $, where the bundle $\mathcal{E}_2^*\otimes \mathcal{E}_2(-5\tau+2E)$ is not the pullback of any bundle over $Y$ and has nonvanishing cohomology in higher degree.
	\end{remark}
	\subsection{Mutations}
	\label{subsection: mutations}

	By \cref{theorem: exceptional collection on Y}, we have an exceptional collection on $Y$.
	Denoting $\langle \mathcal{O}_Y, \mathcal{U}_2^*, \mathcal{U}_1^*, \mathcal{U}_2(1) \rangle$ by $\mathcal A$, we can rewrite \cref{1 exc} as follows:
	\begin{equation}
		\label{equation: shorter notation for exceptional collection}
		\langle \mathfrak{sl} (\mathcal U_1) ,\ \mathcal A,\ \mathcal A(1),\ \mathcal A(2) \rangle.
	\end{equation}
	In this subsection, we will explore further structures of this collection, such as results of mutating some of the objects.

	\begin{proposition}
		\label{proposition: mutating slU1}
		If one right mutates the bundle $\mathfrak{sl}(\mathcal{U}_1)$ across $\mathcal A$ or $\langle\mathcal A, \mathcal{O}_Y(1)\rangle$,
		one gets $\mathfrak{sl}(\mathcal{U}_1)(1)[-2]$.
	\end{proposition}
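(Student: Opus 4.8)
The plan is to verify that $\mathfrak{sl}(\mathcal U_1)(1)[-2]$ satisfies the two properties characterising the right mutation $\mathrm R_{\mathcal A}\mathfrak{sl}(\mathcal U_1)$: that it is right-orthogonal to $\langle\mathcal A\rangle$, and that it fits into an exact triangle $\mathfrak{sl}(\mathcal U_1)(1)[-2]\to\mathfrak{sl}(\mathcal U_1)\to P\to\mathfrak{sl}(\mathcal U_1)(1)[-1]$ with $P\in\langle\mathcal A\rangle$. Once the mutation across $\mathcal A$ is settled, the variant across $\langle\mathcal A,\mathcal O_Y(1)\rangle$ is automatic: $\mathfrak{sl}$ is invariant under line-bundle twists, so by \cref{muorthogonal} we have $\rhom_Y(\mathfrak{sl}(\mathcal U_1)(1),\mathcal O_Y(1))\cong\rhom_Y(\mathfrak{sl}(\mathcal U_1),\mathcal O_Y)=0$, hence the further mutation across $\mathcal O_Y(1)$ changes nothing.

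For right-orthogonality I would pull everything back along the fully faithful functor $\pi^*$ attached to the $\mathds P^1$-bundle $\pi\colon X\to Y$, using $\pi^*\big(\mathfrak{sl}(\mathcal U_1)(1)\big)=\mathfrak{sl}(\mathcal E_1)(3\tau-E)$ together with the identifications $\pi^*\mathcal U_2^*=\mathcal E_2^*(\tau)$, $\pi^*\mathcal U_1^*=\mathcal E_1(2\tau-E)$, $\pi^*(\mathcal U_2(1))=\mathcal E_2(2\tau-E)$ from the proof of \cref{theorem: exceptional collection on Y}. The four conditions $\rhom_Y(\mathfrak{sl}(\mathcal U_1)(1),E)=0$ for $E\in\{\mathcal O_Y,\mathcal U_2^*,\mathcal U_1^*,\mathcal U_2(1)\}$ become the vanishing of $\mathrm H^\bullet(\mathfrak{sl}(\mathcal E_1)(-3\tau+E))$, $\mathrm H^\bullet(\mathfrak{sl}(\mathcal E_1)\otimes\mathcal E_2^*(-2\tau+E))$, $\mathrm H^\bullet(\mathfrak{sl}(\mathcal E_1)\otimes\mathcal E_1(-\tau))$ and $\mathrm H^\bullet(\mathfrak{sl}(\mathcal E_1)\otimes\mathcal E_2(-\tau))$ respectively; all four already lie in the ranges of \cref{muorthogonal}, \cref{lemma: vanishslE1E2*}, the (unlabelled) vanishing lemma for $\mathfrak{sl}(\mathcal E_1)\otimes\mathcal E_1$, and \cref{lemma: vanishslE1E2}, so this half is essentially free.

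The substantive part is producing the triangle, which I would obtain by running the four iterated right mutations $\mathrm R_{\mathcal A}=\mathrm R_{\mathcal U_2(1)}\circ\mathrm R_{\mathcal U_1^*}\circ\mathrm R_{\mathcal U_2^*}\circ\mathrm R_{\mathcal O_Y}$ one generator at a time. The first mutation does nothing because $\mathfrak{sl}(\mathcal U_1)$ and $\mathcal O_Y$ are mutually orthogonal (\cref{muorthogonal}). For each of the remaining three I would compute the relevant $\rhom$ of the current complex with the next generator after pulling back to $X$, keeping track of everything in terms of $\mathcal E_1,\mathcal E_2$ and twists, using the short exact sequences \cref{eq: ses for slE1}, \cref{E_1 extension}, \cref{E2}, \cref{2dual}, the pushforwards \cref{lemma: H(O(a)) = H(O(a+E)) = H(O(a+2E))} and \cref{-3}, and Teleman Quantization plus Hirzebruch--Riemann--Roch for the Euler characteristics; since all morphism spaces are $SL(W)$-modules, the connecting maps are pinned down up to scalars. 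The crucial new input is $\rhom_Y(\mathfrak{sl}(\mathcal U_1),\mathcal U_2^*)\cong\mathrm H^\bullet(X,\mathfrak{sl}(\mathcal E_1)\otimes\mathcal E_2^*(\tau))$: this twist falls outside the earlier vanishing lemmas, so one genuinely has to tensor \cref{eq: ses for slE1} with $\mathcal E_2^*(\tau)$ and chase the structure-sheaf sequences, and its value propagates into all subsequent steps.

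Finally I would identify the object resulting from the four mutations. It is exceptional (mutations preserve exceptionality) and, by construction, right-orthogonal to $\langle\mathcal A\rangle$ and generated by $\langle\mathfrak{sl}(\mathcal U_1),\mathcal A\rangle$; since $^\perp\langle\mathcal A\rangle\cap\langle\mathfrak{sl}(\mathcal U_1),\mathcal A\rangle$ is generated by a single exceptional object, the result must be a shift of $\mathfrak{sl}(\mathcal U_1)(1)$ as soon as one checks that the two classes agree in $K_0(Y)$ modulo $K_0(\langle\mathcal A\rangle)$ — this follows by comparing Chern characters via \cref{chern ch} and \cref{table: Chow ring of Y} — and the precise shift $[-2]$ is then read off from the cohomological placement of the complex $P$ built in the previous step. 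The main obstacle I expect is exactly this bookkeeping in the mutation step: controlling the iterated cones and verifying that the intermediate objects remain (shifts of) pullbacks of honest bundles from $Y$ rather than more complicated complexes on $X$.
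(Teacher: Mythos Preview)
Your orthogonality verification $\mathfrak{sl}(\mathcal U_1)(1)\in{}^\perp\mathcal A$ is correct and does follow from the cited lemmas, and the reduction of the $\langle\mathcal A,\mathcal O_Y(1)\rangle$ case to the $\mathcal A$ case is fine. The gap is in your identification step. You observe that ${}^\perp\mathcal A\cap\langle\mathfrak{sl}(\mathcal U_1),\mathcal A\rangle$ is generated by a single exceptional object --- namely $R_{\mathcal A}\mathfrak{sl}(\mathcal U_1)$ itself --- and then want to conclude that this generator is a shift of $\mathfrak{sl}(\mathcal U_1)(1)$. For that you would need $\mathfrak{sl}(\mathcal U_1)(1)\in\langle\mathfrak{sl}(\mathcal U_1),\mathcal A\rangle$, which is precisely the content of the proposition; a $K_0$ equality is necessary but not sufficient, since at this point \eqref{1 exc} is not known to be full and ${}^\perp\mathcal A$ may contain many exceptional objects with the same $K$-class. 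Running the iterated cones on $X$ does not avoid the problem: you end up with some complex and still need a genuine argument to match it with $\mathfrak{sl}(\mathcal U_1)(1)$.

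The paper supplies this via a duality you did not exploit. The involution $F\mapsto F^*(1)$ exchanges $(\mathfrak{sl}(\mathcal U_1),\mathcal U_2^*)$ with $(\mathfrak{sl}(\mathcal U_1)(1),\mathcal U_2(1))$ and fixes $\mathcal U_1^*$. The first nontrivial mutation is realised by a short exact sequence of bundles $0\to\mathfrak{sl}(\mathcal U_1)\to\wedge^2W\otimes\mathcal U_2^*\to Q_1\to0$ (injectivity checked on fibers over the two closed $SL(W)$-orbits), giving $R_{\mathcal U_2^*}\mathfrak{sl}(\mathcal U_1)=Q_1[-1]$; dualising and twisting then yields $L_{\mathcal U_2(1)}\mathfrak{sl}(\mathcal U_1)(1)=Q_1^*(1)[1]$ for free. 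Everything reduces to two bundle-level facts on $Y$: (i) $\rhom(Q_1,\mathcal U_1^*)=0$, proved by showing that a specific $SL(W)$-equivariant map between two copies of $\End(W)$ is nonzero on each irreducible summand; and (ii) $Q_1\cong Q_1^*(1)$, proved by writing down an explicit $GL(W)$-equivariant four-term exact sequence splicing the two short ones, with middle map induced from the multiplication $S^2W^*\to\mathcal U_2^*$, and verifying exactness via a constant-rank-$6$ plus determinant check. Step (ii) is where the real work lies; your $K_0$ check is its Grothendieck-group shadow but does not by itself produce the required exact sequence.
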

	
	\begin{proof}
		\textbf{Step 1.}
		\textit{Right mutation of $\mathfrak{sl}(\mathcal{U}_1)$ across $\mathcal O_Y$ is $\mathfrak{sl}(\mathcal{U}_1)$.}
		By \cref{muorthogonal}, the bundle $\mathfrak{sl}(\mathcal{U}_1)$ is orthogonal to $\mathcal{O}_Y$. As a result, the right mutation of $\mathfrak{sl}(\mathcal{U}_1)$ across $\mathcal{O}_Y$ is still $\mathfrak{sl}(\mathcal{U}_1)$ itself. 
		
		For the same reason, we only need to prove that the right mutation of $\mathfrak{sl}(\mathcal{U}_1)$ across $\mathcal A$ is equal to $\mathfrak{sl}(\mathcal{U}_1)(1)[-2]$ because the further mutation of $\mathfrak{sl}(\mathcal{U}_1)(1)[-2]$ across $\mathcal{O}_Y(1)$ won't change anything.
		
		\textbf{Step 2.}
		\textit{Right mutation of $\mathfrak{sl}(\mathcal{U}_1)$ across $\mathcal U_2^*$ is a shifted vector bundle $Q_1[-1]$.}
		In order to understand the right mutation of $\mathfrak{sl}(\mathcal{U}_1)$ across $\mathcal{U}_2^*$, we need to understand the vector space $\mathrm{RHom}^{\bullet}(\mathfrak{sl}(\mathcal{U}_1),\mathcal{U}_2^*)$.
		By Teleman Quantization (\cref{teleman}), we know that 
		$\mathrm{RHom}^{\bullet}(\mathfrak{sl}(\mathcal{U}_1),\mathcal{U}_2^*)$ is isomorphic to
		$\mathrm H^0 (Y, \mathfrak{sl}(\mathcal{U}_1)\otimes\mathcal{U}_2^*)[0]$.
		By calculation using Riemann-Roch Theorem, we conclude that $\mathrm{RHom}^{\bullet}(\mathfrak{sl}(\mathcal{U}_1),$
		$\mathcal{U}_2^*)\cong\mathds{C}^3[0]$, and we will further exhibit an explicit $\operatorname{SL}(W)$-equivariant isomorphism $\mathrm{RHom}^{\bullet}(\mathfrak{sl}(\mathcal{U}_1),\mathcal{U}_2^*)\cong W[0]$, noting that $W \cong \wedge^2 W^*$ as $\operatorname{SL}(W)$-representations.
		We have the universal representation $\mathcal{U}_2^*\rightarrow \mathcal{U}_1^*\otimes W$.
		Applying the second wedge product and using knowledge about Schur functor of tensor product (see \cite[Lemma 0.5]{kapranov83}), we have $\wedge^2\mathcal{U}_2^*\rightarrow \wedge^2(\mathcal{U}_1^*\otimes W)\rightarrow S^2\mathcal{U}_1^*\otimes \wedge^2 W$, thus a map $\wedge^2W^*\otimes S^2\mathcal{U}_1\rightarrow \wedge^2\mathcal{U}_2$.
		Since $\det(\mathcal{U}_1)=\det(\mathcal{U}_2)=\mathcal{O}_Y(-1)$, we have $S^2\mathcal{U}_1\cong \mathfrak{sl}(\mathcal{U}_1)\otimes\mathcal{O}_Y(-1)$ and $\wedge^2\mathcal{U}_2\cong \mathcal{U}_2^*\otimes \mathcal{O}_Y(-1)$.
		These observations give us a map $\wedge^2 W^*\rightarrow \Hom(\mathfrak{sl}(\mathcal{U}_1),\mathcal{U}_2^*)$, which must be an isomorphism by dimension counting.
		One can check that the map $S^2\mathcal{U}_1\rightarrow \wedge^2W\otimes\wedge^2\mathcal{U}_2$ and thus the map $\mathfrak{sl}(\mathcal{U}_1)\rightarrow \wedge^2W\otimes\mathcal{U}_2^*$ are fiberwise
		injective by checking this fact on the fibers over the representative points $\langle x^2,xy,y^2\rangle$, $\langle x^2,xy,xz\rangle$ in the two smallest $SL(W)$-orbits.
		Consequently, we have an exact sequence
		\begin{equation}
			0\rightarrow \mathfrak{sl}(\mathcal{U}_1)\xrightarrow{i} \wedge^2W\otimes\mathcal{U}_2^*\rightarrow Q_1 \rightarrow 0,
			\label{lexact}
		\end{equation}
		where $Q_1$ is a vector bundle. As a result, $Q_1[-1]=\mathrm{R}_{\mathcal{U}_2^*}(\mathfrak{sl}(\mathcal{U}_1))$.
		
		\textbf{Step 3.}
		\textit{Right mutation of $Q_1^*(1)$ across $\mathcal U_2(1)$ is $\mathfrak{sl}(\mathcal{U}_1)(1)[-1]$.}
		By taking dual of the above sequence and then twisting with $\mathcal{O}_Y(1)$, we have another sequence \begin{equation}
			0\rightarrow Q_1^*(1)\rightarrow \wedge^2W^*\otimes\mathcal{U}_2(1)\rightarrow \mathfrak{sl}(\mathcal{U}_1)(1) \rightarrow 0\label{rexact}
		\end{equation}
		which actually shows that $\mathrm{L}_{\mathcal{U}_2(1)} (\mathfrak{sl}(\mathcal{U}_1)(1)) \cong Q_1^*(1)[1]$.
		
		\textbf{Step 4.}
		\textit{Right mutation of $Q_1$ across $\mathcal U_1^*$ is $Q_1$.}
		We will show that $\operatorname{RHom}(Q_1, \mathcal U_1^*) = 0$, which will imply that $\mathrm{R}_{\mathcal{U}_1^*}(Q_1) \cong Q_1$.
		To compute the derived Hom, we apply $\operatorname{RHom}(\blank, \mathcal U_1^*)$ to \cref{lexact}.
		By Teleman Quantization, both $\operatorname{RHom}(\wedge^2 W \otimes \mathcal U_2^*, \mathcal U_1^*)$ and $\operatorname{RHom}(\mathfrak{sl}(\mathcal U_1), \mathcal U_1^*)$ are concentrated in degree zero.
		By calculations analogous to those in \cref{subsection: exceptional collection}, we have that
		$\operatorname{RHom}(\wedge^2 W \otimes \mathcal U_2^*, \mathcal U_1^*)$ and $\operatorname{RHom}(\mathfrak{sl}(\mathcal U_1), \mathcal U_1^*)$ are both $\operatorname{SL}(W)$-equivariantly isomorphic to $\operatorname{End} (W) \cong \mathbb C \oplus \mathfrak{sl}(W)$.
		One can show that the $\operatorname{SL}(W)$-equivariant map
		\begin{equation}
			\label{equation: iso between RHoms}
			i^* : \operatorname{End}(W) \cong \operatorname{RHom}(\wedge^2 W \otimes \mathcal U_2^*, \mathcal U_1^*) \to \operatorname{RHom}(\mathfrak{sl}(\mathcal U_1), \mathcal U_1^*)
		\end{equation}
		induced by $i$ in \cref{lexact} is an isomorphism by checking that $i^*(\mathrm{id}_W) \neq 0$ and $i^*(\alpha) \neq 0$ for some $\alpha \in \mathfrak{sl}(W) \subset \operatorname{End}(W)$.
		To do that, one needs the formula for $i^*$, which we now describe.
		Let $u : W^* \otimes \mathcal U_2^* \to \mathcal U_1^*$ denote the universal representation.
		We start by noting that, with the identification $W^* \cong \wedge^2 W$, the isomorphism $\operatorname{End}(W) \cong \operatorname{RHom}(\wedge^2 W \otimes \mathcal U_2^*, \mathcal U_1^*)$ sends an element $g \in \operatorname{End}(W)$ to $f_g = u \circ (g^* \otimes 1)$.
		If $g = w \otimes \eta \in W \otimes W^* \cong \operatorname{End}(W)$, then $f_g (\xi \otimes \alpha) = \xi(w) \cdot u(\eta \otimes \alpha)$, for $\xi\otimes \alpha\in W^*\otimes \mathcal{U}_2^*\cong\wedge^2 W \otimes \mathcal U_2^*$.
		We will abuse notation and denote by $\eta$ the dual to the morphism $u_{|\eta \otimes \mathcal U_2^*} : \mathcal U_2^* \to \mathcal U_1^*$, and so we get
		$f_{w\otimes \eta} (\xi \otimes \alpha) = \xi(w) \cdot \alpha \circ \eta : \mathcal U_1 \to \mathcal O_Y$.
		If $\{\eta_1,\eta_2,\eta_3\}$ is some basis of $W^*$, then the construction of $i \otimes \mathcal O_Y(-1)$ in Step 2 can be described by the formula:
		\begin{align*}
			S^2\mathcal{U}_1 \ni a\cdot b \quad \mapsto \, \quad
			& \eta_1 \otimes (\eta_2(a)\wedge \eta_3(b)+\eta_2(b)\wedge \eta_3(a))
			\\+\, & \eta_2 \otimes (\eta_3(a)\wedge \eta_1(b)+\eta_3(b)\wedge \eta_1(a))
			\\+\, & \eta_3 \otimes (\eta_1(a)\wedge \eta_2(b)+\eta_1(b)\wedge \eta_2(a)).
		\end{align*}
		We leave it to the reader to plug in $\mathrm{id}_W$ into $i^*$.
		To construct $\alpha$ as above, we now redefine $\eta_1$, $\eta_2$ to be some elements of $W^*$ such that the span of the images of $\eta_1, \eta_2 : \mathcal U_1 \to \mathcal U_2$ is $3$-dimensional, over some fixed point of $Y$.
		Extend this pair to a basis $\{\eta_1,\eta_2,\eta_3\}$ of $W^*$, and let $\{w_1,w_2,w_3\}$ denote the dual basis.
		By an explicit calculation, one gets that $i^*(\alpha) \neq 0$ for $\alpha = w_3 \otimes \eta_1$.
		
		Now \cref{lexact} and \cref{equation: iso between RHoms} together imply that  $\operatorname{RHom}(Q_1, \mathcal U_1^*) = 0$, and this in turn proves the main claim of this step.
		
		\textbf{Step 5.}
		\textit{$Q_1$ is isomorphic to $Q_1^*(1)$.}
		We need to prove that $Q_1\cong Q_1^*(1)$ to finish the proof of this Proposition and we show this fact by constructing a global exact sequence directly: 
		$$0\rightarrow \mathfrak{sl}(\mathcal{U}_1)\otimes\det(W^*)\rightarrow \wedge^2W\otimes\mathcal{U}_2^*\otimes\det(W^*)\rightarrow\wedge^2W^*\otimes\mathcal{U}_2(1)\otimes\det(W)\rightarrow \mathfrak{sl}(\mathcal{U}_1)(1)\otimes\det(W) \rightarrow 0.$$
		In the above sequence, we twist the terms with $\det(W)$, $\det(W^*)$ respectively, so that the sequence will be $GL(W)$-equivariant.
		The first map comes from \cref{lexact} and the last map comes from \cref{rexact}.
		The middle map, i.e. $W^*\otimes \mathcal{U}_2^*\rightarrow W\otimes \mathcal{U}_2(1)\cong W\otimes \wedge^2\mathcal{U}_2^*$ is given by the adjunction of the composition map $W^*\otimes W^*\rightarrow S^2W^*\rightarrow \mathcal{U}_2^*\rightarrow \mathcal{H}om(\mathcal{U}_2^*,\wedge^2\mathcal{U}_2^*)$. One can easily check that this middle morphism is of constant rank 6 and the above sequence is actually a complex. Then the exactness of the above sequence comes from the fact that both $Q_1$ and $Q_1^*(1)$ are of rank 6 and $\det(Q_1)\cong\det(Q_1^*(1))$.
	\end{proof}
	
	As a corollary, the following collections are also exceptional:
	\begin{equation}
		\label{2 exc}
		\langle \mathcal A, \mathfrak{sl}(\mathcal{U}_1^*)(1), \mathcal A(1),\mathcal A(2)\rangle
		\text{, } \ \
		\langle \mathcal A,  \mathcal A(1),\mathfrak{sl}(\mathcal{U}_1^*)(2),\mathcal A(2)\rangle
		\ \text{ and } \
		\langle \mathcal A,  \mathcal A(1),\mathcal A(2),\mathfrak{sl}(\mathcal{U}_1^*)(3)\rangle;
	\end{equation}
	\begin{equation}
		\langle \mathcal A,  \mathcal{O}_Y(1), \mathfrak{sl}(\mathcal{U}_1^*)(1), \mathcal{U}_2^*(1),\mathcal{U}_1^*(1),\mathcal{U}_2(2),\mathcal A(2)\rangle
		\text{ and }
		\langle \mathcal A,  \mathcal A(1),\mathcal{O}_Y(2),\mathfrak{sl}(\mathcal{U}_1^*)(2),\mathcal{U}_2^*(2),\mathcal{U}_1^*(2),\mathcal{U}_2(3)\rangle.
		\label{3 exc}
	\end{equation}
	
	\begin{Corollary}
		\label{ch eq}
		We have the following relations between Chern characters of some tensor combinations of the universal bundles:
		\begin{align*}
			\mathrm{ch}(\mathfrak{sl}(\mathcal{U}_1^*))&=\mathrm{ch}(\mathfrak{sl}(\mathcal{U}_1^*)(1))+3\mathrm{ch}(\mathcal{U}_2^*)-3\mathrm{ch}(\mathcal{U}_2(1)),\\
			\mathrm{ch}(\mathcal{U}_1^*\otimes \mathcal{U}_2(1))&=-\mathrm{ch}( \mathcal{U}_2)+6\mathrm{ch}(\mathcal{O}_Y)+3\mathrm{ch}(\mathcal{U}_2^*)-9\mathrm{ch}(\mathcal{U}_1^*)+3\mathrm{ch}(\mathfrak{sl}(\mathcal{U}_1^*)(1))+3\mathrm{ch}( \mathcal{O}_Y(1)),\\
			\mathrm{ch}(\mathcal{U}_1^*\otimes \mathcal{U}_2(2))&=-\mathrm{ch}( \mathcal{U}_2(1))+6\mathrm{ch}(\mathcal{O}_Y(1))+3\mathrm{ch}(\mathcal{U}_2^*(1))-9\mathrm{ch}(\mathcal{U}_1^*(1))+3\mathrm{ch}(\mathfrak{sl}(\mathcal{U}_1^*)(2))+3\mathrm{ch}( \mathcal{O}_Y(2)).
		\end{align*}
	\end{Corollary}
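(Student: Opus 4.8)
The plan is to derive all three relations from material already in place: the short exact sequences \eqref{lexact} and \eqref{rexact} from the proof of \cref{proposition: mutating slU1}, the isomorphism $Q_1 \cong Q_1^*(1)$ proved there, and the Chern-character identity for $\mathcal{U}_1^* \otimes \mathcal{U}_2(1)$ recorded in \cref{chern ch}. Throughout I would use that $\mathrm{ch}$ is additive on short exact sequences and multiplicative under tensor products, and that $\mathfrak{sl}(\mathcal{U}_1^*) \cong \mathfrak{sl}(\mathcal{U}_1)$: indeed $\mathcal{U}_1^* \cong \mathcal{U}_1(1)$ since $\mathcal{U}_1$ has rank $2$ and $\det\mathcal{U}_1^* = \mathcal{O}_Y(1)$, while formation of $\mathfrak{sl}$ is insensitive to line-bundle twists; the same identification commutes with twisting.

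For the first relation, I would apply $\mathrm{ch}$ to \eqref{lexact} and \eqref{rexact}; since $\wedge^2 W$ and $\wedge^2 W^*$ are $3$-dimensional, these give $\mathrm{ch}(Q_1) = 3\,\mathrm{ch}(\mathcal{U}_2^*) - \mathrm{ch}(\mathfrak{sl}(\mathcal{U}_1))$ and $\mathrm{ch}(Q_1^*(1)) = 3\,\mathrm{ch}(\mathcal{U}_2(1)) - \mathrm{ch}(\mathfrak{sl}(\mathcal{U}_1)(1))$ respectively. Equating the right-hand sides by means of $Q_1 \cong Q_1^*(1)$ and rearranging produces the first relation (after rewriting $\mathfrak{sl}(\mathcal{U}_1)$ as $\mathfrak{sl}(\mathcal{U}_1^*)$).

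For the second relation, I would start from the identity $\mathrm{ch}(\mathcal{U}_1^*\otimes \mathcal{U}_2(1)) = -\mathrm{ch}(\mathcal{U}_2)+3\mathrm{ch}(\mathfrak{sl}(\mathcal{U}_1^*))+6\mathrm{ch}(\mathcal{O}_Y)-6\mathrm{ch}(\mathcal{U}_2^*)-9\mathrm{ch}(\mathcal{U}_1^*)+9\mathrm{ch}(\mathcal{U}_2(1))+3\mathrm{ch}(\mathcal{O}_Y(1))$ of \cref{chern ch} and substitute $3\,\mathrm{ch}(\mathfrak{sl}(\mathcal{U}_1^*)) = 3\,\mathrm{ch}(\mathfrak{sl}(\mathcal{U}_1^*)(1)) + 9\,\mathrm{ch}(\mathcal{U}_2^*) - 9\,\mathrm{ch}(\mathcal{U}_2(1))$, which is three times the first relation. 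Collecting the $\mathrm{ch}(\mathcal{U}_2^*)$- and $\mathrm{ch}(\mathcal{U}_2(1))$-terms (the latter cancelling) gives the asserted expression. The third relation then follows by multiplying the second through by $\mathrm{ch}(\mathcal{O}_Y(1))$, i.e.\ by tensoring the corresponding virtual sheaf with $\mathcal{O}_Y(1)$ and using that $\mathrm{ch}$ is a ring homomorphism.

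The whole argument is bookkeeping with relations in $K_0$, so I do not expect a genuine obstacle; the only points needing care are invoking $Q_1 \cong Q_1^*(1)$ and the identification $\mathfrak{sl}(\mathcal{U}_1^*) \cong \mathfrak{sl}(\mathcal{U}_1)$ correctly. As an independent check (and an alternative route bypassing the exact sequences), all three identities can be verified term by term against the explicit data of \cref{table: Chern characters of bundles}, using $\mathrm{ch}(\mathfrak{sl}(\mathcal{U}_1^*)(k)) = \mathrm{ch}(\mathfrak{sl}(\mathcal{U}_1^*))\cdot\mathrm{ch}(\mathcal{O}_Y(k))$.
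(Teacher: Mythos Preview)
Your proof is correct and follows essentially the same route as the paper: the first identity is extracted from the exact sequences \eqref{lexact}, \eqref{rexact} together with $Q_1\cong Q_1^*(1)$ (the paper cites the resulting four-term exact sequence from Step~5 directly, which is the splice of these), the second is the substitution of that identity into the relation from \cref{chern ch}, and the third is the twist by $\mathcal{O}_Y(1)$.
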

	
	\begin{proof}
		The first equality comes from the exact sequence in Step 5 in the proof of \cref{proposition: mutating slU1}.
		The second equality is a combination of the first equality with \cref{chern ch}.
		The last equality is obtained by multiplying the second equality by $\mathrm{ch}(\mathcal{O}_Y(1))$.
	\end{proof}
	
	We want to prove that the exceptional collections \eqref{2 exc} and \eqref{3 exc} are full.
	It turns out that the core to proving the fullness is to prove that $\mathcal{U}_1^*\otimes \mathcal{U}_2^*$ and $\mathcal{U}_1^*\otimes \mathcal{U}_2(2)$ lie in the subcategory of $\mathrm{D}^b(Y)$ generated by the above exceptional objects. Notice that $T(\mathcal{U}_1^*\otimes \mathcal{U}_2^*)=\mathcal{U}_1^*\otimes \mathcal{U}_2(2)$. In the following, we will first prove that the following two collections are exceptional:
	\begin{equation}
		\langle \mathcal{O}_Y, \mathcal{U}_2^*,\mathcal{U}_1^*, \mathcal{O}_Y(1),\mathcal{U}_2^*(1),\mathcal{U}_1^*(1),\mathcal{U}_2(2),\mathfrak{sl}(\mathcal{U}_1^*)(2),\mathcal{O}_Y(2),\mathcal{U}_1^*\otimes \mathcal{U}_2(2),\mathcal{U}_2^*(2),\mathcal{U}_1^*(2),\mathcal{U}_2(3)\rangle,\label{4 exc}
	\end{equation}
	\begin{equation}
		\langle \mathcal{O}_Y, \mathcal{U}_2^*,\mathcal{U}_1^*,\mathcal{U}_2(1), \mathcal{U}_1^*\otimes \mathcal{U}_2^*, \mathcal{O}_Y(1),\mathfrak{sl}(\mathcal{U}_1^*)(1), \mathcal{U}_2^*(1),\mathcal{U}_1^*(1),\mathcal{U}_2(2),\mathcal{O}_Y(2),\mathcal{U}_1^*(2),\mathcal{U}_2(3)\rangle. \label{5 exc}
	\end{equation}
	The sequence \eqref{4 exc} is obtained from \eqref{1 exc} by deleting $\mathcal{U}_2(1)$ and adding $\mathcal{U}_1^*\otimes \mathcal{U}_2(2)$.
	The sequence \eqref{5 exc} is obtained from \eqref{1 exc} by deleting $\mathcal{U}_2^*(2)$ and adding $\mathcal{U}_1^*\otimes \mathcal{U}_2^*$.
	
	\begin{proposition}
		The two collections \eqref{4 exc} and \eqref{5 exc} are exceptional. \label{symmetry}
	\end{proposition}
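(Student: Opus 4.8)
The plan is to reduce both claims to cohomology vanishings already proved in \cref{subsection: exceptional collection}, one Hirzebruch--Riemann--Roch computation, and the duality functor that names the Proposition. Since a subcollection of an exceptional collection is again exceptional (with the inherited order), I would first note that \eqref{4 exc} with $\mathcal U_1^*\otimes\mathcal U_2(2)$ removed is exactly the third collection of \eqref{2 exc} with $\mathcal U_2(1)$ removed, and \eqref{5 exc} with $\mathcal U_1^*\otimes\mathcal U_2^*$ removed is the first collection of \eqref{2 exc} with $\mathcal U_2^*(2)$ removed; both are therefore exceptional. Hence only three things remain: exceptionality of the two new objects, and the semiorthogonality of each new object against the objects flanking it.

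\textbf{Exceptionality of the new objects.} Using $\mathcal U_i\otimes\mathcal U_i^*\cong\mathcal O_Y\oplus\mathfrak{sl}(\mathcal U_i)$, one gets for $\mathcal F:=\mathcal U_1^*\otimes\mathcal U_2(2)$ (and identically for $\mathcal U_1^*\otimes\mathcal U_2^*$, whose endomorphism complex is the same)
\[
\rhom_Y(\mathcal F,\mathcal F)\cong\mathrm H^\bullet(\mathcal O_Y)\oplus\mathrm H^\bullet(\mathfrak{sl}(\mathcal U_1))\oplus\mathrm H^\bullet(\mathfrak{sl}(\mathcal U_2))\oplus\mathrm H^\bullet(\mathfrak{sl}(\mathcal U_1)\otimes\mathfrak{sl}(\mathcal U_2)).
\]
The first summand is $\mathds C[0]$; the second vanishes by \cref{muorthogonal}; the third vanishes because $\mathcal U_2(1)$ is exceptional in \eqref{1 exc}, so $\mathrm H^\bullet(\mathcal U_2^*\otimes\mathcal U_2)=\mathds C[0]$. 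For the last summand, \cref{teleman} together with \cref{table: weights of universal representations} kills higher cohomology, and Hirzebruch--Riemann--Roch — using \cref{table: Chern characters of bundles}, \cref{chern ch}, and multiplicativity of the Chern character — gives $\chi(\mathfrak{sl}(\mathcal U_1)\otimes\mathfrak{sl}(\mathcal U_2))=0$. Thus both new objects are exceptional.

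\textbf{The symmetry.} Next I would introduce $T:=(-)^\vee\otimes\mathcal O_Y(3)$ (derived dual, twisted), a contravariant autoequivalence of $\mathrm D^b(Y)$ with $T^2=\mathrm{id}$. Using $\mathcal U_1\cong\mathcal U_1^*(-1)$, $\mathcal U_2^*\cong\wedge^2\mathcal U_2(1)$, $\mathfrak{sl}(\mathcal U_1)^\vee\cong\mathfrak{sl}(\mathcal U_1)\cong\mathfrak{sl}(\mathcal U_1^*)$, and the identity $T(\mathcal U_1^*\otimes\mathcal U_2^*)=\mathcal U_1^*\otimes\mathcal U_2(2)$, a term-by-term check shows that $T$ applied to \eqref{4 exc}, read in reverse order, is \eqref{5 exc} with its leading $\mathcal O_Y$ deleted and a copy of $\mathcal O_Y(3)$ appended at the end. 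As $T$ sends exceptional collections to exceptional collections (reversing the order), proving \eqref{4 exc} exceptional immediately gives that \eqref{5 exc} minus its leading $\mathcal O_Y$ is exceptional; to conclude \eqref{5 exc} it then suffices to check $\rhom_Y(E,\mathcal O_Y)=\mathrm H^\bullet(Y,E^\vee)=0$ for each remaining object $E$, which holds because $\mathcal O_Y$ precedes $E$ in one of \eqref{1 exc}, \eqref{2 exc} — except for $E=\mathcal U_1^*\otimes\mathcal U_2^*$, where $\pi^*(\mathcal U_1\otimes\mathcal U_2)=\mathcal E_1\otimes\mathcal E_2(-2\tau)$ has no cohomology by \cref{vanishE1E2}. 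So it is enough to treat \eqref{4 exc} directly (symmetrically, one could instead do \eqref{5 exc} and deduce \eqref{4 exc}).

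\textbf{Proving \eqref{4 exc} and the main obstacle.} For \eqref{4 exc} I would run through the twelve $\rhom$'s between $\mathcal F=\mathcal U_1^*\otimes\mathcal U_2(2)$ and the objects before and after it. Using $\mathcal F^\vee=\mathcal U_1\otimes\mathcal U_2^*(-2)$ and the splittings $\mathcal U_i\otimes\mathcal U_i^*\cong\mathcal O_Y\oplus\mathfrak{sl}(\mathcal U_i)$, $\mathcal U_2^*\otimes\mathcal U_2^*\cong S^2\mathcal U_2^*\oplus\mathcal U_2(1)$, each becomes $\mathrm H^\bullet$ of a twist of a universal-bundle tensor. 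Pulling back along the fully faithful $\pi^*$ and applying the projection formula (with $\mathrm R\pi_*\mathcal O_X(k\tau)=S^k\mathcal U_1^*$ for $k\ge 0$, $\mathrm R\pi_*\mathcal O_X(-\tau)=0$, $\mathrm R\pi_*\mathcal O_X(-2\tau)=\mathcal O_Y(-1)[-1]$, and $\mathcal O_X(3\tau-E)=\pi^*\mathcal O_Y(1)$ from \cref{corollary: determinants of E_1 and E_2 and also tau=rel.O(1)}) then turns each into the cohomology of a twist of $\mathcal E_1\otimes\mathcal E_2$, $\mathcal E_1\otimes\mathcal E_2^*$, $\mathcal E_2\otimes\mathcal E_2^*$, $\mathcal E_2^*\otimes\mathcal E_2^*$, $\mathfrak{sl}(\mathcal E_1)\otimes\mathcal E_2$, $\mathfrak{sl}(\mathcal E_1)\otimes\mathcal E_2^*$, or of a ``triple'' bundle obtained from these by tensoring the structural sequences \cref{E_1 extension}, \cref{E2}, \cref{2dual}; all the needed vanishings are among those recorded in \cref{subsection: exceptional collection}. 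The genuinely delicate point — the main obstacle — is that at least one of these reductions lands on a sheaf on $X$ with \emph{nonzero} higher cohomology, namely a twist of $\mathcal E_2\otimes\mathcal E_2^*$ with $\mathrm H^\bullet=\mathds C[-3]$ as computed in \cref{E2E2^*}; to obtain the required vanishing on $Y$ one must argue that a connecting homomorphism in the relevant long exact sequence is an isomorphism, exactly in the spirit of the $\mathcal E_1(\tau-E)$ computation in \cref{lemma: vanishE1}. Keeping track of which structural sequence to tensor with, and pinning down this single cancellation, is where the real work lies; the rest is bookkeeping.
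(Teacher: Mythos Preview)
Your overall strategy --- reduce via the involution $T=(-)^\vee\otimes\mathcal O_Y(3)$ to proving \eqref{4 exc}, then check the twelve $\rhom$'s involving $\mathcal F=\mathcal U_1^*\otimes\mathcal U_2(2)$ --- is exactly what the paper does, and your bookkeeping for $T$ and for the extra condition $\rhom(\mathcal U_1^*\otimes\mathcal U_2^*,\mathcal O_Y)=0$ is correct. The paper differs from you mainly in efficiency: for nine of the twelve vanishings (those with $\mathcal F$ against $\mathcal U_2(3),\mathcal U_1^*(2),\mathcal U_2^*(2)$ on the right and $\mathcal O_Y(2),\mathfrak{sl}(\mathcal U_1^*)(2),\mathcal U_2(2),\mathcal U_1^*(1),\mathcal U_2^*(1),\mathcal O_Y(1)$ on the left) the paper simply invokes Teleman Quantization to kill higher cohomology and then checks $\chi=0$ by Hirzebruch--Riemann--Roch, avoiding any pullback to $X$.

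Your description of the ``main obstacle'', however, is misplaced. The sheaf $\mathcal E_2\otimes\mathcal E_2^*(-5\tau+2E)$ with $\mathrm H^\bullet=\mathds C[-3]$ corresponds, after tensoring into the $\mathcal E_1$-extension, to $\rhom(\mathcal F,\mathcal U_2(1))$ --- and this $\rhom$ is \emph{not supposed to vanish}: it is precisely $\mathds C[-3]$ (this is \cref{1 unique morphism}), which is why $\mathcal U_2(1)$ was deleted from \eqref{4 exc} in the first place. There is no connecting-homomorphism cancellation to perform anywhere in this proof. The three vanishings not covered by Teleman are $\rhom(\mathcal F,\mathcal U_1^*)$, $\rhom(\mathcal F,\mathcal O_Y)$, $\rhom(\mathcal F,\mathcal U_2^*)$; the first two the paper reads off from the known collections \eqref{1 exc},\eqref{2 exc} (since $\mathfrak{sl}(\mathcal U_1^*)(2),\mathcal O_Y(2),\mathcal U_1^*(2)$ are all left-orthogonal to $\mathcal U_2^*$), and only the last requires the $X$-pullback. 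That one computes $\pi^*(\mathcal U_1\otimes\mathcal U_2^*\otimes\mathcal U_2^*(-2))=\mathcal E_1\otimes\mathcal E_2^*\otimes\mathcal E_2^*(-5\tau+2E)$, and the $\mathcal E_1$-extension reduces it to $\mathrm H^\bullet(\mathcal E_2^*\otimes\mathcal E_2^*(-5\tau+2E))$ and $\mathrm H^\bullet(\mathcal E_2^*\otimes\mathcal E_2^*(-6\tau+3E))$, both of which are genuinely zero by \cref{HE2^*E2^*} --- no cancellation needed.
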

	
	\begin{proof}
		We define a symmetry functor $T$ by the formula $T(\mathcal{G}^.)=(\mathcal{G}^.)^*\otimes \mathcal{O}_Y(3)$.
		Notice that $T(\mathcal{U}_1^*(1))=\mathcal{U}_1(2)\cong \mathcal{U}_1^*(1)$.
		Therefore the symmetry functor $T(\mathcal{G}^.)=(\mathcal{G}^.)^*\otimes \mathcal{O}_Y(3)$ will map the subsequence of \eqref{4 exc}$$\langle \mathcal{U}_2^*,\mathcal{U}_1^*, \mathcal{O}_Y(1),\mathcal{U}_2^*(1),\mathcal{U}_1^*(1),\mathcal{U}_2(2),\mathfrak{sl}(\mathcal{U}_1^*)(2),\mathcal{O}_Y(2),\mathcal{U}_1^*\otimes \mathcal{U}_2(2),\mathcal{U}_2^*(2),\mathcal{U}_1^*(2),\mathcal{U}_2(3)\rangle$$ to the subsequence of \eqref{5 exc} (to be read from right to left instead) $$\langle \mathcal{U}_2^*,\mathcal{U}_1^*,\mathcal{U}_2(1), \mathcal{U}_1^*\otimes \mathcal{U}_2^*, \mathcal{O}_Y(1),\mathfrak{sl}(\mathcal{U}_1^*)(1), \mathcal{U}_2^*(1),\mathcal{U}_1^*(1),\mathcal{U}_2(2),\mathcal{O}_Y(2),\mathcal{U}_1^*(2),\mathcal{U}_2(3)\rangle.$$
		As a result, we only need to prove that \eqref{4 exc} is exceptional and $\mathrm{RHom}^{\bullet}(\mathcal{U}_1^*\otimes \mathcal{U}_2^*, \mathcal{O}_Y)=0$, because we know that \eqref{1 exc} is exceptional and that \eqref{5 exc} is a minor modification of \eqref{1 exc}.
		
		The object $\mathcal{U}_1^*\otimes \mathcal{U}_2(2)$ is exceptional because $\chi(\mathcal{U}_1^*\otimes\mathcal{U}_1\otimes \mathcal{U}_2^*\otimes \mathcal{U}_2)=1$ by Hirzebruch-Riemann-Roch Theorem (use \cref{appendix: Sage code} for calculation), and we can check that the higher cohomology of $\mathcal{U}_1^*\otimes\mathcal{U}_1\otimes \mathcal{U}_2^*\otimes \mathcal{U}_2$ vanish using Teleman Quantization.
		
		The vanishings of $\mathrm{RHom}^{\bullet}(\mathcal{U}_1^*\otimes \mathcal{U}_2^*, \mathcal{O}_Y)$ and \begin{alignat*}{3}
			\mathrm{RHom}^{\bullet}(\mathcal{U}_2(3), \mathcal{U}_1^*\otimes \mathcal{U}_2(2)), \quad\quad 
			&&\mathrm{RHom}^{\bullet}(\mathcal{U}_1^*(2), \mathcal{U}_1^*\otimes \mathcal{U}_2(2)),  \quad\quad
			&&\mathrm{RHom}^{\bullet}(\mathcal{U}_2^*(2), \mathcal{U}_1^*\otimes \mathcal{U}_2(2)),\\
			\mathrm{RHom}^{\bullet}(\mathcal{U}_1^*\otimes \mathcal{U}_2(2),\mathcal{O}_Y(2)), \quad\quad 
			&&\mathrm{RHom}^{\bullet}(\mathcal{U}_1^*\otimes \mathcal{U}_2(2),\mathfrak{sl}(\mathcal{U}_1^*)(2)),\quad\quad 
			&&\mathrm{RHom}^{\bullet}(\mathcal{U}_1^*\otimes \mathcal{U}_2(2),\mathcal{U}_2(2)),\\
			\mathrm{RHom}^{\bullet}(\mathcal{U}_1^*\otimes \mathcal{U}_2(2),\mathcal{U}_1^*(1)),  \quad\quad
			&&\mathrm{RHom}^{\bullet}(\mathcal{U}_1^*\otimes \mathcal{U}_2(2),\mathcal{U}_2^*(1)), \quad\quad 
			&&\mathrm{RHom}^{\bullet}(\mathcal{U}_1^*\otimes \mathcal{U}_2(2),\mathcal{O}_Y(1))
		\end{alignat*}
		all follow the same pattern. One can show that the Euler characteristics of these complexes are 0 using the Hirzebruch-Riemann-Roch Theorem (use \cref{appendix: Sage code} for calculation), and all the higher cohomology groups vanish by Teleman Quantization and easy weight calculations.
		
		What remains to verify is $\mathrm{RHom}^{\bullet}(\mathcal{U}_1^*\otimes \mathcal{U}_2(2),\mathcal{U}_1^*)=\mathrm{RHom}^{\bullet}(\mathcal{U}_1^*\otimes \mathcal{U}_2(2),\mathcal{O}_Y)=\mathrm{RHom}^{\bullet}(\mathcal{U}_1^*\otimes \mathcal{U}_2(2),\mathcal{U}_2^*)=0$.
		The first and the second vanishing come from the fact that $\mathfrak{sl}(\mathcal{U}_1^*)(2)$, $\mathcal{O}_Y(2)$, $\mathcal{U}_1^*(2)$ are all left orthogonal to $\mathcal{U}_2^*$, which can be read from the known exceptional sequences.
		The third vanishing is equivalent to the vanishing of $\mathrm{H}^{\bullet}(X,\mathcal{E}_1\otimes \mathcal{E}_2^*\otimes \mathcal{E}_2^*(-5\tau+2E))$ via the fully faithful pullback $\mathds{L}\pi^*: \mathrm{D}^b(Y)\rightarrow \mathrm{D}^b(\mathds{P}_Y(\mathcal{U}_1))=\mathrm{D}^b(X)$.
		By \cref{E_1 extension}, the object $\mathcal{E}_1$ is an extension of $\mathcal{O}_X(-\tau+E)$ by $\mathcal{O}_X$.
		Thus it suffices to prove that $\mathrm{H}^{\bullet}(X, \mathcal{E}_2^*\otimes \mathcal{E}_2^*(-5\tau+2E))=\mathrm{H}^{\bullet}(X, \mathcal{E}_2^*\otimes \mathcal{E}_2^*(-6\tau+3E))=0$, which follows from \cref{HE2^*E2^*}.
	\end{proof}
	
	\begin{lemma}
		We have
		$\mathrm{RHom}^{\bullet}(\mathcal{U}_1^*\otimes \mathcal{U}_2(2), \mathcal{U}_2(1))=\mathds{C}[-3]$. \label{1 unique morphism}
	\end{lemma}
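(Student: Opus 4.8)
The plan is to reduce everything to sheaf cohomology, pull back along the $\mathds P^1$-bundle $\pi : X \to Y$, and then split off the answer from the extension sequence for $\mathcal E_1$. First I would rewrite
\[
\mathrm{RHom}^\bullet(\mathcal U_1^* \otimes \mathcal U_2(2), \mathcal U_2(1)) \;=\; \mathrm H^\bullet\big(Y,\ \mathcal U_1 \otimes \mathcal U_2^* \otimes \mathcal U_2(-1)\big),
\]
using that $\mathcal U_1^*\otimes\mathcal U_2(2)$ is locally free. Since $\mathds L\pi^* : \mathrm D^b(Y) \to \mathrm D^b(X)$ is fully faithful, this equals $\mathrm H^\bullet\big(X,\ \pi^*(\mathcal U_1 \otimes \mathcal U_2^* \otimes \mathcal U_2(-1))\big)$. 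Using $\pi^*\mathcal U_1 = \mathcal E_1(-\tau)$, $\pi^*\mathcal U_2 = \mathcal E_2(-\tau)$, $\pi^*\mathcal U_2^* = \mathcal E_2^*(\tau)$ and $\pi^*\mathcal O_Y(-1) = \mathcal O_X(-3\tau + E)$ (all of which follow from the definitions $\mathcal E_i = \pi^*\mathcal U_i \otimes \mathcal O(\tau)$ together with \cref{corollary: determinants of E_1 and E_2 and also tau=rel.O(1)}), the twists add up to
\[
\pi^*\big(\mathcal U_1 \otimes \mathcal U_2^* \otimes \mathcal U_2(-1)\big) \;\cong\; \mathcal E_1 \otimes \mathcal E_2^* \otimes \mathcal E_2(-4\tau + E).
\]

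Next I would tensor the short exact sequence $0 \to \mathcal O_X \to \mathcal E_1 \to \mathcal O_X(-\tau + E) \to 0$ of \cref{E_1 extension} with the locally free sheaf $\mathcal E_2^* \otimes \mathcal E_2(-4\tau + E)$, obtaining
\[
0 \to \mathcal E_2^* \otimes \mathcal E_2(-4\tau + E) \to \mathcal E_1 \otimes \mathcal E_2^* \otimes \mathcal E_2(-4\tau + E) \to \mathcal E_2^* \otimes \mathcal E_2(-5\tau + 2E) \to 0.
\]
The left-hand term has vanishing cohomology by \cref{E2E2^*} (the case $a = -4$, $b = 1$, which lies in the range $-7 \le a \le -1$), and the right-hand term has $\mathrm H^\bullet \cong \mathds C[-3]$ by the ``moreover'' part of \cref{E2E2^*}. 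The long exact sequence in cohomology then forces $\mathrm H^\bullet\big(X,\ \mathcal E_1 \otimes \mathcal E_2^* \otimes \mathcal E_2(-4\tau + E)\big) \cong \mathds C[-3]$, which is exactly the claimed identity.

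I do not expect a genuine obstacle here: the real work has already been carried out in \cref{E2E2^*}, and what remains is bookkeeping. The only point requiring care is the twist computation on $X$ — getting the exponent of $\tau$ (namely $-4$) and of $E$ (namely $1$) right, so that the correct cases of \cref{E2E2^*} are invoked — and noting that tensoring the $\mathcal E_1$-extension with a locally free sheaf stays exact, so no higher $\mathrm{Tor}$ terms appear.
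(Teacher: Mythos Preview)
Your proposal is correct and follows essentially the same approach as the paper: pull back to $X$, identify the bundle as $\mathcal E_1 \otimes \mathcal E_2 \otimes \mathcal E_2^*(-4\tau+E)$, tensor the extension of \cref{E_1 extension} with $\mathcal E_2^*\otimes\mathcal E_2(-4\tau+E)$, and invoke the two cases of \cref{E2E2^*}. The twist bookkeeping is right, and the paper's proof is nothing more than a terse version of exactly what you wrote.
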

	
	\begin{proof}
		The Lemma is equivalent to saying $\mathrm{H}^{\bullet}(X,\mathcal{E}_1\otimes\mathcal{E}_2\otimes\mathcal{E}_2^*(-4\tau+E))=\mathds{C}[-3]$ via the fully faithfully pullback $\mathds{L}\pi^*: \mathrm{D}^b(Y)\rightarrow \mathrm{D}^b(\mathds{P}_Y(\mathcal{U}_1))=\mathrm{D}^b(X)$.
		By \cref{E2E2^*}, we know $\mathrm{H}^{\bullet}(X,\mathcal{E}_2^*\otimes \mathcal{E}_2(-5\tau+2E))=\mathds{C}[-3]$ and $\mathrm{H}^{\bullet}(X,\mathcal{E}_2^*\otimes \mathcal{E}_2(-4\tau+E))=0$. We conclude by  \cref{E_1 extension}.
	\end{proof}
	
	Based on the above \cref{1 unique morphism}, as well as \cref{ch eq} and \cref{symmetry}, it is reasonable to guess that if we left mutate $\mathcal{U}_1^*\otimes\mathcal{U}_2(2)$ across $\langle \mathcal{O}_Y(1),\mathcal{U}_2^*(1),\mathcal{U}_1^*(1),\mathcal{U}_2(2),\mathfrak{sl}(\mathcal{U}_1^*)(2),\mathcal{O}_Y(2)\rangle$, then we should get $\mathcal{U}_2(1)[3]$. We will prove this fact in a slightly indirect way.
	
	Recall that $\mathcal{U}_2$ and $\mathcal{U}_1$ are pullbacks of the universal subbundles via the embeddings $Y\subset \mathrm{Gr}(3,S^2W)$ and $Y\subset \mathrm{Gr}(2,S^{2,1}W)$ respectively. We let $L_6[-3]$ denote $\mathcal{U}_2(1)$ and let $L_5[-2]$ denote the twsiting of the pullback of the universal quotient bundle over $\mathrm{Gr}(3,S^2W)$ by $\mathcal{O}_Y(1)$. We have the standard short exact sequence:
	\begin{equation}
		0\rightarrow \mathcal{U}_2(1)=L_6[-3]\rightarrow S^2W\otimes\mathcal{O}_Y(1)\rightarrow L_5[-2]\rightarrow 0 \label{mu1}.
	\end{equation}
	
	\begin{lemma}
		As $SL(W)$-modules, $\mathrm{H}^{\bullet}(\mathcal{U}_2\otimes \mathcal{U}_1^*)\cong W^*[0]$,
		$\mathrm{H}^{\bullet}(\mathcal{U}_2^*)\cong S^2W^*[0]$, and
		$\mathrm{H}^{\bullet}(\mathcal{U}_1^*)\cong S^{2,1}W^*[0]$.
		Furthermore, $\mathrm{H}^{\bullet}(\mathcal{U}_2^*)\otimes \mathrm{H}^{\bullet}(\mathcal{U}_2^*)$ is injectively mapped into $ \mathrm{H}^{\bullet}(\mathcal{U}_2^*\otimes\mathcal{U}_2^*)= \mathds{C}^{39}[0]$ via the natural map; and $\mathrm{H}^{\bullet}(\mathcal{U}_2^*\otimes\mathcal{U}_1^*)\cong\mathrm{H}^{\bullet}(\mathcal{U}_2^*)\otimes \mathrm{H}^{\bullet}(\mathcal{U}_1^*)$ via the natural tensor product map.
		\label{global section}
	\end{lemma}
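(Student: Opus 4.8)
The plan is to handle all five statements by the same two-step scheme. \emph{Step A (higher vanishing).} Apply Teleman Quantization (\cref{teleman}) together with \cref{table: weights of universal representations}: the $\lambda$-weights of $\tilde{\mathcal U}_1^*$ and $\tilde{\mathcal U}_2^*$ are the negatives of the nonnegative entries of that table, hence $\le 0 < \eta_\lambda$, and therefore the same holds for $\mathcal U_2^*\otimes\mathcal U_2^*$ and $\mathcal U_2^*\otimes\mathcal U_1^*$; for $\mathcal U_2\otimes\mathcal U_1^*$ one checks stratum by stratum that a sum of a weight of $\tilde{\mathcal U}_2$ and a weight of $\tilde{\mathcal U}_1^*$ stays below $\eta_\lambda$ (the largest such weight that occurs is $5$, on the first stratum, where $\eta_\lambda=15$). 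Thus all the bundles in the statement have no higher cohomology and $\mathrm H^\bullet=\mathrm H^0[0]$. \emph{Step B (identifying $\mathrm H^0$).} For each bundle exhibit a natural $SL(W)$-equivariant map to or from $\mathrm H^0$ and prove it is an isomorphism (or injection) by a dimension count, the dimensions coming from Hirzebruch--Riemann--Roch (\cref{table: Chern characters of bundles}, \cref{chern ch}, \cref{appendix: Sage code}) or from a direct computation.

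For $\mathcal U_1^*$ the cleanest route is via $X$: since $\mathcal O_X(\tau)=\mathcal O_{\mathds P_Y(\mathcal U_1)}(1)$ by \cref{corollary: determinants of E_1 and E_2 and also tau=rel.O(1)}, we have $\pi_*\mathcal O_X(\tau)=\mathcal U_1^*$, hence $\mathrm H^0(Y,\mathcal U_1^*)=\mathrm H^0(X,\mathcal O_X(\tau))=\mathrm H^0(\mathds P^7,\mathcal O(1))=\mathfrak{sl}(W)$, which as an $SL(W)$-module is $S^{2,1}W\cong S^{2,1}W^*$. For $\mathcal U_2^*$: dualizing the tautological inclusion $\mathcal U_2\hookrightarrow S^2W\otimes\mathcal O_Y$ of \cref{construction: Y into Gr(3 S^2 W)} gives a surjection $S^2W^*\otimes\mathcal O_Y\twoheadrightarrow\mathcal U_2^*$ and hence an $SL(W)$-map $S^2W^*\to\mathrm H^0(\mathcal U_2^*)$; it is injective because $\sum_{[r]\in Y}\mathcal U_2|_{[r]}$ is a nonzero $SL(W)$-submodule of the irreducible module $S^2W$, hence all of $S^2W$, so no nonzero linear form on $S^2W$ vanishes on all of $\mathcal U_2$; since $\chi(\mathcal U_2^*)=6=\dim S^2W^*$ it is an isomorphism. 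For $\mathcal U_2\otimes\mathcal U_1^*=\sheafhom(\mathcal U_1,\mathcal U_2)$: the universal representation $\rho\colon\mathcal U_1\to\mathcal U_2\otimes W$ gives $W^*\to\mathrm H^0(\mathcal U_2\otimes\mathcal U_1^*)$, $\xi\mapsto(1\otimes\xi)\circ\rho$, which is nonzero ($\rho\ne 0$, and the contractions $1\otimes\xi$ have no common kernel), hence injective as $W^*$ is irreducible, hence an isomorphism because $\chi(\mathcal U_2\otimes\mathcal U_1^*)=3$.

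For the two tensor-square statements, Riemann--Roch gives $\mathrm H^0(\mathcal U_2^*\otimes\mathcal U_2^*)=\mathds C^{39}$ and $\mathrm H^0(\mathcal U_2^*\otimes\mathcal U_1^*)=\mathds C^{48}=\dim(S^2W^*\otimes S^{2,1}W^*)$. Tensoring the surjection $S^2W^*\otimes\mathcal O_Y\twoheadrightarrow\mathcal U_2^*$ with $\mathcal U_2^*$, resp.\ $\mathcal U_1^*$, and taking global sections (legitimate since $\mathrm H^1(\mathcal U_2^*)=\mathrm H^1(\mathcal U_1^*)=0$ by Step A) realises the natural multiplication maps $S^2W^*\otimes S^2W^*\to\mathrm H^0(\mathcal U_2^*\otimes\mathcal U_2^*)$ and $S^2W^*\otimes S^{2,1}W^*=\mathrm H^0(\mathcal U_2^*)\otimes\mathrm H^0(\mathcal U_1^*)\to\mathrm H^0(\mathcal U_2^*\otimes\mathcal U_1^*)$, and identifies their kernels with $\mathrm H^0(N\otimes\mathcal U_2^*)=\Hom(\mathcal U_2,N)$ and $\mathrm H^0(N\otimes\mathcal U_1^*)=\Hom(\mathcal U_1,N)$ respectively, where $N:=\ker(S^2W^*\otimes\mathcal O_Y\to\mathcal U_2^*)$ (so $N$ is the dual of the pullback of the tautological quotient bundle of $\mathrm{Gr}(3,S^2W)$). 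Thus both injectivity claims reduce to the vanishing $\Hom(\mathcal U_2,N)=\Hom(\mathcal U_1,N)=0$; and for $\mathcal U_2^*\otimes\mathcal U_1^*$ this will upgrade automatically to the asserted isomorphism, since $\chi(N\otimes\mathcal U_1^*)=48-48=0$ forces $\mathrm H^1(N\otimes\mathcal U_1^*)=0$ once $\mathrm H^0$ and $\mathrm H^{\ge 2}$ of it vanish. Controlling the cohomology of these $N$-twists is the step I expect to be the main obstacle, precisely because $N$ is not one of the quiver-universal bundles; one can either identify the $GL(\underline d)$-equivariant bundle on $R_{(2,3)}$ descending to $N\otimes\mathcal U_i^*$ and invoke \cref{teleman} once more, or, more concretely, fix a highest-weight vector in each irreducible summand of $S^2W^*\otimes S^2W^*$ and of $S^2W^*\otimes S^{2,1}W^*$ and check that its image is nonzero by evaluating at the explicit orbit representatives of \cref{subsection: geometric constructions}, where $\mathcal U_1$, $\mathcal U_2$ and $\rho$ are given by the displayed matrices.
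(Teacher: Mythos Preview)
Your treatment of the first three claims is essentially the paper's argument (Teleman for higher vanishing, then a nonzero $SL(W)$-map from an irreducible module together with a Riemann--Roch dimension count). Your route for $\mathcal U_1^*$ via $\pi_*\mathcal O_X(\tau)=\mathcal U_1^*$ and $\mathrm H^0(X,\mathcal O_X(\tau))=\mathrm H^0(\mathds P^7,\mathcal O(1))=\mathfrak{sl}(W)^*$ is a pleasant variant; the paper instead uses the tautological inclusion $\mathcal U_1\hookrightarrow S^{2,1}W\otimes\mathcal O_Y$ from \cref{construction: Y into Gr(2 S^21 W)}.

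Where your proposal differs from the paper is the two tensor-product statements, and here your argument is left incomplete. You correctly reduce injectivity of the multiplication maps to the vanishing $\Hom(\mathcal U_2,N)=\Hom(\mathcal U_1,N)=0$, but you do not actually prove these, only sketch two possible attacks. The first (Teleman for $N\otimes\mathcal U_i^*$) is not immediate: $N$ is not one of the standard quiver-universal bundles, and writing down the $GL(\underline d)$-equivariant lift requires going through the minors map of \cref{construction: Y into Gr(3 S^2 W)}, after which the weight check is still to be done. Your second suggestion (evaluate highest-weight vectors of each irreducible summand at explicit orbit points) would work but entails decomposing $S^2W^*\otimes S^2W^*$ and $S^2W^*\otimes S^{2,1}W^*$ into irreducibles and running several pointwise checks.

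The paper sidesteps all of this with a single geometric observation, which is the dual of your vanishing condition. An element of the kernel of $S^2W^*\otimes S^2W^*\to\mathrm H^0(\mathcal U_2^*\otimes\mathcal U_2^*)$ is a tensor that restricts to zero on every fibre $H_r\otimes H_r\subset S^2W\otimes S^2W$; hence injectivity is equivalent to the statement that the subspaces $H_r\otimes H_r$, as $r$ ranges over $Y$, linearly span $S^2W\otimes S^2W$. This is immediate to verify from the orbit representatives in \cref{subsection: geometric constructions} (e.g.\ varying $r$ in the open orbit already gives enough monomials). The same spanning argument with $H_r\otimes\mathrm{Syz}_r\subset S^2W\otimes S^{2,1}W$ handles $\mathcal U_2^*\otimes\mathcal U_1^*$. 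This replaces your unresolved $\Hom$-vanishings by a one-line check, with no need to decompose into irreducibles or to analyse weights of $N$.
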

	
	\begin{proof}
		The vanishing of the higher cohomology of $\mathcal{U}_2^*$, $\mathcal{U}_1^*$, $\mathcal{U}_2^*\otimes \mathcal{U}_2^*$, $\mathcal{U}_2^*\otimes \mathcal{U}_1^*$, $\mathcal{U}_2\otimes \mathcal{U}_1^*$ all follow from Teleman Quantization directly.
		Calculations using Hirzebruch-Riemann-Roch Theorem and \cref{appendix: Sage code} show that $\chi(\mathcal{U}_2\otimes \mathcal{U}_1^*)=3, \ \chi(\mathcal{U}_2^*)=6, \ \chi(\mathcal{U}_1^*)=8, \ \chi(\mathcal{U}_2^*\otimes \mathcal{U}_2^*)=39, \ \chi(\mathcal{U}_2^*\otimes \mathcal{U}_1^*)=48$.
		By construction, we have nontrivial maps $W^*\rightarrow \mathrm{Hom}(\mathcal{U}_1,\mathcal{U}_2)=\mathrm{H}^0(\mathcal{U}_2\otimes \mathcal{U}_1^*), \  S^2W^*\rightarrow \mathrm{H}^0(\mathcal{U}_2^*), \  S^{2,1}W^*\rightarrow \mathrm{H}^0(\mathcal{U}_1^*)$, and these maps must be isomorphisms because of dimension countings and the fact that $W^*,S^2W^*, S^{2,1}W^*$ are irreducible $SL(W)$-modules.
		For the last two statements, we want to prove that both $S^2W^*\otimes S^2W^*\rightarrow \mathrm{H}^0(\mathcal{U}_2^*\otimes\mathcal{U}_2^*)$ and $S^2W^*\otimes S^{2,1}W^*\rightarrow\mathrm{H}^0(\mathcal{U}_2^*\otimes\mathcal{U}_1^*)$ are injective.
		For a point $r\in Y$, which corresponds to an orbit class of $2\times 3$ matrices under the action of $GL(2)\times GL(3)/{(\alpha.\mathrm{Id_2},\alpha.\mathrm{Id_3})}_{\alpha\in \mathds{C}^*}$, the fiber of $\mathcal{U}_2$ over $r$ is equal to the three dimensional space of maximal minors $H_r\subset S^2W$  and the fiber of $\mathcal{U}_1$ at $r$ is equal to the two dimensional canonical syzygy space $\Syz_r\subset S^{2,1}W$ at $r$.
		When $r$ varies in $Y$, one can easily verify that all those elements in $H_r\otimes H_r\subset S^2W\otimes S^2W$ together linearly generate the whole space $S^2W\otimes S^2W$, and all those $H_r\otimes \Syz_r$ together linearly generate the whole space $S^2W\otimes S^{2,1}W$, from which the desired inclusions of global sections follow immediately.
	\end{proof}
	
	\begin{Corollary}
		\label{corollary: RHom from L5 to universals^*(1)}
		We have $\mathrm{RHom}^{\bullet}(L_5[-2],\mathcal{U}_2^*(1))=\mathds{C}^{\oplus 3}[-1]$ and $\mathrm{RHom}^{\bullet}(L_5[-2],\mathcal{U}_1^*(1))=0$.\label{rhom mu}
	\end{Corollary}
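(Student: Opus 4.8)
The plan is to apply the contravariant functor $\mathrm{RHom}^{\bullet}(-,\mathcal F)$ to the short exact sequence \eqref{mu1}, once with $\mathcal F=\mathcal U_2^*(1)$ and once with $\mathcal F=\mathcal U_1^*(1)$. This produces a distinguished triangle
\[
\mathrm{RHom}^{\bullet}(L_5[-2],\mathcal F)\longrightarrow \mathrm{RHom}^{\bullet}(S^2W\otimes\mathcal O_Y(1),\mathcal F)\overset{\rho}{\longrightarrow} \mathrm{RHom}^{\bullet}(\mathcal U_2(1),\mathcal F)\longrightarrow \mathrm{RHom}^{\bullet}(L_5[-2],\mathcal F)[1],
\]
where $\rho$ is induced by precomposition with the universal subbundle inclusion $\mathcal U_2\hookrightarrow S^2W\otimes\mathcal O_Y$ (twisted by $\mathcal O_Y(1)$) coming from $Y\subset\mathrm{Gr}(3,S^2W)$. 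I would then identify all three terms explicitly and read off the answer from the long exact sequence of cohomology.

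First I would compute the two right-hand terms using \cref{global section}. One has $\mathrm{RHom}^{\bullet}(S^2W\otimes\mathcal O_Y(1),\mathcal F)\cong S^2W^*\otimes\mathrm H^{\bullet}(\mathcal U_i^*)$, which is $S^2W^*\otimes S^2W^*$ (dimension $36$) for $i=2$ and $S^2W^*\otimes S^{2,1}W^*$ (dimension $48$) for $i=1$; and $\mathrm{RHom}^{\bullet}(\mathcal U_2(1),\mathcal F)\cong\mathrm H^{\bullet}(\mathcal U_2^*\otimes\mathcal U_i^*)$, which is $\mathds C^{39}$ for $i=2$ and $\mathrm H^{\bullet}(\mathcal U_2^*)\otimes\mathrm H^{\bullet}(\mathcal U_1^*)$ (dimension $48$) for $i=1$. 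All four of these groups are concentrated in cohomological degree $0$ by \cref{global section}, so the triangle collapses to a four-term exact sequence of vector spaces and the problem reduces to understanding $\rho$.

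The heart of the argument is the identification of $\rho$, and this is the step I expect to require the most care. Writing an element of $\mathrm{Hom}(S^2W\otimes\mathcal O_Y(1),\mathcal F)=S^2W^*\otimes\mathrm H^0(\mathcal U_i^*)$ in the form $\xi\otimes s$ and chasing through \eqref{mu1}, one sees that $\rho(\xi\otimes s)$ is the composite $\mathcal U_2\xrightarrow{\xi|_{\mathcal U_2}}\mathcal O_Y\xrightarrow{s}\mathcal U_i^*$; since $\xi\mapsto\xi|_{\mathcal U_2}$ is precisely the isomorphism $S^2W^*\xrightarrow{\sim}\mathrm H^0(\mathcal U_2^*)$ of \cref{global section} (induced by the surjection $S^2W^*\twoheadrightarrow\mathrm H^0(\mathcal U_2^*)$ dual to the subbundle inclusion), this identifies $\rho$ with the natural multiplication map $\mathrm H^0(\mathcal U_2^*)\otimes\mathrm H^0(\mathcal U_i^*)\to\mathrm H^0(\mathcal U_2^*\otimes\mathcal U_i^*)$. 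By \cref{global section} this map is injective with $3$-dimensional cokernel when $i=2$, and an isomorphism when $i=1$. Feeding this into the long exact sequence of the triangle then gives $\mathrm{RHom}^{\bullet}(L_5[-2],\mathcal U_2^*(1))\cong\mathds C^{\oplus3}[-1]$ and $\mathrm{RHom}^{\bullet}(L_5[-2],\mathcal U_1^*(1))=0$, as claimed. Everything other than the description of $\rho$ on global sections is formal manipulation of \eqref{mu1} together with the cohomology computations already recorded in \cref{global section}.
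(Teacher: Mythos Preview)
Your proposal is correct and follows exactly the paper's approach: apply $\mathrm{RHom}^{\bullet}(-,\mathcal F)$ to \eqref{mu1}, identify the induced map with the natural multiplication $\mathrm H^0(\mathcal U_2^*)\otimes\mathrm H^0(\mathcal U_i^*)\to\mathrm H^0(\mathcal U_2^*\otimes\mathcal U_i^*)$, and invoke \cref{global section} for its injectivity (and bijectivity when $i=1$). Your explicit identification of $\rho$ is slightly more detailed than the paper's, but the argument is the same.
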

	
	\begin{proof}
		We apply functors $\mathrm{RHom}^{\bullet}(\blank,\mathcal{U}_2^*(1))$ and $\mathrm{RHom}^{\bullet}(\blank,\mathcal{U}_1^*(1))$ to the short exact sequence \eqref{mu1}. Using the above Lemma, we know that the map $\mathrm{H}^{0}(\mathcal{U}_2^*)\otimes \mathrm{H}^{0}(\mathcal{U}_2^*)\cong\mathrm{Hom}(S^2W\otimes\mathcal{O}_Y(1),\mathcal{U}_2^*(1))\rightarrow \mathrm{Hom}(\mathcal{U}_2(1),\mathcal{U}_2^*(1))$
		$=\mathrm{H}^{0}(\mathcal{U}_2^*\otimes\mathcal{U}_2^*)$ and the map $\mathrm{H}^{0}(\mathcal{U}_2^*)\otimes \mathrm{H}^{0}(\mathcal{U}_1^*)\cong\mathrm{Hom}(S^2W\otimes\mathcal{O}_Y(1),\mathcal{U}_1^*(1))\rightarrow \mathrm{Hom}(\mathcal{U}_2(1),\mathcal{U}_1^*(1))=\mathrm{H}^{0}(\mathcal{U}_2^*\otimes\mathcal{U}_1^*)$ are both injective.
		The claims then follow from the associated long exact sequences in cohomology.
	\end{proof}
	
	\begin{notation}
		\label{notation: L4 L5 L6}
		The last \cref{corollary: RHom from L5 to universals^*(1)} gives $\dim \big( \mathrm{Ext}^1 \left( L_5[-2],\mathcal{U}_2^*(1)\right) \big)=3$.
		Thus there is a canonical element in $\mathrm{Ext}^1 \big(L_5[-2]$,
		$\mathcal{U}_2^*(1)\otimes (\mathrm{Ext}^1(L_5[-2],\mathcal{U}_2^*(1)))^* \big)=\mathrm{Ext}^1 \big( L_5[-2],\mathcal{U}_2^*(1)^{\oplus 3} \big)$, which is $SL(W)$-equivariant. As a result, we have a canonical $SL(W)$-equivariant extension of vector bundles:
		\begin{equation}
			0\rightarrow \mathcal{U}_2^*(1)^{\oplus 3}\rightarrow L_4[-2]\rightarrow L_5[-2]\rightarrow 0 \label{mu2}
		\end{equation}
		It is good to keep in mind that $\mathcal{U}_2^*(1)^{\oplus 3}=\mathcal{U}_2^*(1)\otimes (\mathrm{Ext}^1(L_5[-2],\mathcal{U}_2^*(1)))^*$ when considering the $SL(W)$ actions on $\mathcal{U}_2^*(1)^{\oplus 3}$.
		We denote $(L_6[-3])[3]$ by $L_6$, denote $(L_5[-2])[2]$ by $L_5$ and denote $(L_4[-2])[2]$ by $L_4$.
	\end{notation}
	
	\begin{Corollary}
		The object $L_5$ is the right mutation of $L_6$ across $\mathcal{O}_Y(1)$, while $L_4$ is the right mutation of $L_5$ across $\mathcal{U}_2^*(1)$.\label{whatever}
	\end{Corollary}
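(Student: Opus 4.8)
The plan is to match each of the two right mutations against one of the short exact sequences \eqref{mu1}, \eqref{mu2}, keeping careful track of shifts. Recall that the right mutation fits in a triangle $\mathrm R_E\mathcal G\to\mathcal G\xrightarrow{\,\varepsilon\,}\rhom^\bullet_Y(\mathcal G,E)^*\otimes E\to\mathrm R_E\mathcal G[1]$ (see the proof of \cref{mutation and Rhom}), whose structure map $\varepsilon$ is the adjunction unit $\mathcal G\to i_*i^*\mathcal G$. The general fact I will use is: if $\rhom^\bullet_Y(\mathcal G,E)$ is concentrated in a single degree $d$, say $\rhom^\bullet_Y(\mathcal G,E)=V[-d]$, then $\varepsilon$ corresponds to $\mathrm{id}_V$ under the identification $\mathrm{Hom}_Y(\mathcal G,V^*\otimes E[d])=V^*\otimes\mathrm{Ext}^d_Y(\mathcal G,E)=\mathrm{End}(V)$. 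So for each mutation it suffices to: (i) compute the relevant $\rhom^\bullet_Y$ and check that it lives in one degree; (ii) shift the mutation triangle so that it becomes \eqref{mu1} or \eqref{mu2}; and (iii) identify $\varepsilon$ with the relevant map in those sequences.

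\textbf{The first mutation.} Since $L_6=\mathcal U_2(1)[3]$, I compute $\rhom^\bullet_Y(L_6,\mathcal O_Y(1))=\rhom^\bullet_Y(\mathcal U_2,\mathcal O_Y)[-3]=\mathrm H^\bullet(Y,\mathcal U_2^*)[-3]$, which equals $S^2W^*[-3]$ by \cref{teleman} (for vanishing of higher cohomology) and \cref{global section} (for $\mathrm H^0$). So $d=3$, $V=S^2W^*$, and shifting the mutation triangle by $[-3]$ gives $(\mathrm R_{\mathcal O_Y(1)}L_6)[-3]\to\mathcal U_2(1)\xrightarrow{\,\varepsilon\,}S^2W\otimes\mathcal O_Y(1)\to(\mathrm R_{\mathcal O_Y(1)}L_6)[-2]$, with $\varepsilon$ corresponding to $\mathrm{id}_{S^2W^*}$ in $\mathrm{End}(S^2W^*)=\mathrm{Hom}_Y(\mathcal U_2(1),S^2W\otimes\mathcal O_Y(1))$. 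I would then note that $\varepsilon$ coincides, up to a nonzero scalar, with the tautological inclusion in \eqref{mu1}: both are nonzero $SL(W)$-equivariant morphisms $\mathcal U_2(1)\to S^2W\otimes\mathcal O_Y(1)$, and the space of such is $(S^2W\otimes\mathrm H^0(Y,\mathcal U_2^*))^{SL(W)}\cong(S^2W\otimes S^2W^*)^{SL(W)}\cong\mathds C$ since $S^2W$ is an irreducible $SL(W)$-module. Comparing cones with \eqref{mu1} then yields $(\mathrm R_{\mathcal O_Y(1)}L_6)[-2]=L_5[-2]$, i.e. $\mathrm R_{\mathcal O_Y(1)}L_6=L_5$.

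\textbf{The second mutation.} Since $L_5=(L_5[-2])[2]$, I compute $\rhom^\bullet_Y(L_5,\mathcal U_2^*(1))=\rhom^\bullet_Y(L_5[-2],\mathcal U_2^*(1))[-2]=\mathds C^{\oplus 3}[-3]$ by \cref{rhom mu}, where $\mathds C^{\oplus 3}=V:=\mathrm{Ext}^1_Y(L_5[-2],\mathcal U_2^*(1))$ is the space used in \cref{notation: L4 L5 L6}. So $d=3$, and shifting the mutation triangle by $[-2]$ gives $(\mathrm R_{\mathcal U_2^*(1)}L_5)[-2]\to L_5[-2]\xrightarrow{\,\varepsilon\,}(\mathcal U_2^*(1)\otimes V^*)[1]\to(\mathrm R_{\mathcal U_2^*(1)}L_5)[-1]$, with $\varepsilon\in\mathrm{Ext}^1_Y(L_5[-2],\mathcal U_2^*(1)\otimes V^*)=V\otimes V^*=\mathrm{End}(V)$ equal to $\mathrm{id}_V$. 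Since $\mathrm{id}_V$ is by construction exactly the extension class of \eqref{mu2} (recall $\mathcal U_2^*(1)^{\oplus 3}=\mathcal U_2^*(1)\otimes V^*$), this triangle has the same connecting map $L_5[-2]\to(\mathcal U_2^*(1)\otimes V^*)[1]$ as the one associated to \eqref{mu2}, hence the same fibre, giving $(\mathrm R_{\mathcal U_2^*(1)}L_5)[-2]=L_4[-2]$, i.e. $\mathrm R_{\mathcal U_2^*(1)}L_5=L_4$.

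\textbf{The main obstacle.} The argument is mostly formal bookkeeping with shifts and rotations of distinguished triangles; the only step requiring genuine content is (iii) for the first mutation, i.e. identifying the unit $\varepsilon$ with the tautological inclusion of \eqref{mu1}. I expect the equivariance-and-irreducibility argument above to settle this cleanly, but if one prefers to avoid it, one can instead unwind the adjoint of $\mathcal U_2\hookrightarrow S^2W\otimes\mathcal O_Y$ and check that, under the isomorphism $\mathrm H^0(Y,\mathcal U_2^*)\cong S^2W^*$ of \cref{global section}, it is the identity. For the second mutation, (iii) is automatic, because \eqref{mu2} was defined to be the extension with connecting class $\mathrm{id}_V$.
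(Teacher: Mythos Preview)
Your proof is correct and spells out exactly the argument the paper leaves implicit (the Corollary is stated without proof, as an immediate consequence of the definitions of $L_5$ and $L_4$ via \eqref{mu1} and \eqref{mu2}). Your identification of $\varepsilon$ with the tautological inclusion in the first mutation is even slightly overcautious: since the unit corresponds to $\mathrm{id}_{S^2W^*}$ and the tautological inclusion is, by the very isomorphism $\mathrm H^0(\mathcal U_2^*)\cong S^2W^*$ of \cref{global section}, also the map corresponding to $\mathrm{id}_{S^2W^*}$, they agree on the nose and the $SL(W)$-equivariance argument is not needed.
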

	
	Now let us perform a left mutation of $\mathcal{U}_1^*\otimes \mathcal{U}_2(2)$ across $\langle \mathfrak{sl}(\mathcal{U}_1^*)(2),\mathcal{O}_Y(2)\rangle$. We denote the result of this mutation by $L_2$.
	
	\begin{lemma}
		\label{lemma: RHoms into U1^*U2(2)}
		We have
		$\mathrm{RHom}^{\bullet}(\mathcal{U}_1^*\otimes \mathcal{U}_1(2),\mathcal{U}_1^*\otimes \mathcal{U}_2(2))=\mathds{C}^{\oplus 6}[0]$ and $\mathrm{RHom}^{\bullet}(\mathfrak{sl}(\mathcal{U}_1^*)(2),\mathcal{U}_1^*\otimes \mathcal{U}_2(2))=\mathds{C}^{\oplus 3}[0]$.
	\end{lemma}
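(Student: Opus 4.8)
The plan is to deduce the first identity from the second, and then establish the second by Teleman Quantization combined with a Hirzebruch--Riemann--Roch computation, in the same spirit as the proof of \cref{symmetry}.

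First I would split the source of the first $\mathrm{RHom}$. In characteristic zero the trace map splits $\mathcal{U}_1^*\otimes\mathcal{U}_1\cong\mathcal{O}_Y\oplus\mathfrak{sl}(\mathcal{U}_1)$, and since $\mathcal{U}_1$ has rank $2$ its dual is a line-bundle twist of $\mathcal{U}_1$, so $\mathfrak{sl}(\mathcal{U}_1)\cong\mathfrak{sl}(\mathcal{U}_1^*)$ (formation of $\mathfrak{sl}$ is twist-independent). Hence
\begin{align*}
\mathrm{RHom}^{\bullet}\big(\mathcal{U}_1^*\otimes\mathcal{U}_1(2),\ \mathcal{U}_1^*\otimes\mathcal{U}_2(2)\big)
& \cong \mathrm{RHom}^{\bullet}\big(\mathcal{O}_Y(2),\ \mathcal{U}_1^*\otimes\mathcal{U}_2(2)\big) \\
& \quad \oplus\ \mathrm{RHom}^{\bullet}\big(\mathfrak{sl}(\mathcal{U}_1^*)(2),\ \mathcal{U}_1^*\otimes\mathcal{U}_2(2)\big).
\end{align*}
The first summand equals $\mathrm{H}^{\bullet}(Y,\mathcal{U}_2\otimes\mathcal{U}_1^*)\cong W^*[0]\cong\mathds{C}^{\oplus 3}[0]$ by \cref{global section}, so once the second $\mathrm{RHom}$ is shown to be $\mathds{C}^{\oplus 3}[0]$, the first becomes $\mathds{C}^{\oplus 3}[0]\oplus\mathds{C}^{\oplus 3}[0]=\mathds{C}^{\oplus 6}[0]$, as claimed.

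For the second identity I would twist both arguments by $\mathcal{O}_Y(-2)$ and use self-duality of $\mathfrak{sl}$ (via the Killing form) to rewrite the left-hand side as $\mathrm{H}^{\bullet}(Y,\mathfrak{sl}(\mathcal{U}_1^*)\otimes\mathcal{U}_1^*\otimes\mathcal{U}_2)$. The $GL(\underline{d})$-equivariant bundle on $R_{(2,3)}$ descending to $\mathfrak{sl}(\mathcal{U}_1^*)\otimes\mathcal{U}_1^*\otimes\mathcal{U}_2$ is $\mathfrak{sl}(\tilde{\mathcal U}_1^*)\otimes\tilde{\mathcal U}_1^*\otimes\tilde{\mathcal U}_2$ (the $L(\underline{a})$-twists cancel, since $\mathfrak{sl}$ carries none and $\mathcal{U}_1^*\otimes\mathcal{U}_2$ involves $L(-1,1)\otimes L(1,-1)$, which is trivial). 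Its $\lambda$-weight on $Z_\lambda$ is bounded by $\max\{0,|w_1-w_2|\}-\min\{w_1,w_2\}+\max\{u_1,u_2,u_3\}$, where $(w_1,w_2)$ and $(u_1,u_2,u_3)$ are the weights of $\tilde{\mathcal U}_1$ and $\tilde{\mathcal U}_2$ recorded in \cref{table: weights of universal representations}; a row-by-row check shows this bound never exceeds $15$, hence lies strictly below $\eta_\lambda$ in all seven cases. By \cref{teleman} the $\mathrm{RHom}$ is therefore concentrated in degree $0$, and it remains to compute $\chi(\mathfrak{sl}(\mathcal{U}_1^*)\otimes\mathcal{U}_1^*\otimes\mathcal{U}_2)=3$ by Hirzebruch--Riemann--Roch, using the Todd class from \cref{chern ch}, the Chern characters from \cref{table: Chern characters of bundles} and the code of \cref{appendix: Sage code} (equivalently, $[\mathfrak{sl}(\mathcal{U}_1^*)]=[\mathcal{U}_1^*\otimes\mathcal{U}_1]-[\mathcal{O}_Y]$ in $K$-theory reduces this to $\chi(\mathcal{U}_1^*\otimes\mathcal{U}_1\otimes\mathcal{U}_1^*\otimes\mathcal{U}_2)-3$).

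The only nonroutine ingredient is the weight bookkeeping for Teleman Quantization: one must correctly identify the $\mathfrak{sl}$-weights as $\{0,\pm(w_1-w_2)\}$ and confirm that the equivariant bundle really descends, but the resulting inequalities against the last column of \cref{table: weights of universal representations} hold with ample room, so I do not anticipate a genuine obstacle; the Euler-characteristic step is standard and delegated to the code, exactly as in \cref{symmetry}. If one prefers to avoid invoking \cref{global section} for the first summand, one can instead run Teleman directly on $\mathcal{U}_1^*\otimes\mathcal{U}_1\otimes\mathcal{U}_1^*\otimes\mathcal{U}_2$ and compute $\chi=6$.
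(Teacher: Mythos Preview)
Your proof is correct and uses essentially the same ingredients as the paper: the splitting $\mathcal{U}_1^*\otimes\mathcal{U}_1\cong\mathcal{O}_Y\oplus\mathfrak{sl}(\mathcal{U}_1^*)$, Teleman Quantization for the vanishing of higher cohomology, and Hirzebruch--Riemann--Roch for the Euler characteristic. The only cosmetic difference is the direction of the deduction---the paper runs Teleman directly on $\mathcal{U}_1^*\otimes\mathcal{U}_1\otimes\mathcal{U}_1^*\otimes\mathcal{U}_2$ to get $\chi=6$ first and then subtracts the $\mathcal{O}_Y$-summand to obtain the second statement, which is precisely the alternative you mention at the end.
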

	
	\begin{proof}
		By Teleman Quantization, we know that the higher cohomology of $\mathcal{U}_1^*\otimes \mathcal{U}_1\otimes \mathcal{U}_1^*\otimes \mathcal{U}_2$ is zero.
		Hirzebruch-Riemann-Roch calculation (using \cref{appendix: Sage code} for calculation) shows $\chi(\mathcal{U}_1^*\otimes \mathcal{U}_1\otimes \mathcal{U}_1^*\otimes \mathcal{U}_2)=6$. The second statement comes from the fact that $\mathcal{U}_1^*\otimes \mathcal{U}_1(2)=\mathfrak{sl}(\mathcal{U}_1^*)(2)\oplus \mathcal{O}_Y(2)$ and $\mathrm{H}^{\bullet}(\mathcal{U}_1^*\otimes \mathcal{U}_2)\cong W^*[0]$.
	\end{proof}
	
	\begin{proposition}
		The left mutation $L_2$ obtained by mutating $\mathcal{U}_1^*\otimes \mathcal{U}_2(2)$ across $\langle \mathfrak{sl}(\mathcal{U}_1^*)(2),\mathcal{O}_Y(2)\rangle$ is isomorphic to $\mathcal{U}_1^*(2)\otimes K[1]$, where $K$ is the kernel of the surjection map $\mathcal{U}_1\otimes W^*\rightarrow \mathcal{U}_2$.
		\begin{equation}
			0\rightarrow L_2[-1]\rightarrow \mathcal{U}_1^*(2)\otimes \mathcal{U}_1\otimes W^*\rightarrow \mathcal{U}_1^*(2)\otimes \mathcal{U}_2\rightarrow 0.
		\end{equation}\label{L_2}
	\end{proposition}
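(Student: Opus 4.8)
Recall that $L_2$ is by definition the left mutation $\mathrm L_{\mathcal A}(\mathcal G)$ of $\mathcal G := \mathcal U_1^* \otimes \mathcal U_2(2)$ across $\mathcal A := \langle \mathfrak{sl}(\mathcal U_1^*)(2), \mathcal O_Y(2) \rangle$, so that it sits in a canonical triangle $i_*i^!\mathcal G \to \mathcal G \to L_2 \to$ with $i_*i^!\mathcal G \in \mathcal A$ and $L_2 \in \mathcal A^\perp$. Since $\mathrm D^b(Y) = \langle \mathcal A^\perp, \mathcal A \rangle$ is a semiorthogonal decomposition, such a triangle is unique up to isomorphism; so the plan is to exhibit \emph{any} triangle $B \to \mathcal G \to C \to$ with $B \in \mathcal A$ and $C \in \mathcal A^\perp$, and then conclude $C \cong L_2$. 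I will produce this triangle from a genuine short exact sequence of sheaves: by \cref{remark: stability condition explained} the adjoint $\mathcal U_1 \otimes W^* \to \mathcal U_2$ of the universal representation is surjective at every point, so its kernel $K$ is a rank-$3$ bundle fitting in $0 \to K \to \mathcal U_1 \otimes W^* \to \mathcal U_2 \to 0$; tensoring by $\mathcal U_1^*(2)$ yields exactly the displayed sequence, i.e.\ a triangle $B \to \mathcal G \to (\mathcal U_1^*(2)\otimes K)[1] \to$ with $B := \mathcal U_1^*(2) \otimes \mathcal U_1 \otimes W^*$.

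First I would check $B \in \mathcal A$: since $\mathcal U_1^* \otimes \mathcal U_1 \cong \mathcal O_Y \oplus \mathfrak{sl}(\mathcal U_1)$, with $\mathfrak{sl}$ unaffected by line-bundle twists and by dualizing ($\mathfrak{sl}(\mathcal U_1) \cong \mathfrak{sl}(\mathcal U_1^*)$), one gets $B \cong \mathcal O_Y(2)^{\oplus 3} \oplus \mathfrak{sl}(\mathcal U_1^*)(2)^{\oplus 3} \in \mathcal A$. The remaining task is to prove $\mathcal U_1^*(2) \otimes K \in \mathcal A^\perp$, which after twisting by $\mathcal O_Y(-2)$ and using $\mathfrak{sl}(\mathcal U_1^*)^\vee \cong \mathfrak{sl}(\mathcal U_1)$ amounts to the two vanishings $\mathrm H^\bullet(Y, \mathcal U_1^* \otimes K) = 0$ and $\mathrm H^\bullet(Y, \mathfrak{sl}(\mathcal U_1) \otimes \mathcal U_1^* \otimes K) = 0$. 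Each I would obtain by tensoring the sequence defining $K$ by $\mathcal U_1^*$, resp.\ $\mathfrak{sl}(\mathcal U_1) \otimes \mathcal U_1^*$, and feeding in known cohomology: $\mathrm H^\bullet(\mathcal U_1^* \otimes \mathcal U_1) = \mathds C[0]$ and $\mathrm H^\bullet(\mathfrak{sl}(\mathcal U_1) \otimes \mathcal U_1^* \otimes \mathcal U_1) = \mathrm H^\bullet(\mathfrak{sl}(\mathcal U_1)) \oplus \mathrm H^\bullet(\mathfrak{sl}(\mathcal U_1)^{\otimes 2}) = \mathds C[0]$ (using $\mathrm H^\bullet(\mathfrak{sl}(\mathcal U_1)) = 0$ from \cref{muorthogonal} and the exceptionality plus self-duality of $\mathfrak{sl}(\mathcal U_1)$), together with $\mathrm H^\bullet(\mathcal U_1^* \otimes \mathcal U_2) \cong W^*[0]$ from \cref{global section} and $\mathrm H^\bullet(\mathfrak{sl}(\mathcal U_1) \otimes \mathcal U_1^* \otimes \mathcal U_2) \cong \mathds C^3[0]$ from \cref{lemma: RHoms into U1^*U2(2)}.

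Since the higher cohomology of the outer two terms vanishes, the long exact sequences reduce both vanishings to showing that the natural maps
\[
W^* = \mathrm H^0(\mathcal U_1^* \otimes \mathcal U_1)\otimes W^* \to \mathrm H^0(\mathcal U_1^* \otimes \mathcal U_2), \qquad W^* = \mathrm H^0(\mathfrak{sl}(\mathcal U_1) \otimes \mathcal U_1^* \otimes \mathcal U_1)\otimes W^* \to \mathrm H^0(\mathfrak{sl}(\mathcal U_1) \otimes \mathcal U_1^* \otimes \mathcal U_2)
\]
induced by $\mathrm{id}_{\mathcal U_1}\otimes(-)$, resp.\ $\mathrm{id}_{\mathfrak{sl}(\mathcal U_1)\otimes\mathcal U_1}\otimes(-)$, are isomorphisms (all four spaces are $3$-dimensional, only the summand generated by the identity contributing to the sources). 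The first map sends $\mathrm{id}_{\mathcal U_1}\otimes w^*$ to the composition $\mathcal U_1 \xrightarrow{\rho} \mathcal U_2 \otimes W \xrightarrow{1\otimes w^*} \mathcal U_2$, which is exactly the isomorphism $W^* \xrightarrow{\sim} \mathrm H^0(\mathcal U_1^* \otimes \mathcal U_2)$ recorded in \cref{global section}, so that case is immediate.

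The second map is $SL(W)$-equivariant between two copies of the irreducible module $W^*$, so by Schur's lemma it is a scalar multiple of a fixed isomorphism, and everything comes down to showing that this scalar is nonzero. I expect this to be the main obstacle. The cleanest way I foresee is a single explicit pointwise computation carried out on the subvariety $Bl(\mathds P^2) \subset Y$ on which (as in the introduction) $\mathcal U_1$, $\mathcal U_2$, and hence $K$, split as direct sums of line bundles, so that the relevant global sections and the evaluation map can be written down by hand and seen to be nonzero; alternatively one can track the $\mathrm{id}_{\mathfrak{sl}(\mathcal U_1)}$-factor and reduce the nonvanishing to that of the first map already established. Granting this, $\mathcal U_1^*(2) \otimes K \in \mathcal A^\perp$, the triangle $B \to \mathcal G \to (\mathcal U_1^*(2)\otimes K)[1] \to$ is the mutation triangle, and therefore $L_2 \cong \mathcal U_1^*(2) \otimes K[1]$, which is precisely the displayed short exact sequence.
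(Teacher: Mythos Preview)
Your approach is correct and, once unwound, is the same argument as the paper's, just phrased dually: the paper identifies the evaluation map $i_*i^!\mathcal G \to \mathcal G$ directly with the twisted universal map $\mathcal U_1^*(2)\otimes\mathcal U_1\otimes W^* \to \mathcal U_1^*(2)\otimes\mathcal U_2$ (using that $\mathfrak{sl}(\mathcal U_1^*)(2)$ and $\mathcal O_Y(2)$ are mutually orthogonal, so $i_*i^!$ splits) and reads off its kernel, whereas you write down the candidate triangle and verify orthogonality. Both routes reduce to exactly the same check, namely that the $SL(W)$-equivariant map
\[
W^* \longrightarrow \Hom\bigl(\mathfrak{sl}(\mathcal U_1),\mathcal U_1^*\otimes\mathcal U_2\bigr),
\qquad
w^*\longmapsto\bigl(\mathfrak{sl}(\mathcal U_1)\hookrightarrow\mathcal U_1^*\otimes\mathcal U_1\xrightarrow{\,1\otimes\rho_{w^*}\,}\mathcal U_1^*\otimes\mathcal U_2\bigr),
\]
is nonzero (hence an isomorphism by Schur and the dimension count from \cref{lemma: RHoms into U1^*U2(2)}).

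You flag this as ``the main obstacle'' and propose an explicit fibre computation on $Bl(\mathds P^2)$; that is much heavier than necessary. The nonvanishing is immediate from a rank count: the composite \emph{sheaf} map
\[
\mathfrak{sl}(\mathcal U_1)\otimes W^*\hookrightarrow\mathcal U_1^*\otimes\mathcal U_1\otimes W^*\twoheadrightarrow\mathcal U_1^*\otimes\mathcal U_2
\]
has source of rank $9$, while the kernel $\mathcal U_1^*\otimes K$ of the surjection has rank $6$, so the composite cannot be zero. This is all the paper needs (it simply calls it ``the nonzero map'' without further comment). Your proposed alternative of reducing the second nonvanishing to the first does not work as stated: the two maps live on the disjoint summands $\mathcal O_Y$ and $\mathfrak{sl}(\mathcal U_1)$ of $\mathcal U_1^*\otimes\mathcal U_1$, so one does not formally imply the other. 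Replace that sentence with the one-line rank argument and your proof is complete.
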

	
	\begin{proof}
		It follows from the stability condition of quiver representations that the universal morphism $\mathcal{U}_1\otimes W^*\rightarrow \mathcal{U}_2$ is surjective.
		
		To mutate $\mathcal{U}_1^*\otimes \mathcal{U}_2(2)$ across $\langle \mathfrak{sl}(\mathcal{U}_1^*)(2),\mathcal{O}_Y(2)\rangle$, first we recall that by \cref{muorthogonal}, the objects $\mathfrak{sl}(\mathcal{U}_1^*)(2)$ and $\mathcal{O}(2)$ are mutually orthogonal, so we have the exact triangle:
		$$L_2[-1]\rightarrow (\mathfrak{sl}(\mathcal{U}_1^*)(2)\otimes \mathrm{Hom}_1)\bigoplus(\mathcal{O}_Y(2)\otimes\mathrm{Hom}_2) \rightarrow \mathcal{U}_1^*\otimes \mathcal{U}_2(2)\rightarrow L_2,$$
		where we denote $\mathrm{RHom}^{\bullet}(\mathfrak{sl}(\mathcal{U}_1^*)(2),\mathcal{U}_1^*\otimes \mathcal{U}_2(2))$ by $\mathrm{Hom}_1$ and $\mathrm{RHom}^{\bullet}(\mathcal{O}_Y(2),\mathcal{U}_1^*\otimes \mathcal{U}_2(2))$ by $\mathrm{Hom}_2$.
		We have seen that $\mathrm{RHom}^{\bullet}(\mathcal{O}_Y(2),\mathcal{U}_1^*\otimes \mathcal{U}_2(2))\cong W^*[0]$,
		and this isomorphism is obtained via $\mathcal{O}_Y(2)\otimes W^*\hookrightarrow \mathcal{U}_1^*\otimes \mathcal{U}_1(2)\otimes W^*\rightarrow\mathcal{U}_1^*\otimes \mathcal{U}_2(2)$,
		where the latter morphism is the twist by $\mathcal{U}_1^*(2)$ of the standard morphism $\mathcal{U}_1\otimes W^*\rightarrow \mathcal{U}_2$. 
		
		We also have a family of morphisms from $\mathfrak{sl}(\mathcal{U}_1^*)(2)$ to $\mathcal{U}_1^*\otimes \mathcal{U}_2(2)$ given by the nonzero map $\mathfrak{sl}(\mathcal{U}_1^*)(2)\otimes W^*\hookrightarrow \mathcal{U}_1^*\otimes \mathcal{U}_1(2)\otimes W^*\rightarrow\mathcal{U}_1^*\otimes \mathcal{U}_2(2)$, where the latter morphism is again the twisted standard morphism. Since $\dim(\Hom(\mathfrak{sl}(\mathcal{U}_1^*)(2),\mathcal{U}_1^*\otimes \mathcal{U}_2(2)))=3$ by \cref{lemma: RHoms into U1^*U2(2)} and $W$ is an irreducible $SL(W)$-module, these give us all the morphisms from $\mathfrak{sl}(\mathcal{U}_1^*)(2)$ to $\mathcal{U}_1^*\otimes \mathcal{U}_2(2)$.
		
		As a result, the map $(\mathfrak{sl}(\mathcal{U}_1^*)(2)\otimes \mathrm{Hom}_1)\bigoplus(\mathcal{O}_Y(2)\otimes\mathrm{Hom}_2) \rightarrow \mathcal{U}_1^*\otimes \mathcal{U}_2(2)$ is exactly $\mathcal{U}_1^*\otimes \mathcal{U}_1(2)\otimes W^*\rightarrow\mathcal{U}_1^*\otimes \mathcal{U}_2(2)$, which is surjective and the kernel is $\mathcal{U}_1^*(2)\otimes K$.
	\end{proof}
	
	Still we want to do one more right mutation, i.e. the right mutation of $L_4$ across $\mathcal{U}_1^*(1)$. For this we apply
	$\mathrm{RHom}^{\bullet}(\blank,\mathcal{U}_1^*(1))$ to the short exact sequence \eqref{mu2} and get 
	$\mathrm{RHom}^{\bullet}(L_4[-2],\mathcal{U}_1^*(1))$
	$=\mathrm{RHom}^{\bullet}(\mathcal{U}_2^*(1)^{\oplus 3},\mathcal{U}_1^*(1))$
	$=(W^*)^{\oplus 3}$, because we know that $\mathrm{RHom}^{\bullet}(L_5[-2],\mathcal{U}_1^*(1))=0$ by \cref{rhom mu}.
	As a result, we have a canonical map $\alpha: L_4[-2]\rightarrow (\mathcal{U}_1^*(1)\otimes W)^{\oplus 3}$.
	We denote the cokernel of $\alpha$ by $L_3[-1]$, and thus obtain the following short exact sequence: 
	\begin{equation}
		L_4[-2]\xrightarrow {\alpha}(\mathcal{U}_1^*(1)\otimes W)^{\oplus 3}\rightarrow L_3[-1]\rightarrow 0. \label{mu3}
	\end{equation}
	In the next subsection we will prove that $\alpha$ is a bundle morphism of maximal rank everywhere (note that $\mathrm{rank}(L_4[-2])=12$), which implies in particular that $L_3[-1]$ is a vector bundle and that the right mutation of $L_4$ across $\mathcal{U}_1^*(1)$ is equal to $L_3$. Moreover, in the next subsection, we will prove that $L_2=L_3$, which is not surprising because of the absence of the $\mathrm{ch}(\mathcal{U}_2(2))$ term in the third equality of \cref{ch eq}; in other words, the right mutation of $L_3$ across $\mathcal{U}_2(2)$ is expected to be $L_3$ itself.

	\subsection{Completion of Mutations}
	\label{subsection: Completion of mutations}
	The main aim of this subsection is to prove the vector bundle isomorphism $L_2[-1]\cong L_3[-1]$ in \cref{same tri subcategory}. We will first prove that these two vector bundles have the same restriction on a special subvariety of $Y$ (\cref{same restriction}) and then conclude using the speciality of this subvariety.
	
	Recall that $Y$ is isomorphic to the zero locus of a general global section of $Q^*(1)$ over $\mathrm{Gr}(2,S^{2,1}W)$, where $Q$ is the universal quotient bundle over $\mathrm{Gr}(2,S^{2,1}W)$. Moreover, the restriction to $Y$ of 
	the universal subbundle is isomorphic to
	$\mathcal{U}_1$.
	If we take the zero locus of a general global section of $(\mathcal{U}_1^*)^{\oplus 2}$, we get a (possibly disconnected) smooth Fano surface $S$, with $K_S=\mathcal{O}_Y(-1)|_S$ and $K_S^2=(c_1^2.d_2^2)=6$.
	We don't need such a general construction of 2-dimensional subvarieties.
	Instead, we will construct a surface contained in $Y$ directly,  which is isomorphic to the blow up of $\mathds P^2$ along 3 points.
	
	\begin{construction}
		\label{construction: BlP2}
		We fix a basis of $W$ to be $\{x,y,z\}$ and the dual basis of $W^*$ to be $\{X,Y,Z\}$.
		We consider the rational map $f:\mathds{P}^2\dashrightarrow Y$ sending $(a:b:c)\in \mathds{P}^2$ to $\begin{pmatrix}
			x & y& z\\
			ay &bz &cx
		\end{pmatrix}\in Y$, viewing elements of $Y$ as equivalence classes of $2\times 3$ matrices as in \cref{construction: Y into Gr(3 S^2 W)}.
		This rational map is undefined at three points $(1:0:0),(0:1:0),(0:0:1)$.
		We consider the blow up of $\mathds{P}^2$ along these three points and simply denote this blow up by $Bl(\mathds{P}^2)$.
		One can check that we have an induced regular map $Bl(\mathds{P}^2)\rightarrow Y$ which is an embedding and over the point $(1:0:0)$ sends $[b':c']\in\mathbb P^1$ to $\begin{pmatrix}
			0 & y& z\\
			y &b'z &c'x
		\end{pmatrix}\in Y$, with analogous formulas over the other two points.
		We use $\mathcal{O}(H)$ to denote the pullback of $\mathcal{O}_{\mathds{P}^2}(1)$, and denote the exceptional divisors by $E_1,E_2,E_3$ respectively.
		
		Over the general point of $Bl(\mathds{P}^2)$, the composition $Bl(\mathds{P}^2)\rightarrow Y\hookrightarrow \mathrm{Gr}(3,S^2W)$ sends $(a:b:c)$ to $\langle bz^2-cxy, cx^2-ayz , bxz-ay^2\rangle\subset\langle z^2,xy\rangle\oplus \langle x^2,yz\rangle\oplus \langle xz,y^2\rangle$, which indicates that $\mathcal{U}_2^*|_{Bl(\mathds{P}^2)}\cong \mathcal{O}(H-E_1)\oplus\mathcal{O}(H-E_2)\oplus\mathcal{O}(H-E_3) $.
		
		Similarly, the composition $Bl(\mathds{P}^2)\rightarrow Y\hookrightarrow \mathrm{Gr}(2,S^{2,1}W)$ sends $(a:b:c)$ to $\langle a(yz\otimes y-y^2\otimes z)+b(xz\otimes z-z^2\otimes x)+c(xy\otimes x-x^2\otimes y),ab(yz\otimes z-z^2\otimes y)+bc(xz\otimes x-x^2\otimes z)+ac(xy\otimes y-y^2\otimes x)\rangle$, which indicates that $\mathcal{U}_1^*|_{Bl(\mathds{P}^2)}\cong \mathcal{O}(H)\oplus\mathcal{O}(2H-E_1-E_2-E_3) $.
		Moreover, one can see from this expression that $Bl(\mathds{P}^2)$ is actually contained in $\mathrm{Gr}(2,6)\bigcap Y\subset \mathrm{Gr}(2,8)=\mathrm{Gr}(2,S^{2,1}W)$, which means that it is the zero locus of a global section of $(\mathcal{U}_1^*)^{\oplus 2}$.
	\end{construction}
	
	
	
	Now our main task is to understand the restriction of the mutation process described in the last subsection to the subvariety $Bl(\mathds{P}^2)$.
	
	\begin{lemma}
		The restriction of $L_5[-2]$ to $Bl(\mathds{P}^2)$ is isomorphic to $(\mathcal{O}(H-E_1)\oplus\mathcal{O}(H-E_2)\oplus\mathcal{O}(H-E_3))\otimes\mathcal{O}_Y(1)|_{Bl(\mathds{P}^2)}$. The restriction of $L_2[-1]$ to $Bl(\mathds{P}^2)$ is isomorphic to $[\mathcal{O}(2H-E_1)\oplus\mathcal{O}(2H-E_2)\oplus\mathcal{O}(2H-E_3)]\otimes\mathcal{O}_Y(1)|_{Bl(\mathds{P}^2)}\bigoplus [\mathcal{O}(3H-2E_1-E_2-E_3)\oplus\mathcal{O}(3H-E_1-2E_2-E_3)\oplus\mathcal{O}(3H-E_1-E_2-2E_3)]\otimes\mathcal{O}_Y(1)|_{Bl(\mathds{P}^2)}$.
		The restriction of $K^*(1) = (\mathcal{U}_1^*(1)\otimes W)/\mathcal{U}_2^*(1)$ is isomorphic to $(\mathcal{O}(2H-E_2-E_3)\oplus\mathcal{O}(2H-E_1-E_3)\oplus\mathcal{O}(2H-E_1-E_2))\otimes\mathcal{O}_Y(1)|_{Bl(\mathds{P}^2)}$. \label{res}
	\end{lemma}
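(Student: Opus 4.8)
The plan is to compute all three restrictions directly from the explicit embedding $Bl(\mathds{P}^2)\hookrightarrow Y$ of \cref{construction: BlP2}, exploiting the fact that over $Bl(\mathds{P}^2)$ the universal bundles and the representation morphism split as direct sums of line bundles. I would proceed in three steps: first compute $L_5[-2]|_{Bl(\mathds{P}^2)}$ from the defining sequence \eqref{mu1}; then compute the restriction to $Bl(\mathds{P}^2)$ of the representation morphism $\mathcal{U}_1\otimes W^*\to\mathcal{U}_2$ and read off $K|_{Bl(\mathds{P}^2)}$, hence $K^*(1)|_{Bl(\mathds{P}^2)}$; finally deduce $L_2[-1]|_{Bl(\mathds{P}^2)}=\mathcal{U}_1^*(2)\otimes K|_{Bl(\mathds{P}^2)}$ from \cref{L_2}. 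Two facts are used throughout: $\mathcal{O}_Y(1)|_{Bl(\mathds{P}^2)}=\det(\mathcal{U}_1^*)|_{Bl(\mathds{P}^2)}=\mathcal{O}(3H-E_1-E_2-E_3)$, and the cyclic symmetry permuting $x,y,z$ together with $E_1,E_2,E_3$, which reduces each computation to one of three symmetric cases.

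For the first step, restricting \eqref{mu1} to $Bl(\mathds{P}^2)$ keeps it exact since $\mathcal{U}_2\hookrightarrow S^2W\otimes\mathcal{O}_Y$ is a subbundle inclusion, so $L_5[-2]|_{Bl(\mathds{P}^2)}\cong\big((S^2W\otimes\mathcal{O})/\mathcal{U}_2|_{Bl(\mathds{P}^2)}\big)\otimes\mathcal{O}_Y(1)|_{Bl(\mathds{P}^2)}$. Following \cref{construction: BlP2}, the inclusion $\mathcal{U}_2|_{Bl(\mathds{P}^2)}\hookrightarrow S^2W\otimes\mathcal{O}$ respects the splitting $S^2W=\langle z^2,xy\rangle\oplus\langle x^2,yz\rangle\oplus\langle xz,y^2\rangle$, with $\mathcal{U}_2|_{Bl(\mathds{P}^2)}\cong\mathcal{O}(-(H-E_1))\oplus\mathcal{O}(-(H-E_2))\oplus\mathcal{O}(-(H-E_3))$ (the $i$-th summand being the tautological subbundle of the pencil $|H-E_i|$); comparing determinants in each two-dimensional factor identifies the $i$-th summand of the quotient with $\mathcal{O}(H-E_i)$, which gives the stated description of $L_5[-2]|_{Bl(\mathds{P}^2)}$.

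For the second step I would write out the representation morphism over $Bl(\mathds{P}^2)$ using the syzygy generators of \cref{construction: Y into Gr(2 S^21 W)} applied to $\begin{pmatrix}x&y&z\\ay&bz&cx\end{pmatrix}$: the generator $g_1$ spanning the summand $\mathcal{O}(-H)$ of $\mathcal{U}_1|_{Bl(\mathds{P}^2)}$ maps to $-h_1\otimes x-h_2\otimes y+h_3\otimes z$ and the generator $g_2$ spanning $\mathcal{O}(-(2H-E_1-E_2-E_3))$ maps to $-h_1\otimes ay-h_2\otimes bz+h_3\otimes cx$, where $h_1,h_2,h_3$ are the frames of the line-bundle summands of $\mathcal{U}_2|_{Bl(\mathds{P}^2)}$ found above. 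Passing to the adjoint $\rho\colon\mathcal{U}_1\otimes W^*\to\mathcal{U}_2$, this exhibits $\rho|_{Bl(\mathds{P}^2)}$ as a direct sum $\rho_1\oplus\rho_2\oplus\rho_3$, where $\rho_1$ has source $\mathcal{O}(-H)\oplus\mathcal{O}(-(2H-E_1-E_2-E_3))$ and target $\mathcal{O}(-(H-E_1))$, and $\rho_2,\rho_3$ are its cyclic images. Since $\rho$ is surjective by stability, each $\rho_i$ is surjective, so $\ker\rho_i$ is a line bundle, and the exact sequence $0\to\ker\rho_1\to\mathcal{O}(-H)\oplus\mathcal{O}(-(2H-E_1-E_2-E_3))\to\mathcal{O}(-(H-E_1))\to0$ forces $\ker\rho_1\cong\mathcal{O}(-2H+E_2+E_3)$ by comparing determinants. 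Hence $K|_{Bl(\mathds{P}^2)}\cong\mathcal{O}(-2H+E_2+E_3)\oplus\mathcal{O}(-2H+E_1+E_3)\oplus\mathcal{O}(-2H+E_1+E_2)$; dualizing and twisting by $\mathcal{O}_Y(1)|_{Bl(\mathds{P}^2)}$ gives $K^*(1)|_{Bl(\mathds{P}^2)}$ as stated, and tensoring $K|_{Bl(\mathds{P}^2)}$ with $\mathcal{U}_1^*(2)|_{Bl(\mathds{P}^2)}=(\mathcal{O}(H)\oplus\mathcal{O}(2H-E_1-E_2-E_3))\otimes\mathcal{O}_Y(2)|_{Bl(\mathds{P}^2)}$ and regrouping the six line-bundle summands gives $L_2[-1]|_{Bl(\mathds{P}^2)}$ via \cref{L_2}.

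The main obstacle is bookkeeping rather than conceptual content: one must keep track of which blown-up point, hence which $E_i$, is responsible for each line-bundle summand, and carry the twist $\mathcal{O}_Y(1)|_{Bl(\mathds{P}^2)}=\mathcal{O}(3H-E_1-E_2-E_3)$ accurately through all the tensor products. Besides that, two minor points need verification: that the filtrations produced by the generators $g_1,g_2$ and by the $h_i$ really split as direct sums, which follows from the vanishing of the relevant $H^1$ on the rational surface $Bl(\mathds{P}^2)$ (for instance $H^1(\mathcal{O}(H-E_1-E_2-E_3))=0$); and that the adjoint $\rho$ genuinely decomposes as $\rho_1\oplus\rho_2\oplus\rho_3$, which is exactly the content of the explicit computation of the representation morphism above. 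With these in hand the three isomorphisms follow at once.
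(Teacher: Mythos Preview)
Your proposal is correct and follows essentially the same route as the paper: restrict \eqref{mu1} and use the splitting of $\mathcal{U}_2|_{Bl(\mathds{P}^2)}\hookrightarrow S^2W\otimes\mathcal{O}$ into three rank-one inclusions (identifying each quotient by a first Chern class comparison), then decompose the representation surjection $\mathcal{U}_1\otimes W^*\to\mathcal{U}_2$ over $Bl(\mathds{P}^2)$ into three rank-one surjections to read off $K|_{Bl(\mathds{P}^2)}$, and finally combine with \cref{L_2}. The only cosmetic difference is that the paper obtains the direct-sum decompositions directly from the explicit formulas in \cref{construction: BlP2}, so your $H^1$-vanishing remark, while harmless, is not needed.
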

	\begin{proof}
		First we deal with the restriction of the map $\mathcal{U}_2(1)\hookrightarrow S^2W\otimes \mathcal{O}_Y(1)$ which is induced from the map $Y\rightarrow\mathrm{Gr}(3,S^2W)$.
		By the argument in \cref{construction: BlP2}, we know that the restriction of this inclusion is equal to the direct sum of three inclusions: $\mathcal{O}(E_1-H)\otimes \mathcal{O}_Y(1)|_{Bl(\mathds{P}^2)}\hookrightarrow \langle z^2,xy\rangle\otimes \mathcal{O}_Y(1)|_{Bl(\mathds{P}^2)}$,  $\mathcal{O}(E_2-H)\otimes \mathcal{O}_Y(1)|_{Bl(\mathds{P}^2)}\hookrightarrow \langle x^2,yz\rangle\otimes \mathcal{O}_Y(1)|_{Bl(\mathds{P}^2)}$, $\mathcal{O}(E_3-H)\otimes \mathcal{O}_Y(1)|_{Bl(\mathds{P}^2)}\hookrightarrow \langle y^2,xz\rangle\otimes \mathcal{O}_Y(1)|_{Bl(\mathds{P}^2)}$.
		As a result, the quotient bundle $L_5[-2]|_{Bl(\mathds{P}^2)}$ of the restriction of the inclusion $\mathcal{U}_2(1)\hookrightarrow S^2W\otimes \mathcal{O}_Y(1)$ is equal to the direct sum of three line bundles, and an easy calculation of the first chern classes shows 
		that the restriction of $L_5[-2]$ to $Bl(\mathds{P}^2)$ is equal to $(\mathcal{O}(H-E_1)\oplus\mathcal{O}(H-E_2)\oplus\mathcal{O}(H-E_3))\otimes\mathcal{O}_Y(1)|_{Bl(\mathds{P}^2)}$.
		
		Now we consider the restriction of the surjection $\mathcal{U}_1\otimes W^*\rightarrow \mathcal{U}_2$, which essentially comes from the dual of the fiberwise inclusion $\Syz_r\subset H_r\otimes W$ (see \cref{construction: Y into Gr(3 S^2 W)} and \cref{construction: Y into Gr(2 S^21 W)} for the notation).
		Similarly, the restriction of this surjection is the direct sum of the following three surjections:
		\begin{align*}
			(\mathcal{O}(-H)\otimes \langle X\rangle)\oplus (\mathcal{O}(-2H+E_1+E_2+E_3)\otimes \langle Y\rangle) &\rightarrow \mathcal{O}(E_1-H),\\
			(\mathcal{O}(-H)\otimes \langle Y\rangle)\oplus (\mathcal{O}(-2H+E_1+E_2+E_3)\otimes \langle Z\rangle) &\rightarrow \mathcal{O}(E_2-H),\\
			(\mathcal{O}(-H)\otimes \langle Z\rangle)\oplus (\mathcal{O}(-2H+E_1+E_2+E_3)\otimes \langle X\rangle) &\rightarrow \mathcal{O}(E_3-H).
		\end{align*}
		As a result, the kernel $K|_{Bl(\mathds{P}^2)}$ of the restriction of the surjection is isomorphic to the direct sum of three line bundles $\mathcal{O}(-2H+E_2+E_3)\oplus\mathcal{O}(-2H+E_1+E_3)\oplus\mathcal{O}(-2H+E_1+E_2)$. The third statement follows by taking dual bundles. The second statement follows from the fact that $L_2[-1]=\mathcal{U}_1^*(2)\otimes K$ (\cref{L_2}) and $\mathcal{O}_Y(1)|_{Bl(\mathds{P}^2)}=\mathrm{det}(\mathcal{U}_1^*)|_{Bl(\mathds{P}^2)}=\mathcal{O}(3H-E_1-E_2-E_3)$.
	\end{proof}
	
	For future use, we record the splitting of $\mathcal{U}_2^*\rightarrow\mathcal{U}_1^*\otimes W$ into direct sums together with the quotient maps in the following diagram:
	\[\scalebox{0.9}
	{\begin{tikzcd}[column sep=3em, row sep=0.5em, ampersand replacement=\&] 
			\mathcal{U}_2^*|_{Bl(\mathds{P}^2)}\ar[rr]\&\&(\mathcal{U}_1^*\otimes W)|_{Bl(\mathds{P}^2)}\\ \\
			\&\&\mathcal{O}(H)\otimes\langle x\rangle\ar[drr,"-a"]\&\&\\
			\mathcal{O}(H-E_1) \ar[urr,"1"]\ar[drr,"a"]\&\&\oplus\&\&\mathcal{O}(2H-E_2-E_3)\\
			\&\&\mathcal{O}(2H-E_1-E_2-E_3)\otimes \langle y\rangle\ar[urr,"1"]\\
			\oplus\&\&\oplus\&\&\oplus\\
			\&\&\mathcal{O}(H)\otimes\langle y\rangle\ar[drr,"-b"]\\
			\mathcal{O}(H-E_2) \ar[urr,"1"]\ar[drr,"b"] \&\&\oplus\&\&\mathcal{O}(2H-E_1-E_3)\&\&(**)\\
			\&\&\mathcal{O}(2H-E_1-E_2-E_3)\otimes \langle z\rangle\ar[urr,"1"]\\
			\oplus\&\&\oplus\&\&\oplus\\
			\&\&\mathcal{O}(H)\otimes\langle z\rangle\ar[drr,"-c"]\\
			\mathcal{O}(H-E_3) \ar[urr,"1"]\ar[drr,"c"] \&\&\oplus\&\&\mathcal{O}(2H-E_1-E_2)\\
			\&\&\mathcal{O}(2H-E_1-E_2-E_3)\otimes \langle x\rangle\ar[urr,"1"]
	\end{tikzcd}}
	\]
	where the symbols on the arrows describe the morphisms between line bundles over $Bl(\mathds{P}^2)-E_1-E_2-E_3$ in terms of the projective coordinates $(a:b:c)$.
	
	\begin{proposition}
		The map $\alpha: L_4[-2]\rightarrow (\mathcal{U}_1^*(1)\otimes W)^{\oplus 3}$ appearing in \eqref{mu3} is a bundle morphism of maximal rank everywhere. In particular, $\mathrm{Coker}(\alpha)=L_3[-1]$ is a vector bundle and the right mutation of $L_4$ across $\mathcal{U}_1^*(1)$ is isomorphic to $L_3$.
		\label{big proof}
	\end{proposition}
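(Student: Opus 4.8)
The plan is to restrict the whole mutation picture of \cref{subsection: mutations} to the surface $Bl(\mathds{P}^2)\subset Y$ of \cref{construction: BlP2}. By \cref{construction: BlP2}, \cref{res}, and the diagram $(**)$, every sheaf occurring in the construction of $L_4$ and in \eqref{mu3} restricts on $Bl(\mathds{P}^2)$ to an explicit direct sum of line bundles, and every structure morphism is given by explicit expressions in the coordinates $(a:b:c)$; recall in particular $\mathcal{U}_2^*|_{Bl(\mathds{P}^2)}\cong\mathcal{O}(H-E_1)\oplus\mathcal{O}(H-E_2)\oplus\mathcal{O}(H-E_3)$, $\mathcal{U}_1^*|_{Bl(\mathds{P}^2)}\cong\mathcal{O}(H)\oplus\mathcal{O}(2H-E_1-E_2-E_3)$, $\mathcal{O}_Y(1)|_{Bl(\mathds{P}^2)}\cong\mathcal{O}(3H-E_1-E_2-E_3)$, together with the descriptions of $L_5[-2]|_{Bl(\mathds{P}^2)}$, $L_2[-1]|_{Bl(\mathds{P}^2)}$ and $K^*(1)|_{Bl(\mathds{P}^2)}$ in \cref{res}. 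The first step is to pin down $L_4[-2]|_{Bl(\mathds{P}^2)}$: restricting the defining extension \eqref{mu2} presents it as an extension of the three–line–bundle sheaf $L_5[-2]|_{Bl(\mathds{P}^2)}$ by the nine–line–bundle sheaf $\mathcal{U}_2^*(1)^{\oplus 3}|_{Bl(\mathds{P}^2)}$, and one determines the restricted extension class summand by summand (the $A_3$-symmetry of \cref{construction: BlP2} keeps the bookkeeping manageable), thereby obtaining $L_4[-2]|_{Bl(\mathds{P}^2)}$ as a concrete sheaf carrying a concrete inclusion of $\mathcal{U}_2^*(1)^{\oplus 3}|_{Bl(\mathds{P}^2)}$.

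Next I would make $\alpha|_{Bl(\mathds{P}^2)}$ explicit. The map $\alpha$ of \eqref{mu3} is the canonical, hence $SL(W)$-equivariant, evaluation morphism attached to the identification $\mathrm{RHom}^\bullet(L_4[-2],\mathcal{U}_1^*(1))\cong(W^*)^{\oplus 3}$ of \cref{rhom mu}. Since $\mathrm{RHom}^\bullet(L_5[-2],\mathcal{U}_1^*(1))=0$, that identification is induced by the inclusion $\mathcal{U}_2^*(1)^{\oplus 3}\hookrightarrow L_4[-2]$ of \eqref{mu2}, so the composite $\mathcal{U}_2^*(1)^{\oplus 3}\hookrightarrow L_4[-2]\xrightarrow{\ \alpha\ }(\mathcal{U}_1^*(1)\otimes W)^{\oplus 3}$ is, in each of the three summands, a twist of the universal morphism $\mathcal{U}_2^*\to\mathcal{U}_1^*\otimes W$; this composite already carries all the information of $\alpha$. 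Restricting to $Bl(\mathds{P}^2)$ and reading this universal morphism off $(**)$, and combining with the first step, turns $\alpha|_{Bl(\mathds{P}^2)}$ into an explicit morphism of split bundles of ranks $12$ and $18$ with matrix entries that are monomials in $(a:b:c)$.

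It then remains to check that $\alpha|_{Bl(\mathds{P}^2)}$ is fibrewise injective at every point — a rank computation with monomial entries on $Bl(\mathds{P}^2)\setminus(E_1\cup E_2\cup E_3)$, and a substitution of the formulas of \cref{construction: BlP2} on each exceptional divisor, with the $A_3$-symmetry reducing the casework to one exceptional divisor plus the generic locus. (That $\mathrm{coker}(\alpha|_{Bl(\mathds{P}^2)})$ then has rank $6$ and agrees with $L_2[-1]|_{Bl(\mathds{P}^2)}$ of \cref{res} is a welcome consistency check.) To globalise: the locus $D\subset Y$ where $\alpha$ fails to have maximal rank is closed, and $SL(W)$-invariant because $\alpha$ is $SL(W)$-equivariant; if $D\neq\emptyset$, then $D$ contains the closure of some orbit, hence one of the two minimal closed $SL(W)$-orbits of \cref{subsection: geometric constructions}; but both of these meet $Bl(\mathds{P}^2)$, where $\alpha$ has maximal rank, so $D=\emptyset$. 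Consequently $\mathrm{Coker}(\alpha)=L_3[-1]$ is a vector bundle, and since $\mathrm{RHom}^\bullet(L_4[-2],\mathcal{U}_1^*(1))$ sits in degree $0$, the sequence \eqref{mu3} exhibits $L_3$ as the right mutation of $L_4$ across $\mathcal{U}_1^*(1)$.

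The main obstacle is the explicit bookkeeping in the first two steps: identifying the restriction to $Bl(\mathds{P}^2)$ of the extension class of \eqref{mu2}, and verifying that the canonical map computed on $Bl(\mathds{P}^2)$ really is the restriction of the canonical map $\alpha$ on $Y$ (so that no global-to-local subtlety is being glossed over). Once $L_4[-2]|_{Bl(\mathds{P}^2)}$ and $\alpha|_{Bl(\mathds{P}^2)}$ are in hand, the rank verification is routine polynomial algebra and the orbit argument is immediate from the orbit structure of $Y$ recalled in \cref{subsection: geometric constructions}.
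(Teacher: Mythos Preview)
Your overall strategy---restrict to $Bl(\mathds{P}^2)$, compute explicitly there, and globalise via $SL(W)$-equivariance and the orbit structure---is exactly the one the paper uses, and your final paragraph is essentially identical to the paper's endgame. The difference lies in how the computation on $Bl(\mathds{P}^2)$ is organised.

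The paper does \emph{not} attempt to determine $L_4[-2]|_{Bl(\mathds{P}^2)}$ or $\alpha|_{Bl(\mathds{P}^2)}$ directly. Instead it first observes (as you also do) that $\alpha$ restricted to the subbundle $\mathcal{U}_2^*(1)^{\oplus 3}\subset L_4[-2]$ is three copies of the universal representation, hence fibrewise injective; then it passes to the induced map on quotients
\[
\beta:\ L_5[-2]=L_4[-2]/\mathcal{U}_2^*(1)^{\oplus 3}\ \longrightarrow\ K^*(1)^{\oplus 3}=(\mathcal{U}_1^*(1)\otimes W)^{\oplus 3}/\mathcal{U}_2^*(1)^{\oplus 3},
\]
so that (snake lemma) $\alpha$ has maximal rank iff $\beta$ does. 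This buys a lot: both $L_5[-2]$ and $K^*(1)$ are already globally defined as subs/quotients of trivial bundles twisted by $\mathcal{O}_Y(1)$, so there is no extension class to chase, and the ``global-to-local subtlety'' you flag simply disappears. The paper then identifies $\beta$ as the coevaluation map for $\Hom(L_5[-2],K^*(1))\cong W^*$ via the comparison-of-triangles argument, and---this is the substantive step your outline does not anticipate---it produces that isomorphism \emph{explicitly} by writing down an $SL(W)$-equivariant map $A+2B:S^2W\otimes W^*\to S^{2,1}W^*\otimes W$ at the level of global sections and verifying (over $Bl(\mathds{P}^2)$) that it descends to $L_5[-2]\to K^*(1)$. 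Only after this is the rank check on $Bl(\mathds{P}^2)$ a matter of reading off a $1\times 3$ block and computing its cokernel.

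Your route would also work, but the step you call the ``main obstacle'' is genuinely heavier than in the paper: the restricted extension \eqref{mu2} actually splits on $Bl(\mathds{P}^2)$ (a short $\mathrm{Ext}^1$ computation among the line bundles $\mathcal{O}(H-E_i)$), but the splitting is non-canonical, so pinning down $\alpha$ on the $L_5[-2]$-summand still forces you back to something equivalent to computing $\beta$. In short: same skeleton, but the paper's reduction to $\beta$ and the explicit construction of $A+2B$ are the two simplifications that make the computation tractable.
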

	
	\begin{proof}
		Recall that we have the short exact sequence: 
		$$0\rightarrow \mathcal{U}_2^*(1)^{\oplus 3}\rightarrow L_4[-2]\rightarrow L_5[-2]\rightarrow 0. $$
		We view $\mathcal{U}_2^*(1)^{\oplus 3}$ as a subbundle of $L_4[-2]$.
		In the definition of $\alpha: L_4[-2]\rightarrow \mathcal{U}_1^*(1)\otimes W^{\oplus 3}$, we used the fact that $\mathrm{RHom}^{\bullet}(L_4[-2],\mathcal{U}_1^*(1))$ is isomorphic to $\mathrm{RHom}^{\bullet}(\mathcal{U}_2^*(1)^{\oplus 3},\mathcal{U}_1^*(1))= (W^*)^{\oplus 3}$, which is observed via the restriction of morphisms to the subbundle $\mathcal{U}_2^*(1)^{\oplus 3}\subset L_4[-2]$, i.e. via applying $\mathrm{RHom}^{\bullet}(\blank,\mathcal{U}_1^*(1))$ to the short exact sequence above.
		Consequently, the morphism $\alpha$ is the unique morphism such that the restriction of $\alpha$ to the subbundle $\mathcal{U}_2^*(1)^{\oplus 3}\subset L_4[-2]$ is just equal to the direct sum of three copies of the twisted adjunction representation map $\mathcal{U}_2^*(1)\rightarrow\mathcal{U}_1^*(1)\otimes W$.
		It follows from the stability condition (see \cref{remark: stability condition explained}) that $\mathcal{U}_2^*(1)\rightarrow\mathcal{U}_1^*(1)\otimes W$ is of constant maximal rank, thus an injective morphism between vector bundles.
		We take the quotient bundle $(K^*(1))^{\oplus 3} \cong (\mathcal{U}_1^*(1)\otimes W)^{\oplus 3}/\mathcal{U}_2^*(1)^{\oplus 3}$ and consider the induced map
		$$\beta: L_5[-2]=L_4[-2]/\mathcal{U}_2^*(1)^{\oplus 3}\rightarrow 
		K^*(1)^{\oplus 3}.
		$$
		It is enough to prove that $\beta$ is of maximal rank everywhere.
		
		Based on the fact that $\mathrm{RHom}^{\bullet}(L_5[-2],\mathcal{U}_1^*(1))=0$ (\cref{rhom mu}), we know that
		\[
		\mathrm{RHom}^{\bullet}(L_5[-2],
		K^*(1)
		)=\mathrm{RHom}^{\bullet}(L_5[-2],\mathcal{U}_2^*(1)[1])=\mathds{C}^{\oplus 3}[0].
		\]
		again by \cref{rhom mu}. Let $\mathrm{Hom}_3$ denote $\mathrm{Hom}(L_5[-2],
		K^*(1)
		)$.
		We claim that $\beta$ must be equal to the coevaluation map: 
		$$L_5[-2]\rightarrow 
		K^*(1)
		\otimes [\mathrm{Hom}_3]^*=
		K^*(1)^{\oplus 3}
		.$$
		To see this, we use the comparison axiom for the following two exact triangles:
		\[\begin{tikzcd}
			\mathcal{U}_2^*(1)^{\oplus 3}\arrow{r} \arrow[swap]{d}{\mathrm{Id}} & L_4[-2] \arrow{d}{\alpha} \arrow{r}&L_5[-2]\arrow{r}{\gamma}\arrow[dashed]{d}&\mathcal{U}_2^*(1)^{\oplus 3}[1]\arrow[swap]{d}{\mathrm{Id}}\\
			\mathcal{U}_2^*(1)^{\oplus 3}\arrow{r} & (\mathcal{U}_1^*(1)\otimes W)^{\oplus 3}\arrow{r}&
			K^*(1)^{\oplus 3}
			\arrow{r}\arrow{r}{\gamma'}&\mathcal{U}_2^*(1)^{\oplus 3}[1]
		\end{tikzcd}
		\]
		There should be a dashed arrow making the diagram commute. Since  $\mathrm{RHom}^{\bullet}(L_5[-2],\mathcal{U}_1^*(1))=0$, there can be only one dashed arrow making the rightmost square commute, which is equivalent to saying that there can be only one dashed arrow whose composition with $\gamma'$ is equal to $\gamma$.
		Recall that $\gamma$ was defined using the canonical $SL(W)$-equivariant element in the extension group $\mathrm{Ext}^1(L_5[-2],\mathcal{U}_2^*(1)\otimes [\mathrm{Ext}^1(L_5[-2],\mathcal{U}_2^*(1))]^*)=\mathrm{Ext}^1(L_5[-2],\mathcal{U}_2^*(1)^{\oplus 3})$ 
		in \cref{notation: L4 L5 L6}.
		Since $\beta$ obviously makes the above diagram commute and the claimed morphism satisfies the characterizing property of the dashed arrow, we get to prove our claim.
		
		Now we need to understand the three dimensional space $\mathrm{Hom}_3 = \mathrm{Hom}(L_5[-2],
		K^*(1))$ in order to prove that $\beta$ is of maximal rank everywhere. Recall that $L_5[-2]$ can be put in a single short exact sequence \eqref{mu1}:
		\begin{equation}
			0\rightarrow \mathcal{U}_2(1)=L_6[-3]\rightarrow S^2W\otimes\mathcal{O}_Y(1)\rightarrow L_5[-2]\rightarrow 0,
		\end{equation}
		thus a morphism 
		$L_5[-2] \to K^*(1)$
		is a morphism
		$S^2W\otimes \mathcal{O}_Y(1) \to K^*(1)$ such that the restriction to the subbundle $\mathcal{U}_2(1)$ is zero.
		We know that the higher cohomology of $\mathcal{U}_2^*$ and $\mathcal{U}_1^*$ vanish, hence $\mathrm{RHom}^{\bullet}(\mathcal{O}_Y(1),
		K^*(1)
		)=\mathrm{H}^0(
		K^*
		)=(S^{2,1}W^*\otimes W)/S^2W^* [0] \ 
		(\cong S^{3,1}W[0] \oplus W[0]) \ 
		\cong S^2W\otimes W^*[0]$ as $SL(W)$-modules.
		
		The above $SL(W)$-module isomorphism is not unique since this $SL(W)$-module is the direct sum of two different irreducible submodules.
		We will construct below an explicit map $A+2B: S^2W\otimes W^*\rightarrow (S^{2,1}W^*\otimes W)/S^2W^*$, thus a map $W^*\rightarrow \mathrm{Hom}(S^2W,(S^{2,1}W^*\otimes W)/S^2W^*)$,
		such that whenever we fix an element in $W^*$,
		it gives a morphism $S^2W\rightarrow (S^{2,1}W^*\otimes W)/S^2W^*=\mathrm{Hom}(\mathcal{O}_Y(1),
		K^*(1)
		)$,
		which can be viewed as a morphism $S^2W\otimes\mathcal{O}_Y(1)\rightarrow
		K^*(1)
		$ and will actually be (i.e. descend to) a morphism from $L_5[-2]$ to 
		$
		K^*(1)
		$.
		This will imply that $\mathrm{Hom}(L_5[-2],
		K^*(1)
		)\cong W^*$ as a $SL(W)$-module by the dimension counting above.
		
		Recall that $S^{2,1}W\cong \mathfrak{sl}(W)$ as $SL(W)$-representations, with an explicit isomoprhism given in \cref{remark: S21 as slW}.
		In the following, we will also use the dual isomorphism $S^{2,1}W^*\cong \mathfrak{sl}(W)$, noticing that $\mathfrak{sl}(W)$ is self-dual.
		
		We define a first map $A:S^2W\otimes W^*\rightarrow S^{2,1}W\otimes W$ (we use $v,w$ to denote elements in $W$ and $f,g$ to denote elements in $W^*$):
		$$S^2W\otimes W^*\rightarrow W\rightarrow W\otimes W^*\otimes W\rightarrow \mathfrak{sl}(W)\otimes W=S^{2,1}W^*\otimes W$$
		$$(v.w)\otimes f\mapsto f(v).w+f(w).v\mapsto (f(v).w+f(w).v)\otimes (X\otimes x+Y\otimes y+Z\otimes z)\mapsto(\mathrm{projection})$$
		where in the last step the projection $(W\otimes W^*)\otimes W\rightarrow \mathfrak{sl}(W)\otimes W$ is the $SL(W)$-equivariant projection $[W\otimes W^*\rightarrow \mathfrak{sl}(W)]\otimes W$.
		
		We define a second map $B:S^2W\otimes W^*\rightarrow S^{2,1}W\otimes W$:
		$$S^2W\otimes W^*\hookrightarrow W\otimes W\otimes W^*\cong(W\otimes W^*)\otimes W\rightarrow \mathfrak{sl}(W)\otimes W=S^{2,1}W^*\otimes W$$
		$$(v.w)\otimes f \mapsto(v\otimes w+w\otimes v)\otimes f\mapsto v\otimes f \otimes w+w\otimes f \otimes v\mapsto(\mathrm{projection})$$
		
		We claim that the composition of $A+2B:S^2W\otimes W^*\rightarrow S^{2,1}W^*\otimes W$ with the quotient map $S^{2,1}W^*\otimes W\rightarrow S^{2,1}W^*\otimes W/S^2W^*$ gives the desired isomorphism $\mathrm{Hom}(L_5[-2],(\mathcal{U}_1^*(1)\otimes W)/\mathcal{U}_2^*(1))\cong W^*$. To verify this, we only need to verify that the induced morphism $S^2W\otimes\mathcal{O}_Y(1)\otimes W^*\rightarrow(\mathcal{U}_1^*(1)\otimes W)/\mathcal{U}_2^*(1)=K^*(1)$ becomes zero when restricted to the subbundle $\mathcal{U}_2(1)\otimes W^*\subset S^2W\otimes\mathcal{O}_Y(1)\otimes W^*$. We only need to verify the restricted map $\mathcal{U}_2(1)\otimes W^*\rightarrow (\mathcal{U}_1^*(1)\otimes W)/\mathcal{U}_2^*(1)$ is zero at general points, and using $SL(W)$-action we only need to verify that it is zero on the fiber of one arbitrary element in the open $SL(W)$ orbit $\mathcal{O}_6$.
		We will postpone doing this until \cref{comp zero}.
		
		Now that we have a good understanding of the isomorphism $\mathrm{Hom}(L_5[-2],(\mathcal{U}_1^*(1)\otimes W)/\mathcal{U}_2^*(1))\cong W^*$, we can try to prove $\beta:L_5[-2]\rightarrow[(\mathcal{U}_1^*(1)\otimes W)/\mathcal{U}_2^*(1)]\otimes W=(\mathcal{U}_1^*(1)\otimes W)^{\oplus 3}/\mathcal{U}_2^*(1)^{\oplus 3}=K^*(1)^{\oplus 3}$ is of maximal rank everywhere. This is again a $SL(W)$-equivariant map and we only need to verify this fact on the fiber of arbitrary elements in the two smallest closed orbits. Our favourite subvariety $Bl(\mathds{P}^2)$ contains elements in the biggest orbit $\mathcal{O}_6$ and the two smallest orbits $\mathcal{O}_2,\mathcal{O}_2'$, so again we will postpone the calculation until \cref{final cal}.
	\end{proof}
	
	We do some calculations in the following paragraphs:
	
	We first calculate the image of $z^2\otimes W^*\in S^2W\otimes W^*$ in $S^{2,1}W^*\otimes W\cong\mathfrak{sl}(W)\otimes W$ under the map $A+2B$. 
	$$(A+2B):(z^2\otimes X)\rightarrow 4 (z\otimes X)\otimes z=4E_{31}\otimes x$$
	$$(A+2B)(z^2\otimes Y)=4 E_{32}\otimes z$$
	$$(A+2B)(z^2\otimes Z)=2 E_{31}\otimes x+ 2E_{32}\otimes y+2(2E_{33}-E_{11}-E_{22})\otimes z$$
	We also calculate the image of $xy\otimes W^*\in S^2W\otimes W^*$ in $S^{2,1}W^*\otimes W\cong\mathfrak{sl}(W)\otimes W$ under the map $A+2B$. 
	$$(A+2B)(xy\otimes X)=3E_{21}\otimes x+E_{23}\otimes z+\frac{1}{3}(2E_{22}-E_{11}-E_{33})\otimes y+\frac{2}{3}(2E_{11}-E_{22}-E_{33})\otimes y$$
	$$(A+2B)(xy\otimes Y)=3E_{12}\otimes y+E_{13}\otimes z+\frac{1}{3}(2E_{11}-E_{22}-E_{33})\otimes x+\frac{2}{3}(2E_{22}-E_{11}-E_{33})\otimes x$$
	$$(A+2B)(xy\otimes Z)=2 E_{13}\otimes y+ 2E_{23}\otimes x$$
	Let $(a,b,c)\in \mathds{C}^3$. We define $$\mathrm{Syz}_1=a(yz\otimes y-y^2\otimes z)+b(xz\otimes z-z^2\otimes x)+c(xy\otimes x-x^2\otimes y)$$
	$$\mathrm{Syz}_2=ab(yz\otimes z-z^2\otimes y)+bc(xz\otimes x-x^2\otimes z)+ac(xy\otimes y-y^2\otimes x).$$ Under the isomorphism $S^{2,1}W\cong \mathfrak{sl}(W)$ we know that $\mathrm{Syz}_1=-aE_{21}-bE_{32}-cE_{13}$ and $\mathrm{Syz}_2=acE_{23}+bcE_{12}+abE_{31}$. (For further calculation, notice that the $SL(W)$-equivariant dual basis of $E_{i,j},i\neq j$ is equal to $E_{j,i}$.)
	\[\begin{tikzcd}
		0\ar{r}&\mathcal{U}_2(1)\otimes W^*\arrow{r}  & (S^2W\otimes W^*)\otimes \mathcal{O}_Y(1) \arrow{d}{F} \ar{dl}{A+2B}\arrow{r}& L_5[-2]\otimes W^*\arrow[dashed]{ddl}\ar{r}&0\\
		&(S^{2,1}W^*\otimes W)\otimes \mathcal{O}_Y(1)\arrow[r,two heads] \arrow[d,two heads] &\mathcal{U}_1^*(1)\otimes W\arrow[d,two heads,"q"]&&\\
		&((S^{2,1}W^*\otimes W)/S^2W^*)\otimes \mathcal{O}_Y(1)\arrow[r,two heads] &(\mathcal{U}_1^*(1)\otimes W)/\mathcal{U}_2^*(1)=K^*(1) & &
	\end{tikzcd}
	\]
	In the above commutative diagram, the exact sequence on the first row is the twisted version of (\ref{mu1}), and the morphism $A+2B$ is the twisted version of $A+2B: S^2W\otimes W^*\rightarrow S^{2,1}W^*\otimes W$. The horizontal arrow on the second row comes from the fact that the bundle $\mathcal{U}_1^*\otimes W$ is generated by global sections $\mathrm{H}^0(\mathcal{U}_1^*\otimes W)\cong S^{2,1}W^*\otimes W$ and the horizontal arrow on the third row comes from the same reasoning. The dashed arrow is essentially the morphism $\beta$ in its adjunction form. We aim to describe the restriction of the map $F(-1)=F\otimes \mathcal{O}_Y(-1): S^2W\otimes W^*\rightarrow \mathcal{U}_1^*\otimes W$: 
	
	We consider the image of $z^2\otimes X\in S^2W\otimes W^* $ via $F(-1)|_{Bl(\mathds{P}^2)}$ for example. This image of $z^2\otimes X$ via $A+2B$ is an element in $S^{2,1}W^*\otimes W$ and thus can be viewed as a linear form on $\langle\mathrm{Syz}_1\otimes X,\mathrm{Syz}_1\otimes Y,\mathrm{Syz}_1\otimes Z,\mathrm{Syz}_2\otimes X,\mathrm{Syz}_2\otimes Y,\mathrm{Syz}_2\otimes Z\rangle\subset S^{2,1}W\otimes W^*$ i.e. a linear form on the fiber of $\mathcal{U}_1\otimes W^*|_{Bl(\mathds{P}^2)}$ at the point $(a:b:c)\in \mathds{P}^2\backslash3\ points$ and thus can be viewed as a global section of $\mathcal{U}_1^*\otimes W|_{Bl(\mathds{P}^2)}$. 
	With this interpretation, we can calculate the map $F(-1)|_{Bl(\mathds{P}^2)}$ (and thus $F|_{Bl(\mathds{P}^2)}$) explicitly and find that the action of $F|_{Bl(\mathds{P}^2)}$ on the subbundle $(\langle z^2,xy\rangle\otimes \mathcal{O}_Y(1))\otimes W^*|_{Bl(\mathds{P}^2)}\subset (S^2W\otimes W^*)\otimes \mathcal{O}_Y(1)|_{Bl(\mathds{P}^2)}$ is the direct sum of three morphisms. The result of the calculation is summarized in the second and third column of the following diagram: 
	\[
	\adjustbox{max width=\textwidth}{
		\begin{tikzcd}[column sep=4em , row sep=0.5em] 
			&&\langle z^2,xy\rangle\otimes \mathcal{O}_Y(1)\otimes W^*|_{Bl(\mathds{P}^2)} \ar[rr,"F|_{Bl(\mathds{P}^2)}"]&&\mathcal{U}_1^*(1)\otimes W|_{Bl(\mathds{P}^2)}\\
			\\
			&&z^2\mathcal{O}(1)X\ar[rr,"-4c"]\ar[ddrr,"0" near start]&&\mathcal{O}(H)(1)z\\
			\mathcal{O}(E_1-H)(1)X\ar[urr, "b"]\ar[drr, "-c"]&&\oplus &&\oplus\\
			&&xy\mathcal{O}(1)X\ar[rr,"3bc"]\ar[uurr,"-b" near end]&&\mathcal{O}(2H-E_1-E_2-E_3)(1)x\\
			\oplus &&\oplus&&\oplus\\
			&&z^2\mathcal{O}(1)Y\ar[rr,"0"]\ar[ddrr,"4ac" near start]&&\mathcal{O}(H)(1)y\\
			\mathcal{O}(E_1-H)(1)Y\ar[urr, "b"]\ar[drr, "-c"]&&\oplus &&\oplus\\
			&&xy\mathcal{O}(1)Y\ar[rr,"ab"]\ar[uurr,"-3a" near end]&&\mathcal{O}(2H-E_1-E_2-E_3)(1)z\\
			\oplus &&\oplus&&\oplus\\
			&&z^2\mathcal{O}(1)Z\ar[rr,"-2c"]\ar[ddrr,"2ac" near start]&&\mathcal{O}(H)(1)x\\
			\mathcal{O}(E_1-H)(1)Z\ar[urr, "b"]\ar[drr, "-c"]&&\oplus &&\oplus\\
			&&xy\mathcal{O}(1)Z\ar[rr,"2ab"]\ar[uurr,"-2b" near end]&&\mathcal{O}(2H-E_1-E_2-E_3)(1)y
		\end{tikzcd}
	}
	\]
	
	(Notation: we use $\mathcal{O}(E_1-H)(1)X$ to denote $\mathcal{O}(E_1-H)\otimes\mathcal{O}_Y(1)|_{Bl_{\mathds{P}^2}}\otimes \langle X\rangle\subset \mathcal{U}_2(1)|_{Bl_{\mathds{P}^2}}\otimes W^*$, we use $z^2\mathcal{O}(1)X$ to denote $\langle z^2\rangle \otimes\mathcal{O}_Y(1)|_{Bl(\mathds{P}^2)}\otimes\langle X\rangle\subset S^2W\otimes \mathcal{O}_Y(1)\otimes W^*|_{Bl(\mathds{P}^2)}$, we use $\mathcal{O}(H)(1)x$ to denote $\mathcal{O}(H)\otimes \mathcal{O}_Y(1)|_{Bl(\mathds{P}^2)}\otimes\langle x\rangle\subset(\mathcal{U}_1^*(1)\otimes W)|_{Bl(\mathds{P}^2)}$ and we use $\mathcal{O}(2H-E_1-E_2-E_3)(1)x$ to denote $\mathcal{O}(2H-E_1-E_2-E_3)\otimes \mathcal{O}_Y(1)|_{Bl(\mathds{P}^2)}\otimes\langle x\rangle\subset(\mathcal{U}_1^*(1)\otimes W)|_{Bl(\mathds{P}^2)}$ for shortness of notation.)
	
	From the first column of the above diagram, one can read the component morphism $F_1$ of the restriction of ``the composition of morphism $\mathcal{U}_2(1)\otimes W^*\hookrightarrow S^2W\otimes\mathcal{O}_Y(1)\otimes W^*\xrightarrow F \mathcal{U}_1^*(1)\otimes W$'' to the subbundle $\mathcal{O}(E_1-H)\otimes\mathcal{O}_Y(1)\otimes W^*|_{Bl(\mathds{P}^2)}\subset \mathcal{U}_2(1)\otimes W^*|_{Bl(\mathds{P}^2)}$.
	
	\begin{lemma}
		The restriction of the composition map $\mathcal{U}_2(1)\otimes W^*\hookrightarrow S^2W\otimes\mathcal{O}_Y(1)\otimes W^*\rightarrow(\mathcal{U}_1^*(1)\otimes W)/\mathcal{U}_2^*(1)$ to $Bl(\mathds{P}^2)$ is zero, thus inducing a morphism from $L_5[-2]|_{Bl(\mathds{P}^2)}\otimes W^*$ to $(\mathcal{U}_1^*(1)\otimes W)/\mathcal{U}_2^*(1)|_{Bl(\mathds{P}^2)}$ and by adjunction a morphism $L_5[-2]|_{Bl(\mathds{P}^2)}\rightarrow((\mathcal{U}_1^*(1)\otimes W)/\mathcal{U}_2^*(1)|_{Bl(\mathds{P}^2)})\otimes W$ (the latter is the restriction of the morphsim $\beta$). \label{comp zero}
	\end{lemma}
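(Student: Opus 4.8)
The plan is to reduce the claim to a finite direct-sum computation over $Bl(\mathds{P}^2)$, using the explicit splittings of $\mathcal{U}_1^*$ and $\mathcal{U}_2^*$ on $Bl(\mathds{P}^2)$ from \cref{construction: BlP2}, the diagram $(**)$, and the explicit formula for $F|_{Bl(\mathds{P}^2)}$ recorded in the diagram computing $F|_{Bl(\mathds{P}^2)}$ displayed just above. Since the target of the composition in question is a vector bundle, hence torsion-free, one could in principle check the vanishing only on the dense open orbit; but here it is cleanest to verify it identically over $Bl(\mathds{P}^2)$, which is exactly what the statement asks.

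First I would recall that over $Bl(\mathds{P}^2)$ the bundle $\mathcal{U}_2(1)\otimes W^*$ is a sum of nine line bundles, organized into three blocks $\mathcal{O}(E_i-H)(1)\otimes W^*$, $i=1,2,3$, and that the inclusion $\mathcal{U}_2(1)\otimes W^*\hookrightarrow S^2W\otimes\mathcal{O}_Y(1)\otimes W^*$ carries the $i$-th block into the appropriate two-dimensional span (for $i=1$ this is $\langle z^2,xy\rangle\otimes\mathcal{O}_Y(1)\otimes W^*$, and the cyclic relabellings for $i=2,3$). The $A_3$-symmetry on $Bl(\mathds{P}^2)$ exhibited in \cref{construction: BlP2}, permuting $(a,b,c)$, $(x,y,z)$ and $(E_1,E_2,E_3)$ cyclically, commutes with all the maps involved, so it suffices to treat the block $i=1$.

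The core step is then to read off, from the diagram computing $F|_{Bl(\mathds{P}^2)}$, the three compositions
\[
\mathcal{O}(E_1-H)(1)\otimes\langle X\rangle,\ \langle Y\rangle,\ \langle Z\rangle \hookrightarrow \langle z^2,xy\rangle(1)\otimes W^* \xrightarrow{F} \mathcal{U}_1^*(1)\otimes W,
\]
obtained by composing the two arrows emanating from each generator. Writing $s_x,s_y,s_z$ for the generators of the $\mathcal{O}(H)(1)\langle\cdot\rangle$-summands and $s'_x,s'_y,s'_z$ for those of the $\mathcal{O}(2H-E_1-E_2-E_3)(1)\langle\cdot\rangle$-summands of $\mathcal{U}_1^*(1)\otimes W$, the computation yields $X\mapsto -3bc\,(s_z+c\,s'_x)$, $Y\mapsto 3ac\,(s_y+b\,s'_z)$, $Z\mapsto 0$. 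Comparing with $(**)$, the sub-line-bundle $\mathcal{O}(H-E_3)(1)\subset\mathcal{U}_2^*(1)$ is exactly the span of $s_z+c\,s'_x$, and $\mathcal{O}(H-E_2)(1)$ the span of $s_y+b\,s'_z$; hence each image lies in $\mathcal{U}_2^*(1)$, i.e. is annihilated by the quotient $q:\mathcal{U}_1^*(1)\otimes W\to K^*(1)$. Applying the $A_3$-symmetry to the remaining two blocks gives the vanishing of $q\circ F$ on $\mathcal{U}_2(1)\otimes W^*|_{Bl(\mathds{P}^2)}$, which is the first assertion. The induced map $L_5[-2]|_{Bl(\mathds{P}^2)}\otimes W^*\to K^*(1)|_{Bl(\mathds{P}^2)}$ then exists formally, and by adjunction so does $L_5[-2]|_{Bl(\mathds{P}^2)}\to K^*(1)|_{Bl(\mathds{P}^2)}\otimes W$; it is the restriction of $\beta$ because $\beta$ was characterized in \cref{big proof} by exactly this factorization property.

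The main obstacle is purely bookkeeping: one must keep straight which of the nine summands of $\mathcal{U}_2(1)\otimes W^*$ maps to which of the six summands of $\mathcal{U}_1^*(1)\otimes W$ and with which coefficients (read off the big diagram), and then recognise the resulting combinations as generators of the graph sub-bundle $\mathcal{U}_2^*(1)$ described in $(**)$. There is no conceptual subtlety — the content is that the syzygies produced by $A+2B$ on the image of $\mathcal{U}_2(1)$ are precisely the tautological ones coming from the universal representation $\mathcal{U}_2^*\to\mathcal{U}_1^*\otimes W$ — but the three generators $X,Y,Z$ genuinely have to be handled separately, since their images are $-3bc(\cdots)$, $3ac(\cdots)$ and $0$ rather than anything uniform.
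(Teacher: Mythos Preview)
Your proof is correct and follows essentially the same approach as the paper: reduce to the $i=1$ block by the $A_3$-symmetry (which the paper invokes implicitly), compute $F_1$ from the large diagram, and verify using $(**)$ that the image lands in $\mathcal{U}_2^*(1)$. You are in fact more explicit than the paper, which simply declares the final check ``trivial'' from the two diagrams; your computed images $-3bc(s_z+c\,s'_x)$, $3ac(s_y+b\,s'_z)$, $0$ are correct and match the generators of $\mathcal{O}(H-E_3)(1)$, $\mathcal{O}(H-E_2)(1)$ read off from $(**)$.
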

	
	\begin{proof}
		We only need to prove that the component morphism of the map $\mathcal{U}_2(1)\otimes W^*|_{Bl(\mathds{P}^2)}\hookrightarrow S^2W\otimes\mathcal{O}_Y(1)\otimes W^*|_{Bl(\mathds{P}^2)}\rightarrow(\mathcal{U}_1^*(1)\otimes W)/\mathcal{U}_2^*(1)|_{Bl(\mathds{P}^2)}$ starting from $\mathcal{O}(E_1-H)\otimes\mathcal{O}_Y(1)\otimes W^*|_{Bl_{\mathds{P}^2}}\subset \mathcal{U}_2(1)\otimes W^*|_{Bl_{\mathds{P}^2}}$ is zero. This component morphism is the composition of the morphism $F_1:\mathcal{O}(E_1-H)\otimes\mathcal{O}_Y(1)\otimes W^*|_{Bl_{\mathds{P}^2}}\rightarrow \mathcal{U}_1^*(1)\otimes W|_{Bl_{\mathds{P}^2}}$ described above with the quotient map $q:\mathcal{U}_1^*(1)\otimes W|_{Bl_{\mathds{P}^2}}\rightarrow (\mathcal{U}_1^*(1)\otimes W)/\mathcal{U}_2^*(1)|_{Bl_{\mathds{P}^2}}$. Now we need to show that the image of $F_1$ lies in the kernel of $q$, which is the image of the inclusion $\mathcal{U}_2^*(1)|_{Bl(\mathds{P}^2)}\hookrightarrow(\mathcal{U}_1^*(1)\otimes W)|_{Bl(\mathds{P}^2)}$.
		It is trivial to check this last claim, since we have described explicitly a twisted version of the inclusion $\mathcal{U}_2^*|_{Bl(\mathds{P}^2)}\hookrightarrow(\mathcal{U}_1^*\otimes W)|_{Bl(\mathds{P}^2)}$ in the diagram $(**)$ below \cref{res}.
	\end{proof}
	
	\begin{lemma}
		The induced morphism from $L_5[-2]|_{Bl(\mathds{P}^2)}$ to $K^*(1)\otimes W=(\mathcal{U}_1^*(1)\otimes W)/\mathcal{U}_2^*(1)|_{Bl(\mathds{P}^2)}\otimes W$ is of maximal rank everywhere.  Moreover, the cokernel of this morphism 
		is isomorphic to $L_2[-1]|_{Bl(\mathds{P}^2)}$. \label{final cal}
	\end{lemma}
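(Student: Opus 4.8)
The plan is to reduce the whole statement to a finite computation over $Bl(\mathds{P}^2)$, where all sheaves in sight split into line bundles. First I would assemble the restricted morphism explicitly. By \cref{comp zero} the composition $\mathcal{U}_2(1)\otimes W^*\hookrightarrow S^2W\otimes\mathcal{O}_Y(1)\otimes W^*\xrightarrow{q\circ F}K^*(1)$ vanishes over $Bl(\mathds{P}^2)$, so $q\circ F$ descends there to the adjunction form $L_5[-2]|_{Bl(\mathds{P}^2)}\otimes W^*\to K^*(1)|_{Bl(\mathds{P}^2)}$ of $\beta|_{Bl(\mathds{P}^2)}$. Combining \cref{res} with the diagram $(**)$ that follows it, the bundles $L_5[-2]$, $\mathcal{U}_2^*(1)$, $\mathcal{U}_1^*(1)\otimes W$ and $K^*(1)$ all split into line bundles on $Bl(\mathds{P}^2)$; reading off the displayed diagram preceding \cref{comp zero}, together with its two cyclic images under the $A_3$-action permuting $x,y,z$, the coordinates $(a:b:c)$ and the divisors $E_1,E_2,E_3$, presents $\beta|_{Bl(\mathds{P}^2)}$ as an explicit $9\times 3$ matrix of homogeneous polynomials in $(a,b,c)$.

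Next I would check maximal rank. The source has rank $3$ and the target rank $9$, so I need this $9\times 3$ matrix to have rank $3$ at every closed point of $Bl(\mathds{P}^2)$. By $A_3$-equivariance it suffices to test one representative of each $A_3$-orbit of points: a general point of the open torus orbit, a point on the strict transform of a coordinate line, a point of an exceptional divisor, and the distinguished points of $Bl(\mathds{P}^2)$ lying in the two minimal $SL(W)$-orbits $\mathcal{O}_2$ and $\mathcal{O}_2'$, whose coordinates were recorded in \cref{construction: BlP2}. For each such point one exhibits a nonvanishing $3\times 3$ minor. Since, by the $SL(W)$-equivariant reduction in the proof of \cref{big proof}, maximal rank of $\beta$ at points of the two minimal orbits and one open-orbit point forces maximal rank of $\beta$ everywhere on $Y$, this step simultaneously closes that reduction and shows $L_3[-1]$ is a vector bundle.

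Finally I would identify the cokernel. Because $\beta|_{Bl(\mathds{P}^2)}$ has constant rank $3$, its cokernel $\mathcal{C}$ is locally free of rank $6$; applying the snake lemma to the inclusion $\mathcal{U}_2^*(1)^{\oplus 3}\hookrightarrow L_4[-2]$ of \eqref{mu2} — whose image under $\alpha$ is precisely the subbundle $\mathcal{U}_2^*(1)^{\oplus 3}\subset(\mathcal{U}_1^*(1)\otimes W)^{\oplus 3}$ — identifies $\mathcal{C}$ with $\mathrm{Coker}(\alpha)|_{Bl(\mathds{P}^2)}=L_3[-1]|_{Bl(\mathds{P}^2)}$ via \eqref{mu3}. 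To match $\mathcal{C}$ with the split form of $L_2[-1]|_{Bl(\mathds{P}^2)}$ given in \cref{res}, I would read off local generators of $\mathcal{C}$ block by block from the explicit matrix, again exploiting the $A_3$-symmetry, and exhibit $\mathcal{C}$ as the direct sum of the six line bundles $\mathcal{O}(2H-E_i)\otimes\mathcal{O}_Y(1)$ and $\mathcal{O}(3H-2E_i-E_j-E_k)\otimes\mathcal{O}_Y(1)$; the first Chern classes are forced by $\det$ and $c_2$ of the split source and target, and the splitting itself follows from vanishing of the relevant $\mathrm{Ext}^1$ groups between these line bundles, which reduces to elementary positivity inequalities along the finitely many negative curves of $Bl(\mathds{P}^2)$.

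The hard part is this last step: ensuring $\mathcal{C}$ is literally the predicted direct sum of line bundles rather than some nonsplit bundle with the same Chern character. I expect the $\mathrm{Ext}^1$-vanishing argument above to handle it; alternatively, since $L_2[-1]$ and $L_3[-1]$ are a priori globally defined bundles on $Y$ with equal Chern characters by \cref{ch eq} (together with \cref{L_2} and \eqref{mu3}), it is enough to pin down the isomorphism class of $\mathcal{C}$ as a split bundle over $Bl(\mathds{P}^2)$, which the block-by-block computation delivers and which feeds directly into the subsequent step concluding $L_2\cong L_3$ (\cref{same restriction}).
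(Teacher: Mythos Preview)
Your plan is broadly on track, but it diverges from the paper in one structural point that makes the paper's argument much cleaner, and your cokernel step as written has a gap.

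The paper's key observation, which you gesture at with ``block by block'' but do not actually exploit, is that $\beta|_{Bl(\mathds{P}^2)}$ is \emph{block-diagonal}: the image of the line summand $\mathcal{O}(H-E_1)\otimes\mathcal{O}_Y(1)\subset L_5[-2]|_{Bl(\mathds{P}^2)}$ lands entirely in a rank-$3$ direct summand $G\subset K^*(1)\otimes W|_{Bl(\mathds{P}^2)}$, and the three such $G$'s (one per $A_3$-translate) are complementary. So instead of a $9\times 3$ matrix one studies a single map from one line bundle into three line bundles, with explicit entries $(4c,4a,4a)$. Maximal rank then reduces to showing that the common zero locus of these three sections is empty, which the paper checks by intersecting the three divisors of zeros. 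The cokernel of this $3\times 1$ block is then read off directly as $\mathcal{O}(2H-E_3)(1)\oplus\mathcal{O}(3H-E_1-2E_2-E_3)(1)$, with exactness in the middle confirmed by multiplicativity of total Chern classes; summing the three blocks gives exactly the split form of $L_2[-1]|_{Bl(\mathds{P}^2)}$ from \cref{res}. No $\mathrm{Ext}^1$ arguments are needed.

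Two comments on your version. First, your rank check is stated misleadingly: $A_3$-equivariance alone cannot reduce the check to finitely many points, since $A_3$ is finite and $Bl(\mathds{P}^2)$ is a surface. What actually works (and you invoke it a sentence later) is $SL(W)$-equivariance of $\beta$ on $Y$ together with semicontinuity, reducing to the two minimal closed orbits, representatives of which lie on $Bl(\mathds{P}^2)$. So the argument survives, but drop the $A_3$ framing. Second, and more seriously, your cokernel identification has a gap: knowing the Chern character of $\mathcal{C}$ together with vanishing of $\mathrm{Ext}^1$ among the six target line bundles does \emph{not} force $\mathcal{C}$ to be their direct sum. $\mathrm{Ext}^1$-vanishing only splits a filtration you already possess, and you have not produced one. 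The route that actually works is the explicit block computation above; once you see the block-diagonal structure, each block gives a short exact sequence of small rank whose cokernel is visibly a sum of two line bundles.
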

	\begin{proof}
		The morphism from $L_5[-2]|_{Bl(\mathds{P}^2)}$ to $(\mathcal{U}_1^*(1)\otimes W)/\mathcal{U}_2^*(1)|_{Bl(\mathds{P}^2)}\otimes W$ is induced (descended) from the morphism $S^2W\otimes \mathcal{O}_Y(1)|_{Bl(\mathds{P}^2)}\rightarrow(\mathcal{U}_1^*(1)\otimes W)/\mathcal{U}_2^*(1)|_{Bl(\mathds{P}^2)}\otimes W$.
		In the proof of \cref{res}, we have described the quotient map $S^2W\otimes \mathcal{O}_Y(1)|_{Bl(\mathds{P}^2)}\rightarrow L_5[-2]|_{Bl(\mathds{P}^2)}$, which splits into the direct sum of several surjective component maps.
		One of the component maps of that quotient map is exhibited as the first row of the diagram below:
		\[\begin{tikzcd}
			\langle z^2,xy\rangle\otimes\mathcal{O}_Y(1)|_{Bl(\mathds{P}^2)}\arrow[r,twoheadrightarrow] \arrow[d,hook] & \mathcal{O}(H-E_1)\otimes \mathcal{O}_Y(1)|_{Bl(\mathds{P}^2)}\arrow[d,hook]\\
			S^2W\otimes \mathcal{O}_Y(1)|_{Bl(\mathds{P}^2)}\arrow[r,twoheadrightarrow] & L_5[-2]|_{Bl(\mathds{P}^2)}
		\end{tikzcd}
		\]
		
		By the diagram preceding the \cref{comp zero}, we know that the image of $\langle z^2,xy\rangle\otimes\mathcal{O}_Y(1)|_{Bl(\mathds{P}^2)}\subset S^2W\otimes \mathcal{O}_Y(1)|_{Bl(\mathds{P}^2)}$ in $(\mathcal{U}_1^*(1)\otimes W)|_{Bl(\mathds{P}^2)}\otimes W$ actually lies in the subbundle 
		\begin{equation}\label{longbundle}
			\begin{split}
				[&\mathcal{O}(H)\otimes\langle z\rangle\otimes \langle x\rangle\bigoplus \mathcal{O}(2H-E_1-E_2-E_3)\otimes \langle x\rangle\otimes \langle x\rangle \\
				\bigoplus&\mathcal{O}(H)\otimes\langle y\rangle\otimes \langle y\rangle\bigoplus \mathcal{O}(2H-E_1-E_2-E_3)\otimes \langle z\rangle\otimes \langle y\rangle \\
				\bigoplus
				&\mathcal{O}(H)\otimes\langle x\rangle\otimes \langle z\rangle\bigoplus \mathcal{O}(2H-E_1-E_2-E_3)\otimes \langle y\rangle\otimes \langle z\rangle]\otimes \mathcal{O}_Y(1)|_{Bl(\mathds{P}^2)}
			\end{split}
		\end{equation}
		
		Because of the above observations, the image of $\mathcal{O}(H-E_1)\otimes\mathcal{O}_Y(1)|_{Bl(\mathds{P}^2)}\subset L_5[-2]|_{Bl(\mathds{P}^2)}$ in $(\mathcal{U}_1^*(1)\otimes W)/\mathcal{U}_2^*(1)|_{Bl(\mathds{P}^2)}\otimes W$ lie in the subbundle $G$, which denotes the quotient of \eqref{longbundle}
		by the bundle $[(\mathcal{O}(H-E_3)\otimes \langle x\rangle)\bigoplus(\mathcal{O}(H-E_2)\otimes \langle y\rangle)\bigoplus(\mathcal{O}(H-E_1)\otimes \langle z\rangle)]\otimes \mathcal{O}_Y(1)|_{Bl(\mathds{P}^2)}$ embedded into \eqref{longbundle} via the twisted version of the component morphisms of the map described in diagram $(**)$. One can easily see that $G$ again splits and is equal to $[\mathcal{O}(2H-E_1-E_2)\otimes\langle x\rangle\bigoplus\mathcal{O}(2H-E_1-E_3)\otimes\langle y\rangle\bigoplus\mathcal{O}(2H-E_2-E_3)\otimes\langle z\rangle]\otimes \mathcal{O}_Y(1)|_{Bl(\mathds{P}^2)}$ 
		
		By calculation, the map $\mathcal{O}(H-E_1)\otimes\mathcal{O}_Y(1)|_{Bl(\mathds{P}^2)}\rightarrow G$ together with the cokernel map are as follows:
		\[
		\adjustbox{max width=\textwidth}{
			\begin{tikzcd}[column sep=2em , row sep=0.5em] 
				&&\mathcal{O}(2H-E_1-E_2)\otimes\mathcal{O}_Y(1)|_{Bl(\mathds{P}^2)}\ar[drr,"0"]\ar[dddrr,"a" near start]&&\\
				&&\oplus &&\mathcal{O}(2H-E_3)\otimes\mathcal{O}_Y(1)|_{Bl(\mathds{P}^2)}\\
				\mathcal{O}(H-E_1)\otimes\mathcal{O}_Y(1)|_{Bl(\mathds{P}^2)}\ar[uurr,"4c"]\ar[rr,"4a"]\ar[ddrr,"4a"]&&\mathcal{O}(2H-E_1-E_3)\otimes\mathcal{O}_Y(1)|_{Bl(\mathds{P}^2)}\ar[urr,"-1" near end]\ar[drr,"0" near start]&&\oplus\\
				&&\oplus&&\mathcal{O}(3H-E_1-2E_2-E_3)\otimes\mathcal{O}_Y(1)|_{Bl(\mathds{P}^2)}\\
				&&\mathcal{O}(2H-E_2-E_3)\otimes\mathcal{O}_Y(1)|_{Bl(\mathds{P}^2)}\ar[uuurr,"1" near start]\ar[urr,"-c"]&&
			\end{tikzcd}
		}
		\]
		
		This implies that $\mathcal{O}(H-E_1)\otimes\mathcal{O}_Y(1)|_{Bl(\mathds{P}^2)}\rightarrow G$ is of constant rank 1, and the above sequence is a short exact sequence:
		
		To check the injectivity on the left, notice that the degenerate locus of the map $\mathcal{O}(H-E_1)\otimes\mathcal{O}_Y(1)|_{Bl(\mathds{P}^2)}$
		$\xrightarrow{4c}\mathcal{O}(2H-E_1-E_2)\otimes\mathcal{O}_Y(1)|_{Bl(\mathds{P}^2)}$ is equal to the zero locus of the global section $4c\in \mathrm{H}^0(Bl(\mathds{P}^2),\mathcal{O}(H-E_2))$ and is thus equal to the union of the strict transform of $\{c=0\}$ with $E_1$. The degenerate loci of the other two morphisms starting from $\mathcal{O}(H-E_1)\otimes\mathcal{O}_Y(1)|_{Bl(\mathds{P}^2)}$ are equal to the union of the strict transform of $\{a=0\}$ with $E_2$, and the union of the strict transform of $\{a=0\}$ with $E_1$ respectively. The intersection of the three degenerate loci is empty and implies that the above sequence is injective on the left.
		
		The above sequence is obviously surjective on the right. Finally one can check the exactness in the middle term by verifying that the multiplicity of total Chern classes holds.
		
		Consequently, $L_5[-2]|_{Bl(\mathds{P}^2)}\rightarrow(\mathcal{U}_1^*(1)\otimes W)/\mathcal{U}_2^*(1)|_{Bl(\mathds{P}^2)}\otimes W$ is of maximal rank everywhere because this map is the direct sum of several maps of the form $\mathcal{O}(H-E_1)\otimes\mathcal{O}_Y(1)|_{Bl(\mathds{P}^2)}\rightarrow G$, and the cokernel is equal to $[\mathcal{O}(2H-E_1)\oplus\mathcal{O}(2H-E_2)\oplus\mathcal{O}(2H-E_3)]\otimes\mathcal{O}_Y(1)|_{Bl(\mathds{P}^2)}\bigoplus [\mathcal{O}(3H-2E_1-E_2-E_3)\oplus\mathcal{O}(3H-E_1-2E_2-E_3)\oplus\mathcal{O}(3H-E_1-E_2-2E_3)]\otimes\mathcal{O}_Y(1)|_{Bl(\mathds{P}^2)}$, which is equal to the restriction of $L_2[-1]$ to $Bl(\mathds{P}^2)$ (see \cref{res}).
	\end{proof}
	\begin{Corollary}
		$L_3[-1]|_{Bl(\mathds{P}^2)}\cong L_2[-1]|_{Bl(\mathds{P}^2)}$.\label{same restriction}
	\end{Corollary}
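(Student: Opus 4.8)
The plan is to obtain \cref{same restriction} as an immediate diagram chase combining \cref{big proof} and \cref{final cal}; no new computation is needed. First I would record that, by \cref{big proof}, the morphism $\alpha$ of \eqref{mu3} is a bundle map of maximal rank everywhere, and since $\mathrm{rank}(L_4[-2])=12 < 18 = \mathrm{rank}\big((\mathcal{U}_1^*(1)\otimes W)^{\oplus 3}\big)$, it is fibrewise injective. Hence
\[
0 \to L_4[-2] \xrightarrow{\ \alpha\ } (\mathcal{U}_1^*(1)\otimes W)^{\oplus 3} \to L_3[-1] \to 0
\]
is a short exact sequence of vector bundles on $Y$. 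Because $L_3[-1]$ is locally free, $\mathrm{Tor}_1^{\mathcal{O}_Y}(\mathcal{O}_{Bl(\mathds{P}^2)}, L_3[-1]) = 0$, so restriction to $Bl(\mathds{P}^2)$ preserves exactness and yields $L_3[-1]|_{Bl(\mathds{P}^2)} \cong \mathrm{Coker}\big(\alpha|_{Bl(\mathds{P}^2)}\big)$.

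Next I would invoke the commutative diagram of short exact sequences built in the proof of \cref{big proof}, whose rows are $0 \to \mathcal{U}_2^*(1)^{\oplus 3} \to L_4[-2] \to L_5[-2] \to 0$ (the twist of \eqref{mu2}) and $0 \to \mathcal{U}_2^*(1)^{\oplus 3} \to (\mathcal{U}_1^*(1)\otimes W)^{\oplus 3} \to K^*(1)^{\oplus 3} \to 0$, whose left vertical arrow is the identity — since $\alpha$ restricts on the subbundle $\mathcal{U}_2^*(1)^{\oplus 3}\subset L_4[-2]$ to the same three copies of the twisted adjunction map $\mathcal{U}_2^*(1)\to\mathcal{U}_1^*(1)\otimes W$ that define the bottom inclusion — whose middle arrow is $\alpha$, and whose right arrow is $\beta$. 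Restricting this entire diagram to $Bl(\mathds{P}^2)$ and applying the snake lemma, the kernel and cokernel of the identity vanish, so $\mathrm{Coker}\big(\alpha|_{Bl(\mathds{P}^2)}\big) \cong \mathrm{Coker}\big(\beta|_{Bl(\mathds{P}^2)}\big)$.

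Finally, \cref{comp zero} and \cref{final cal} identify $\beta|_{Bl(\mathds{P}^2)}$ — the induced morphism $L_5[-2]|_{Bl(\mathds{P}^2)} \to K^*(1)\otimes W|_{Bl(\mathds{P}^2)}$ — as a map of maximal rank everywhere whose cokernel is $L_2[-1]|_{Bl(\mathds{P}^2)}$. Chaining the three isomorphisms gives
\[
L_3[-1]|_{Bl(\mathds{P}^2)} \cong \mathrm{Coker}\big(\alpha|_{Bl(\mathds{P}^2)}\big) \cong \mathrm{Coker}\big(\beta|_{Bl(\mathds{P}^2)}\big) \cong L_2[-1]|_{Bl(\mathds{P}^2)},
\]
which is the claim. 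There is no genuinely hard step here: all the substance was front-loaded into \cref{big proof} and \cref{final cal}. The only points requiring care are the commutation of restriction with the cokernel of a fibrewise-injective bundle map (local freeness) and the commutativity of the left-hand square of the diagram defining $\beta$ against the identity, both of which are immediate from the construction of $\alpha$.
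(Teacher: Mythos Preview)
Your proposal is correct and follows essentially the same approach as the paper. The paper's proof is terser---it simply notes that $\alpha$ and $\beta$ share the same cokernel (the snake lemma applied to the diagram in the proof of \cref{big proof}), identifies $\mathrm{Coker}(\alpha)=L_3[-1]$ via \cref{big proof}, and then invokes \cref{final cal} to identify $\mathrm{Coker}(\beta|_{Bl(\mathds{P}^2)})\cong L_2[-1]|_{Bl(\mathds{P}^2)}$---but your more explicit treatment of the Tor-vanishing and the snake lemma after restriction amounts to the same argument spelled out in greater detail.
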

	\begin{proof}
		Recall that the morphism from $L_5[-2]$ to $[(\mathcal{U}_1^*(1)\otimes W)/\mathcal{U}_2^*(1)]\otimes W$ is the morphism $\beta$ and share the same cokernel with the morphism $\alpha$ (see the proof of \cref{big proof}). By \cref{big proof}, we know that the cokernel of $\alpha$ is the vector bundle $L_3[-1]$. By the last Lemma, the cokernel of $\beta|_{Bl(\mathds{P}^2)}$ is isomorphic to $L_2[-1]|_{Bl(\mathds{P}^2)}$ and this finishes the proof.
	\end{proof}
	
	\begin{proposition}
		Over $Y$ globally, we have an isomorphism $L_3[-1]\cong L_2[-1]$. As a result, the exceptional collections \eqref{1 exc}, \eqref{4 exc}, \eqref{5 exc} generate the same triangulated subcategory in $\mathrm{D}^b(Y)$. \label{same tri subcategory}
	\end{proposition}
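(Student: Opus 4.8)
The plan is to promote the isomorphism $L_3[-1]|_{Bl(\mathds{P}^2)}\cong L_2[-1]|_{Bl(\mathds{P}^2)}$ of \cref{same restriction} to a global isomorphism over $Y$. I will do this by constructing an explicit $SL(W)$-equivariant morphism $L_3[-1]\to L_2[-1]$ whose restriction to $Bl(\mathds{P}^2)$ recovers the isomorphism of \cref{same restriction}, and then arguing that such a morphism must be an isomorphism over all of $Y$. The two geometric facts that make this work are, first, that by \cref{big proof} one has over $Y$ a short exact sequence $0\to L_5[-2]\xrightarrow{\ \beta\ }K^*(1)\otimes W\to L_3[-1]\to 0$ with $\beta$ a bundle monomorphism of maximal rank everywhere, and second, that $Bl(\mathds{P}^2)$ meets both the open orbit $\mathcal{O}_6$ and the two minimal closed orbits $\mathcal{O}_2,\mathcal{O}_2'$.

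The key step is to construct a global $SL(W)$-equivariant morphism $\rho:K^*(1)\otimes W\to L_2[-1]$ whose restriction to $Bl(\mathds{P}^2)$ equals the surjection $K^*(1)\otimes W|_{Bl(\mathds{P}^2)}\twoheadrightarrow L_2[-1]|_{Bl(\mathds{P}^2)}$ underlying \cref{final cal} (the cokernel map of $\beta|_{Bl(\mathds{P}^2)}$, composed with the explicit identification $\operatorname{Coker}(\beta|_{Bl(\mathds{P}^2)})\cong L_2[-1]|_{Bl(\mathds{P}^2)}$). I would produce $\rho$ exactly as the map $\beta$ was produced in \cref{big proof}: as an explicit equivariant combination of natural maps of $SL(W)$-modules between the bundles involved, using $S^{2,1}W\cong\mathfrak{sl}(W)$ and the descriptions of the relevant spaces of global sections from \cref{global section}. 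It is enough to exhibit \emph{any} equivariant $\rho$ lying in the correct $SL(W)$-isotypic component, because an equivariant morphism of bundles on $Y$ is determined by its value at one point of the dense orbit $\mathcal{O}_6$ and $Bl(\mathds{P}^2)$ meets $\mathcal{O}_6$; and over $Bl(\mathds{P}^2)$ both bundles split into sums of line bundles (\cref{res}, \cref{final cal}) between whose distinct summands all $\operatorname{Hom}$'s vanish—a del-Pezzo-of-degree-$6$ computation—so every $A_3$-equivariant morphism between them is block-diagonal with scalar blocks, which pins down $\rho|_{Bl(\mathds{P}^2)}$ up to scalar.

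Granting such a $\rho$, the rest is formal. The composite $\rho\circ\beta:L_5[-2]\to L_2[-1]$ is $SL(W)$-equivariant and vanishes on $Bl(\mathds{P}^2)\cap\mathcal{O}_6$ (because $\rho|_{Bl(\mathds{P}^2)}$ factors $\beta|_{Bl(\mathds{P}^2)}$ to zero), hence vanishes on $\mathcal{O}_6$ and therefore identically; so $\rho$ factors through the quotient $K^*(1)\otimes W\twoheadrightarrow L_3[-1]$, giving $\bar\rho:L_3[-1]\to L_2[-1]$. Moreover $\rho$ is surjective: its degeneracy locus is closed and $SL(W)$-invariant, so if nonempty it contains one of $\mathcal{O}_2,\mathcal{O}_2'$, which lie in $Bl(\mathds{P}^2)$ where $\rho$ is surjective by \cref{final cal}—a contradiction. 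Thus $\bar\rho$ is a surjection between vector bundles of the same rank $6$ (that the ranks agree, and even that $\mathrm{ch}(L_3[-1])=\mathrm{ch}(L_2[-1])$, can be read off from \cref{ch eq}), so its kernel is a rank-zero subsheaf of a torsion-free sheaf, i.e. zero, and $\bar\rho$ is an isomorphism. For the statement about generated subcategories: the chain of right mutations $L_6\rightsquigarrow L_5\rightsquigarrow L_4\rightsquigarrow L_3$ of \cref{whatever} and \eqref{mu3}, together with $L_3\cong L_2$, \cref{L_2}, and the (twice-applied) mutation of $\mathfrak{sl}(\mathcal{U}_1)$ from \cref{proposition: mutating slU1}, shows that \eqref{4 exc} is obtained from \eqref{1 exc} by a sequence of mutations performed within the collection; applying the duality functor $T$ of \cref{symmetry} shows the same for \eqref{5 exc}. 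Since mutations preserve the generated triangulated subcategory, \eqref{1 exc}, \eqref{4 exc} and \eqref{5 exc} all generate the same subcategory of $\mathrm{D}^b(Y)$.

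I expect the main obstacle to be the construction in the second paragraph—writing down the global equivariant $\rho$ and checking that it restricts over $Bl(\mathds{P}^2)$ to the precise surjection produced in \cref{final cal}. Once $\rho$ is in hand with the correct restriction, its surjectivity, its factorization through $L_3[-1]$, and hence the isomorphism $L_3[-1]\cong L_2[-1]$, all follow almost automatically from the orbit geometry (the closed orbits sit inside $Bl(\mathds{P}^2)$, and the open orbit meets $Bl(\mathds{P}^2)$) together with the fact that the restriction bundles over $Bl(\mathds{P}^2)$ are ``simple'', in the sense that every equivariant endomorphism of such a bundle is of constant rank.
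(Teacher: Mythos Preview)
Your overall strategy—produce a global equivariant morphism and then exploit that $Bl(\mathds{P}^2)$ meets both the open orbit and the two minimal closed orbits—matches the paper's, but the paper gets the global morphism essentially for free, in the opposite direction, and this lets it bypass precisely the obstacle you flag.

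The paper does not try to build an explicit $\rho:K^*(1)\otimes W\to L_2[-1]$. Instead it observes that $L_3$ is obtained from $L_6=\mathcal{U}_2(1)[3]$ by right mutations across $\langle\mathcal{O}_Y(1),\mathcal{U}_2^*(1),\mathcal{U}_1^*(1)\rangle$ while $L_2$ is obtained from $\mathcal{U}_1^*\otimes\mathcal{U}_2(2)$ by left mutations across $\langle\mathfrak{sl}(\mathcal{U}_1^*)(2),\mathcal{O}_Y(2)\rangle$; by \cref{mutation and Rhom} one therefore has $\rhom^\bullet(L_2[-1],L_3[-1])=\rhom^\bullet(\mathcal{U}_1^*\otimes\mathcal{U}_2(2),\mathcal{U}_2(1)[3])=\mathds{C}[0]$, the last equality being \cref{1 unique morphism}. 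So there is a \emph{unique} nonzero morphism $L_2[-1]\to L_3[-1]$, automatically $SL(W)$-equivariant, and no explicit construction is needed. On $Bl(\mathds{P}^2)$ both bundles split as the same sum of six line bundles with pairwise vanishing $\Hom$'s between distinct summands, so the restricted morphism is diagonal with scalar entries; the $A_3$-symmetry forces the rank to be $0$, $3$, or $6$, and $0$ is excluded since the morphism is nonzero. The case rank $3$ is ruled out by a Picard-rank argument you do not have: the image would be a rank-$3$ subbundle of $L_3[-1]$ whose determinant, being a line bundle on $Y$, must be $\mathcal{O}_Y(s)$ for some $s$, but the two candidate restrictions to $Bl(\mathds{P}^2)$ have determinants $\mathcal{O}(6H-E_1-E_2-E_3)\otimes\mathcal{O}_Y(3)$ and $\mathcal{O}(9H-4E_1-4E_2-4E_3)\otimes\mathcal{O}_Y(3)$, neither of which is of the form $\mathcal{O}(3H-E_1-E_2-E_3)^{\otimes s}$. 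Hence the rank is $6$ and the map is an isomorphism.

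Your construction of $\rho$, by contrast, is left as a promissory note (``I would produce $\rho$ exactly as the map $\beta$ was produced''), and your uniqueness argument for $\rho|_{Bl(\mathds{P}^2)}$ is not quite right: block-diagonality with scalar blocks and $A_3$-equivariance pin down the restriction only up to two independent scalars (one per $A_3$-orbit of summands), not one, so you would still have to check that your particular global $\rho$ hits the correct ratio. The paper's route avoids all of this: the one-dimensionality of the $\Hom$-space is the whole point, and the Picard trick finishes the rank analysis. Your deduction of the final sentence (equality of generated subcategories via the mutation chain, \cref{proposition: mutating slU1}, and the symmetry $T$) agrees with the paper.
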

	\begin{proof}
		Recall that in \cref{1 unique morphism}, we have $\mathrm{RHom}^{\bullet}(\mathcal{U}_1^*\otimes \mathcal{U}_2(2), \mathcal{U}_2(1)[3])=\mathds{C}[0]$ and that $L_6=\mathcal{U}_2(1)[3]$ in our notation. Since $L_3$ is obtained from $L_6$ by right mutations and that $L_2$ is obtained from $\mathcal{U}_1^*\otimes \mathcal{U}_2(2)$ by left mutations, we know by \cref{mutation and Rhom} that $\mathrm{RHom}^{\bullet}(L_2[-1], L_3[-1])=\mathds{C}[0]$. This unique nonzero morphism between the vector bundles $L_2[-1]$ and $L_3[-1]$ must be $SL(W)$-equivariant because $SL(W)$ is a semisimple group and has no nontrivial group character. Thus the rank of the bundle map $L_2[-1]\rightarrow L_3[-1]$ must be constant along $SL(W)$-orbits.
		
		We know that $L_3[-1]|_{Bl(\mathds{P}^2)}=L_2[-1]|_{Bl(\mathds{P}^2)}=[\mathcal{O}(2H-E_1)\oplus\mathcal{O}(2H-E_2)\oplus\mathcal{O}(2H-E_3)]\otimes\mathcal{O}_Y(1)|_{Bl(\mathds{P}^2)}\bigoplus $
		$[\mathcal{O}(3H-2E_1-E_2-E_3)\oplus\mathcal{O}(3H-E_1-2E_2-E_3)\oplus\mathcal{O}(3H-E_1-E_2-2E_3)]\otimes\mathcal{O}_Y(1)|_{Bl(\mathds{P}^2)}$. It is easy to see that any endomorphism of this vector bundle (over $Bl(\mathds{P}^2)$) must be the direct sum of the endomorphisms of the six line bundle summands. In particular, the map $L_2[-1]|_{Bl(\mathds{P}^2)}\rightarrow L_3[-1]|_{Bl(\mathds{P}^2)}$ must be of constant rank. Moreover, the rank can only be 3 or 6: the rank cannot be zero because we consider the unique nonzero morphism (up to scalar) between vector bundles $L_2[-1]$ and $L_3[-1]$, the rank cannot be $1,2,4,5$ because we have the alternating group $\mathrm{A}_3=\{(1,2,3),(2,3,1),(3,1,2)\}\subset SL(W)$ action on the subvariety $Bl(\mathds{P}^2)$, which permutes the 6 line bundle summands of $L_3[-1]|_{Bl(\mathds{P}^2)}$ 3 by 3.
		If the rank is 3 or 6 over $Bl(\mathds{P}^2)$, then the map is of constant rank 3 or 6 correspondingly over $Y$ because $Bl(\mathds{P}^2)$ contains elements in the biggest $SL(W)$-orbit and the two smallest $SL(W)$-orbits. 
		
		If the rank is 6, we are done. If the rank is 3, then the image of $L_2[-1]\rightarrow L_3[-1]$ is a rank 3 vector subbundle whose restriction to $Bl(\mathds{P}^2)$ is equal to either $[\mathcal{O}(2H-E_1)\oplus\mathcal{O}(2H-E_2)\oplus\mathcal{O}(2H-E_3)]\otimes\mathcal{O}_Y(1)|_{Bl(\mathds{P}^2)}$ or $ [\mathcal{O}(3H-2E_1-E_2-E_3)\oplus\mathcal{O}(3H-E_1-2E_2-E_3)\oplus\mathcal{O}(3H-E_1-E_2-2E_3)]\otimes\mathcal{O}_Y(1)|_{Bl(\mathds{P}^2)}$. Recall that the Picard rank of $Y$ is one, the determinant of the rank 3 subbundle must be equal to $\mathcal{O}_Y(s)$ for some $s\in \mathds{Z}$. The determinant of the restriction of the rank 3 subbundle is equal to either $\det([\mathcal{O}(2H-E_1)\oplus\mathcal{O}(2H-E_2)\oplus\mathcal{O}(2H-E_3)]\otimes\mathcal{O}_Y(1)|_{Bl(\mathds{P}^2)})=\mathcal{O}(6H-E_1-E_2-E_3)\otimes\mathcal{O}_Y(3)|_{Bl(\mathds{P}^2)}$ or $\det([\mathcal{O}(3H-2E_1-E_2-E_3)\oplus\mathcal{O}(3H-E_1-2E_2-E_3)\oplus\mathcal{O}(3H-E_1-E_2-2E_3)]\otimes\mathcal{O}_Y(1)|_{Bl(\mathds{P}^2)})=\mathcal{O}(9H-4E_1-4E_2-4E_3)\otimes\mathcal{O}_Y(3)|_{Bl(\mathds{P}^2)}$, which are not in the form $\mathcal{O}_Y(s)|_{Bl(\mathds{P}^2)}=\mathcal{O}(3H-E_1-E_2-E_3)^{\otimes s}$! We come to a contradiction and conclude that $L_2[-1]\rightarrow L_3[-1]$ is of constant rank 6 and is thus an isomorphism.
		
		Recall that $L_3$ is obtained from $L_6=\mathcal{U}_2(1)[3]$ by right mutation across $\langle\mathcal{O}_Y(1),\mathcal{U}_2^*(1),\mathcal{U}_1^*(1)\rangle$ and that $L_2$ is obtained from $\mathcal{U}_1^*\otimes \mathcal{U}_2(2)$ by left mutation across $\langle \mathfrak{sl}(\mathcal{U}_1^*)(2),\mathcal{O}_Y(2)\rangle$. Consequently, we know that \eqref{1 exc} and \eqref{4 exc} generate the same triangulated subcategory.
		Another statement now follows easily using the symmetry functor $T$ which maps $\mathcal{U}_1^*\otimes \mathcal{U}_2(2)$ to $\mathcal{U}_1^*\otimes \mathcal{U}_2^*$.
	\end{proof}

	\subsection{Fullness of Exceptional Sequences}
	\label{subsection: proving fullness}

	In this subsection we shall use the covering argument as in \cref{covering} to show that all the exceptional sequences given in the last subsections are full.
	
	We will use the following covering family of $Y$, which is obtained by taking zero locus $J$ of a general element in $\mathrm{H}^0(Y,\mathcal{U}_1^*)=S^{2,1}W^*$. Since $Y\subset \mathrm{Gr}(2,S^{2,1}W)=\mathrm{Gr}(2,8)$ is defined by a general section of $Q^*(1)$ (where $Q$ is the universal quotient bundle over $\mathrm{Gr}(2,8)$), this is equivalent to intersecting $Y$ with a general subvariety $\mathrm{Gr}(2,7)$ inside $\mathrm{Gr}(2,8)$, or equivalent to taking the zero locus of a general global section of $Q^*(1)|_{\mathrm{Gr}(2,7)}$ over $\mathrm{Gr}(2,7)$. If we denote the universal quotient bundle on $\mathrm{Gr}(2,7)$ by $Q'$, we know $Q|_{\mathrm{Gr}(2,7)}=Q'\oplus \mathcal{O}_{\mathrm{Gr}(2,7)}$. We can conclude that the subvariety $J$ is isomorphic to a hyperplane section of the zero locus of a general global section of $(Q')^*(1)$ over $\mathrm{Gr}(2,7)$.
	
	We would like to use such $J$ to cover $Y$ with $J$ being smooth. Of course $J$ is smooth when we take a general section of $\mathcal{U}_1^*$. The remaining part is to use the $SL(W)$ action and prove that the zero locus of a general global section of $\mathcal{U}_1^*$ has nonempty intersections with all the five $SL(W)$-orbits. We only need to check that the intersections of $J$ with the two smallest orbits are nonempty, and this is done by calculating $(d_2.(-3c_2d_2+6d_2^2))=(d_2.(3c_2d_2-3d_2^2))=3\neq 0$ based on our previous calculations of the fundamental classes of the two smallest orbits (the fundamental class of $J$ is equal to $d_2$).
	
	A general global section of $(Q')^*(1)$ over $\mathrm{Gr}(2,7)$ is isomorphic to the $\mathrm{G}_2$-Grassmannian $\mathrm{G}_2\mathrm{Gr}(2,7)$
	and it is proved in the paper \cite{hyperplaneDer} by Kuznetsov that $\mathrm{G}_2\mathrm{Gr}(2,7)$ has a homological projective dual variety, which allows us to describe the derived category of a general hyperplane section $J$.
	\begin{thm}[\cite{hyperplaneDer}]
		The collection $\langle \mathcal{O}_J,\mathcal{U}_1^*|_J,\mathcal{O}_J(1),\mathcal{U}_1^*(1)|_J\rangle$ is an exceptional collection in $\mathrm{D}^b(J)$. Its right orthogonal complement in $\mathrm{D}^b(J)$ is equivalent to the derived category of two isolated points, and is in particular spanned by two exceptional objects.\label{kuz theorem}
	\end{thm}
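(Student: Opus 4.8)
The plan is to obtain the statement from Kuznetsov's homological projective duality for the $5$-dimensional $\mathrm{G}_2$-Grassmannian $G:=\mathrm{G}_2\mathrm{Gr}(2,7)$, of which $J$ is a generic hyperplane section. As noted just above, $G\subset \mathrm{Gr}(2,7)$ is the zero locus of a generic section of a bundle pulled back from $\mathrm{Gr}(2,\mathds C^8)$, and cutting with a further hyperplane $H\subset \mathds P(\mathfrak g_2)=\mathds P^{13}$ realizes $J=G\cap H$; moreover the universal bundle restricts as $\mathcal U_1^*|_J=\mathcal U_G^*|_J$, where $\mathcal U_G$ is the tautological rank-$2$ subbundle on $G$. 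I would start by recalling Kuznetsov's full \emph{rectangular} Lefschetz exceptional collection
\[
\mathrm D^b(G)=\big\langle\,\mathcal B,\ \mathcal B(1),\ \mathcal B(2)\,\big\rangle,\qquad \mathcal B=\langle\mathcal O_G,\ \mathcal U_G^*\rangle
\]
(three blocks, matching the Fano index $3$ of the adjoint variety $G$), together with the homological projective dual $(G^\vee,\ \text{dual Lefschetz structure})$ of $G\subset\mathds P(\mathfrak g_2)$.

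Exceptionality of $\langle\mathcal O_J,\mathcal U_1^*|_J,\mathcal O_J(1),\mathcal U_1^*(1)|_J\rangle$ I would prove by restriction from $G$. The Koszul resolution $0\to\mathcal O_G(-1)\to\mathcal O_G\to\mathcal O_J\to 0$, suitably tensored, places $\rhom_J(E|_J,F|_J)$ in an exact triangle with $\rhom_G(E,F)$ and $\rhom_G(E,F(1))[-1]$ for $E,F$ ranging over $\mathcal B\cup\mathcal B(1)$; the Lefschetz semiorthogonality on $G$ handles the untwisted terms, while the positively twisted groups $\rhom_G(\mathcal O_G,\mathcal O_G(1))$, $\rhom_G(\mathcal O_G,\mathcal U_G^*(1))$, $\rhom_G(\mathcal U_G^*,\mathcal O_G(1))$, $\rhom_G(\mathcal U_G^*,\mathcal U_G^*(1))$ are read off from Borel--Bott--Weil applied to the homogeneous bundles on $G$. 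The same computation gives $\rhom_J(E|_J,E|_J)=\mathds C$, so each of the four objects is exceptional. This is the usual mechanism by which a hyperplane section inherits the ``top'' part of a rectangular Lefschetz collection.

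The core of the argument is the identification of the orthogonal complement. Kuznetsov's hyperplane-section theorem, applied to the above rectangular decomposition, produces (after an overall twist by $\mathcal O_J(-1)$) a semiorthogonal decomposition
\[
\mathrm D^b(J)=\big\langle\,\mathcal A_J,\ \mathcal O_J,\ \mathcal U_1^*|_J,\ \mathcal O_J(1),\ \mathcal U_1^*(1)|_J\,\big\rangle,
\]
in which $\mathcal A_J$ is equivalent to $\mathrm D^b$ of the fibre of $G^\vee$ over the point $[H]\in\mathds P(\mathfrak g_2^*)$. I would then invoke the explicit description of $G^\vee$ --- it is a double cover of $\mathds P(\mathfrak g_2^*)$ branched along the projective dual (the discriminant hypersurface) of the adjoint variety $G$; this is exactly where the special $\mathrm{G}_2$ geometry enters. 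For generic $H$, i.e. $[H]$ off the branch divisor --- a nonempty open condition, which one should moreover check is compatible with the further genericity of $H$ used in the covering argument elsewhere in the paper --- this fibre consists of two reduced points, so $\mathcal A_J\simeq \mathrm D^b(\mathrm{pt})\times \mathrm D^b(\mathrm{pt})$, and is thus spanned by two mutually orthogonal exceptional objects.

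I expect the principal obstacle to be the HPD input itself: constructing $G^\vee$ with its dual Lefschetz decomposition and verifying the kernel and compatibility conditions that make the hyperplane-section theorem applicable, together with careful bookkeeping of conventions so that the primitive component $\mathcal A_J$ ends up precisely as the right orthogonal complement of the four explicit bundles as in the cited statement. If one prefers to bypass HPD, the alternative is a direct analysis of $\langle\mathcal O_J,\mathcal U_1^*|_J,\mathcal O_J(1),\mathcal U_1^*(1)|_J\rangle^\perp$ via a resolution of the diagonal of $J$ or via mutations; this is possible in principle but considerably more delicate, which is why the HPD route is preferable.
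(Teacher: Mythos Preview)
The paper does not give its own proof of this theorem: it is stated as a citation from Kuznetsov's work \cite{hyperplaneDer} and used as a black box in the covering argument of \cref{subsection: proving fullness}. Your sketch correctly reconstructs the argument from that reference---the rectangular Lefschetz decomposition $\mathrm{D}^b(G)=\langle\mathcal B,\mathcal B(1),\mathcal B(2)\rangle$ with $\mathcal B=\langle\mathcal O_G,\mathcal U_G^*\rangle$ on the $\mathrm G_2$-Grassmannian, the HPD dual realized as a double cover of $\mathds P(\mathfrak g_2^*)$, and the identification of the residual category of a generic hyperplane section with the derived category of the two-point fibre. This is precisely the content of \cite{hyperplaneDer}, so your proposal is faithful to the source the paper invokes; there is simply nothing further in the present paper to compare against.
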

	
	Over $\mathrm{Gr}(3, S^2W)$, we have two interesting subbundles of trivial bundles, namely $\mathcal{U} \subset S^2W \otimes \mathcal{O}$ and $\mathcal{U}^{\perp} \subset S^2W^* \otimes \mathcal{O}$. Notice that $\mathcal{U}|_J=\mathcal{U}_2|_J$ and that we have an exact sequence $0\rightarrow\mathcal{U}^{\perp}|_Y\rightarrow S^2W^* \otimes \mathcal{O}_Y\rightarrow \mathcal{U}_2^*\rightarrow 0$.
	
	\begin{proposition}
		The following sequences are full exceptional sequences in $\mathrm{D}^b(J)$:
		$$\langle \mathcal{U}_2|_J, \mathcal{U}^{\perp}|_J, \mathcal{O}_J, \mathcal{U}_1^*|_J, \mathcal{O}_J(1),\mathcal{U}_1^*(1)|_J\rangle,$$
		$$\langle \mathcal{U}^{\perp}|_J, \mathcal{O}_J, \mathcal{U}_1^*|_J, \mathcal{O}_J(1),\mathcal{U}_1^*(1)|_J,\mathcal{U}_2(2)|_J \rangle.$$
	\end{proposition}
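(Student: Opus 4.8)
Write $\mathcal E := \langle \mathcal O_J, \mathcal U_1^*|_J, \mathcal O_J(1), \mathcal U_1^*(1)|_J\rangle$ for the exceptional collection of \cref{kuz theorem}. An exceptional collection on a smooth projective variety is admissible, so \cref{kuz theorem} gives $\mathrm D^b(J) = \langle \mathcal E, \mathcal E^\perp\rangle$ with $\mathcal E^\perp$ equivalent to $\mathrm D^b(\mathrm{pt}\sqcup\mathrm{pt})$; equivalently $\mathrm D^b(J) = \langle {}^{\perp}\mathcal E, \mathcal E\rangle$, where the left orthogonal ${}^{\perp}\mathcal E$ is equivalent to $\mathcal E^\perp$ via a mutation functor, hence again equivalent to $\mathrm D^b(\mathrm{pt}\sqcup\mathrm{pt})$. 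Thus, to prove that the first collection is full exceptional it suffices to establish: (i) $\mathcal U_2|_J$ and $\mathcal U^\perp|_J$ are exceptional; (ii) $\rhom^{\bullet}_J(e, \mathcal U_2|_J) = \rhom^{\bullet}_J(e, \mathcal U^\perp|_J) = 0$ for each of the four objects $e$ of $\mathcal E$, so that $\mathcal U_2|_J, \mathcal U^\perp|_J \in {}^{\perp}\mathcal E$; and (iii) $\rhom^{\bullet}_J(\mathcal U^\perp|_J, \mathcal U_2|_J) = 0$. Granting these, $\langle \mathcal U_2|_J, \mathcal U^\perp|_J\rangle$ is an exceptional pair of nonzero objects of ${}^{\perp}\mathcal E \simeq \mathrm D^b(\mathrm{pt}\sqcup\mathrm{pt})$; every nonzero exceptional object there is a shifted skyscraper, and (iii) forces the two of them to sit over the two distinct points, so they generate ${}^{\perp}\mathcal E$. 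Therefore $\{\mathcal U_2|_J, \mathcal U^\perp|_J\} \cup \mathcal E$ generates $\langle {}^{\perp}\mathcal E, \mathcal E\rangle = \mathrm D^b(J)$, and the first collection is full and exceptional. The second collection then comes for free: since the first collection is full exceptional and $\omega_J = \mathcal O_J(-2)$ by adjunction ($K_J = (K_Y + c_1(\mathcal U_1^*))|_J = \mathcal O_Y(-2)|_J$, as $Y$ has index $3$ and $\det \mathcal U_1^* = \mathcal O_Y(1)$), right-mutating $\mathcal U_2|_J$ past the remaining five terms yields — by the standard relation between the terminal mutation of a full exceptional collection and the Serre functor — the full exceptional collection $\langle \mathcal U^\perp|_J, \mathcal O_J, \mathcal U_1^*|_J, \mathcal O_J(1), \mathcal U_1^*(1)|_J, \mathcal U_2|_J \otimes \omega_J^{-1}[-4]\rangle = \langle \mathcal U^\perp|_J, \mathcal O_J, \mathcal U_1^*|_J, \mathcal O_J(1), \mathcal U_1^*(1)|_J, \mathcal U_2(2)|_J[-4]\rangle$, which differs from the second collection only by a shift of its last term and hence is itself full exceptional.

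\textbf{Reduction to cohomology on $Y$.} All the groups in (i)--(iii) are of the form $\mathrm H^\bullet(J, \mathcal F|_J)$ for $\mathcal F$ a tensor bundle in $\mathcal U_1, \mathcal U_2, \mathcal U_1^*, \mathcal U_2^*$ and $\mathcal O_Y(\pm 1)$; for the summands involving $\mathcal U^\perp$ one first uses the tautological sequence $0 \to \mathcal U^\perp|_Y \to S^2W^* \otimes \mathcal O_Y \to \mathcal U_2^* \to 0$ and its dual $0 \to \mathcal U_2|_Y \to S^2W \otimes \mathcal O_Y \to (\mathcal U^\perp)^*|_Y \to 0$ to trade $\mathcal U^\perp|_Y$ for $\mathcal O_Y$, $\mathcal U_2|_Y$ and $\mathcal U_2^*|_Y$. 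Since $J$ is the zero scheme of a regular section of $\mathcal U_1^*$ on $Y$ and $\mathrm{rk}\,\mathcal U_1^* = \mathrm{codim}\,J = 2$, we have the Koszul resolution $0 \to \mathcal O_Y(-1) \to \mathcal U_1 \to \mathcal O_Y \to \mathcal O_J \to 0$ (using $\wedge^2\mathcal U_1 = \det \mathcal U_1 = \mathcal O_Y(-1)$), so $\mathrm H^\bullet(J, \mathcal F|_J)$ is the hypercohomology of $[\,\mathcal F(-1) \to \mathcal F \otimes \mathcal U_1 \to \mathcal F\,]$, read off a spectral sequence with first page $\mathrm H^\bullet(Y, \mathcal F(-1))$, $\mathrm H^\bullet(Y, \mathcal F \otimes \mathcal U_1)$, $\mathrm H^\bullet(Y, \mathcal F)$. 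This reduces everything to cohomology of explicit bundles over $Y$, where the vanishing of higher cohomology is supplied by Teleman quantization (\cref{teleman}, as used in \cref{remark: higher cohg vanish} and throughout \cref{subsection: exceptional collection}), the Euler characteristics by Hirzebruch--Riemann--Roch with the Chern and Todd data of \cref{table: Chern characters of bundles} and \cref{chern ch} (and the code of \cref{appendix: Sage code}), and — for the few bundles occurring in (i) that remain non-pulled-back after these reductions — by passing to the $\mathds P^1$-bundle $X = \mathds P_Y(\mathcal U_1)$ and its blow-up description, exactly as in \cref{subsection: exceptional collection}.

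\textbf{Main obstacle.} The delicate part is not the dimension counts but the differentials of the above spectral sequences: as $[\,\mathcal F(-1) \to \mathcal F \otimes \mathcal U_1 \to \mathcal F\,]$ has three nonzero terms, one must show that the maps on cohomology induced by the section of $\mathcal U_1^*$ (for instance $\mathrm H^0(Y, \mathcal F(-1)) \to \mathrm H^0(Y, \mathcal F \otimes \mathcal U_1)$), and likewise the connecting maps coming from the two $\mathcal U^\perp$-sequences, have the expected rank. As in \cref{subsection: mutations} and \cref{subsection: Completion of mutations}, all these maps are $SL(W)$-equivariant, so the relevant Hom-spaces are $SL(W)$-modules and the maps are determined up to a scalar on each isotypic component; nonvanishing of these scalars can be checked either fibrewise over a point of the open orbit $\mathcal O_6$ or by restriction to $Bl(\mathds P^2)$ of \cref{construction: BlP2}, where every bundle in sight splits as a sum of line bundles. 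Carrying out this equivariant bookkeeping is the bulk of the work; once it is in place, fullness is immediate from \cref{kuz theorem} by the argument above.
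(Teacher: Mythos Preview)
Your overall strategy is correct and agrees with the paper's: reduce fullness to Kuznetsov's theorem by checking that $\mathcal U_2|_J$ and $\mathcal U^\perp|_J$ are an exceptional pair in the orthogonal of $\mathcal E$, compute all the needed $\rhom$'s on $Y$ via the Koszul resolution of $\mathcal O_J$, and obtain the second collection from the first by the Serre-functor/terminal-mutation argument using $\omega_J=\mathcal O_J(-2)$. One notational slip: the conditions you verify in (ii), namely $\rhom^\bullet(e,\mathcal U_2|_J)=0$, place $\mathcal U_2|_J$ in the \emph{right} orthogonal $\mathcal E^\perp$, not ${}^\perp\mathcal E$; with the paper's convention the decomposition you want is $\mathrm D^b(J)=\langle \mathcal E^\perp,\mathcal E\rangle$.

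Where you diverge from the paper is in your ``Main obstacle'' paragraph. You anticipate that the crux is controlling the differentials in the Koszul spectral sequence and propose attacking them via $SL(W)$-equivariance and restriction to $Bl(\mathds P^2)$. In fact the paper shows that (with one trivial exception) \emph{every individual Koszul term already vanishes}, so no differential analysis is needed. The mechanism is not a fresh Teleman/HRR computation but a systematic reduction to the orthogonality relations already established in the exceptional collections \eqref{1 exc}, \eqref{2 exc}, \eqref{4 exc}, \eqref{5 exc}, together with Serre duality on $Y$. For instance, $\rhom^\bullet(\mathcal U_1^*\!\otimes\!\mathcal U_1^*,\mathcal U_2)=\rhom^\bullet(\mathcal O_Y(2)\oplus\mathfrak{sl}(\mathcal U_1)(2),\mathcal U_2(1))=0$ by \eqref{1 exc} and \eqref{2 exc}; $\rhom^\bullet(\mathcal U_2\!\otimes\!\mathcal U_1^*,\mathcal U_2)=\rhom^\bullet(\mathcal U_1^*\!\otimes\!\mathcal U_2(2),\mathcal U_2(2))=0$ by \eqref{4 exc}; and $\rhom^\bullet(\mathcal U_2^*\!\otimes\!\mathcal U_1^*,\mathcal U_2)=\rhom^\bullet(\mathcal U_1^*\!\otimes\!\mathcal U_2(2),\mathcal U_2^*)^*[-6]=0$ by Serre duality and \cref{symmetry}. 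The single place where a map of cohomology groups must be identified is $\rhom^\bullet(\mathcal O_Y,\mathcal U^\perp|_Y)=0$, which follows from the evident isomorphism $S^2W^*\cong \mathrm H^0(\mathcal U_2^*)$ of \cref{global section}. So your proposed equivariant/$Bl(\mathds P^2)$ machinery, while it would work in principle, is substantially heavier than what is actually required; the paper's route is to recognise that the hard computations were already done in \cref{subsection: exceptional collection} and \cref{subsection: mutations} and simply quote them.
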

	\begin{proof}
		We begin with proving that the first sequence above is exceptional. 
		
		We want to prove that in $\mathrm{D}^b(Y)$ we have $\mathrm{RHom}^{\bullet}(\mathcal{O}_J,\mathcal{U}_2|_J)=\mathrm{RHom}^{\bullet}(\mathcal{U}_1^*|_J,\mathcal{U}_2|_J)=\mathrm{RHom}^{\bullet}(\mathcal{O}_J(1),\mathcal{U}_2|_J)=\mathrm{RHom}^{\bullet}(\mathcal{U}_1^*(1)|_J,\mathcal{U}_2|_J)=0$. There is a Koszul complex:
		$$0\rightarrow \wedge^2(\mathcal{U}_1)=\mathcal{O}_Y(-1)\rightarrow\mathcal{U}_1\rightarrow \mathcal{O}_Y\rightarrow\mathcal{O}_J\rightarrow0.$$
		After tensoring with $(\mathcal{O}_Y)^*\otimes\mathcal{U}_2,(\mathcal{U}_1^*)^*\otimes\mathcal{U}_2$,$(\mathcal{O}_Y(1))^*\otimes\mathcal{U}_2,(\mathcal{U}_1^*(1))^*\otimes\mathcal{U}_2$ respectively, we only need to prove $$\mathrm{RHom}^{\bullet}(\mathcal{O}_Y,\mathcal{U}_2)=\mathrm{RHom}^{\bullet}(\mathcal{U}_1^*,\mathcal{U}_2)=\mathrm{RHom}^{\bullet}(\mathcal{O}_Y(1),\mathcal{U}_2)=\mathrm{RHom}^{\bullet}(\mathcal{U}_1^*(1),\mathcal{U}_2)=\mathrm{RHom}^{\bullet}(\mathcal{U}_1^*\otimes \mathcal{U}_1^*,\mathcal{U}_2)=0,$$  $$\mathrm{RHom}^{\bullet}(\mathcal{O}_Y(2),\mathcal{U}_2)=\mathrm{RHom}^{\bullet}(\mathcal{U}_1^*(2),\mathcal{U}_2)=\mathrm{RHom}^{\bullet}(\mathcal{U}_1^*\otimes \mathcal{U}_1^*(1),\mathcal{U}_2)=0.$$ The first set of vanishing follow directly from the orthogonal relations that we have seen from \eqref{1 exc} and \eqref{2 exc}, for example, $\mathrm{RHom}^{\bullet}(\mathcal{U}_1^*\otimes \mathcal{U}_1^*,\mathcal{U}_2)=\mathrm{RHom}^{\bullet}(\mathcal{U}_1^*\otimes \mathcal{U}_1^*(1),\mathcal{U}_2(1))=\mathrm{RHom}^{\bullet}(\mathcal{U}_1\otimes \mathcal{U}_1^*(2),\mathcal{U}_2(1))=\mathrm{RHom}^{\bullet}(\mathcal{O}_Y(2)\oplus\mathfrak{sl}(\mathcal{U}_1)(2),\mathcal{U}_2(1))=0$. The second set of vanishing follow from the orthogonal relations that we have seen after we apply the Grothendieck-Verdier duality, for example $\mathrm{RHom}^{\bullet}(\mathcal{U}_1^*\otimes \mathcal{U}_1^*(1),\mathcal{U}_2)=\mathrm{RHom}^{\bullet}(\mathcal{U}_2,\mathcal{U}_1\otimes \mathcal{U}_1^*(2)\otimes \omega_Y[6])^*=\mathrm{RHom}^{\bullet}(\mathcal{U}_2(1),\mathcal{U}_1\otimes \mathcal{U}_1^*[6])^*=0$.
		
		We also want to prove \begin{equation*}
			\mathrm{RHom}^{\bullet}(\mathcal{O}_J,\mathcal{U}^{\perp}|_J)=\mathrm{RHom}^{\bullet}(\mathcal{U}_1^*|_J,\mathcal{U}^{\perp}|_J)=\mathrm{RHom}^{\bullet}(\mathcal{O}_J(1),\mathcal{U}^{\perp}|_J)=\mathrm{RHom}^{\bullet}(\mathcal{U}_1^*(1)|_J,\mathcal{U}^{\perp}|_J)=0
		\end{equation*}
		Thanks to the Koszul complex, we only need to prove that $\mathrm{RHom}^{\bullet}(\mathcal{O}_Y,\mathcal{U}^{\perp}|_Y)=0$ and $\mathrm{RHom}^{\bullet}(\mathcal{U}_1^*,\mathcal{U}^{\perp}|_Y)=\mathrm{RHom}^{\bullet}(\mathcal{O}_Y(1),\mathcal{U}^{\perp}|_Y)=\mathrm{RHom}^{\bullet}(\mathcal{U}_1^*(1),\mathcal{U}^{\perp}|_Y)=\mathrm{RHom}^{\bullet}(\mathcal{U}_1^*\otimes \mathcal{U}_1^*,\mathcal{U}^{\perp}|_Y)=\mathrm{RHom}^{\bullet}(\mathcal{O}_Y(2),\mathcal{U}^{\perp}|_Y)=$
		$\mathrm{RHom}^{\bullet}$
		$(\mathcal{U}_1^*(2),\mathcal{U}^{\perp}|_Y)=\mathrm{RHom}^{\bullet}(\mathcal{U}_1^*\otimes \mathcal{U}_1^*(1),\mathcal{U}^{\perp}|_Y)=0$. To do this, recall that we have an exact sequence \begin{equation}
			0\rightarrow\mathcal{U}^{\perp}|_Y \rightarrow S^2W^* \otimes \mathcal{O}_Y\rightarrow \mathcal{U}_2^*\rightarrow 0.\label{last new exact sequence}
		\end{equation}
		The vanishing of $\mathrm{RHom}^{\bullet}(\mathcal{O}_Y,\mathcal{U}^{\perp}|_Y)=0$ comes from the fact that the morphism $\mathrm{RHom}^{\bullet}(\mathcal{O}_Y,S^2W^* \otimes \mathcal{O}_Y)\rightarrow$
		$\mathrm{RHom}^{\bullet}(\mathcal{O}_Y, \mathcal{U}_2^*)$ is an isomorphism (see \cref{global section} and recall that $Y$ is a rational variety). To verify the other vanishing of cohomology, we only need to verify that the corresponding vanishing equalities hold when we replace $\mathcal{U}^{\perp}|_Y$ by $\mathcal{O}_Y$ and $\mathcal{U}_2^*$ (because of the above exact sequence). Then these vanishing equalities follow from the orthogonal relations in \eqref{1 exc} and \eqref{2 exc}.
		
		The fact that $\mathrm{RHom}^{\bullet}(\mathcal{U}_2|_J,\mathcal{U}_2|_J)=\mathds{C}[0]$ also follows from a Koszul complex argument: $\mathrm{RHom}^{\bullet}(\mathcal{U}_2,\mathcal{U}_2)=\mathrm{RHom}^{\bullet}(\mathcal{U}_2(1),\mathcal{U}_2(1))=\mathds{C}[0]$, $\mathrm{RHom}^{\bullet}(\mathcal{U}_2(1),\mathcal{U}_2)=\mathrm{RHom}^{\bullet}(\mathcal{U}_2(2),\mathcal{U}_2(1))=0$ and $\mathrm{RHom}^{\bullet}(\mathcal{U}_2\otimes \mathcal{U}_1^*,\mathcal{U}_2)=\mathrm{RHom}^{\bullet}(\mathcal{U}_2\otimes \mathcal{U}_1^*(2),\mathcal{U}_2(2))=0$ because of the orthogonal relations in \eqref{1 exc} and \eqref{4 exc}.
		
		To prove $\mathrm{RHom}^{\bullet}(\mathcal{U}^{\perp}|_J,\mathcal{U}_2|_J)=0$, we only need to prove that $\mathrm{RHom}^{\bullet}(\mathcal{O}_J,\mathcal{U}_2|_J)=\mathrm{RHom}^{\bullet}(\mathcal{U}_2^*|_J,\mathcal{U}_2|_J)=0$, because of the short exact sequence \eqref{last new exact sequence}.
		We have seen in the above paragraphs that $\mathrm{RHom}^{\bullet}(\mathcal{O}_J,\mathcal{U}_2|_J)=0$. We apply again the Koszul complex argument to show that $\mathrm{RHom}^{\bullet}(\mathcal{U}_2^*|_J,\mathcal{U}_2|_J)=0$: for example $\mathrm{RHom}^{\bullet}(\mathcal{U}_2^*\otimes \mathcal{U}_1^*,\mathcal{U}_2)=\mathrm{RHom}^{\bullet}(\mathcal{U}_2^*,\mathcal{U}_1\otimes\mathcal{U}_2)=\mathrm{RHom}^{\bullet}(\mathcal{U}_1\otimes \mathcal{U}_2,\mathcal{U}_2^*(-3)[6])^*=\mathrm{RHom}^{\bullet}(\mathcal{U}_1^*\otimes \mathcal{U}_2(2),\mathcal{U}_2^*[6])^*=0$ by Serre duality and \cref{symmetry}.
		
		It is a similar routine check to see that $\mathrm{RHom}^{\bullet}(\mathcal{U}^{\perp}|_J,\mathcal{U}^{\perp}|_J)=\mathds{C}[0]$. From the above calculations, we see that the first sequence given in the statement is exceptional and that the two exceptional objects $\mathcal{U}_2|_J, \mathcal{U}^{\perp}|_J$ lie in the right orthogonal complement of $\langle \mathcal{O}_J,\mathcal{U}_1^*|_J,\mathcal{O}_J(1),\mathcal{U}_1^*(1)|_J\rangle$ in $\mathrm{D}^b(J)$. It follows from Kuznetsov's \cref{kuz theorem} that the first sequence is a full exceptional sequence for $\mathrm{D}^b(J)$.
		
		As a result, the second sequence is also a full exceptional sequence because of the fullness of the first sequence, Grothendieck-Verdier duality and the fact that $K_J\cong \mathcal{O}_J(-2)$.
	\end{proof}
	
	Now we are ready to prove the main Theorem.
	
	\begin{thm}
		All the exceptional sequences \eqref{1 exc}, \eqref{2 exc}, \eqref{3 exc}, \eqref{4 exc}, \eqref{5 exc} in $\mathrm{D}^b(Y)$ are full.\label{fullnessthm}
	\end{thm}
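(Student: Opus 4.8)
The plan is to reduce everything to the fullness of \eqref{1 exc}. All of \eqref{2 exc}, \eqref{3 exc} are obtained from \eqref{1 exc} by the mutations of \cref{proposition: mutating slU1} and their iterations, and by \cref{same tri subcategory} the collections \eqref{4 exc} and \eqref{5 exc} generate the same triangulated subcategory as \eqref{1 exc}; since a mutation of an exceptional collection generates the same admissible subcategory, all five collections span one and the same admissible subcategory $\mathcal D\subseteq \mathrm D^b(Y)$ (admissibility because \eqref{1 exc} is exceptional on the smooth projective $Y$ by \cref{theorem: exceptional collection on Y}). Hence it suffices to show $\mathcal D=\mathrm D^b(Y)$, i.e. that $\mathcal D^\perp=0$.

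So let $\mathcal G\in\mathcal D^\perp$; the goal is $\mathcal G\cong 0$. By the covering argument of \cref{covering}, applied to the family of smooth subvarieties $J$ constructed above (translates under $SL(W)$ of a fixed general zero locus of a section of $\mathcal U_1^*$; such a zero locus meets every $SL(W)$-orbit, so the translates cover $Y$), it is enough to prove that the derived restriction $\iota_J^*\mathcal G$ vanishes for each such $J$, where $\iota_J\colon J\hookrightarrow Y$ is the inclusion. Now $\mathrm D^b(J)$ is generated by the second of the two full exceptional collections of $\mathrm D^b(J)$ proved just above, namely $\langle \mathcal U^\perp|_J,\mathcal O_J,\mathcal U_1^*|_J,\mathcal O_J(1),\mathcal U_1^*(1)|_J,\mathcal U_2(2)|_J\rangle$, whose members are the restrictions along $\iota_J$ of the six objects $F'\in\mathcal S:=\{\mathcal U^\perp|_Y,\mathcal O_Y,\mathcal U_1^*,\mathcal O_Y(1),\mathcal U_1^*(1),\mathcal U_2(2)\}$. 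Therefore $\iota_J^*\mathcal G=0$ as soon as $\rhom_J(\iota_J^*F',\iota_J^*\mathcal G)=0$ for every $F'\in\mathcal S$.

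By adjunction and the projection formula, $\rhom_J(\iota_J^*F',\iota_J^*\mathcal G)\cong\rhom_Y(F',\mathcal G\otimes^{\mathrm L}\iota_{J*}\mathcal O_J)$; and since $J$ is the zero locus of a regular section of $\mathcal U_1^*$, the complex $\iota_{J*}\mathcal O_J$ is quasi-isomorphic to the Koszul complex $[\,\mathcal O_Y(-1)\to\mathcal U_1\to\mathcal O_Y\,]$, using $\wedge^2\mathcal U_1=\det\mathcal U_1=\mathcal O_Y(-1)$. Consequently $\rhom_Y(F',\mathcal G\otimes^{\mathrm L}\iota_{J*}\mathcal O_J)$ is built, by two exact triangles, from $\rhom_Y(F',\mathcal G)$, $\rhom_Y(F'\otimes\mathcal U_1^*,\mathcal G)$ and $\rhom_Y(F'(1),\mathcal G)$; as $\mathcal G\in\mathcal D^\perp$, all three vanish provided $F'$, $F'\otimes\mathcal U_1^*$ and $F'(1)$ all lie in $\mathcal D$. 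The whole argument is thus reduced to checking that the finite list $\{F',\,F'(1),\,F'\otimes\mathcal U_1^*\colon F'\in\mathcal S\}$ consists of objects of $\mathcal D$.

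That final verification is the only genuine work, and it is where the mutation results of the preceding subsections enter. Most members of the list are immediate: $\mathcal O_Y,\mathcal O_Y(1),\mathcal O_Y(2),\mathcal U_1^*,\mathcal U_1^*(1),\mathcal U_1^*(2),\mathcal U_2(2),\mathcal U_2(3),\mathcal U_2^*,\mathcal U_2^*(1)$ are all among the objects of \eqref{1 exc}. For the remaining ones: $\mathcal U^\perp|_Y$, $\mathcal U^\perp|_Y(1)$ and $\mathcal U^\perp|_Y\otimes\mathcal U_1^*$ lie in $\mathcal D$ via the exact sequence $0\to\mathcal U^\perp|_Y\to S^2W^*\otimes\mathcal O_Y\to\mathcal U_2^*\to 0$ and its twists by $\mathcal O_Y(1)$ and by $\mathcal U_1^*$, together with $\mathcal U_1^*\otimes\mathcal U_2^*\in\mathcal D$ (which appears in \eqref{5 exc}); one has $\mathcal U_1^*\otimes\mathcal U_1^*\cong\mathcal O_Y(1)\oplus\mathfrak{sl}(\mathcal U_1)(1)$ and $\mathcal U_1^*(1)\otimes\mathcal U_1^*\cong\mathcal O_Y(2)\oplus\mathfrak{sl}(\mathcal U_1)(2)$, with $\mathfrak{sl}(\mathcal U_1)(1)\in\mathcal D$ by \cref{proposition: mutating slU1} and $\mathfrak{sl}(\mathcal U_1)(2)\in\mathcal D$ by \eqref{2 exc}; and $\mathcal U_2(2)\otimes\mathcal U_1^*=\mathcal U_1^*\otimes\mathcal U_2(2)\in\mathcal D$ since it occurs in \eqref{4 exc}. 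With all these memberships recorded, $\iota_J^*\mathcal G=0$ for every $J$ in the covering family, hence $\mathcal G=0$ and $\mathcal D=\mathrm D^b(Y)$, which proves the theorem. The expected obstacle is therefore not conceptual but bookkeeping: one must make certain that every auxiliary bundle produced by the Koszul/adjunction reduction has genuinely been placed in $\mathcal D$ by the earlier mutation computations.
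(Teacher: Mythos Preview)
Your proof is correct and follows essentially the same route as the paper: reduce all five collections to a single admissible subcategory via \cref{proposition: mutating slU1} and \cref{same tri subcategory}, apply the covering argument with the family of smooth $J$'s, use the second full exceptional collection on $J$, unwind $\rhom_J(\iota_J^*F',\iota_J^*\mathcal G)$ by adjunction and the Koszul resolution of $\mathcal O_J$, and then verify that each of the resulting objects $F'$, $F'\otimes\mathcal U_1^*$, $F'(1)$ lies in the generated subcategory. Your bookkeeping of these memberships (via \eqref{1 exc}, \eqref{4 exc}, \eqref{5 exc}, the sequence for $\mathcal U^\perp|_Y$, and the decomposition $\mathcal U_1^*\otimes\mathcal U_1^*\cong\mathcal O_Y(1)\oplus\mathfrak{sl}(\mathcal U_1)(1)$) matches the paper's list exactly.
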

	
	\begin{proof}
		From \cref{same tri subcategory}, we have seen that all these exceptional sequences generate the same triangulated subcategory of $\mathrm{D}^b(Y)$, which we denote by $\mathcal{C}$.
		We need to prove that if $\mathcal{S}^.\in \mathrm{D}^b(Y)$ belongs to $\mathcal{C}^{\perp}$, then $\mathcal{S}^.$ is the zero object.
		Under this assumption, we only need to prove $\mathcal{S}^.|_J$ is the zero object in $\mathrm{D}^b(J)$ by
		\cref{covering}, which is equivalent to $\mathrm{RHom}^{\bullet}(\mathcal{U}^{\perp}|_J,\mathcal{S}^.|_J)=\mathrm{RHom}^{\bullet}(\mathcal{O}_J,\mathcal{S}^.|_J)=\mathrm{RHom}^{\bullet}(\mathcal{U}_1^*|_J,\mathcal{S}^.|_J)=\mathrm{RHom}^{\bullet}(\mathcal{O}_J(1),\mathcal{S}^.|_J)=\mathrm{RHom}^{\bullet}(\mathcal{U}_1^*(1)|_J,\mathcal{S}^.|_J)=\mathrm{RHom}^{\bullet}(\mathcal{U}_2(2)|_J,\mathcal{S}^.|_J)=0$ by the last Proposition. By the Koszul complex for $\mathcal{O}_J$, we only need to prove that $\mathcal{S}^.$ is right orthogonal to 
		$$  \mathcal{U}^{\perp}|_Y, \mathcal{O}_Y, \mathcal{U}_1^*, \mathcal{O}_Y(1),\mathcal{U}_1^*(1),\mathcal{U}_2(2),$$
		$$  \mathcal{U}^{\perp}|_Y\otimes \mathcal{U}_1^*, \mathcal{U}_1^*, \mathcal{U}_1^*\otimes \mathcal{U}_1^*,  \mathcal{U}_1^*(1),\mathcal{U}_1^*\otimes \mathcal{U}_1^*(1),\mathcal{U}_1^*\otimes \mathcal{U}_2(2),$$
		$$  \mathcal{U}^{\perp}(1)|_Y, \mathcal{O}_Y(1), \mathcal{U}_1^*(1), \mathcal{O}_Y(2),\mathcal{U}_1^*(2),\mathcal{U}_2(3).$$
		We know that all these objects belong to $\mathcal{C}$ thanks to the short exact sequence $0\rightarrow\mathcal{U}^{\perp}|_Y\rightarrow S^2W^* \otimes \mathcal{O}_Y\rightarrow \mathcal{U}_2^*\rightarrow 0$. This shows $\mathcal{S}^.|_J=0$ whenever $\mathcal{S}^.\in \mathcal{C}^{\perp}$ and by \cref{covering}, we know $\mathcal{S}^.=0$.
	\end{proof}
	
	\begin{Corollary}
		The left mutation of $\mathcal{U}_1^*\otimes\mathcal{U}_2(2)$ across $\langle \mathcal{O}_Y(1),\mathcal{U}_2^*(1),\mathcal{U}_1^*(1),\mathcal{U}_2(2),\mathfrak{sl}(\mathcal{U}_1^*)(2),\mathcal{O}_Y(2)\rangle$ is equal to $\mathcal{U}_2(1)[3]$.
	\end{Corollary}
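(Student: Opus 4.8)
The plan is to deduce this formally from the mutation computations of \cref{subsection: mutations} and \cref{subsection: Completion of mutations}, together with the standard facts that for an admissible subcategory $\mathcal A$ the functors $L_{\mathcal A}$ and $R_{\mathcal A}$ restrict to mutually inverse equivalences ${}^{\perp}\mathcal A\xrightarrow{\ \sim\ }\mathcal A^{\perp}$ and $\mathcal A^{\perp}\xrightarrow{\ \sim\ }{}^{\perp}\mathcal A$, and that $L_{\langle\mathcal A_1,\mathcal A_2\rangle}=L_{\mathcal A_1}\circ L_{\mathcal A_2}$ while $R_{\langle\mathcal A_1,\mathcal A_2\rangle}=R_{\mathcal A_2}\circ R_{\mathcal A_1}$. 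Put $\mathcal B:=\langle\mathcal{O}_Y(1),\mathcal{U}_2^*(1),\mathcal{U}_1^*(1),\mathcal{U}_2(2),\mathfrak{sl}(\mathcal{U}_1^*)(2),\mathcal{O}_Y(2)\rangle$ and $\mathcal G:=\mathcal{U}_1^*\otimes\mathcal{U}_2(2)$. By \cref{symmetry}, $\mathcal G$ lies to the right of all six objects of $\mathcal B$ in the exceptional collection \eqref{4 exc}, so $\mathcal G\in{}^{\perp}\mathcal B$; and by the Ext-vanishings recorded in \eqref{1 exc} and \eqref{2 exc} the bundle $\mathcal{U}_2(1)$ lies to the left of every object of $\mathcal B$, so $\mathcal{U}_2(1)[3]\in\mathcal B^{\perp}$. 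It therefore suffices to show that the right mutation of $\mathcal{U}_2(1)[3]$ across $\mathcal B$ equals $\mathcal G$; applying the inverse equivalence $L_{\mathcal B}$ will then give $L_{\mathcal B}(\mathcal G)=\mathcal{U}_2(1)[3]$, which is the assertion.

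First I would recall the chain of right mutations from \cref{whatever} and \cref{big proof}: with $L_6=\mathcal{U}_2(1)[3]$ one has $L_5=R_{\mathcal{O}_Y(1)}(L_6)$, $L_4=R_{\mathcal{U}_2^*(1)}(L_5)$ and $L_3=R_{\mathcal{U}_1^*(1)}(L_4)$, hence $R_{\langle\mathcal{O}_Y(1),\mathcal{U}_2^*(1),\mathcal{U}_1^*(1)\rangle}(\mathcal{U}_2(1)[3])=L_3$. By \cref{same tri subcategory} we have $L_3\cong L_2$, and by \cref{L_2} the object $L_2$ is the left mutation of $\mathcal G$ across $\langle\mathfrak{sl}(\mathcal{U}_1^*)(2),\mathcal{O}_Y(2)\rangle$; since $\mathcal G$ also lies to the right of $\mathfrak{sl}(\mathcal{U}_1^*)(2)$ and $\mathcal{O}_Y(2)$ in \eqref{4 exc}, inverting this mutation yields $R_{\langle\mathfrak{sl}(\mathcal{U}_1^*)(2),\mathcal{O}_Y(2)\rangle}(L_2)=\mathcal G$. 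The one remaining link is the mutation across $\mathcal{U}_2(2)$: performing inside \eqref{4 exc} the two left mutations of $\mathcal G$ past $\mathcal{O}_Y(2)$ and then $\mathfrak{sl}(\mathcal{U}_1^*)(2)$ places $L_2$ immediately to the right of $\mathcal{U}_2(2)$ in an exceptional collection, so that $\rhom^{\bullet}(L_2,\mathcal{U}_2(2))=0$ and thus $R_{\mathcal{U}_2(2)}(L_3)=R_{\mathcal{U}_2(2)}(L_2)=L_2\cong L_3$. Composing the three relations gives $R_{\mathcal B}(\mathcal{U}_2(1)[3])=R_{\langle\mathfrak{sl}(\mathcal{U}_1^*)(2),\mathcal{O}_Y(2)\rangle}\bigl(R_{\mathcal{U}_2(2)}(L_3)\bigr)=R_{\langle\mathfrak{sl}(\mathcal{U}_1^*)(2),\mathcal{O}_Y(2)\rangle}(L_2)=\mathcal G$, as wanted.

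All the substantial input — the identification $L_2\cong L_3$ via \cref{1 unique morphism} and \cref{same tri subcategory}, the constant-rank statements underlying the mutation sequences \eqref{mu1}--\eqref{mu3}, and the exceptionality of \eqref{4 exc} — is already in place, so the work here will be mostly bookkeeping: keeping the composition order of the mutation functors straight, and verifying the two memberships $\mathcal G\in{}^{\perp}\mathcal B$ and $\mathcal{U}_2(1)[3]\in\mathcal B^{\perp}$ that make the inverse-equivalence step legitimate. The spot that deserves the most care is the interpolation across $\mathcal{U}_2(2)$: the vanishing $\rhom^{\bullet}(L_2,\mathcal{U}_2(2))=0$ is cleanest to read off from the mutated exceptional collection in which $L_2$ sits just after $\mathcal{U}_2(2)$, rather than to establish by direct computation; alternatively, one could check it from the presentation $L_2\cong\mathcal{U}_1^*(2)\otimes K[1]$ of \cref{L_2} by pulling back along $\mathds{P}_Y(\mathcal{U}_1)\to Y$, in the same spirit as the rest of \cref{section: derived category}.
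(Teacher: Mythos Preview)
Your argument is correct, and it is in fact a slightly different route from the paper's. The paper argues as follows: since the mutated version of \eqref{4 exc} and (a reordering of) one of the sequences in \eqref{2 exc} are both \emph{full} and agree on twelve of their thirteen objects, the thirteenth objects must generate the same subcategory; hence $L_{\mathcal B}(\mathcal G)$ is a shift of $\mathcal{U}_2(1)$, and \cref{1 unique morphism} together with \cref{mutation and Rhom} pins down the shift as $[3]$. In other words, the paper leans on the fullness theorem \cref{fullnessthm}, which in turn rests on the covering argument of \cref{subsection: proving fullness}.

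Your approach instead composes the six individual mutations directly, using only the identifications $R_{\langle\mathcal{O}_Y(1),\mathcal{U}_2^*(1),\mathcal{U}_1^*(1)\rangle}(L_6)=L_3$, the isomorphism $L_3\cong L_2$ from \cref{same tri subcategory}, the orthogonality $\rhom^\bullet(L_2,\mathcal{U}_2(2))=0$ read off from the mutated exceptional collection, and the inverse pair $L_{\mathcal A}\!\leftrightarrow\! R_{\mathcal A}$. This avoids any appeal to fullness: the only substantive input is the isomorphism $L_2\cong L_3$, which the paper establishes well before fullness. The paper's proof is shorter once fullness is in hand, but yours is logically more economical and would allow one to prove this corollary immediately after \cref{same tri subcategory} rather than after \cref{fullnessthm}. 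Your bookkeeping on the orthogonals $\mathcal G\in{}^{\perp}\mathcal B$, $\mathcal{U}_2(1)\in\mathcal B^{\perp}$, and on the trivial step across $\mathcal{U}_2(2)$, is all correct.
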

	
	\begin{proof}
		We know that the left mutation of $\mathcal{U}_1^*\otimes\mathcal{U}_2(2)$ across $\langle \mathcal{O}_Y(1),\mathcal{U}_2^*(1),\mathcal{U}_1^*(1),\mathcal{U}_2(2),\mathfrak{sl}(\mathcal{U}_1^*)(2),\mathcal{O}_Y(2)\rangle$ lies in the subcategory generated by $\mathcal{U}_2(1)$ by the comparison of the two full exceptional sequences \eqref{1 exc} and \eqref{4 exc}. Since both the result of this left mutation and $\mathcal{U}_2(1)$ are exceptional objects and $\mathrm{RHom}^{\bullet}(\mathcal{U}_1^*\otimes \mathcal{U}_2(2), \mathcal{U}_2(1))=\mathds{C}[-3]$ by \cref{1 unique morphism}, we arrive at the conclusion.
	\end{proof}
	
	For the reader's convenience, we summarize the process of the mutations as follows:
	\begin{gather*}  
		0\rightarrow \mathcal{U}_2(1)=L_6[-3]\rightarrow S^2W\otimes\mathcal{O}_Y(1)\rightarrow L_5[-2]\rightarrow 0, \\
		0\rightarrow \mathcal{U}_2^*(1)^{\oplus 3}\rightarrow L_4[-2]\rightarrow L_5[-2]\rightarrow 0,    \\
		0\rightarrow L_4[-2]\rightarrow (\mathcal{U}_1^*(1)\otimes W)^{\oplus 3}\rightarrow L_3[-1]\rightarrow 0, \\
		0\rightarrow L_3[-1]\rightarrow L_2[-1]\rightarrow 0,\\
		0\rightarrow L_2[-1]\rightarrow \mathcal{U}_1^*\otimes \mathcal{U}_1(2)\otimes W^*\rightarrow \mathcal{U}_1^*\otimes \mathcal{U}_2(2)\rightarrow 0.
	\end{gather*}

	\appendix

	\section{Julia code for Hirzebruch-Riemann-Roch}
	\label{appendix: Sage code}

	In order to verify the Hirzebruch-Riemann-Roch calculations in the main text, the reader may use the method \texttt{integral} that is defined in the Julia package QuiverTools \cite{BFP:QuiverTools} and computes the Euler characteristic of a sheaf with the given Chern character.
	The reader can do so by copying the code below into a Julia notebook and adjusting the last line when necessary.
	
\begin{lstlisting}[numbersep=0pt]
  using QuiverTools;
  Q = mKronecker_quiver(3); d = [2,3]; Y = QuiverModuliSpace(Q, d);
  U1     = Chern_character_universal_bundle(Y, 1);
  U1dual = dual_Chern_character(Y, U1);
  U2     = Chern_character_universal_bundle(Y, 2);
  U2dual = dual_Chern_character(Y, U2);
  O1     = Chern_character_line_bundle(Y, [3,-2]);
  O1dual = dual_Chern_character(Y, O1);
		
  integral(Y, U2dual*U1dual*O1dual^2)
\end{lstlisting}
	
	Running \texttt{integral(Y, U1\textasciicircum\textbf{a} * U1dual\textasciicircum\textbf{b} * U2\textasciicircum\textbf{c} * U2dual\textasciicircum\textbf{d} * O1\textasciicircum\textbf{e} * O1dual\textasciicircum\textbf{f})} returns the Euler characteristic of $\mathcal U_1^{\otimes a} \otimes \mathcal (U_1^*)^{\otimes b} \otimes \mathcal U_2^{\otimes c} \otimes \mathcal (U_2^*)^{\otimes d} \otimes \mathcal O_Y(e-f)$, where the reader will replace $a,b,c,d,e,f$ with natural numbers.
	

	\printbibliography


	\texttt{Svetlana.Makarova@anu.edu.au} \\
	Mathematical Sciences Institute, Australian National University, Canberra ACT 2600, Australia
	
	\texttt{Junyu.Meng@math.univ-toulouse.fr} \\
	Université Paul Sabatier, Institut de Mathématiques de Toulouse, 118, Route de Narbonne, F-31062 Toulouse Cedex 9, France
\end{document}